\documentclass[english, 11pt]{amsart}
\usepackage{euscript,graphicx,epstopdf,amscd,amsgen,amsfonts,amssymb,latexsym,amsmath,amsthm,graphicx,mathrsfs,times,color,overpic,fourier,array}
\usepackage[all]{xy}
\usepackage{enumitem}
 \usepackage[colorlinks=true, backref=page]{hyperref}
\usepackage{xcolor}
\definecolor{forestgreen}{rgb}{0.0, 0.27, 0.13}

\usepackage[T1]{fontenc}

\newtheorem{teo}{Theorem}[section]

\newtheorem{lemma}[teo]{Lemma}
\newtheorem{claim}[teo]{Claim}

\newtheorem*{claim*}{Claim}
\newtheorem{coro}[teo]{Corollary}
\newtheorem{prop}[teo]{Proposition}

\newtheorem{mtheo}{Theorem}

\theoremstyle{definition}
\newtheorem{defi}[teo]{Definition}
\newtheorem{notation}[teo]{Notation}
\newtheorem{fact}[teo]{Fact}

\newtheorem{remark}[teo]{Remark}
\newtheorem{af}[teo]{Claim}
\newtheorem*{maintheorem*}{Theorem}

\DeclareMathOperator{\diam}{diam}

\DeclareMathOperator{\Diff}{Diff}

\def\c{{\rm c}}

\def\s{{\rm s}}

\def\u{{\rm u}}
\def\cs{{\rm cs}}
\def\cu{{\rm cu}}

\def\bN{\mathbb{N}}

\def\bZ{\mathbb{Z}}

\def\bR{\mathbb{R}}

\def\c{{\rm c}}
\def\s{{\rm ss}}
\def\u{{\rm uu}}
\def\cs{{\rm cs}}
\def\cu{{\rm cu}}

 \def\N{\mathbb{N}}

\def\N{\mathbb{N}}

\def\Ga{\Gamma}

\def\le{\chi}

\newcommand{\eqdef}{\stackrel{\scriptscriptstyle\mathrm{def}}{=}}

\numberwithin{equation}{section}

\DeclareMathSymbol{\varnothing}{\mathord}{AMSb}{"3F}
\renewcommand{\emptyset}{\varnothing}

\title[Zero entropy aperiodic ergodic measures]{Denseness of zero entropy aperiodic ergodic measures}

\author[C.~Crispim]{Camila Crispim}
\email{crispimscamila@gmail.com}

\author[L.~J.~D\'iaz]{Lorenzo J. D\'\i az}

\email{lodiaz@puc-rio.br}

\address{Departamento de Matem\'atica PUC-Rio, Marqu\^es de S\~ao Vicente 225, G\'avea, Rio de Janeiro 22451-900, Brazil}

\thanks{This paper is part of the thesis of the first author, completed under the supervision of the second author at PUC-Rio. 
It has been supported [in part] by CAPES - Finance Code 001. 
CC was supported by a PhD fellowship from CAPES. 
LD was supported by CNPq grants 310069/2020-3 and Projeto Universal
401737/2025-0 and  404943/2023-3, as well as by E-16/2014 INCT/FAPERJ and E-26/204.046/2024 CNE/FAPERJ. 
\newline
The authors thank D. Kwietniak for his comments on GIKN and rank-one measures, and CC thanks the colleagues and staff at PUC-Rio for their support and valuable discussions}

\begin{document}

\begin{abstract}
We study partially hyperbolic homoclinic classes of $C^1$-generic diffeomorphisms with a one-dimensional central bundle, so that the central Lyapunov exponent $\chi^c(\mu)$ is well defined for any ergodic measure $\mu$ supported on the class. We focus on nonhyperbolic homoclinic classes supporting ergodic measures with positive, zero, and negative central exponents. For each $\alpha$ and a nontrivial homoclinic class $H$ of a $C^1$-generic diffeomorphism $f$, we consider the level set of measures 
\[
\mathcal{M}^\alpha_{\mathrm{erg}}(f,H)=
\left\{\text{$\mu$ ergodic, supported on $H$, with } \chi^c(\mu)=\alpha \right\}.
\]  
In this generic setting, the range of $\alpha$ for which $\mathcal{M}^\alpha_{\mathrm{erg}}(f,H)$ is nonempty forms a nontrivial closed interval $I$. Since the set of periodic measures is countable, most of these sets contain no periodic measures. We show that for every $\alpha$ in the interior of $I$, the so-called Axiomatized GIKN measures, a class of low-complexity, zero-entropy measures, are dense in $\mathcal{M}^\alpha_{\mathrm{erg}}(f,H)$. 
This result can be viewed as an analogue of Sigmund's classical density of periodic measures for systems with the specification property, obtained here in a setting where the specification property does not hold and periodic measures are typically absent (in the considered level sets).

We also present a similar result for the open class of blender-minimal diffeomorphisms, contained in the class of 
$C^1$-robustly transitive ones.
\end{abstract}

\keywords{blender-horseshoe, ergodic measure, 
homoclinic class, Lyapunov exponent, minimal foliation, partial hyperbolicity}
\subjclass[2000]{%
37D30, 
37D25, 
37A35, 
37C40 
}  

\maketitle



\section{Introduction}
In 1974, Sigmund~\cite{Sig:74} proved that the specification property implies the density (in the weak$^\ast$ topology) of \emph{periodic measures} (that is, ergodic measures supported on periodic orbits) in the space of invariant measures. This result can be regarded as an ergodic counterpart of the density of periodic orbits in that setting. The specification property, introduced by Bowen in~\cite{Bow:72}, roughly asserts that finite orbit segments can be freely concatenated and shadowed by true orbits. 

In the differentiable setting, hyperbolicity and the specification property are closely related, with the former implying the latter. In general, however, the above density result fails without the specification property, as there may exist convex combinations of ergodic measures that are not limits of ergodic measures\footnote{There are many examples of this phenomenon; see, for instance,~\cite{DiaHorRioSam:09}.}. Under the specification property, Sigmund~\cite{Sig:70} also provides a detailed description of the space of invariant measures.

There are extensions of Sigmund's theorem beyond the specification property in the setting of $C^1$-generic diffeomorphisms (i.e., diffeomorphisms in a residual subset of $\mathrm{Diff}^1(M)$, the space of $C^1$-diffeomorphisms). In this direction, a main result asserts that every invariant measure supported on an isolated transitive set is the weak$^\ast$ limit of periodic measures supported on the same set; see \cite[Theorem~3.5(a)]{AbdBonCro:11}. This substantially strengthens a classical consequence of Ma\~n\'e's ergodic closing lemma~\cite{Man:82}, which states that, for $C^1$-generic diffeomorphisms, invariant measures are weak$^\ast$ limits of convex combinations of periodic orbit measures.

Our goal is to formulate (semi-local and global) relative versions of Sigmund's results for systems without specification, by considering ``level sets of measures'' in partially hyperbolic settings. We focus on \emph{transitive} systems (i.e., systems with dense orbits) or on isolated transitive sets admitting a dominated invariant splitting into three nontrivial bundles, two of which are uniformly hyperbolic (contracting and expanding), while the third is a one-dimensional central bundle. Throughout the paper, whenever we refer to partial hyperbolicity, we assume the existence of such a three-bundle splitting, and we refer to the corresponding sets or splittings as \emph{strongly partially hyperbolic}.

In this setting, the central bundle is part of the Oseledets splitting associated to any ergodic measure $\mu$, which allows us to define its \emph{central Lyapunov exponent} $\chi^c(\mu)$. Ergodic measures are thus classified according to the sign of this exponent. Those with zero central Lyapunov exponent are called \emph{nonhyperbolic} and are the main focus of this paper.  We stress that nonhyperbolicity of the system does not imply the existence of nonhyperbolic measures. Indeed, there are nonhyperbolic systems whose ergodic measures are all hyperbolic; see \cite{BalBonSch:99,DiaHorRioSam:09}. Such situations are typically regarded as pathological.

Given a map $f$, denote by $\mathcal{M}_{\mathrm{erg}}(f)$ the set of its ergodic measures, and by $\mathcal{M}_{\mathrm{erg}}(f,\Lambda)$ those supported on a closed $f$-invariant set $\Lambda$. Let $f$ be a diffeomorphism and $\Lambda$ a closed, $f$-invariant, transitive, partially hyperbolic set. For $\alpha \in \mathbb{R}$, define
\begin{equation}
\label{e.measurelevelsets}
\mathcal{M}^\alpha_{\mathrm{erg}}(f,\Lambda)
\eqdef
\left\{
\mu \in \mathcal{M}_{\mathrm{erg}}(f,\Lambda)
\colon
\chi^c(\mu)=\alpha
\right\}.
\end{equation}
For many values of $\alpha$ this set is empty, but in relevant cases the set of $\alpha$ for which $\mathcal{M}^\alpha_{\mathrm{erg}}(f,\Lambda)\neq\emptyset$ is a closed interval. Our main interest is the nonhyperbolic case $\alpha=0$, which is the most delicate; the case $\alpha\neq 0$ is analogous and simpler.

A question is whether a relative version of Sigmund's result holds for~\eqref{e.measurelevelsets}, namely whether periodic measures are dense in $\mathcal{M}^\alpha_{\mathrm{erg}}(f,\Lambda)$. This fails in general: by the Kupka--Smale genericity theorem, periodic points are generically hyperbolic and hence countable, so most of these sets contain no periodic measures.

Furthermore, since entropy is upper semicontinuous in our setting~\cite{DiaFis:11} and these sets typically contain measures with positive entropy, any dense subclass must consist of zero-entropy measures\footnote{We are 
assuming that entropy is one of the ingredients describing the class.}. Thus, to obtain a density result in the spirit of Sigmund's theorem, periodicity must be replaced by a notion of low dynamical complexity, namely zero entropy. Note that nonhyperbolic measures with positive entropy do exist; see~\cite{BocBonDia:16}.

Thus, we must discard the periodic case and look for an appropriate replacement for periodic measures. Recently, a class of low-complexity measures has been explored in several contexts: the \emph{GIKN measures} (or, more precisely, \emph{axiomatized GIKN measures}), introduced in \cite{Goretal:05} and defined in \cite{KwiLac:} following the axiomatization in \cite{BonDiaGor:10}. 
These measures are obtained via periodic approximation: they arise as ergodic weak$^\ast$ limits of sequences of periodic measures in which each orbit shadows the previous one for most of the time, while also exhibiting a \emph{tail} that deviates from it. A precise quantification of these shadowing and tailing times is required. We present and discuss these measures in Section~\ref{s.GIKN}. GIKN measures are always ergodic \cite{Goretal:05,BonDiaGor:10}, have zero entropy \cite{KwiLac:}, 
and belong to the class of \emph{rank-one} measures, that is, measures that can be approximated using a single Rokhlin tower \cite{KwiLacTri:}. In particular, they can be regarded as the simplest type of aperiodic ergodic measures. For a general discussion of rank-one measures and their relevance, see \cite{Fer:97}.

We present two settings in which aperiodic GIKN-measures are dense in the sets
$\mathcal{M}^\alpha_{\mathrm{erg}}(f,\Lambda)$.
First, in the setting of partially hyperbolic $C^1$-generic diffeomorphisms, in Theorem~\ref{t.teogeralclassehom}
we consider the case where $\Lambda$ is an isolated homoclinic class. 
We observe that certain genericity assumptions are necessary, since otherwise one may find so-called exposed pieces of the dynamics, where the range of Lyapunov exponents exhibits gaps and the results may fail; see \cite{DiaHorRioSam:09} and related variations in \cite{DiaGelRam:14,Diaetal:19}.

The second setting (see Theorem~\ref{t.densitygeraldifeo})
considers the case $\Lambda = M$ within the class of $C^1$-robustly transitive diffeomorphisms introduced in \cite{DiaGelSan:20,YanZha:20}, known as blender-minimal. This class encompasses many known examples of robustly nonhyperbolic transitive sets. We provide a precise definition of this class in Section~\ref{ss.setmb}.

\begin{notation}
Throughout the whole paper, $f \in \mathrm{Diff}^1(M)$, where  $M$ is a compact boundaryless manifold.
We endow $\mathrm{Diff}^1(M)$ with the uniform topology.
We denote by  $\mathcal{M}_{\mathrm{inv}} (f)$ the set of $f$-invariant measures. When considering measures supported on an $f$-invariant closed set $\Lambda$, we use the notations 
 $\mathcal{M}_{\mathrm{inv}} (f, \Lambda)$. The notations
 $\mathcal{M}_{\mathrm{erg}} (f)$ and 
  $\mathcal{M}_{\mathrm{erg}} (f, \Lambda)$ were introduced above. 
\end{notation}

\subsection{Nonhyperbolic homoclinic classes}
We now introduce the class $\mathrm{ISPH}(f)$ of homoclinic classes of a diffeomorphism $f$ that will be a key object of our study. To this end, we first recall some terminology.

A {\em saddle} $p$ is a hyperbolic periodic point whose stable and unstable bundles are both nontrivial. The {\em$\mathrm{s}$-index} of a saddle is the dimension of its stable bundle. Recall that a saddle $p$ has well-defined (saddle) continuations $p_g$ for every diffeomorphism $g$ sufficiently $C^1$-close to $f$.

The homoclinic class of a saddle $p$, denoted by $H(p)$, is the closure of the transverse intersections between the stable and unstable manifolds of its orbit. See Section~\ref{s.homrelations} for details and \cite[Chapter 10.4]{BonDiaVia:05} for a discussion of the relevance of homoclinic classes.

An $f$-invariant set $\Lambda$ is {\em isolated for $f$} if there exists a neighborhood $U$ of $\Lambda$, called an {\em isolating block of $\Lambda$}, such that
\begin{equation} \label{e.LambdaU}
\Lambda_f(\overline{U}) = \Lambda_f(U)=\Lambda,
\quad \text{where} \quad
\Lambda_f(K) \eqdef \bigcap_{i \in \bZ} f^i(K).
\end{equation}

A homoclinic class $H(p)$ is said to have \emph{index variation} if it contains saddles with different $\mathrm{s}$-indices. This property implies that the class is nonhyperbolic. We say that $H(p)$ is \emph{$C^1$-robustly isolated} if there exists a $C^1$-neighborhood $\mathcal{U}$ of $f$ such that the homoclinic class $H(p_g)$ is isolated for every $g \in \mathcal{U}$.

\begin{defi}[Homoclinic classes in $\mathrm{ISPH}(f)$]
\label{d.ISPH}
Given $f \in \mathrm{Diff}^1(M)$, let $\mathrm{ISPH}(f)$ denote the collection of homoclinic classes $H$ of $f$ such that:
\begin{enumerate}
\item $H$ is strongly partially hyperbolic;
\item $H$ has index variation; and
\item $H$ is $C^1$-robustly isolated.
\end{enumerate}
\end{defi}

Associated to a class $H \in \mathrm{ISPH}(f)$, and recalling the definition of $\mathcal{M}_{\mathrm{erg}}^\alpha(f,H)$ in \eqref{e.measurelevelsets}, we define
\begin{equation}
\label{eq.defalphaminalphamax}
\alpha_{\inf}(H) \eqdef \inf \left\{ \alpha \colon \mathcal{M}_{\mathrm{erg}}^\alpha(f,H) \neq \emptyset \right\},
\,\,
\alpha_{\sup}(H) \eqdef \sup \left\{ \alpha \colon \mathcal{M}_{\mathrm{erg}}^\alpha(f,H) \neq \emptyset \right\}.
\end{equation}

Finally, consider the subset $\mathcal{M}_{\mathrm{GIKN}}(f)$ of $\mathcal{M}_{\mathrm{erg}}(f)$ consisting of GIKN measures, and its subset $\mathcal{M}_{\mathrm{aGIKN}}(f)$ formed by aperiodic ones (see Definitions~\ref{d.GIKNseq} 
and \ref{d.GIKNmes} 
and Theorem~\ref{t.l.p.strongnonper}).
Similarly, for $\alpha \in \mathbb{R}$, we define the corresponding subsets $\mathcal{M}^\alpha_{\mathrm{GIKN}}(f,\Lambda)$ and $\mathcal{M}^\alpha_{\mathrm{aGIKN}}(f,\Lambda)$ as in \eqref{e.measurelevelsets}, replacing ergodic measures by GIKN and aGIKN measures, respectively.

\begin{mtheo}
\label{t.teogeralclassehom}
There exists a residual set $\mathcal{R}(M)$ of $\mathrm{Diff}^1(M)$ such that for every $f \in \mathcal{R}(M)$ and every homoclinic class $H(p_f)\in \mathrm{ISPH}(f)$, there exists a neighborhood $\mathcal{U}_f$ of $f$ such that
$$
\mathcal{M}^\alpha_{\mathrm{aGIKN}}(g,H(p_g)) \text{ is dense in } \mathcal{M}^\alpha_{\mathrm{erg}}(g,H(p_g))
$$
for every $g \in \mathcal{R}(M) \cap \mathcal{U}_f$ and every
$\alpha \in \big(\alpha_{\inf}(H(p_g)),\alpha_{\sup}(H(p_g))\big)$.
\end{mtheo}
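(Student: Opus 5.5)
The strategy is a two-step approximation. Fix $g$, $\alpha$ in the open interval, $\mu\in\mathcal{M}^\alpha_{\mathrm{erg}}(g,H(p_g))$ and a weak$^\ast$ neighborhood of $\mu$. We first approximate $\mu$ by periodic measures whose central exponents are close to $\alpha$ but slightly off. We then build, by the GIKN axioms, a sequence of periodic orbits whose limit is an aperiodic GIKN measure with central exponent exactly $\alpha$, staying in the prescribed neighborhood.

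\textbf{Choice of the residual set.} The residual set $\mathcal{R}(M)$ is taken to be the intersection of several known $C^1$-generic conditions:
\begin{itemize}
\item Kupka--Smale;
\item the version of the ergodic closing lemma for isolated transitive sets \cite[Theorem~3.5]{AbdBonCro:11}, giving every ergodic measure on $H(p_g)$ as the weak$^\ast$ limit of periodic measures in $H(p_g)$ whose central exponents converge to $\chi^c(\mu)$;
\item homoclinic relation of saddles of the same index in the class;
\item the existence of blender-horseshoes of both indices inside robustly nonhyperbolic partially hyperbolic classes with index variation.
\end{itemize}
Since $H(p_f)$ is robustly isolated and partially hyperbolic, these properties persist on a neighborhood $\mathcal{U}_f$ for generic $g$. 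They also give that the closure of the set of central exponents of periodic measures is $[\alpha_{\inf},\alpha_{\sup}]$.

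\textbf{Step 1: periodic measures with exponents on both sides of $\alpha$.} Given $\mu$, take a periodic orbit $\mathcal{O}_1$ whose measure is close to $\mu$ and has $\chi^c$ close to $\alpha$. Because $\alpha$ lies in the interior of the range, there are saddles $q^-,q^+$ in $H(p_g)$ with $\chi^c(q^-)<\alpha<\chi^c(q^+)$. The index variation and the blenders let us homoclinically relate the relevant saddles (after choosing indices appropriately), so that the central exponent can be pushed either way. Using the homoclinic relations and shadowing inside hyperbolic horseshoes, we produce periodic orbits that follow $\mathcal{O}_1$ for most of the time and spend a small proportion of time near $q^\pm$. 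By choosing the proportion, the central exponent is adjusted to be slightly above or below $\alpha$, while the measure remains close to $\mu$.

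\textbf{Step 2: inductive GIKN construction.} We build orbits $\mathcal{O}_n$ of periods $\pi_n\to\infty$ such that:
\begin{itemize}
\item $\mathcal{O}_{n+1}$ shadows $\mathcal{O}_n$, with closeness $\gamma_n$, for a proportion $1-\kappa_n$ of its period;
\item the remaining tail passes near $q^\pm$ in order to correct the exponent;
\item $|\chi^c(\mathcal{O}_n)-\alpha|\to 0$ with alternating signs.
\end{itemize}
The sequences are chosen so that $\sum\gamma_n<\infty$ and $\prod(1-\kappa_n)>0$, as required by Definition~\ref{d.GIKNseq}. The limit is then ergodic and GIKN, and its measure lies in the chosen neighborhood of $\mu$ because each step moves the measure by a summable amount. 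Its central exponent is exactly $\alpha$, by continuity of $\chi^c$ along the construction: $\chi^c$ is continuous on measures by domination, since $\log\|Dg|_{E^c}\|$ is continuous. The tails must be genuinely nontrivial and $\pi_{n+1}/\pi_n\to\infty$, so that Theorem~\ref{t.l.p.strongnonper} gives aperiodicity.

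\textbf{Main obstacle.} The hardest point is the exponent control along the tails in Step 2. Shadowing in a partially hyperbolic, nonhyperbolic set with central distortion requires that the shadowing orbit's central derivative stays close to that of $\mathcal{O}_n$. This needs the central direction along $\mathcal{O}_n$ to be uniformly controlled, and when $\alpha$ is near $0$ the $\mathcal{O}_n$ are only weakly hyperbolic. I would handle this as in Bonatti--D\'iaz--Gorodetski. The orbits are constructed inside a sequence of horseshoes relative to the blender, using the ``flip-flop'' mechanism. The tails are spent near $q^\pm$ and have uniform hyperbolicity constants, which compensates for the weak hyperbolicity of the shadowed part. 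This keeps the exponent error of order $\kappa_n+\gamma_n$. A separate but easier point is ensuring the hyperbolic continuations and homoclinic relations used in the tail persist uniformly for generic $g\in\mathcal{U}_f$; this comes from robust isolation together with the robustness of blenders.
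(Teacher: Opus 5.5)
\textbf{Where the proposal breaks.} Your plan works for $\alpha\neq 0$. There you can take both $q^\pm$ with central exponent of the same sign as $\alpha$; they are then homoclinically related by (G2), and interpolating between them is essentially the paper's Lemma~\ref{l.apmethodgeral}. The gap is in the main case $\alpha=0$.

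Saddles with $\chi^c<0$ and with $\chi^c>0$ have different $\mathrm{s}$-indices. They can never be homoclinically related, and no hyperbolic horseshoe contains orbits of both kinds. So ``shadowing inside hyperbolic horseshoes'' cannot produce an orbit that follows $\mathcal{O}_1$ (with $\chi^c<0$, say) and then spends time near $q^+$ to raise the exponent. Your Step~2 makes this worse: asking that $\chi^c(\mathcal{O}_n)$ alternate in sign means consecutive orbits of the GIKN sequence have different indices. You give no closing tool that builds such an orbit while controlling its index, and the closing remark about ``horseshoes relative to the blender'' does not supply one.

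\textbf{The missing idea: never change index.} The paper keeps every orbit with $\chi^c<0$ and lets the exponents tend to $0$ from below.
\begin{itemize}
\item The tail is not spent near a saddle of positive exponent. It is spent in the disks in-between of an unstable blender-horseshoe $B^+$, where $\varphi^c>\log\lambda_{\mathrm{bh}}$. The orbit enters and leaves this region through the split flip-flop configuration (Definition~\ref{d.splitflipflop}), which Theorem~\ref{t.classeblender} provides generically for every saddle of the class.
\item The orbit is closed with Liao--Gan's lemma (Lemma~\ref{l.gan}), not hyperbolic shadowing. Its hypothesis is that every initial partial average of $\log\|Df|_{E^{\mathrm{cs}}}\|$ is at most a fixed negative constant depending on $\chi^c(p)$. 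The choice of the ratio $\ell/k$ in \eqref{eq.escmk} guarantees exactly this, and it forces the new periodic point $q$ to have the same index as $p$.
\item The output is the estimate $|\chi^c(q)|<\bigl(1-\frac{\log\lambda_{\mathrm{bh}}}{2L^{\mathrm{c}}}\bigr)|\chi^c(p)|$, i.e.\ geometric decay of the exponents.
\end{itemize}

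\textbf{What the geometric decay buys.} It gives $\prod\rho_n>0$ and, at the same time, the quantitative bound $W_1(\mu,\nu)<A_\Lambda|\chi^c(\mu)|$ of Proposition~\ref{p.mainprop}. That bound is what places $\nu$ in the prescribed neighborhood of $\mu$. Summability of the step sizes alone, as you claim, does not; you need the total to be small.

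\textbf{Two smaller corrections.}
\begin{itemize}
\item No uniformity of hyperbolicity across steps is needed. The Liao--Gan constants may depend on the current orbit, because $\epsilon_n$ is chosen afterwards.
\item Aperiodicity in Theorem~\ref{t.l.p.strongnonper} comes from the condition $\sum_{k\ge n}\epsilon_k<d_n/3$, not from $\pi_{n+1}/\pi_n\to\infty$.
\end{itemize}
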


The most interesting case of Theorem~\ref{t.teogeralclassehom} corresponds to the ``nonhyperbolic case,'' when $\alpha = 0$. This case is proved in Section~\ref{s.thmAnonhyperbolic}. The ``hyperbolic cases'', that is, $\alpha \neq 0$, follow by suitable modifications of the argument for $\alpha = 0$ and are treated in Section~\ref{s.hyperbolic}. For these hyperbolic values, the index variation hypothesis is not required; see Theorem~\ref{t.hyperboliccase}.

\subsection{Blender-minimal diffeomorphisms}
\label{ss.setmb}

To define the (open) subset $\mathbf{MB}^1(M)$ of $\mathrm{Diff}^1(M)$ consisting of blender-minimal diffeomorphisms, we first introduce some preliminaries.

We denote by $\mathbf{PH}_{\mathrm{c}=1}^1(M)$ the $C^1$-open subset of $\mathrm{Diff}^1(M)$ formed by diffeomorphisms for which the whole ambient manifold is a strongly partially hyperbolic set. For each $f \in \mathbf{PH}_{\mathrm{c}=1}^1(M)$, there are well-defined {\em strong stable} $\mathcal{F}^\s$ and {\em strong unstable} $\mathcal{F}^\u$ foliations, tangent to the corresponding invariant bundles; see Section~\ref{s.ph} for details.

Given a foliation $\mathcal{F}$ of $M$ and $x\in M$, we denote by $\mathcal{F}(x)$ the leaf containing $x$. A foliation $\mathcal{F}$ is said to be {\em minimal} if $\mathcal{F}(x)$ is dense in $M$ for every $x\in M$. Note that minimality of either of the strong foliations of $f \in \mathbf{PH}_{\mathrm{c}=1}^1(M)$ implies that $f$ is transitive.

A final ingredient in the definition of $\mathbf{MB}^1(M)$ is given by {\em{stable and unstable blender-horseshoes,}} introduced in \cite{BonDia:12}. For a detailed discussion, see Section~\ref{ss.blenders}, and for an informal presentation, see \cite{Bonetal:16}. Roughly speaking, a blender-horseshoe is a higher-dimensional horseshoe (of dimension at least three) satisfying a superposition property, stated in Lemma~\ref{l.sobrediscos}. A key feature is that blender-horseshoes are ubiquitous in the robustly nonhyperbolic and robustly transitive setting; see \cite{BonDia:08}.

We are now ready to define the set $\mathbf{MB}^1(M)$, following \cite{DiaGelSan:20,YanZha:20}.

\begin{defi}
\label{d.setmb}
The set $\mathbf{MB}^1(M)$ is the subset of $\mathbf{PH}_{\mathrm{c}=1}^1(M)$ consisting of diffeomorphisms such that
\begin{enumerate}[leftmargin=2cm,label=(\textbf{MB\arabic*}),start=1]
    \item \label{MB1} both the strong stable and strong unstable foliations are minimal; and
    \item \label{MB2} $f$ has both a stable blender-horseshoe and an unstable blender-horseshoe.
\end{enumerate}
\end{defi}
 
 At first glance, the defining assumptions of $\mathbf{MB}^1(M)$ may seem restrictive; yet this class is remarkably large and rich in structure. Indeed, conditions~\ref{MB1} and~\ref{MB2} together guarantee that every diffeomorphism in $\mathbf{MB}^1(M)$ is transitive and nonhyperbolic. While the existence of a blender-horseshoe is $C^1$-open~\cite{BonDia:12}, minimality of a strong foliation is not open in general. The presence of blender-horseshoes ensures the openness of this property as well~\cite[Theorem~4]{BonDiaUre:02}, making all diffeomorphisms in $\mathbf{MB}^1(M)$ robustly transitive. Moreover, by~\cite{BocBonDia:16}, for each $f \in \mathbf{MB}^1(M)$, the set $\mathcal{M}_{\mathrm{erg}}^0(f)$ is nonempty and contains measures with positive topological entropy.

The relevance of $\mathbf{MB}^1(M)$ has been widely discussed; see \cite[Section~2.4]{DiaGelRam:} for a detailed overview. In dimension three, restricting to robustly transitive diffeomorphisms, this class is $C^1$-dense among diffeomorphisms fibered by circles~\cite{BonDia:96,BonDiaUre:02,RodRodUre:07} and among flow-type diffeomorphisms~\cite{BonDia:96,BuzFisTah:22}. It also contains open subsets of derived-from-Anosov diffeomorphisms~\cite{RodUreYan:22} as well as certain anomalous examples~\cite{BonGogHamPot:20}.

As in~\eqref{eq.defalphaminalphamax}, for $f \in \mathrm{Diff}^1(M)$, we define
\begin{equation} \label{e.maxmin}
    \alpha_{\inf}(f) \eqdef \inf \{ \alpha \colon \mathcal{M}_{\mathrm{erg}}^\alpha(f) \neq \emptyset \}, \qquad
    \alpha_{\sup}(f) \eqdef \sup \{ \alpha \colon \mathcal{M}_{\mathrm{erg}}^\alpha(f) \neq \emptyset \}.
\end{equation}

\begin{mtheo}
\label{t.densitygeraldifeo}
For every $f \in \mathbf{MB}^1(M)$ and every $\alpha \in (\alpha_{\inf}(f),\alpha_{\sup}(f))$, 
the set $\mathcal{M}^\alpha_{\mathrm{aGIKN}}(f)$ is dense in $\mathcal{M}^\alpha_{\mathrm{erg}}(f)$. 
\end{mtheo}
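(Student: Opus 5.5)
The plan is to reduce Theorem~\ref{t.densitygeraldifeo} to a construction of GIKN sequences of periodic orbits whose central exponents converge to $\alpha$ and whose averages stay weak$^\ast$-close to a prescribed measure. Fix $f\in\mathbf{MB}^1(M)$, $\alpha\in(\alpha_{\inf}(f),\alpha_{\sup}(f))$, $\mu\in\mathcal{M}^\alpha_{\mathrm{erg}}(f)$ and $\varepsilon>0$. Our goal is an aperiodic GIKN measure $\nu$ with $\chi^c(\nu)=\alpha$ and $\nu$ $\varepsilon$-close to $\mu$.

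\emph{Step 1: approximating $\mu$ by a periodic orbit.} In $\mathbf{MB}^1(M)$ there is a hyperbolic periodic point $p$, or a horseshoe, such that:
\begin{itemize}
\item its measure is close to $\mu$;
\item its central exponent is close to $\alpha$;
\item it is homoclinically related to the blender-horseshoes.
\end{itemize}
To get this, I would use minimality of $\mathcal F^{\mathrm{ss}},\mathcal F^{\mathrm{uu}}$ together with the blender superposition property (Lemma~\ref{l.sobrediscos}). This gives a skeleton-type/closing argument: a long Birkhoff-generic orbit segment for $\mu$ is connected, in uniformly bounded time, to the stable and unstable blenders. This yields periodic orbits $\varepsilon$-close to $\mu$ whose exponent is within $\varepsilon$ of $\alpha$, as in \cite{BocBonDia:16,DiaGelSan:20}.

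Since $\alpha$ is interior, there are also ergodic measures with exponents $<\alpha$ and $>\alpha$. Applying the same argument to them gives periodic orbits $q^-,q^+$ with exponents $\alpha^-<\alpha<\alpha^+$. All these orbits are pairwise homoclinically related, because minimality plus blenders makes every pair of saddles of the same index related, and the index-change is mediated by the blenders.

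\emph{Step 2: inductive GIKN construction.} Starting from $\gamma_0=p$, I build periodic orbits $\gamma_n$. Each $\gamma_n$ follows $\gamma_{n-1}$ for a fraction $1-\kappa_n$ of its period, with $\sum\kappa_n<\infty$ and $\kappa_n$ as small as the GIKN axioms (Definitions~\ref{d.GIKNseq}, \ref{d.GIKNmes}) require. During the remaining tail, $\gamma_n$ visits $q^-$ or $q^+$, chosen to push the central exponent of $\gamma_n$ toward $\alpha$, with errors $\delta_n\to0$. The concatenations are realized inside a fixed horseshoe containing all the orbits. Because the transition times are bounded, shadowing with exponentially small error gives the required closeness of $\gamma_n$ to $\gamma_{n-1}$.

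The exponent control works by a standard telescoping argument. The exponent of $\gamma_n$ is a weighted average of the exponent of $\gamma_{n-1}$, an $O(\kappa_n)$ tail contribution, and an error proportional to the shadowing distance. Choosing the tail sign alternately keeps $|\chi^c(\gamma_n)-\alpha|\to0$. The weak$^\ast$ distance between $\gamma_n$ and $\mu$ stays below $\varepsilon$ since the perturbations are summable.

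\emph{Step 3: passing to the limit.} By Theorem~\ref{t.l.p.strongnonper} and the GIKN results, the limit $\nu$ is ergodic, has zero entropy, and is aperiodic, provided the periods grow and the tails are nontrivial. Continuity of $\chi^c$ along GIKN sequences then gives $\chi^c(\nu)=\lim\chi^c(\gamma_n)=\alpha$. This holds because the central bundle is a continuous one-dimensional bundle and $\log\|Df|_{E^c}\|$ is continuous, so the exponent is continuous under weak$^\ast$ convergence.

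\emph{Main obstacle.} I expect the hardest part to be the case $\alpha=0$. The approximating periodic orbits then have exponents of both signs, hence different $s$-indices, so they are not homoclinically related and cannot lie in one horseshoe. There I would instead use heteroclinic connections through the two blender-horseshoes, as in the flip-flop configurations of \cite{BocBonDia:16}. In that setting, shadowing is only along the strong directions, so controlling the central distortion during concatenation requires the blender's uniform expansion/contraction estimates. The key point is checking that the central drift during transitions is $O(\kappa_n)$ and still compatible with the GIKN quantifiers.
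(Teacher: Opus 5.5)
\paragraph{The $\alpha\neq 0$ part.} Your treatment of $\alpha\neq 0$ is workable and close in spirit to Section~\ref{s.hyperbolic}. Once $p$, $q^-$, $q^+$ have the same index, they are homoclinically related by Lemma~\ref{lp.homreldifeo}. Uniform shadowing inside a horseshoe containing them, with tail fractions $\kappa_n$ proportional to $|\chi^{\mathrm{c}}(\gamma_{n-1})-\alpha|$, then gives a GIKN sequence whose exponents tend to $\alpha$ and whose total $W_1$-displacement is of order $|\chi^{\mathrm{c}}(p)-\alpha|$. You should still build condition~\eqref{e.distorbitaseq} into the choice of $\epsilon_n$ to obtain aperiodicity; ``periods grow and tails are nontrivial'' is not enough.

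\paragraph{The gap: $\alpha=0$.} The genuine gap is the case $\alpha=0$, which is the main case of the theorem and which you leave as a heuristic. The obstruction is more basic than the index mismatch between $q^-$ and $q^+$. Everything in your Step~2 (bounded transitions, exponentially accurate shadowing in a fixed horseshoe) takes place in a fixed uniformly hyperbolic set. On such a set $|\chi^{\mathrm{c}}|$ is bounded below by a positive constant for every invariant measure, so no GIKN limit obtained this way can have $\chi^{\mathrm{c}}=0$. To reach exponent $0$ the periodic orbits must become less and less hyperbolic. This requires a nonuniform closing tool together with control of the central Birkhoff sums at every intermediate time. ``Heteroclinic connections through the two blenders'' with alternating tails supplies neither. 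You never specify the index of $\gamma_n$, which shadowing lemma produces it, or why its exponent lands where you want.

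\paragraph{The missing idea.} The idea used in the paper is to work on one side only and keep all orbits of the same index. For $\Lambda=M$ one checks the two hypotheses of Theorem~\ref{t.densityisolatedset}.
\begin{enumerate}
\item Every $\eta\in\mathcal{M}^0_{\mathrm{erg}}(f)$ is a limit of periodic measures with $\chi^{\mathrm{c}}<0$, by \cite{DiaGelSan:20,YanZha:20} combined with \cite{Cro:11,Gel:16}.
\item Every saddle with $\chi^{\mathrm{c}}(p)<0$ forms a split flip-flop configuration with the unstable blender-horseshoe $B^+$ (Lemma~\ref{lp.flipflopmb}). Minimality of the strong unstable foliation puts a disk in-between inside every strong unstable plaque of a fixed size. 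Minimality of the strong stable foliation makes a bounded piece of $W^{\mathrm{s}}(p)$ cross every disk in-between.
\end{enumerate}
Lemma~\ref{l.periodicpointshomoclinicclass} then builds a closing orbit segment in four pieces:
\begin{itemize}
\item $\ell$ iterates along $W^{\mathrm{u}}_{\mathrm{loc}}(p)$;
\item $k$ iterates along disks in-between of $B^+$, where $\varphi^{\mathrm{c}}>\log\lambda_{\mathrm{bh}}>0$ by the center-unstable cone expansion;
\item bounded transition times between these two pieces.
\end{itemize}
The ratio $\ell/k$ is chosen in a window so that every partial average of $\varphi^{\mathrm{c}}$ stays below a fixed negative constant. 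This is exactly the hypothesis of Liao--Gan's shadowing lemma (Lemma~\ref{l.gan}) for the splitting $E^{\mathrm{cs}}\oplus E^{\mathrm{uu}}$. The result is a saddle $q$ of the same index with $|\chi^{\mathrm{c}}(q)|\leq \bigl(1-\log\lambda_{\mathrm{bh}}/(2L^{\mathrm{c}})\bigr)|\chi^{\mathrm{c}}(p)|$ and good-point fraction at least $1-|\chi^{\mathrm{c}}(p)|/(2L^{\mathrm{c}})$. Iterating gives geometric decay of the exponents, hence summable deficiencies and the bound $W_1(\mu,\nu)<A_\Lambda|\chi^{\mathrm{c}}(\mu)|$ of Proposition~\ref{p.mainprop}. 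Density for $\alpha=0$ follows from this bound, and it is the ingredient your proposal lacks.
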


This paper is organized as follows. Section~\ref{ss.framework} introduces the main concepts and tools used throughout: GIKN measures, homoclinic classes and their $C^1$-generic properties, partial hyperbolicity and strong foliations, ergodic measures and Lyapunov exponents, blender-horseshoes, and split flip-flop configurations. Section~\ref{s.distancetoGIKN} presents a setting guaranteeing the density of aperiodic nonhyperbolic GIKN measures (Theorem~\ref{t.densityisolatedset}). Section~\ref{s.splitflipflopinsidehomoclinicclass} discusses split flip-flop configurations in homoclinic classes with index variation (Theorem~\ref{t.classeblender}). The proof of Theorem~\ref{t.teogeralclassehom} is divided into nonhyperbolic measures (Section~\ref{s.thmAnonhyperbolic}) and hyperbolic measures (Section~\ref{s.hyperbolic}). Finally, Section~\ref{s.proofthmB} contains the proof of Theorem~\ref{t.densitygeraldifeo}.

\begin{notation}[Convergence of measures]
Throughout the whole paper, convergence of a sequence of measures means convergence in the
 weak$^\ast$ topology.
\end{notation}

\section{Framework, main definitions, and tools}
\label{ss.framework}

We now introduce the main ingredients of our constructions. In Section~\ref{s.GIKN}, we define GIKN measures and state their properties. Section~\ref{s.homrelations} discusses properties of homoclinic classes for $C^1$-generic diffeomorphisms. In Section~\ref{s.ph}, we recall the notion of partial hyperbolicity together with the associated strong stable and unstable foliations. Section~\ref{s.ergodicmeasures} introduces hyperbolic and nonhyperbolic ergodic measures. In Section~\ref{ss.blenders}, we present blender-horseshoes and the properties relevant to our purposes. Finally, Section~\ref{s.flipflops} introduces split flip-flop configurations

\subsection{Axiomatized GIKN measures}
\label{s.GIKN}
Given a point $x \in M$, 
its {\em{forward orbit}} is the set 
    $
        \mathcal{O}(x) \eqdef \{ f^n(x) \colon n \geq 0\}.
    $
The orbit $\mathcal{O}(x)$ and $x$ are {\em{periodic}} if there exists $\pi\in \N$, such that $f^\pi(x)=x$. The smallest number $\pi(x)=\pi (\mathcal{O}(x))$ satisfying that condition is the {\em{period}} of $\mathcal{O}(x)$ or $x$.

An invariant measure  of $f$ is {\em{periodic}} if it is ergodic and supported on a periodic orbit, otherwise is {\em{aperiodic}}. 
Given a periodic orbit $\mathcal{O}(x)$,
we denote by $\mu_{\mathcal{O}(x)}$ the unique ergodic measure supported on $\mathcal{O}(x)$.

\begin{defi}[$(\epsilon,\pi)$-good points]
\label{d.goodpoints}
    Let $\mathcal{O}(x)=\mathcal{O}$ be a periodic orbit of $f$ and $\epsilon>0$. A point $y \in M$ is $(\epsilon,\pi(x))$-{\em{good}} for 
    $\mathcal{O}$ if there exists $\bar x \in \mathcal{O}$ such that 
    $$
        d(f^k(\bar x),f^k(y))< \epsilon \quad \mbox{for every } k \in \{0,1,\dots,\pi(x)-1\}.
    $$

    Given a  periodic orbit $\mathcal{O}(y)$,
    we denote by  $\mathcal{O}(y)_{\epsilon}^{\mathrm{g}}(\mathcal{O})$ the subset of
    $\mathcal{O}(y)$ 
      of points which  are $\epsilon$-good for $\mathcal{O}$. 
     We let 
     $$
     \mathcal{O}(y)_{\epsilon}^{\mathrm{b}}(\mathcal{O} ) \eqdef \mathcal{O}(y) \backslash \mathcal{O}(y)_{\epsilon}^{\mathrm{g}}(\mathcal{O}).
     $$ 
     We call the  sets   $\mathcal{O}(y)_{\epsilon}^{\mathrm{g}}(\mathcal{O})$
     and  $\mathcal{O}(y)_{\epsilon}^{\mathrm{b}}(\mathcal{O})$
   the
    $\epsilon$-{\em{good}} and $\epsilon$-{\em{bad points of $\mathcal{O}(y)$ with respect to $\mathcal{O}$}}, respectively.
\end{defi}

In what follows, given a finite set $K$, we denote by $\# (K)$ its cardinality.

\begin{defi}[$(\epsilon,\rho)$-good approximations]
\label{d.goodap}
    Let $\mathcal{O}(x)=\mathcal{O}$ be a periodic orbit and $\epsilon>0$ such that the balls $\{B_{\epsilon}(f^i(x))\}_{i=0}^{\pi(x)-1}$ 
    are pairwise disjoint. Given $\rho \in (0,1)$, a periodic orbit $\mathcal{O}(y)$ is an $(\epsilon,\rho)$-{\em{good approximation}} of $\mathcal{O}$ if
    \begin{enumerate}
        \item $\dfrac{\# \big(\mathcal{O}(y)_{\epsilon}^{\mathrm{g}}(\mathcal{O}) \big)}{\#\big(\mathcal{O}(y) \big)} \geq \rho$;
        \item there is $\kappa$ such that $\#\big( \mathcal{O}(y)_{\epsilon}^{\mathrm{g}}(\mathcal{O} )
         \cap B_{\epsilon}(f^i(x)) \big)= \kappa$ for all $i \in \{0,\dots,\pi(x)-1\}$.
    \end{enumerate}
\end{defi}

\begin{defi}[GIKN sequences]
\label{d.GIKNseq}
   A sequence of periodic orbits  $(\mathcal{O}_n)_{n}$ of a diffeomorphism $f$ with increasing periods  is called 
   {\em{GIKN}} if there are sequences of positive numbers $(\epsilon_n)_n$ and $(\rho_n)_n$ with 
        $$
        \sum_{n = 1}^{\infty} \epsilon_n < \infty \quad \mbox{and} \quad \prod_{n=1}^{\infty} \rho_n \in (0,1]
        $$
    such that $\mathcal{O}_{n+1}$ is an $(\epsilon_n,\rho_n)$-good approximation of $\mathcal{O}_n$ for every $n$. 
    
    In this case, we say that the sequences of 
    orbits $\mathcal{O}_n$ and
    measures  $(\mu_{\mathcal{O}_n})_n$ are
    {\em{$(\epsilon_n,\rho_n)$-GIKN}} 
    or simply {\em{GIKN.}}
\end{defi}

In the theorem below, convergence follows from~\cite[Lemma 2.5]{BonDiaGor:10} and~\cite[Lemma 2]{Goretal:05}, aperiodicity from~\cite[Section 8]{Goretal:05}, the entropy statement from~\cite{KwiLac:}
and the rank-one assertion from \cite{KwiLacTri:}.

\begin{teo}
\label{t.l.p.strongnonper}
    Let $(\mu_n)_n$ be a $(\epsilon_n, \rho_n)$-GIKN sequence of measures 
    supported on orbits $\mathcal{O}_n$
    such that for every $n$
    it holds
        \begin{equation}
        \label{e.distorbitaseq}
            \sum_{k=n}^{\infty} \epsilon_k < \frac{d_n}{3}, \qquad 
            \mbox{where} \quad  d_n \eqdef d_{\mathcal{O}_n}  =\inf \{d(y,z) \colon y \ne z, \, \, y,z \in \mathcal{O}_n\}.
        \end{equation}
 Then $(\mu_n)_n$ converges 
 to an ergodic aperiodic  rank-one  measure with zero topological entropy. 
\end{teo}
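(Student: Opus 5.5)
The statement collects results proved elsewhere (convergence in \cite[Lemma~2.5]{BonDiaGor:10} and \cite[Lemma~2]{Goretal:05}, aperiodicity in \cite[Section~8]{Goretal:05}, zero entropy in \cite{KwiLac:}, rank one in \cite{KwiLacTri:}). To reprove it directly, I would treat the five conclusions in order, working throughout with Lipschitz observables $\varphi$; they are dense in $C^0(M)$, so this is enough for weak$^\ast$ statements. Two numerical facts are used repeatedly: $\sum_n(1-\rho_n)<\infty$, which follows from $\prod\rho_n>0$, and $\sum_n\epsilon_n<\infty$.

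\emph{Convergence.} Each good point $y\in\mathcal{O}_{n+1}$ shadows a full period of $\mathcal{O}_n$ within $\epsilon_n$. Condition (2) of Definition~\ref{d.goodap} says every ball $B_{\epsilon_n}(f^i(x))$ contains the same number $\kappa$ of good points, and the balls are disjoint. Hence the good points are in bijection with $\kappa$ copies of $\mathcal{O}_n$, each point moved by less than $\epsilon_n$. This gives
\[
\Bigl|\int\varphi\,d\mu_{n+1}-\int\varphi\,d\mu_n\Bigr|\le \mathrm{Lip}(\varphi)\,\epsilon_n+2(1-\rho_n)\|\varphi\|_\infty ,
\]
and the right-hand side is summable. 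So $(\mu_n)_n$ is Cauchy and has a limit $\mu$, which is invariant as a limit of invariant measures.

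\emph{Ergodicity.} Fix $n$. Iterating the good-point relation, a proportion at least $\prod_{k\ge n}\rho_k$ of the points of $\mathcal{O}_m$ ($m>n$) are good at every level $n,\dots,m-1$. Such a point $(2\sum_{k\ge n}\epsilon_k)$-shadows an orbit of $\mathcal{O}_n$ for $\pi(\mathcal{O}_n)$ iterates, and its Birkhoff averages of $\varphi$ over that window are within $\mathrm{Lip}(\varphi)\sum_{k\ge n}\epsilon_k$ of $\int\varphi\,d\mu_n$. Passing to the limit via closed neighbourhoods, the set $G_n$ of points whose $\pi(\mathcal{O}_n)$-averages are close to $\int\varphi\,d\mu$ has $\mu(G_n)\ge\prod_{k\ge n}\rho_k\to 1$. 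A standard argument then shows the Birkhoff limit $\varphi^\ast$ has $L^1(\mu)$-distance from the constant $\int\varphi\,d\mu$ tending to $0$ in $n$: split long averages into $\pi(\mathcal{O}_n)$-blocks and use the ergodic theorem for $\mathbf 1_{G_n}$. Hence $\varphi^\ast$ is constant and $\mu$ is ergodic.

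\emph{Aperiodicity.} Here \eqref{e.distorbitaseq} enters. Every limit point of the fully good points of later orbits lies within $\sum_{k\ge n}\epsilon_k<d_n/3$ of a point of $\mathcal{O}_n$. The $d_n/3$-balls around distinct points of $\mathcal{O}_n$ are disjoint, and the equidistribution (2) gives each of them mass at most $1/\pi(\mathcal{O}_n)+\bigl(1-\prod_{k\ge n}\rho_k\bigr)$. Any atom therefore has mass tending to $0$, so $\mu$ is nonatomic. An ergodic nonatomic measure is not supported on a periodic orbit.

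\emph{Rank one and zero entropy.} For rank one, I take as base of a Rokhlin tower the $\mu$-part of a small ball around one point of $\mathcal{O}_n$, with height $\pi(\mathcal{O}_n)$. The previous estimates show this tower fills measure $\ge\prod_{k\ge n}\rho_k$, and a partition refined by its levels approximates any fixed partition as $n\to\infty$. Zero entropy should follow from rank one via the standard fact that rank-one systems have zero entropy, or directly through Katok's formula. In the direct route, a $\mu$-typical orbit segment of length $N$ is determined, up to $\delta$, by the phase in $\mathcal{O}_n$ plus the positions of the ``tail'' excursions. These occupy a fraction $\le 1-\prod_{k\ge n}\rho_k$, so the count of $(N,\delta)$-Bowen balls grows at exponential rate at most a function of $1-\prod_{k\ge n}\rho_k$ that tends to $0$.

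I expect this entropy step to be the main obstacle. The tails are not controlled pointwise, so one must show that the combinatorial choices of where the tails sit contribute only subexponentially as $n\to\infty$. I would first try to bound them by $\binom{N}{\eta N}$-type estimates with $\eta\to0$, and otherwise fall back on the rank-one structure.
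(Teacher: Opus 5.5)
Your proposal is correct in substance, and it takes a genuinely different route from the paper. The paper does not prove this theorem; it assembles it from the literature: convergence and ergodicity from the original GIKN paper and its axiomatized version, aperiodicity from Section~8 of the GIKN paper, zero entropy from a Feldman--Katok pseudometric argument, and rank one from a separate work.

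You instead rest everything on one elementary counting device. Iterating condition~(2) of Definition~\ref{d.goodap}, each $x\in\mathcal{O}_n$ is $s_n$-shadowed for $\pi(\mathcal{O}_n)$ iterates, where $s_n\eqdef\sum_{k\ge n}\epsilon_k$, by exactly $\prod_{k=n}^{m-1}\kappa_k$ chain-good points of $\mathcal{O}_m$. These points fill a proportion at least $\prod_{k\ge n}\rho_k\to1$ of $\mathcal{O}_m$. This yields convergence, ergodicity, nonatomicity and rank one. For ergodicity, the cleanest form uses $\varphi^\ast=\bigl(\frac{1}{\pi_n}\sum_{j<\pi_n}\varphi\circ f^j\bigr)^\ast$ together with $\|\psi^\ast\|_{L^1(\mu)}\le\|\psi\|_{L^1(\mu)}$. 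Your approach also makes visible that \eqref{e.distorbitaseq} is used only for nonatomicity and rank one.

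One step needs tightening. The base of your Rokhlin tower should be the closed set $\{z\colon d(f^jz,f^jx)\le s_n \text{ for } 0\le j<\pi(\mathcal{O}_n)\}$, not a metric ball. Only with this base are the levels disjoint (because $2s_n<d_n$) and of diameter at most $2s_n\to0$. The portmanteau theorem then gives the tower total $\mu$-mass at least $\prod_{k\ge n}\rho_k$, and inner regularity lets unions of levels approximate any finite partition.

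The direct Katok count you worry about is unnecessary. Rank one implies zero metric entropy by a classical result, so your fallback already completes the proof. What you obtain is $h_\mu(f)=0$. That is the meaningful reading of ``zero entropy'' here, since GIKN limits can be supported on an entire nontrivial homoclinic class, which has positive topological entropy.

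The citation route delivers this package, and finer structure, at no cost. Yours is self-contained and shows exactly which hypothesis does which job.
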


\begin{defi}[Axiomatized GIKN measures]
\label{d.GIKNmes}
        A measure that is the limit of a GIKN sequence is called {\em{Axiomatized GIKN}} or simply {\em{GIKN.}}
\end{defi}

\subsection{Homoclinic classes of $C^1$-generic diffeomorphisms}
\label{s.homrelations}

We now introduce homoclinic classes and state their properties for $C^1$-generic diffeomorphisms.

Let  $p$ be  a saddle of $f$.  
The {\em{stable}} and {\em{unstable}} sets of $\mathcal{O}(p)$ are, respectively, defined by
 \[
 \begin{split}
 &W^\mathrm{s}(\mathcal{O}(p)) \eqdef \{ x \colon \lim_{n \to \infty} d(f^n(x),\mathcal{O}(p))=0\},
 \\
& W^\mathrm{u}(\mathcal{O}(p)) \eqdef \{ x \colon \lim_{n \to \infty} d(f^{-n}(x),\mathcal{O}(p))=0\}.
 \end{split}
 \]
 For  $\ast \in \{ \mathrm{s},\mathrm{u}\}$,
let $W^\ast (p)$ the connected component of $W^\ast (\mathcal{O}(p))$ that contains $p$.

We say that two periodic orbits (or points) are {\em{homoclinically related}} if 
    $$
        W^\mathrm{s}(\mathcal{O}(p)) \pitchfork  W^\mathrm{u}(\mathcal{O}(q)) \neq \emptyset \neq W^\mathrm{u}(\mathcal{O}(p)) \pitchfork  W^\mathrm{s}(\mathcal{O}(q)).
    $$
Every point  $x \in W^{\mathrm{s}}(\mathcal{O}(p)) \pitchfork W^{\mathrm{u}}(\mathcal{O}(p))$, $x\not\in \mathcal{O}(p)$, is a called a {\em{ transverse homoclinic point}} of $\mathcal{O}(p)$.
The {\em{homoclinic class}} of $\mathcal{O}(p)$ is defined by
 \begin{equation}
 \label{eq.defhomclass}
 H(p) \eqdef \overline{\{ W^\mathrm{s}(\mathcal{O}(p)) \pitchfork W^\mathrm{u}(\mathcal{O}(p))\}}.
 \end{equation}
A homoclinic class is a {\em{transitive set}} (it is the closure of some  orbit of it) whose periodic points form a  dense subset of it, see for instance~\cite[Chapter 5]{Wen:16}.

A homoclinic class is {\em{nontrivial}} if it contains at least two different orbits.

We now introduce the chain recurrent set of $f$.
Given $\epsilon>0$, a sequence $(x_i)_{i \in \mathbb{Z}}$ is an $\epsilon$-pseudo orbit of $f$ if $d(f(x_i),x_{i+1})<\epsilon$, for every $i \in \mathbb{Z}$.  We consider also finite pseudo orbits. The $\epsilon$-pseudo orbit $(x_i)_{i\in \mathbb{Z}}$ is {\em{periodic}} if the sequence $(x_i)_{i\in \mathbb{Z}}$ is periodic.

The {\em{chain recurrent set of $f$,}} 
denoted by $\mathrm{CR}(f)$ is the set of  points $x \in M$ such that for every $\epsilon>0$, there exists  a
finite $\epsilon$-pseudo orbit starting and ending at $x$. 
Two points $x,y \in \mathrm{CR}(f)$ are in the same {\em{chain recurrence class}} if for every $\epsilon>0$, there is an $\epsilon$-pseudo orbit starting at $x$ and ending at $y$ and vice versa. The chain recurrence class of $x$ is denoted by $\mathrm{CR}(x)$. Two recurrence classes are equal or disjoint and are $f$-invariant closed sets, see~\cite{Wen:16}. 

\begin{remark}
\label{r.classerecorrenciatransitivo}
Every transitive set is contained in some chain recurrence class. In particular,
$H(p) \subset \mathrm{CR}(p)$, for every hyperbolic periodic point $p$ of $f$.
\end{remark}

Recall that the {{$\mathrm{s}$-index}} of a hyperbolic periodic point $p$, denoted by $\mathrm{s}$-$\mathrm{ind}(p)$, is the dimension of its stable bundle. 

A periodic point $p$ of $f$ has {\em{real multipliers}} if every eigenvalue of $Df^{\pi(p)}(p)$ is real and has multiplicity one, and every two eigenvalues of $Df^{\pi(p)}(p)$ have different absolute value. 
We denote by $\mathrm{Per}_{\mathbb{R}}(H(p))$ the set of saddles in $H(p)$ having real multipliers.

For the result below, recall that every saddle $p$ of $f$ has a continuation $p_g$ for every diffeomorphism
$g$ sufficiently $C^1$-close to $f$. This saddle has the same $\mathrm{s}$-index as $p$.

\begin{teo}[see \cite{CarMorPac:03,Abdetal:07,BonCro:04,Cheetal:19}]
\label{t.residual}
There is a residual subset $\mathcal{R}_0(M)$ of $\mathrm{Diff}^1(M)$ 
consisting of diffeomorphisms $f$ satisfying conditions (G0)-(G6) below: 
\begin{enumerate}[leftmargin=2cm,label=(\textbf{G\arabic*}),start=0]
    \item\label{G0} The periodic points of $f$ are hyperbolic and their invariant manifolds intersect transversally; 
    \item\label{G1}
     $\mathrm{CR}(p)=H(p)$ for every periodic point $p$ of $f$. In particular, every pair of homoclinic classes of $f$ are either equal or disjoint;
    \item \label{G2} 
    every pair of saddles of $f$ with the same $\mathrm{s}$-index in the same  homoclinic class are homoclinically related;
    \item \label{G3} 
    every homoclinic class depends continuously on $f$ (in the Hausdorff metric);
    \item \label{G4} 
    for every pair of saddles $p$ and $q$ of $f$, there is a neighborhood $\mathcal{U}$ of $f$ such that for every $g \in \mathcal{U}\cap \mathcal{R}_0(M)$
    either  $H(p_g)=H(q_g)$ 
    or  $H(p_g)\cap H(q_g)=\emptyset$;
    \item \label{G5} for every saddle $p$ of $f$, the set $\mathrm{Per}_{\mathbb{R}}(H(p))$ is dense in $H(p)$;
    and
    \item  \label{G6}
  every nonhyperbolic homoclinic class of $f$ supports a nonhyperbolic measure.
\end{enumerate}
\end{teo}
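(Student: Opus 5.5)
The statement is a compilation of known $C^1$-generic properties, so the plan is to show that each of \ref{G0}--\ref{G6} holds on a residual subset of $\mathrm{Diff}^1(M)$. Then we take $\mathcal{R}_0(M)$ to be their intersection. Since $\mathrm{Diff}^1(M)$ is a Baire space, a finite (or countable) intersection of residual sets is residual, and this finishes the proof. The work therefore reduces to matching each item with its source and checking that the hypotheses there agree with the formulation here.

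We go through the items in order.
\begin{enumerate}
\item \ref{G0} is the Kupka--Smale theorem.
\item \ref{G1}, the equality $\mathrm{CR}(p)=H(p)$, is the main result of \cite{BonCro:04}, obtained via the connecting lemma for pseudo-orbits. The statement that two homoclinic classes are either equal or disjoint follows immediately, because distinct chain recurrence classes are disjoint (Remark~\ref{r.classerecorrenciatransitivo}).
\item \ref{G2} comes from \cite{CarMorPac:03,Abdetal:07}. For generic $f$, two saddles of equal $\mathrm{s}$-index in the same class are chain related, hence, by \ref{G1} together with the connecting lemma, their invariant manifolds intersect. Transversality of these intersections is supplied by \ref{G0}.
\item \ref{G5} is proved in \cite{Abdetal:07}. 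One uses Franks' lemma to perturb a dense set of periodic orbits so that they have real simple multipliers of distinct moduli, which is an open condition on each orbit. A Baire argument over a countable basis of open sets then yields genericity.
\item \ref{G6} is the result of \cite{Cheetal:19}.
\end{enumerate}

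Items \ref{G3} and \ref{G4} require the standard semicontinuity device, and we expect these to be the main obstacle, since they are statements about all saddles at once and about neighborhoods of $f$. Fix $n$ and consider the hyperbolic periodic points of period at most $n$, which are finitely many and vary continuously on an open dense set. The map $g\mapsto H(p_g)$ is lower semicontinuous, because transverse homoclinic points persist. A semicontinuous map into the compact subsets of $M$ with the Hausdorff metric is continuous on a residual set. Taking the intersection over $n$, and over a countable cover of $\mathrm{Diff}^1(M)$ by open sets on which the continuations are defined, gives \ref{G3}.

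For \ref{G4}, we follow \cite{Abdetal:07}. The condition ``$H(p_g)=H(q_g)$'' is locally open, since it is equivalent to the existence of transverse homoclinic relations, or of heterodimensional cycles, which persist after perturbation. By \ref{G1}, the alternative to this condition is disjointness. A standard ``open or interior of complement'' Baire argument, applied to each pair of saddles in a countable enumeration, produces the required neighborhood $\mathcal{U}$ of $f$.
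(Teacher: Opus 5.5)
Your overall strategy is the same as the paper's: the statement is given there without argument, as a compilation of the four cited works. Your treatment of (G0), (G1), (G3) and (G6) is fine. Two of your supporting sketches, however, fail as written.

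\textbf{(G4).} You claim that $H(p_g)=H(q_g)$ is locally open because it is equivalent to transverse homoclinic relations or to heterodimensional cycles, which persist. Cycles between saddles do not persist. If $\mathrm{s}$-$\mathrm{ind}(p)>\mathrm{s}$-$\mathrm{ind}(q)$, then $\dim W^{\mathrm{u}}(\mathcal{O}(p))+\dim W^{\mathrm{s}}(\mathcal{O}(q))<\dim M$, so that intersection is destroyed by generic perturbations. In fact a diffeomorphism satisfying (G0) has no such cycle at all, since a transverse intersection of this kind is empty; consistently, Lemma~\ref{l.hetcyc} only yields cycles on a dense set. So your openness claim is unjustified precisely in the index-variation case, which is the one relevant for $\mathrm{ISPH}$.

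The ``open or interior of complement'' device does work if you apply it to robust disjointness instead. Let $\mathcal{E}$ be the open set of $g$ having a neighborhood on which $H(p_h)\cap H(q_h)=\emptyset$ for every $h$, and let $\mathcal{F}$ be the complement of $\overline{\mathcal{E}}$. Suppose $g\in\mathcal{F}$ is a continuity point as in (G3) and $H(p_g)\cap H(q_g)=\emptyset$. Then disjointness propagates to a neighborhood of $g$, forcing $g\in\mathcal{E}$, which is impossible. Hence the classes meet, and (G1) gives equality.

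\textbf{(G2).} The connecting lemma creates the required intersections only for perturbations of $f$, not for $f$ itself. Transferring them back to $f$ needs the same dichotomy, applied to the open condition that $p_g$ and $q_g$ are homoclinically related.

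\textbf{(G5).} Here the gap is more serious. Franks' lemma only allows a small change of the derivative at each point of the orbit. Along orbits of bounded period it therefore cannot make real a complex pair of multipliers whose argument is bounded away from $0$ and $\pi$. To fix this, you need three ingredients.
\begin{enumerate}
\item Periodic orbits of arbitrarily large period accumulating on every point of $H(p)$, so that small rotations can add up.
\item A guarantee that the perturbed orbits remain homoclinically related to $p_g$. Franks' lemma alone does not give this: the size of the perturbation it requires is fixed, while the robustness of the homoclinic relation of a long orbit is not uniform.
\item Nontriviality of $H(p)$, since such long orbits exist in $H(p)$ only when the class is nontrivial.
\end{enumerate}

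The last point is not just technical: (G5) as literally stated cannot hold generically when $\dim M\ge 3$. There are $C^1$-open sets of Morse--Smale diffeomorphisms having a saddle-focus $p$. For these, $H(p)=\mathcal{O}(p)$ and $\mathrm{Per}_{\mathbb{R}}(H(p))=\emptyset$, so no Baire argument can produce the item. It has to be restricted to nontrivial homoclinic classes. Even then, its proof needs the long-orbit construction together with a perturbation lemma that preserves homoclinic relations, not just Franks' lemma plus a Baire argument.
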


\subsection{Partial Hyperbolicity and strong foliations}
\label{s.ph}

An $f$-invariant compact set $\Lambda \subset M$ is {\em{strongly partially hyperbolic}} if there are
a $Df$-invariant  splitting with three nontrivial bundles
\begin{equation}
\label{e.threesplitting}
T_{\Lambda}M = E^{\s} \oplus E^{\mathrm{c}} \oplus E^{\u},
\qquad \mbox{where} \quad \dim (E^\c)=1,
\end{equation}
and constants $C>1$ and $\lambda\in(0,1)$ such that for every $x\in \Lambda$ and $n\in\mathbb{N}$, it holds
\begin{itemize}
	\item $\|Df^n|_{E^{\s}(x)}\|<C \lambda^n$ and $\|Df^{-n}|_{E^{\u}(x)}\|<C\lambda^n$;
	\item $\max\left\{ \|Df^n|_{E^{\s}(x)}\|\, \|Df^{-n}|_{E^\c(f^n(x))}\| \,,\, \|Df^n|_{E^{\c}(x)}\|\, \|Df^{-n}|_{E^{\u}(f^n(x))}\|\right\}<C\lambda^n$.
	\end{itemize}
The bundles of the splitting depend continuously on $x$, see~\cite[Section B.1]{BonDiaVia:05}.
 By \cite{Gou:07}, 
 there is a metric, equivalent to the initial one, called {\em{adapted,}} where $C=1$. In what follows, we will consider
 adapted metrics.

The bundles $E^{\s}$, $E^{\u}$, and $E^{\mathrm{c}}$ are called {\em{strong stable}}, {\em{strong unstable}} and {\em{center bundles}}, respectively. When $\Lambda=M$, we say that the diffeomorphism $f$ is {\em{strongly partially hyperbolic.}}

We now consider the  {\em{strong stable}} and {\em{unstable laminations}} $\mathcal{F}^\s$ and $\mathcal{F}^\u$ tangent to $E^\s$ and $E^\u$, respectively.   When $\Lambda = M$, the laminations $\mathcal{F}^\s$ and $\mathcal{F}^\u$ are, indeed, foliations.
The following is a classical result that we formulate for $\mathcal{F}^\s$, writing $\mathcal{F}^\s(x)$ for the leaf through $x$. The analogous statement for $\mathcal{F}^\u$ is obtained by considering $f^{-1}$.

\begin{teo}[\cite{HirPugShu:77}]
\label{t.invariantfoliation}
    Let $f \in \Diff^1(M)$ and $\Lambda$ be a strongly partially hyperbolic set of $f$. There is a uniquely defined $f$-invariant 
    lamination $\mathcal{F}^{\s}$  such that every point of $x\in\Lambda$ 
    is contained in some leaf and
    \begin{enumerate}
        \item the leaves of $\mathcal{F}^{\s}$  are immersed submanifolds of $M$ of dimensions $\mathrm{dim}(E^{\s})$
        such that 
        $ T_{x}\mathcal{F}^{\s}=E^{\s}(x)$ for every $x\in \Lambda$;
        \item there is $\rho>0$ such that 
          $$
                y \in \mathcal{F}^\s(x) \Leftrightarrow \lim_{n \to \infty} \frac{1}{n} \log{d(f^n(x),f^n(y))}<-\rho;
           $$
            \item the leaves depend continuously on $x$:  if $x_n \to x$, then $\mathcal{F}^\s(x_n) \to \mathcal{F}^\s(x)$.
    \end{enumerate}
\end{teo}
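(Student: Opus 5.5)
The lamination is built by a graph transform argument, in the spirit of the Hadamard--Perron construction of stable manifolds of hyperbolic sets. I would not derive it from the three-bundle splitting directly. Instead I would regroup the splitting into the two-bundle dominated splitting
\[
T_\Lambda M = E^{\s}\oplus F, \qquad F \eqdef E^{\c}\oplus E^{\u}.
\]
Here $E^{\s}$ is uniformly contracted, and it is dominated by $F$: since we work with an adapted metric, $\|Df|_{E^{\s}(x)}\|\,\|Df^{-1}|_{F(f(x))}\|<\lambda<1$. Once this domination holds, the remaining argument does not use the dimension of the center bundle.

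First, I extend the splitting continuously to a small neighborhood $U$ of $\Lambda$, keeping the cone estimates. I then work in exponential charts $\exp_x$ on $E^{\s}(x)\oplus F(x)$ of a fixed radius $r$. In these charts, the maps $f_x = \exp_{f(x)}^{-1}\circ f\circ \exp_x$ are $C^1$-close to $Df(x)$ when $r$ is small.

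Next, I consider the space $\mathcal{G}$ of sequences $(\phi_{x})_{x}$. Each $\phi_x\colon E^{\s}_r(x)\to F(x)$ is a Lipschitz graph with constant at most $1$ and satisfies $\phi_x(0)=0$. On this space I define the backward graph transform
\[
(\Gamma\phi)_x = \text{the graph of } f_x^{-1}\bigl(\mathrm{graph}\,\phi_{f(x)}\bigr),
\]
restricted to radius $r$. This operator is well defined because $Df^{-1}$ expands $E^{\s}$ by at least $\lambda^{-1}$ more than it expands $F$. By the domination, $\Gamma$ preserves the cone of Lipschitz constant $1$ and is a contraction in the $C^0$ distance, with rate roughly $\lambda$.

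The unique fixed point of $\Gamma$ gives local strong stable discs $W^{\s}_{r}(x)$. These discs are $f$-invariant, $f(W^{\s}_r(x))\subset W^{\s}_r(f(x))$, and they are characterized as the set of points whose forward orbit stays $r$-close and is contracted at rate at least $\lambda'$, for any $\lambda'$ with $\lambda<\lambda'<$ the $F$-rate. This gives the dynamical characterization in item (2), with $\rho=-\log\lambda'$. The global leaves are obtained as
\[
\mathcal{F}^{\s}(x)=\bigcup_n f^{-n}\bigl(W^{\s}_r(f^n(x))\bigr).
\]
These are immersed submanifolds, and the characterization in (2) shows that two leaves either coincide or are disjoint. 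Uniqueness of the lamination also follows from (2).

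The main obstacle is the regularity claim in item (1): the fixed point must be $C^1$ and satisfy $T_x\mathcal{F}^{\s}=E^{\s}(x)$. For this I would run the same graph transform on pairs (graph, field of tangent planes), using the fibre contraction theorem. The tangent planes lie in a bundle of Grassmannian-like spaces over the graph, and $D f^{-1}$ acts on them as a contraction, again by domination. The invariant section gives the derivative, which shows that the graphs are $C^1$ and tangent to $E^{\s}$ at $x$.

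Item (3), continuity of the leaves, follows because the fixed point depends continuously on the base point. A uniform contraction depending continuously on parameters has a continuously varying fixed point, so the local discs vary continuously. Continuity then extends to compact pieces of leaves by iterating with $f^{-n}$.
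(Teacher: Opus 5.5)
Your overall route is the standard Hirsch--Pugh--Shub argument, which the paper cites rather than proves. You regroup into the dominated splitting $E^{\mathrm{ss}}\oplus F$ with $F=E^{\mathrm{c}}\oplus E^{\mathrm{uu}}$, run the graph transform, use fibre contraction for $C^1$ regularity, and globalize with $f^{-n}$. This does give the lamination and items (1) and (3).

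\textbf{The step that fails is item (2).} You pick one $\lambda'$ ``with $\lambda<\lambda'<$ the $F$-rate''. That requires $\lambda<\inf_{x\in\Lambda} m(Df|_{F(x)})$, where $m(T)=\|T^{-1}\|^{-1}$ is the conorm. Nothing in the hypotheses gives this. The paper's definition only provides \emph{pointwise} domination, and since $F$ contains $E^{\mathrm{c}}$, which may be contracted, the interval $(\lambda,\inf_x m(Df|_{F(x)}))$ can be empty.

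In fact item (2) with a single $\rho$ for all $x\in\Lambda$ cannot be proved under these hypotheses. Let $f$ be a diffeomorphism of a $3$-manifold that is linear near two fixed points $x,z$, with orthogonal eigendirections and eigenvalues $0.1,\,0.5,\,3$ at $x$ and $0.01,\,0.05,\,3$ at $z$. Then $\Lambda=\{x,z\}$ is strongly partially hyperbolic in the paper's sense, since all domination ratios are at most $0.2$. The same happens for a horseshoe containing two such fixed points. Yet $\frac1n\log d(f^nx,f^ny)\to\log 0.1$ for $y\in\mathcal{F}^{\mathrm{ss}}(x)\setminus\{x\}$, while $\frac1n\log d(f^nz,f^ny)\to\log 0.05<\log 0.1$ for $y\in W^{\mathrm{s}}(z)\setminus\mathcal{F}^{\mathrm{ss}}(z)$. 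The forward implication at $x$ forces $\rho<-\log 0.1$, and the backward implication at $z$ forces $\rho\ge-\log 0.05$, which is incompatible.

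What your construction actually yields is a local characterization with orbit-dependent rates. For small $r$, $y\in W^{\mathrm{ss}}_r(x)$ if and only if $d(f^nx,f^ny)\le r$ for all $n\ge 0$ and $d(f^nx,f^ny)/\prod_{i<n}\mu_i\to 0$, with for instance $\mu_i=\sqrt{\lambda}\,m(Df|_{F(f^ix)})$. The proof is the usual cone argument: the vertical deviation from the invariant graph shrinks at most like $m(Df|_F)$. Item (2) as stated needs an extra absolute hypothesis such as $\sup_x\|Df|_{E^{\mathrm{ss}}(x)}\|<\inf_x m(Df|_{E^{\mathrm{c}}(x)})$. For the same reason, uniqueness of the lamination should be deduced from the local characterization, or from uniqueness of the fixed point, not from (2).

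Two further points are repairable but should be stated correctly.

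First, the backward graph transform is not a contraction for the plain $C^0$ distance on $E^{\mathrm{ss}}_r(x)$. The Lipschitz factor there is $\sup\|Df^{-1}|_{F}\|$, which exceeds $1$ wherever the center is contracted, for instance at the saddles with $\chi^{\mathrm{c}}<0$ that this paper works with. The rate $\lambda$ you claim holds for the weighted distance $\sup_{s\neq 0}|\phi_x(s)-\psi_x(s)|/|s|$, which is legitimate because all graphs pass through $0$. There the shrinking of the domain by $\|Df|_{E^{\mathrm{ss}}}\|$ compensates $\|Df^{-1}|_{F}\|$, giving the domination ratio up to the nonlinear error.

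Second, for item (3) you cannot apply the uniform contraction principle with $x$ as a parameter, because $\Gamma$ couples the graph over $x$ with the one over $f(x)$. Instead, observe that sections $x\mapsto\phi_x$ that are continuous in $x$ (read in the charts given by the continuous splitting) form a closed $\Gamma$-invariant subset, so the fixed point is continuous. Alternatively, apply Arzel\`a--Ascoli to the $1$-Lipschitz local discs together with the local characterization above.
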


\begin{remark}
    Let $\Lambda \subset M$ be a strongly partially hyperbolic set  of $f$   and $p\in \Lambda$ be a saddle. 
    Then   either $\mathrm{s}$-$\mathrm{ind}(p)=\dim(E^\s)$ or $\mathrm{s}$-$\mathrm{ind}(p)=\dim(E^\s)+1$.
    In the first case, 
    $\mathcal{F}^\u(p) \subsetneq  W^{\mathrm{u}}(p)$ and $\mathcal{F}^\s(p) = W^{\mathrm{s}}(p)$.
    In the second case, 
    $\mathcal{F}^\u(p) = W^{\mathrm{u}}(p)$ and $\mathcal{F}^\s(p) \subsetneq W^{\mathrm{s}}(p)$.   
\end{remark}

\subsection{Ergodic measures and center Lyapunov exponents}
\label{s.ergodicmeasures}

Let $\mu\in\mathcal{M}_{\mathrm{erg}} (f)$.
The Oseledets Multiplicative Ergodic Theorem provides a set $\Upsilon$ with $\mu (\Upsilon)=1$, a $Df$-invariant 
splitting $T_\Upsilon M=E_1 \oplus \dots \oplus E_m$,
called {\em{Oseledets splitting of $\mu$,}}
 and numbers $\chi_1(\mu) < \dots < \chi_m(\mu)$, called {\em{Lyapunov exponent of $\mu$}}, such that 
    $$
        \chi_i(\mu) \eqdef \lim_{n \to \pm \infty} \frac{1}{n} \log \| Df^n|_{E_i(x)} \|, \qquad \mbox{for every $y \in \Upsilon$ and $i\in\{1,\dots,m\}$.}
    $$
An ergodic measure is {\em{hyperbolic}} if $\chi_i (\mu) \ne 0$
for every $i$, otherwise it is
called {\em{nonhyperbolic.}}

For what is below,
 we consider a strongly partially hyperbolic set $\Lambda$ of $f$  as in \eqref{e.threesplitting}
 and the set $\mathcal{M}_{\mathrm{erg}} (f,\Lambda)$.
For  $\mu \in \mathcal{M}_{\mathrm{erg}} (f,\Lambda)$,
there are three possibilities for a bundle $E_i$ of the Oseledets splitting of $\mu$:
$E_i\subset E^\s$,  $E_i \subset E^\u$ or $E_i=E^{\mathrm{c}}$. 
Since the strong stable and unstable bundles are uniformly contracting and expanding, respectively, the Lyapunov exponent associated to $E_i$  is negative if $E_i \subset E^\s$ or  positive if $E_i \subset E^\u$.
Concerning the center bundle $E^\c$, we have $E^\c =E_j$ for some bundle of the Oseledets splitting, thus
the exponent $\chi^{\mathrm{c}} (\mu)=\chi_j (\mu)$, which is called the {\em{center Lyapunov exponent of $\mu$,}} such that
    $$
        \chi^{\mathrm{c}} (\mu) \eqdef \lim_{n \to \pm \infty} \frac{1}{n} \log \| Df^n|_{E^{\mathrm{c}}(x)} \|,
        \qquad \mbox{for $\mu$-almost every point $x$.}
    $$

As a consequence,  $\mu \in \mathcal{M}_{\mathrm{erg}} (f,\Lambda)$
is nonhyperbolic if and only if $\chi^{\mathrm{c}}(\mu)=0$, otherwise it is hyperbolic.
Note that 
\begin{equation}
\label{e.threemeasures}
        \mathcal{M}_{\mathrm{erg}}(f, \Lambda) = \mathcal{M}^+_{\mathrm{erg}}(f, \Lambda) \cup \mathcal{M}^0_{\mathrm{erg}}(f, \Lambda) \cup \mathcal{M}_{\mathrm{erg}}^-(f, \Lambda),
   \end{equation}
where $\mathcal{M}^0_{\mathrm{erg}}(f, \Lambda)$ is the set of  nonhyperbolic measures of $\Lambda$, and $\mathcal{M}^+_{\mathrm{erg}}(f, \Lambda)$ and $\mathcal{M}^-_{\mathrm{erg}}(f, \Lambda)$ are the sets of hyperbolic measures of $\Lambda$ with positive and negative center Lyapunov exponents, respectively.      

    \begin{remark}
    \label{r.convexp}
    Consider the potential $\varphi^{\mathrm{c}} \colon M \to \mathbb{R}$ defined by 
        \begin{equation}
        \label{eq.derivadacentrala}
            \varphi^{\mathrm{c}}(x) \eqdef \log \| Df|_{E^{\mathrm{c}}(x)} \|.
        \end{equation}
       By the comments above, this map is continuous. Thus
    given any $\mu \in \mathcal{M}_{\mathrm{erg}} (f)$, by the Birkhoff ergodic theorem
    and since $E^\c$ is one-dimensional,
        \begin{equation}
        \label{eq.explyapint}
            \chi^{\mathrm{c}}(\mu) = \lim_{n \to \infty} \frac{1}{n}\sum_{i=0}^n \varphi^{\mathrm{c}}(f^n(x)) = \int \varphi^{\mathrm{c}}  d \mu.
        \end{equation}
    Hence, if a sequence of ergodic measures $(\mu_n)_n$ 
    converges to an ergodic measure $\mu$ then the sequence $(\chi^{\mathrm{c}}(\mu_n))_n$ converges to $\chi^{\mathrm{c}}(\mu)$.
    \end{remark}

\subsection{Blender-horseshoes}
\label{ss.blenders}
An \emph{unstable blender-horseshoe} $B^+=(f, \mathbf{C}, \Gamma)$ is a triple consisting of a diffeomorphism $f$, a compact set $\mathbf{C}$, and a transitive hyperbolic set $\Gamma \subset \mathrm{int}(\mathbf{C})$, which is the maximal invariant set of $f$ in $\mathbf{C}$. This set is strongly partially hyperbolic, and its central bundle is contained in the unstable one. With a slight abuse of notation, we also refer to $\Gamma$ as a blender-horseshoe.
The complete definition involves conditions (BH1)-(BH6) in \cite[Section 3.2]{BonDia:12}; see also \cite[Section 2]{DiaGelSan:20} for further discussion and properties. We will not give the full definition here, focusing instead on the key property needed for our purposes, stated in Lemma~\ref{l.sobrediscos}.

A \emph{stable blender-horseshoe} is defined as an unstable blender-horseshoe for the inverse diffeomorphism $f^{-1}$."

\begin{remark}[Blender-horseshoes have continuations]
\label{r.blendercontinuations}
As a hyperbolic set, every blender-horseshoe has a hyperbolic continuation which also is  
 a blender-horseshoe, see \cite[Lemma 3.9]{BonDia:12}.
 More precisely, if $B^+=(f, \mathbf{C}, \Ga)$ is a blender-horseshoe then
 $B^+_g=(g, \mathbf{C}, \Ga_g)$, where $\Ga_g$ is the continuation of $\Ga$,
 is also a blender-horseshoe for every $g$ close enough to $f$.
\end{remark} 
 
Given a splitting $E=E_1 \oplus E_2$ and a constant $\alpha \in (0,1)$ the {\em{cone field 
around $E_1$ of size $\alpha$,}} is the subset of $E$
defined by
 $$
 \mathcal{C}_\alpha (E_1) \eqdef \{ v= v_1+v_2 \colon v_i \in E_i , || v_2|| \leq \alpha || v_1||\}.
 $$
 A cone field on a set $K\subset M$ is a continuous map  defined on $K$, $x\mapsto C_{\alpha(x)} (E_1(x))$, where
 the dimensions of the spaces $E_1(x)$ are constant. This cone field is $Df$-invariant if
 $Df(C_{\alpha(x)} (E_1(x))$ is strictly contained in $C_{\alpha(f(x))} (E_1(f(x)))$, for every $x\in K \cap f^{-1}(K)$.
 Note this $Df$-invariant cone field can be extended to a $Df$-invariant cone field in a neighborhood of $K$.
 Having a $Df$-invariant cone field on a compact set is a $C^1$-open property. For details, see
 \cite[Appendix B1]{BonDiaVia:05}.
 
 In what follows, we consider  a strongly partially hyperbolic set $\Lambda$
 with a splitting 
$T_{\Lambda}M = E^{\s} \oplus E^{\mathrm{c}} \oplus E^{\u}$ as in \eqref{e.threesplitting},
  and
 cone fields  $\mathcal{C}^\u$ and $\mathcal{C}^{\mathrm{cu}}$,  
  $\mathcal{C}^\u\subset \mathcal{C}^{\mathrm{cu}}$,
 around the
 bundles $E^\u$ and $E^\c \oplus E^\u$, respectively. We will omit the size of these cones.
 The existence of these cone fields follows from the partial hyperbolicity.
 We can freely  assume that the splittings and the cone fields are extended to a neighborhood of $\Lambda$.

\begin{remark}[Cone fields and central expansion constant] 
\label{r.cuexpansion}
A blender-horseshoe
 $B^+$ 
 is also endowed with a pair of $Df$-invariant cone fields 
 $\mathcal{C}^\u\subset \mathcal{C}^{\mathrm{cu}}$ as above
defined on $\mathbf{C}$, see condition (BH2) in \cite[Section 3.2]{BonDia:12}.
 As the central bundle $E^\c\oplus E^\u$ of a unstable blender-horseshoe $\Ga$ is uniformly expanded, 
    there is a   {\em{central expansion constant}}
    $\lambda_{\mathrm{bh}}(B^+)>1$
       such that
            $ \| Df_x(v)\| \geq \lambda_{\mathrm{bh}} \|v\|$
    for every $v\in \mathcal{C}^\mathrm{\cu}$.
 \end{remark}

For our goals, the main property of  blender-horseshoes is the following:

\begin{remark}[The family of disks in-between]
\label{r.defdisksinbet}
    Conditions (BH4)-(BH6) in~\cite{BonDia:12} 
    provides 
    an open  family   
     $\mathfrak{D}_{\mathrm{bet}}(B^+)$
    of $f$-invariant disks of dimension $\dim (E^\u)$ tangent to  $\mathcal{C}^\u$, called
   disks {\em{in-between,}} 
    such that the image $f(D)$ of any $D\in \mathfrak{D}_{\mathrm{bet}}(B^+)$ contains a disk
    in $\mathfrak{D}_{\mathrm{bet}}(B^+)$.
\end{remark}

The next lemma is an immediate consequence of the definition of $\mathfrak{D}_{\mathrm{bet}}(B^+)$:
\begin{lemma} 
\label{l.sobrediscos}
Let $B^+$ a unstable blender-horseshoe $f$. Then     
   every $D \in \mathfrak{D}_{\mathrm{bet}}(B^+)$ and every $N \in \N$, there exists $D_N \subset D$ such that
   \begin{enumerate}
       \item $f^i(D_N)$ is contained in some disk in $\mathfrak{D}_{\mathrm{bet}}(B^+)$ for all $i \in \{0,\dots,N\}$, 
       and
       \item $f^N(D_N)$ contains a disk in $\mathfrak{D}_{\mathrm{bet}}(B^+)$.   
       \end{enumerate}
\end{lemma}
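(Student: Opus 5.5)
The plan is an induction on $N$, using only the property recorded in Remark~\ref{r.defdisksinbet}: for every $D\in\mathfrak{D}_{\mathrm{bet}}(B^+)$ the image $f(D)$ contains some disk of $\mathfrak{D}_{\mathrm{bet}}(B^+)$. The idea is to pull this disk back by $f^{-1}$, which gives a subdisk of $D$ whose image is a disk in-between, and then to iterate.

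Case $N=0$. We take $D_0=D$. Item (1) holds trivially because $f^0(D_0)=D\in\mathfrak{D}_{\mathrm{bet}}(B^+)$, and item (2) holds because $D_0$ contains $D$.

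Inductive step. Assume the statement holds for $N$ and for every disk in-between. Given $D\in\mathfrak{D}_{\mathrm{bet}}(B^+)$, Remark~\ref{r.defdisksinbet} gives a disk $D^1\in\mathfrak{D}_{\mathrm{bet}}(B^+)$ with $D^1\subset f(D)$. Apply the inductive hypothesis to $D^1$. This gives $D^1_N\subset D^1$ such that:
\begin{itemize}
\item $f^i(D^1_N)$ lies in a disk in-between for every $i\in\{0,\dots,N\}$;
\item $f^N(D^1_N)$ contains a disk in $\mathfrak{D}_{\mathrm{bet}}(B^+)$.
\end{itemize}
Now set $D_{N+1}\eqdef f^{-1}(D^1_N)$. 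Since $D^1_N\subset D^1\subset f(D)$, we get $D_{N+1}\subset D$.

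We check item (1) for $D_{N+1}$. For $i=0$, $D_{N+1}$ is contained in $D$, which is a disk in-between. For $1\le i\le N+1$, we have $f^i(D_{N+1})=f^{i-1}(D^1_N)$, which lies in a disk in-between by the inductive hypothesis.

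We check item (2). We have $f^{N+1}(D_{N+1})=f^N(D^1_N)$, and this contains a disk of $\mathfrak{D}_{\mathrm{bet}}(B^+)$ by the inductive hypothesis. This closes the induction.

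Expected difficulty. There is essentially no obstacle. The only point needing care is that $D_N$ is taken as a subset of $D$, defined by pulling back and not as an image. Since $f$ is a diffeomorphism, $f^{-1}(D^1_N)$ is again a $C^1$ disk tangent to the cone field $\mathcal{C}^\u$ inside $\mathbf{C}$, although the statement only needs it to be a subset.
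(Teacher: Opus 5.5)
Your induction is correct and is exactly the argument the paper leaves implicit when it calls the lemma an immediate consequence of the property in Remark~\ref{r.defdisksinbet}. Unrolled, it amounts to choosing $E_0=D$ and $E_{j+1}\in\mathfrak{D}_{\mathrm{bet}}(B^+)$ with $E_{j+1}\subset f(E_j)$, then setting $D_N=f^{-N}(E_N)$, so that $f^i(D_N)\subset E_i$ for $0\le i\le N$ and $f^N(D_N)=E_N$. One small quibble in your closing aside, which plays no role in the argument: $f^{-1}(D^1_N)$ is tangent to the unstable cone because it lies in $D$, not because $f$ is a diffeomorphism, since the cone field is only forward invariant.
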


The next remark is a direct consequence of the previous lemma:

\begin{remark}\label{r.intersectionwiththestableset}
Consider a unstable blender-horseshoe  $B^+=(f, \mathbf{C}, \Ga)$ 
and the local stable manifold $\Ga$ given by
$$
W^{\mathrm{s}}_{\mathrm{loc}} (\Ga) \eqdef  \bigcap_{i\in \mathbb{N}} f^i (\mathbf{C}).
$$
Then every disk  $D \in \mathfrak{D}_{\mathrm{bet}}(B^+)$ intersects $W^{\mathrm{s}}_{\mathrm{loc}} (\Ga)$,
 see \cite[Remark 3.10]{BonDia:12}.
\end{remark}

 \subsection{Split flip-flop configurations}
\label{s.flipflops}
Following ~\cite{BocBonDia:16}, 
we proceed to introduce {\em{split flip-flop configurations}} associated to 
a unstable blender-horseshoe $B^+$
and  saddle $p$ with $\chi^\c (p)<0$.
This definition involves
the  family $\mathfrak{D}_{\mathrm{bet}}(B^+)$ of disks in-between of $B^+$ and 
a family $\mathfrak{D}_{\delta, \rho}(p)$ extending a local unstable manifold of $p$
that we proceed to describe, see Remark~\ref{rd.familiadiscos}.

Given  small $r>0$, we denote 
by $W^{\mathrm{s}}_r(p)$ and $W^{\mathrm{u}}_r(p)$
the local stable  and unstable sets of $p$ of size $r$, respectively, and for $\ast \in \{\s,\u\}$, we define
    $
        \mathcal{F}^\ast_r(x)
    $
as the connected component  of $\mathcal{F}^\ast(x) \cap B_r(x)$ that contains $x$.

\begin{remark}[Safety region]
\label{r.safetyregionofsaddles}
There is $\delta_1(p)>0$ such that  
    \begin{equation}
        f^{\pi(p)}(W^\mathrm{s}_{\delta}(p)) \subset W^\mathrm{s}_{\delta(p)}(p)
        \quad \mbox{for every $\delta \in (0, \delta_1(p))$.}
    \end{equation}
Consider the continuous map 
\begin{equation}
\label{e.varphipip}
\varphi_p^\c \colon \Lambda \to \bR, \qquad
\varphi_p^\c (x) \eqdef \log {\|Df^{\pi(p)}|_{E^{\mathrm{c}}(x)}\|}.
\end{equation}
Since $\varphi^\c_p(p)<0$, there is $\delta_2(p)>0$ such that  
$\varphi^\c_p(x)<0$ for every $x$ with $d(x,p)<\delta_2(p)$.
We set $\delta(p) \eqdef \min \{\delta_1(p),\delta_2(p)\}$
and call it a {\em{safe number of $p$.}}
\end{remark}

Denote by $\mathcal{D}^\u(M)$ the set of all closed disks with dimension $\dim (E^\u)$ that are $C^1$-embedded in $M$. By~\cite[Proposition 3.1]{BocBonDia:16}, the space $\mathcal{D}^\u (M)$ is metrizable with a distance $\vartheta$.
Given a 
a family of discs 
$\mathfrak{D}$ in  $\mathcal{D}^\u(M)$,
let $\mathcal{V}_\epsilon (\mathfrak{D})$ 
the $\epsilon$-neighborhood of $\mathfrak{D}$ in the metric $\vartheta$.
The family $\mathfrak{D}$ is
 {\em{strictly $f$-invariant}}
if there is $\epsilon>0$ such that the image $f(D)$ of any disk 
$D\in \mathcal{V}_\epsilon (\mathfrak{D})$  contains a disk in $\mathfrak{D}$.

\begin{remark}[Family $\mathfrak{D}_{\delta,\rho}(p)$]
\label{rd.familiadiscos}
 Due to hyperbolicity of $p$, there are small $\rho,\delta>0$
such that the family $\mathfrak{D}_{\delta,\rho}\eqdef \mathcal{V}_\delta ( W^{\mathrm{u}}_\rho (p))$
is strictly invariant for $f^{\pi (p)}$. Moreover, we can assume that:
\begin{enumerate}
\item
every disk in $\mathfrak{D}_{\delta,\rho}$ is contained in $B_{\delta(p)} (p)$,
\item
every disk in $\mathfrak{D}_{\delta,\rho}$ intersects transversely $W^{\mathrm{s}}_{\delta(p)/4}(p)$,
and
\item
there is a $Df^{\pi (p)}$-invariant unstable cone field $\mathcal{C}^{\u}$
such that every disk in   $D\in \mathfrak{D}_{\delta,\rho}$ is tangent to $\mathcal{C}^\u$
(i.e. $T_x D \subset \mathcal{C}^\u(x)$ for every $x\in D$).
\end{enumerate}
Note that the disks in  $\mathfrak{D}_{\delta,\rho}$ contains a ball of uniform size.
For details, see \cite[Lemma 4.11]{BocBonDia:16}.
\end{remark}

\begin{defi}[Split flip-flop configuration]
\label{d.splitflipflop}
A periodic orbit $\mathcal{O}(p)$ with $\chi^{\mathrm{c}}(p)<0$ and an unstable blender-horseshoe $B^+$ form a {\em{split flip-flop configuration}} if
\begin{enumerate}
        \item if $W^\mathrm{u}(\mathcal{O}(p))$ contains a disk in $\mathfrak{D}_\mathrm{bet}(B^+)$;
        \item  there are $\delta,\rho>0$ and $N \eqdef N_{\delta,\rho}>0$ 
        such that   $f^{N}(D)$  contains a disk $D' \in \mathfrak{D}_{\delta,\rho}(p)$ for every $D \in \mathfrak{D}_{\mathrm{bet}}(B^+)$.
    \end{enumerate}
\end{defi}
Analogously and taking $f^{-1}$, we define split flip-flop configurations associated to saddles with positive central exponent and stable blender-horseshoes.

 The next remark is a direct consequence from the previous definition, transversality, and the $\lambda$-lemma.
 
\begin{remark}
\label{r.connectiontime}
    Item (1) in Definition~\ref{d.splitflipflop} implies that there is $C \eqdef C_{\delta,\rho}>0$ such that 
for every $D \in \mathfrak{D}_{\delta,\rho}(p)$,    
    $f^{C}(D)$ contains a disk  in $\mathfrak{D}_{\mathrm{bet}}(B^+)$. To emphasize the role of the family of disks 
  $\mathfrak{D}_{\delta,\rho}(p)$  
    and the  numbers $N_{\delta,\rho}$ and $C_{\delta,\rho}$, called {\em{connecting times}} between $\mathfrak{D}_{\delta,\rho}(p)$ and $\mathfrak{D}_{\mathrm{bet}}(B^+)$, and between  $\mathfrak{D}_{\mathrm{bet}}(B^+)$ and $\mathfrak{D}_{\delta,\rho}(p)$, respectively,
    we use the following notation to denote a split flip-flop configuration between $\mathcal{O}(p)$ and $B^+$:      
     \begin{equation}
    \label{e.splitfilpflop}
    \mathrm{SFF}(\mathfrak{D}_{\delta,\rho}(p),\mathfrak{D}_{\mathrm{bet}}(B^+),N_{\delta,\rho},C_{\delta, \rho}).   
     \end{equation}
\end{remark}

The following  is an immediate consequence of the definitions of a  flip-flop configuration:

\begin{remark}
\label{r.refinedflipfop}
Let $ \mathrm{SFF}(\mathfrak{D}_{\delta,\rho}(p),\mathfrak{D}_{\mathrm{bet}}(B^+),N_{\delta,\rho},
C_{\delta,\rho})$ be a flip-flop configuration. Given $\delta_0 \in (0,\delta), \rho_0\in (0, \rho)$.
there are 
$
N_{\delta_0, \rho_0}\ge N_{\delta, \rho,p}$ and $C_{\delta_0,\rho_0}\ge  C_{\delta,\rho,}$ 
such that
 $ \mathrm{SFF}(\mathfrak{D}_{\delta_0, \rho_0}(p),\mathfrak{D}_{\mathrm{bet}}(B^+),N_{\delta_0, \rho_0},C_{\delta_0, \rho_0})$
is a flip-flop configuration.
\end{remark}

We finish with an immediate consequence of the $\lambda$-lemma whose proof is omitted.

\begin{lemma}
\label{l.flipflopeverypoint}
    Let $f \in \mathrm{Diff}^1(M)$, 
    $p$ a saddle of $f$ with $\le^\c (p)<0$  such that
    $H(p)$ is strongly partially hyperbolic,  and $B^+$ an unstable blender-horseshoe.
   Suppose that $p$ and $B^+$ forms a split flip-flop configuration. Then $B^{+}$ forms a split flip-flop configuration with any saddle in $H(p)$ which is homoclinically related to $p$.
\end{lemma}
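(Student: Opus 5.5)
The plan is to transport the split flip-flop configuration from $p$ to $q$ by moving disks along the orbit of a transverse heteroclinic point, using the $\lambda$-lemma in both directions. Let $q\in H(p)$ be a saddle homoclinically related to $p$. Since $q$ is homoclinically related to $p$, it has the same $\mathrm{s}$-index. Moreover $\chi^\c(q)<0$, because in the strongly partially hyperbolic set $H(p)$ the index determines whether the center direction is contracting or expanding. Fix transverse intersection points
\[
x\in W^{\mathrm u}(\mathcal O(p))\pitchfork W^{\mathrm s}(\mathcal O(q)),\qquad y\in W^{\mathrm u}(\mathcal O(q))\pitchfork W^{\mathrm s}(\mathcal O(p)).
\]
We must check the two items of Definition~\ref{d.splitflipflop} for $q$.

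For item (1), start from a disk $\Delta\in\mathfrak D_{\mathrm{bet}}(B^+)$ with $\Delta\subset W^{\mathrm u}(\mathcal O(p))$. Take a small $\dim(E^\u)$-disk $\Sigma\subset W^{\mathrm u}(\mathcal O(q))$ through $y$. Since $\Sigma$ meets $W^{\mathrm s}(\mathcal O(p))$ transversely, the $\lambda$-lemma gives that $f^{n}(\Sigma)$ contains disks $C^1$-converging to any compact piece of $W^{\mathrm u}(\mathcal O(p))$, in particular to $\Delta$. The family $\mathfrak D_{\mathrm{bet}}(B^+)$ is open, so for large $n$ we get a subdisk of $f^n(\Sigma)\subset W^{\mathrm u}(\mathcal O(q))$ lying in $\mathfrak D_{\mathrm{bet}}(B^+)$. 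This proves item (1) for $q$.

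For item (2), fix the data $\delta,\rho,N$ of the configuration for $p$. Given $D\in\mathfrak D_{\mathrm{bet}}(B^+)$, the disk $f^N(D)$ contains some $D'\in\mathfrak D_{\delta,\rho}(p)$, and $D'$ meets $W^{\mathrm s}_{\delta(p)/4}(p)$ transversely. We want a uniform time $T$ such that $f^{T}(D')$ contains a disk in a family $\mathfrak D_{\delta',\rho'}(q)=\mathcal V_{\delta'}(W^{\mathrm u}_{\rho'}(q))$, for suitable small $\delta',\rho'$ as in Remark~\ref{rd.familiadiscos}.

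The first step is to pass from $p$ to a neighborhood of $x$. By strict $f^{\pi(p)}$-invariance and the $\lambda$-lemma applied uniformly over the compact closure of $\mathfrak D_{\delta,\rho}(p)$, there is $k$ such that every disk of $\mathfrak D_{\delta,\rho}(p)$ has $f^{k\pi(p)}$-image containing a disk $C^1$-close to a neighborhood of $x$ in $W^{\mathrm u}(\mathcal O(p))$. Pushing forward a fixed number of iterates, we get a disk close to a piece of $W^{\mathrm u}$ crossing $W^{\mathrm s}_{\mathrm{loc}}(\mathcal O(q))$ transversely.

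The second step is to pass from there to $q$. A second uniform application of the $\lambda$-lemma near $q$ yields $f^{m}$-images containing disks $\delta'$-close to $W^{\mathrm u}_{\rho'}(q)$. The required uniformity comes from the $C^1$-continuity of the $\lambda$-lemma on compact families of disks transverse to the stable manifold. Then $N_q:=N+k\pi(p)+\ell+m$ works, where $\ell$ is the fixed number of intermediate iterates. Finally, we shrink $\delta',\rho'$ so that items (1)–(3) of Remark~\ref{rd.familiadiscos} hold at $q$; Remark~\ref{r.refinedflipfop} handles this adjustment.

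The main obstacle is the uniformity in item (2). The connecting time must not depend on $D$, so the $\lambda$-lemma has to be applied to the whole $\vartheta$-neighborhood family $\mathfrak D_{\delta,\rho}(p)$ rather than to a single disk. I would handle this by a compactness argument in $\mathcal D^\u(M)$: each disk has a neighborhood on which one time works, since the $\lambda$-lemma conclusion is open in the $C^1$ topology on disks. A finite subcover then gives finitely many candidate times, and strict invariance of the target family lets us iterate the remaining disks by multiples of $\pi(q)$ so that all of them end at a common time.
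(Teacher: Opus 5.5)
Your overall route is the one the paper intends: the paper only says the lemma is an immediate consequence of the $\lambda$-lemma and omits the proof. Your treatment of item~(1) is fine. The gap is in how you justify the uniform connecting time in item~(2).

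You appeal to the compactness of the $\vartheta$-closure of $\mathfrak{D}_{\delta,\rho}(p)$ and extract a finite subcover. That closure is not compact. $\mathfrak{D}_{\delta,\rho}(p)=\mathcal{V}_\delta(W^{\mathrm{u}}_\rho(p))$ is a ball in the space of $C^1$-embedded disks, which is infinite dimensional. Bounded families in the $C^1$ topology have no equicontinuity of derivatives, so no Arzel\`a--Ascoli type argument applies. The same objection hits your second step near $q$: the disks near the piece of $W^{\mathrm{u}}(\mathcal{O}(p))$ through $x$ form a $C^1$-neighbourhood, not a compact family. Moreover, even with finitely many candidate times, strict $f^{\pi(q)}$-invariance only lets you align times that are congruent modulo $\pi(q)$. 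Your formula $N+k\pi(p)+\ell+m$, with arbitrary $m$, does not guarantee this.

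The uniformity should come from the quantitative form of the inclination lemma, not from compactness. By Remark~\ref{rd.familiadiscos}, every disk of $\mathfrak{D}_{\delta,\rho}(p)$ is tangent to the $Df^{\pi(p)}$-invariant cone field $\mathcal{C}^{\mathrm{uu}}$, contains a ball of uniform radius, and meets $W^{\mathrm{s}}_{\delta(p)/4}(p)$ transversely. The graph-transform estimates in the proof of the $\lambda$-lemma depend only on these data. Hence a single $k$ works: $f^{k\pi(p)}(D')$ contains a disk $C^1$-close to a prescribed compact piece of $W^{\mathrm{u}}(p)$ for every $D'\in\mathfrak{D}_{\delta,\rho}(p)$.

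Next, push forward by a fixed number $\ell$ of iterates to reach a neighbourhood of $x\in W^{\mathrm{s}}(q')$, with $q'\in\mathcal{O}(q)$. Apply the same uniform estimate at $q'$ for $f^{m\pi(q)}$. Finish with the fixed shift $f^{j}$ taking $q'$ to $q$. This gives one time $N+k\pi(p)+\ell+m\pi(q)+j$ valid simultaneously for all $D\in\mathfrak{D}_{\mathrm{bet}}(B^+)$, so no covering or alignment step is needed. With this replacement your argument is complete.
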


Note that, the hypotheses of Lemma~\ref{l.flipflopeverypoint} implies that
there is a strongly partially hyperbolic set containing $H(p) \cup B^+$.

\section{Density of nonhyperbolic aperiodic GIKN measures}
\label{s.distancetoGIKN}

The main result of this section is Theorem~\ref{t.densityisolatedset}, which gives sufficient conditions for the density of aperiodic nonhyperbolic GIKN measures among nonhyperbolic measures
supported on a set $\Lambda$.
Its core ingredient is Proposition~\ref{p.mainprop}, which associates to each hyperbolic periodic measure~$\mu
\in \mathcal{M}_{\rm erg} (f, \Lambda)$
 a measure  in 
 $\mathcal{M}_{\rm aGIKN} (f, \Lambda)$
  whose distance to~$\mu$ is bounded by
$
A\,\bigl|\chi^{\mathrm{c}}(\mu)\bigr|,
$
for some constant~$A$ depending only on $\Lambda$.

Given two probability measures $\mu$ and $\nu$ their  {\em{Wasserstein distance}} is
\begin{equation}
\label{eq.wasserstein}
W_1(\mu,\nu)\eqdef  \sup_{\psi \in \mathrm{Lip}(1)}\left\{\left| \int_M \psi(x) d \mu- \int_M \psi(x) d \nu \right|\right\},
\end{equation}
where
$$
\mathrm{Lip}(1)\eqdef \{ \psi \colon M \to \mathbb{R} \colon  \psi \,\, \mbox{is Lipschitz with constant 
 $L \leq 1$}\}.
$$
This distance induces the weak$^\ast$ topology on the space of probability measures (see \cite[Theorem 6.9]{Vil:09}).

\begin{remark}
Fix $x_0 \in M$. It is enough to consider the supremum over the Lipschitz maps $\psi \in \mathrm{Lip}(1)$, with $\psi(x_0)=0$. With that we get that for every $x \in M$, 
$$|\psi(x)|=|\psi(x)-\psi(x_0)|\leq d(x,x_0) \leq \mathrm{diam}(M).$$
\end{remark}

Let $\Lambda$ be a strongly partially hyperbolic set.
Given  $\alpha \in \bR$, let 
    \begin{align*}
        & \mathcal{M}^\alpha_{\mathrm{erg}}(f,\Lambda)  \eqdef \{ \mu \in \mathcal{M}_{\mathrm{erg}}(f,\Lambda) \colon \chi^\c(\mu)=\alpha\} \quad \mbox{and} \\
        & \mathcal{M}^\alpha_{\mathrm{aGIKN}}(f,\Lambda)  \eqdef \mathcal{M}_{\mathrm{aGIKN}}(f) \cap \mathcal{M}^\alpha_{\mathrm{erg}}(f,\Lambda).
    \end{align*}

\begin{teo}
\label{t.densityisolatedset}
    Let $f \in \mathrm{Diff}^1(M)$ and $\Lambda$ be an isolated strongly partially hyperbolic set of $f$
    such that:
        \begin{enumerate}
            \item 
            the set $\Lambda$ contains an unstable blender-horseshoe $B^+$  which
            forms a split flip-flop configuration with any saddle $p \in \Lambda$ with $\chi^\c(p)<0$; and
            \item every measure in $\mathcal{M}_{\mathrm{erg}}^0(f,\Lambda)$ is 
            weak$^\ast$ approximated by periodic
        measures with negative central Lyapunov exponent.
        \end{enumerate}
   Then  
        the set $\mathcal{M}^0_{\mathrm{aGIKN}}(f,\Lambda)$ is dense in $\mathcal{M}^0_{\mathrm{erg}}(f,\Lambda)$.
\end{teo}

The main step to prove this theorem is the following proposition:

\begin{prop}
\label{p.mainprop}
    Consider $f \in \mathrm{Diff}^1(M)$ and $\Lambda$ satisfying the hypotheses of
    Theorem~\ref{t.densityisolatedset}.
   Then  there is a constant $A_{\Lambda}>0$ such that for every periodic measure
   $\mu\in \mathcal{M}_{\mathrm{erg}}(f,\Lambda)$ with $\chi^\c(\mu)<0$ 
   there exists  $\nu \in \mathcal{M}^0_{\mathrm{aGIKN}}(f,\Lambda)$ such that 
        $$
            W_1(\mu,\nu) < A_{\Lambda}|\chi^\c(\mu)|.
        $$
\end{prop}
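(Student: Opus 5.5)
The plan is to adapt the flip-flop construction of \cite{BocBonDia:16} (itself following \cite{Goretal:05,BonDiaGor:10}), building a GIKN sequence $(\mathcal{O}_n)_n$ starting near $\mathcal{O}_0=\mathcal{O}(p)$ and then invoking Theorem~\ref{t.l.p.strongnonper}. Let $\mu=\mu_{\mathcal{O}(p)}$ with $\chi^\c(p)<0$. By hypothesis (1), $p$ and $B^+$ form a split flip-flop configuration $\mathrm{SFF}(\mathfrak{D}_{\delta,\rho}(p),\mathfrak{D}_{\mathrm{bet}}(B^+),N,C)$. The first new orbit $\mathcal{O}_1$ is produced as follows. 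We stay near $\mathcal{O}(p)$ for $k\pi(p)$ iterates, using the strict invariance of $\mathfrak{D}_{\delta,\rho}(p)$ under $f^{\pi(p)}$; this accumulates central contraction of order $k\pi(p)\chi^\c(p)$. We then travel to $\mathfrak{D}_{\mathrm{bet}}(B^+)$ in $C$ iterates, spend $m$ iterates inside the blender (Lemma~\ref{l.sobrediscos}), where the central derivative is expanded by at least $\lambda_{\mathrm{bh}}^m$, and return to $\mathfrak{D}_{\delta,\rho}(p)$ in $N$ iterates. The choice $m\approx k\pi(p)|\chi^\c(p)|/\log\lambda_{\mathrm{bh}}$ is made so that the central Birkhoff sum of $\varphi^\c$ along the whole loop is close to zero. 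Because the disks are tangent to $\mathcal{C}^\u$ and the map is contracting in the $E^\s\oplus E^\c$ direction on the relevant pieces, a standard nested-disk/contraction argument (as in \cite[Section 4]{BocBonDia:16}, using the isolation of $\Lambda$ so that the orbit stays in $\Lambda$) produces a periodic orbit following this itinerary.

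The distance estimate comes from the proportion of time spent away from $\mathcal{O}(p)$. This proportion is $(C+N+m)/(k\pi(p)+C+N+m)$. For $k$ large it tends to
\[
\frac{m}{k\pi(p)+m}\approx \frac{|\chi^\c(p)|}{\log \lambda_{\mathrm{bh}}+|\chi^\c(p)|}\le \frac{|\chi^\c(\mu)|}{\log\lambda_{\mathrm{bh}}}.
\]
Since $|\psi|\le\diam(M)$ for $\psi\in\mathrm{Lip}(1)$ normalized at a point, and shadowing within $\epsilon_0$ costs at most $\epsilon_0$, we get $W_1(\mu,\mu_{\mathcal{O}_1})\lesssim 2\diam(M)\,|\chi^\c(\mu)|/\log\lambda_{\mathrm{bh}}+\epsilon_0$. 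The same bound should hold for the limit, provided all subsequent steps add only a summable error. This suggests $A_\Lambda \approx 3\diam(M)/\log\lambda_{\mathrm{bh}}(B^+)$ (plus slack absorbing the $\epsilon$'s, which can be made smaller than a fraction of $|\chi^\c(\mu)|$).

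The iteration proceeds by induction. The orbit $\mathcal{O}_n$ has small negative central exponent $\chi^\c(\mathcal{O}_n)<0$, by arranging the Birkhoff sum to be slightly negative, and it is still a saddle in $\Lambda$ with $\chi^\c<0$. Hypothesis (1) then applies to it, and we repeat the construction with $\mathcal{O}_n$ in place of $p$, choosing $k_n$ huge. This yields $\mathcal{O}_{n+1}$, an $(\epsilon_n,\rho_n)$-good approximation of $\mathcal{O}_n$. The quantities are controlled as follows.
\begin{itemize}
\item The good fraction $\rho_n\ge 1-O(|\chi^\c(\mathcal{O}_n)|)$.
\item $\epsilon_n$ is chosen small relative to $d_{\mathcal{O}_n}$, as in \eqref{e.distorbitaseq}.
\item Condition (2) of Definition~\ref{d.goodap}, equidistribution of good points, holds automatically because we shadow whole periods.
\end{itemize}
Making $|\chi^\c(\mathcal{O}_{n})|\to0$ fast, say $\le 2^{-n}$ times the previous value, gives $\prod\rho_n>0$. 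By Remark~\ref{r.convexp} the limit $\nu$, which is ergodic, aperiodic and GIKN by Theorem~\ref{t.l.p.strongnonper}, satisfies $\chi^\c(\nu)=\lim\chi^\c(\mathcal{O}_n)=0$. We also have $W_1(\mu,\nu)\le\sum_n W_1(\mu_{\mathcal{O}_n},\mu_{\mathcal{O}_{n+1}})$. The later terms are bounded by $\mathrm{const}\cdot|\chi^\c(\mathcal{O}_n)|$, which sums to a small multiple of $|\chi^\c(\mu)|$.

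The main obstacle is the quantitative control in the gluing step. We need that the central exponent of $\mathcal{O}_{n+1}$ can be tuned to a prescribed small negative value, using only the uniform constants $\lambda_{\mathrm{bh}}$, $N$, $C$ of a single blender, while $\mathcal{O}_{n+1}$ still $\epsilon_n$-shadows $\mathcal{O}_n$ for the needed fraction of time. The difficulty is that the connecting times $N_{\delta,\rho}$ and $C_{\delta,\rho}$ for $\mathcal{O}_n$ grow as $\epsilon_n\to0$ (Remark~\ref{r.refinedflipfop}). I would handle this by choosing $k_n$ after these connecting times, so that they become negligible in proportion. I would also bound the central derivative along the blender visit from both sides (between $\lambda_{\mathrm{bh}}$ and $\max\|Df\|$), so that stopping at the right $m$ lands the exponent within $O(1/k_n)$ of the target.
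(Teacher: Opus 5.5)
Your overall architecture is the paper's. You build a GIKN chain from flip-flop loops: a long stay near the current orbit, $C$ iterates to $\mathfrak{D}_{\mathrm{bet}}(B^+)$, a stay in the blender, and $N$ iterates back. Isolation keeps the new orbits in $\Lambda$, the bad-time fraction is proportional to $|\chi^{\mathrm{c}}|$, and you telescope $W_1(\mu,\nu)\leq\sum_n W_1(\mu_n,\mu_{n+1})$.

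The genuine gap is the step that produces the periodic orbit. Nested unstable disks (Lemma~\ref{l.sobrediscos}, or the controlled-point machinery of \cite{BocBonDia:16}) only give points with a prescribed forward itinerary, not periodic points. There is also no contraction in $E^{\mathrm{ss}}\oplus E^{\mathrm{c}}$ to close the loop with. On the blender portion the center is expanded by at least $\lambda_{\mathrm{bh}}$ per iterate, and since you aim at a total central sum near zero, your loop is almost neutral in the center.

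The paper closes the loop with the Liao--Gan shadowing lemma (Lemma~\ref{l.gan}). Its essential hypothesis is that \emph{every forward partial average} $\frac{1}{j}\sum_{i=0}^{j-1}\log\|Df|_{E^{\mathrm{cs}}(f^i(x))}\|$, $1\leq j\leq n$, is at most a fixed $\lambda<0$, with $L,a_0$ depending on $\lambda$. Verifying this is the core of the argument (Claim~\ref{af.cond2}). It forces the stay near $p$ to come first and to dominate the blender stay, and it limits how much the exponent can be reduced in one step.

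The second problem is the tuning you identify as the main obstacle; your proposed remedy does not work. The bounds $\log\lambda_{\mathrm{bh}}\leq\varphi^{\mathrm{c}}\leq L^{\mathrm{c}}$ on the blender portion leave an uncertainty of order $m(L^{\mathrm{c}}-\log\lambda_{\mathrm{bh}})$ in the central Birkhoff sum. That is the same order as the quantity $k\pi(p)|\chi^{\mathrm{c}}(p)|$ you want to cancel. So with $m\approx k\pi(p)|\chi^{\mathrm{c}}(p)|/\log\lambda_{\mathrm{bh}}$ the loop may well have positive central average. That destroys both the shadowing hypothesis and the induction, since hypothesis (1) of Theorem~\ref{t.densityisolatedset} needs $\chi^{\mathrm{c}}<0$. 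Stopping adaptively at the right $m$ would need a further argument, which you do not supply: controlling Birkhoff sums uniformly along the nested disks and transferring them to the shadowing orbit.

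The paper avoids tuning entirely. It pins $\ell/k$ near $(2L^{\mathrm{c}}+|\chi^{\mathrm{c}}(p)|)/|\chi^{\mathrm{c}}(p)|$, and then the crude bounds alone give
$\chi^{\mathrm{c}}(p)\bigl(1-\frac{\log\lambda_{\mathrm{bh}}}{2L^{\mathrm{c}}}\bigr)\leq\chi^{\mathrm{c}}(q)\leq\frac{\chi^{\mathrm{c}}(p)}{4}$,
with good fraction at least $1-\frac{|\chi^{\mathrm{c}}(p)|}{2L^{\mathrm{c}}}$ (Lemma~\ref{l.periodicpointshomoclinicclass}). A fixed decay factor $C_f<1$ per step already yields $\prod_n\rho_n>0$ and the geometric series defining $A_\Lambda$ (Lemma~\ref{l.seq}). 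Your \emph{close to zero in one step} and $2^{-n}$ decay are unnecessary. Replacing your closing and tuning steps by this fixed-ratio loop together with Lemma~\ref{l.gan} repairs the argument.
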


There are the corresponding versions of Theorem~\ref{t.densityisolatedset} and
 Proposition~\ref{p.mainprop} for stable blender-horseshoes and saddles with positive central Lyapunov exponent.

This section is organized as follows. In Section~\ref{ss.proofoft.densityisolatedset}, we derive Theorem~\ref{t.densityisolatedset} as a direct consequence of Proposition~\ref{p.mainprop}. In Section~\ref{ss.proofofp.mainprop}, we state Lemma~\ref{l.periodicpointshomoclinicclass} and use it to prove Proposition~\ref{p.mainprop}; the proof of the lemma is postponed to Section~\ref{ss.proofofperiodicpointshomoclinicclass}.

\subsection{Proof of Theorem~\ref{t.densityisolatedset}}
\label{ss.proofoft.densityisolatedset}
   Given any $\eta \in \mathcal{M}^0_{\mathrm{erg}}(f,\Lambda)$, we construct a sequence $(\nu_n)_n$ of measures in $\mathcal{M}^0_{\mathrm{aGIKN}}(f,\Lambda)$ which  converges to $\eta$.
    By hypotheses, there exists a sequence  of periodic measures $(\mu_n)_n$ with 
    $\chi^\c(\mu_n)<0$ converging to $\eta$. By Proposition~\ref{p.mainprop}, there exists a constant $A_{\Lambda}>0$ such that for
    every $n \in \bN$, there exists $\nu_{n}\in \mathcal{M}^0_{\mathrm{aGIKN}}(f,\Lambda)$ such that 
        $$
            W_1(\mu_n,\nu_n) < A_\Lambda | \chi^\c(\mu_n)|.
        $$
        Therefore
        \begin{equation*}
            W_1(\eta,\nu_n) \leq W_1(\eta,\mu_n) + W_1(\mu_n,\nu_n) < W_1(\eta,\mu_n) + A_{\Lambda} | \chi^\c(\mu_n)|.
        \end{equation*}
    Since $W_1(\eta,\mu_n) \to 0$ and by Remark~\ref{r.convexp} $\chi^\c(\mu_n) \to \chi^\c(\nu)=0$, we get       
     $$
            \lim_{n \to \infty} W_1(\eta,\nu_n)=0,
       $$
proving the theorem. \qed

\subsection{Proof of Proposition~\ref{p.mainprop}}
\label{ss.proofofp.mainprop}    
    For $\Lambda$ as in the proposition,  let 
    \begin{equation}
    \label{e.Lc}
    L^\c(\Lambda) \eqdef \sup_{x \in \Lambda} |\varphi^\c(x)|, \qquad 
    \mbox{where $\varphi^\c \colon \Lambda \to \bR$ is as in
    \eqref{eq.derivadacentrala}.}
    \end{equation}    
    We use the next lemma, whose proof is postponed to 
    Section~\ref{ss.proofofperiodicpointshomoclinicclass}. In this lemma 
    $\lambda_{\mathrm{bh}}(B^+)>1$ is the expansion constant of the blender-horseshoe in Remark~\ref{r.cuexpansion}.

    \begin{lemma}
    \label{l.periodicpointshomoclinicclass} 
        There is $\epsilon_0>0$
         such that for every 
        $\epsilon \in (0,\epsilon_0)$ and every saddle $p \in \Lambda$ with $\chi^{\mathrm{c}}(p)<0$ there is a saddle $q \in \Lambda$ such that 
        \begin{itemize}
            \item $\pi(q)>\pi(p)$;
            \item $\chi^{\mathrm{c}}(q)<0$ and 
                $
                    |\chi^{\mathrm{c}} (q)|< \left(1- \dfrac{\log{\lambda_{\mathrm{bh}} (B^+) }}{2L^{\mathrm{c}}(\Lambda)} \right) |\chi^{\mathrm{c}} (p)|,
                $
            \item
            $\mathcal{O}(q_f)$ is an $(\epsilon, \rho)$-good approximation of $\mathcal{O}(p_f)$, where
                $
                    \rho\eqdef 1- \dfrac{|\chi^{\mathrm{c}} (p)|}{2L^{\mathrm{c}}(\Lambda)}.
                $
    \end{itemize}
\end{lemma}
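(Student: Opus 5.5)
The plan is to follow the standard GIKN construction through a split flip-flop configuration, as in \cite{BocBonDia:16}. Fix $p$ with $\chi^\c(p)<0$, and fix the configuration $\mathrm{SFF}(\mathfrak{D}_{\delta,\rho}(p),\mathfrak{D}_{\mathrm{bet}}(B^+),N,C)$ given by hypothesis~(1). By Remark~\ref{r.refinedflipfop} we may shrink $\delta,\rho$ so that all disks in $\mathfrak{D}_{\delta,\rho}(p)$ lie in $B_{\epsilon}(p)$. We take $\epsilon_0$ smaller than the safe numbers and than the distance needed for the central derivative to stay close to $\varphi^\c_p$ along $\epsilon$-shadowing segments. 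The orbit of $q$ will consist of three pieces:
\begin{itemize}
\item[(i)] a long stay of $m\pi(p)$ iterates near $\mathcal{O}(p)$;
\item[(ii)] a transition of $C$ iterates to the blender;
\item[(iii)] a stay of $k$ iterates inside the blender, followed by $N$ iterates back to $\mathfrak{D}_{\delta,\rho}(p)$.
\end{itemize}

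First I construct the orbit. Start with a disk $D_0\in\mathfrak{D}_{\delta,\rho}(p)$. Strict invariance under $f^{\pi(p)}$ gives a subdisk whose image under $f^{m\pi(p)}$ contains a disk of the family, with every intermediate iterate $\epsilon$-shadowing $\mathcal{O}(p)$. Apply $f^C$ to reach $\mathfrak{D}_{\mathrm{bet}}(B^+)$. Use Lemma~\ref{l.sobrediscos} to stay $k$ iterates in the blender, then $f^N$ to return to $\mathfrak{D}_{\delta,\rho}(p)$. This yields a nested sequence of compact subdisks $D'\subset D_0$ with $f^{T}(D')\supset \widetilde D\in\mathfrak{D}_{\delta,\rho}(p)$, where $T=m\pi(p)+C+k+N$. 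All disks are tangent to $\mathcal{C}^\u$ and the return map expands them. We then get a periodic point $q$ of period $T$ by the usual cone-field and graph-transform argument in the local product chart near $p$: the disks cross $W^{\mathrm s}_{\delta(p)/4}(p)$ transversely, and the backward contraction gives a fixed point of $f^T$. The point $q$ lies in $\Lambda$ because $\Lambda$ is isolated and the whole orbit stays in a neighborhood of $\Lambda$. Its period exceeds $\pi(p)$ as soon as $m\ge 2$. The index of $q$ is $\dim E^\s+1$: this follows once the central exponent is shown to be negative in the next step, using the central contraction near $p$ (Remark~\ref{r.safetyregionofsaddles}).

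Next I tune the exponents by choosing $m$ and $k$. Since $E^\c$ is one-dimensional, $\chi^\c(q)=\frac1T\sum_{i<T}\varphi^\c(f^iq)$. Split this sum into three parts:
\begin{itemize}
\item the part near $p$, about $m\pi(p)\chi^\c(p)$ up to an error $m\pi(p)\eta(\epsilon)$;
\item the transition part, bounded by $(C+N)L^\c$;
\item the blender part, at least $k\log\lambda_{\mathrm{bh}}$ and at most $kL^\c$.
\end{itemize}
We need $k$ to be a suitable fraction of $T$. With $k\approx\theta T$ we get
$$\chi^\c(q)\lesssim(1-\theta)\chi^\c(p)+\theta L^\c \qquad\text{and}\qquad \chi^\c(q)\gtrsim(1-\theta)\chi^\c(p)+\theta\log\lambda_{\mathrm{bh}}.$$
Choosing $\theta=|\chi^\c(p)|/(2L^\c)$ gives the fraction of good points $\rho=1-\theta$. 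The lower bound then reads
$$\chi^\c(q)\ge \chi^\c(p)\Bigl(1-\tfrac{\log\lambda_{\mathrm{bh}}}{2L^\c}\Bigr)+O(|\chi^\c(p)|\,\theta)\quad(\text{up to the }\epsilon\text{-errors}),$$
which is the second bullet. Negativity needs $\theta L^\c<(1-\theta)|\chi^\c(p)|$, i.e. $\theta<|\chi^\c(p)|/(L^\c+|\chi^\c(p)|)$, and our choice satisfies this. The terms $C+N$ and the $\epsilon$-errors are absorbed by taking $m$ large. I expect the bookkeeping to require slack, for example $\theta$ slightly below $|\chi^\c(p)|/(2L^\c)$, and I will adjust so that both inequalities are strict.

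Finally, the good-approximation condition. The good points of $\mathcal{O}(q)$ include the $m\pi(p)-\pi(p)$ iterates in the $p$-stay that shadow a full period, which gives ratio at least $(m-1)\pi(p)/T\ge\rho$. For condition (2) of Definition~\ref{d.goodap}, the good points in segment (i) are equidistributed among the balls $B_\epsilon(f^i p)$, but stray good points could occur during the transition or blender phases. Handling these is where I expect the main obstacle. I would choose $\epsilon_0$ smaller than the distance between $\mathcal{O}(p)$ and the blender region $\mathbf{C}$ and the transition disks; this is possible after shrinking $\delta,\rho$, since $B^+$ is disjoint from $\mathcal{O}(p)$. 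I would also count good points only via full $\pi(p)$-shadowing, so that any good point outside (i) must itself start a full period of shadowing. Together these arguments should give exact equal counts $\kappa$. If that fails, I would pad by choosing $T$ so the stray contributions form whole periods.
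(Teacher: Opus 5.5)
There is a genuine gap in the step that produces $q$. Backward contraction along the $\mathcal{C}^{\mathrm{uu}}$-disks only tells you that $f^{-T}$ maps $\widetilde D$ into $D'\subset D_0$. In general $\widetilde D$ is a different disk of $\mathfrak{D}_{\delta,\rho}(p)$ than $D_0$, since its position transverse to $E^{\mathrm{uu}}$ is not controlled, so no fixed point follows.

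To close the orbit you need contraction in the complementary bundle $E^{\mathrm{cs}}=E^{\mathrm{ss}}\oplus E^{\mathrm{c}}$, and this bundle is not contracted along your segment. During the $k$ blender iterates, $E^{\mathrm{c}}$ is expanded by at least $\lambda_{\mathrm{bh}}^k$, so the contraction near $p$ (Remark~\ref{r.safetyregionofsaddles}) is not enough. Neither is a negative \emph{total} average of $\varphi^{\mathrm{c}}$, which is all you estimate.

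What is needed is a hyperbolic-time (Pliss-type) condition. Take the segment starting at $x\in W^{\mathrm{u}}_{\mathrm{loc}}(p)$ and ending at $f^T(x)\in W^{\mathrm{s}}(p)$ close to $p$. You need every initial partial average $\frac1n\sum_{i=0}^{n-1}\varphi^{\mathrm{c}}(f^i(x))$, for $1\le n\le T$, to be at most some $\lambda<0$. This is hypothesis (2) of the Liao--Gan shadowing lemma (Lemma~\ref{l.gan}), which the paper uses to obtain $q$. Verifying it is the core of the paper's proof: see Claim~\ref{af.cond2}, with the window \eqref{eq.escmk} on $\ell/k$. That window gives a blender fraction of about $|\chi^{\mathrm{c}}(p)|/(2L^{\mathrm{c}}+2|\chi^{\mathrm{c}}(p)|)$, which builds in slack for both negativity and the good-point ratio.

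Your ordering, with the stay near $p$ first, is the right one for this. With $\theta$ slightly below $|\chi^{\mathrm{c}}(p)|/(2L^{\mathrm{c}})$, the upper bounds for the partial averages are largest at $n=T$. There they are roughly $(1-\theta)\chi^{\mathrm{c}}(p)+\theta L^{\mathrm{c}}<0$. So the repair is to check all partial averages explicitly and apply Lemma~\ref{l.gan} at a shadowing scale $a$. That same scale then controls the exponent estimates and the $\epsilon$-closeness.

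A second problem is the order of quantifiers for $\epsilon_0$. The statement requires $\epsilon_0$ independent of $p$, but yours depends on the safe numbers of $p$ and on the distance from $\mathcal{O}(p)$ to $\mathbf{C}$. In the paper, $\epsilon_0$ only guarantees that orbits staying $\epsilon_0$-close to $\Lambda$ lie in the isolating block $U$. All $p$-dependent smallness goes into the shadowing scale $a<\min\{a_0,\gamma/(L+1),\epsilon/(L+1)\}$. As a result, the separation between $\mathcal{O}(p)$ and $\mathbf{C}$ that you want to use against stray good points is not available uniformly. It also fails in the inductive use of the lemma (Corollary~\ref{c.periodicpointshomclass}): there $p=q_n$, and the orbit of $q_n$ itself passes through the blender region. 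The paper writes its proof for $f(p)=p$, where condition (2) of Definition~\ref{d.goodap} is automatic, so its write-up does not confront this point.
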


The proof of  Lemma~\ref{l.periodicpointshomoclinicclass}  
borrows ingredients from the proof of~\cite[Proposition 3.12]{DiaYanZha:}, where nonhyperbolic measures are constructed for mostly expanding and strongly partially hyperbolic sets with a minimal strong unstable foliation.

Consider any saddle $p \in \Lambda$ with $\chi^\c(p)<0$ and the 
measure $\mu = \mu_{\mathcal{O}(p)}$.
Applying Lemma~\ref{l.periodicpointshomoclinicclass} to  $p$ and arguing inductively, we get:

\begin{coro}
\label{c.periodicpointshomclass}
    There are $C_{f} \in (0,1)$ and
 a sequence of saddles $(q_n)_n$  in $\Lambda$ with $\chi^{\mathrm{c}}(q_n)<0$ such that
\begin{enumerate}
\item
$q_1=p$,
\item
$\pi(q_{n+1})>  \pi (q_n)$, in particular, $\pi(q_n)\to \infty$ as $n\to \infty$,
\item
$|\chi^{\mathrm{c}} (q_{n+1})|< C_{f} |\chi^{\mathrm{c}} (q_n)|$,
in particular, $\chi^c(q_n)\to 0$,
\item
$\mathcal{O}(q_{n+1})$ is an $(\epsilon_n, \rho_n)$-good approximation of $\mathcal{O}(q_n)$, where 
    \begin{equation}
        \epsilon_n  \eqdef  \ \frac{(\min_{1\leq  j \leq n}d_j) \cdot |\chi^{\mathrm{c}}(q_1)|}{  3 \cdot 2^n \cdot L^{\mathrm{c}}(\Lambda)} \qquad \mbox{and} \qquad
        \rho_n  \eqdef 1  - \frac{|\chi^{\mathrm{c}}(q_{n+1})|}{2L^{\mathrm{c}}(\Lambda)},
    \end{equation}
   \item for every $n$, 
    $\displaystyle{ \sum_{k=n}^{\infty} } \epsilon_k < \dfrac{d_n}{3}$, where  $d_i= d_{\mathcal{O}(q_i)}$ defined in Equation~\eqref{e.distorbitaseq}.
\end{enumerate}
\end{coro}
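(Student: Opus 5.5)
The plan is to build the sequence $(q_n)_n$ inductively with Lemma~\ref{l.periodicpointshomoclinicclass}, choosing the precision parameter $\epsilon$ at each stage so that item (5) holds. Set $q_1=p$ and
\[
C_f\eqdef 1-\frac{\log \lambda_{\mathrm{bh}}(B^+)}{2L^\c(\Lambda)}.
\]
First I will check that $C_f\in(0,1)$. Since $\lambda_{\mathrm{bh}}>1$, we have $C_f<1$. For $C_f>0$: the central cone contains $E^\c$ on $\Gamma\subset\Lambda$, so $\log\lambda_{\mathrm{bh}}\le \inf_\Gamma \varphi^\c\le L^\c(\Lambda)$, and therefore $C_f\ge 1/2$. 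Here I use that the cone field condition of Remark~\ref{r.cuexpansion} applies to vectors in $E^\c$. If needed, $\lambda_{\mathrm{bh}}$ can be decreased slightly, which only weakens the statement.

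Next comes the inductive step. Suppose $q_1,\dots,q_n$ have been constructed, with $\chi^\c(q_j)<0$ and periods strictly increasing. Define $\epsilon_n$ by the displayed formula. Because it is positive, I may shrink it if necessary (formally, replace it by $\min\{\epsilon_n,\epsilon_0/2\}$, or observe that it is already below $\epsilon_0$ once $d_j$ is small). I will also require $\epsilon_n<d_n/2$, so that the balls $B_{\epsilon_n}(f^i(q_n))$ are pairwise disjoint, as Definition~\ref{d.goodap} demands. Since $\epsilon_n\le d_n/(3\cdot 2^n)$ whenever $|\chi^\c(q_1)|\le L^\c(\Lambda)$, this disjointness holds automatically.

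Now apply Lemma~\ref{l.periodicpointshomoclinicclass} to the saddle $q_n$ with $\epsilon=\epsilon_n$. It produces $q_{n+1}\in\Lambda$ with $\pi(q_{n+1})>\pi(q_n)$ and $\chi^\c(q_{n+1})<0$. It also gives $|\chi^\c(q_{n+1})|<C_f|\chi^\c(q_n)|$, which is item (3); iterating, $|\chi^\c(q_n)|\le C_f^{\,n-1}|\chi^\c(p)|\to0$. Finally, $\mathcal{O}(q_{n+1})$ is an $(\epsilon_n,\rho)$-good approximation of $\mathcal{O}(q_n)$ with $\rho=1-|\chi^\c(q_n)|/(2L^\c)$. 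To obtain item (4) with the stated $\rho_n$, I must reconcile an index shift: the stated $\rho_n$ uses $q_{n+1}$, while the lemma gives $\rho$ with $q_n$. Since $|\chi^\c(q_{n+1})|<|\chi^\c(q_n)|$, we have $\rho<\rho_n$, so the lemma's bound is weaker than what item (4) asks for.

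I expect this index shift to be the only delicate point. What I will try first is to note that goodness is monotone in $\rho$ in the wrong direction, so the lemma cannot be used as a black box. Instead I will check that the proof of Lemma~\ref{l.periodicpointshomoclinicclass} actually yields proportion of good points at least $1-|\chi^\c(q)|/(2L^\c)$. This is natural, because the bad tail is exactly what raises the central exponent from $\chi^\c(p)$ to $\chi^\c(q)$. If that fails, I will read $\rho_n$ as $1-|\chi^\c(q_n)|/(2L^\c)$, which is all that is needed later: this sequence still satisfies $\prod\rho_n>0$, since $\sum|\chi^\c(q_n)|\le |\chi^\c(p)|/(1-C_f)<\infty$.

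For item (5), note that $\min_{j\le k}d_j\le d_n$ for $k\ge n$. Assuming without loss of generality that $|\chi^\c(q_1)|\le L^\c$, which holds automatically since $|\chi^\c(p)|$ is an average of $\varphi^\c$, we get
\[
\sum_{k\ge n}\epsilon_k\le \frac{d_n}{3}\sum_{k\ge n}2^{-k}\le \frac{d_n}{3}\,2^{1-n}.
\]
This is strictly less than $d_n/3$ for $n\ge2$. For $n=1$ the strict inequality comes from the factor $|\chi^\c(q_1)|/L^\c$, which is at most $1$, together with the strict inequality in the geometric sum after the shrinking of $\epsilon_n$ if needed.
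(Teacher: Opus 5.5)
Your route is the paper's: its proof of the corollary is just the instruction to apply Lemma~\ref{l.periodicpointshomoclinicclass} inductively with $\epsilon=\epsilon_n$. Your checks of $C_f\in(0,1)$, items (2) and (3), disjointness of the $\epsilon_n$-balls, and $\prod\rho_n>0$ are fine. You are also right that the lemma gives good proportion $1-|\chi^{\mathrm{c}}(q_n)|/(2L^{\mathrm{c}})$, not the stated $\rho_n=1-|\chi^{\mathrm{c}}(q_{n+1})|/(2L^{\mathrm{c}})$. The paper's one-line argument does not address this.

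\textbf{Your first fix fails.} The heuristic behind it has the sign backwards. Write $x=|\chi^{\mathrm{c}}(q_n)|$.
\begin{enumerate}
\item In the lemma's construction, \eqref{eq.escmk} makes the blender excursion occupy a fraction $\beta\approx k/(\ell+k)\approx x/(2L^{\mathrm{c}}+2x)$ of the period.
\item The good points are essentially the $\ell$ iterates near $\mathcal{O}(q_n)$, so the good proportion is about $1-\beta$.
\item The excursion does not merely dilute the negative exponent. Along the $k$ blender iterates $\varphi^{\mathrm{c}}>\log\lambda_{\mathrm{bh}}>0$, so $|\chi^{\mathrm{c}}(q_{n+1})|\lesssim (1-\beta)x-\beta\log\lambda_{\mathrm{bh}}$.
\end{enumerate}
A direct computation then gives
\[
\frac{|\chi^{\mathrm{c}}(q_{n+1})|}{2L^{\mathrm{c}}}-\beta\;\lesssim\;\frac{x\,(x-\log\lambda_{\mathrm{bh}})}{2L^{\mathrm{c}}\,(2L^{\mathrm{c}}+2x)} .
\]
This is negative once $x<\log\lambda_{\mathrm{bh}}$, which holds for all large $n$ because $\chi^{\mathrm{c}}(q_n)\to 0$. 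So the proportion of good points falls strictly below $1-|\chi^{\mathrm{c}}(q_{n+1})|/(2L^{\mathrm{c}})$. Item (4) as literally stated is therefore not obtainable from the lemma's construction as it stands.

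Go directly to your fallback, $\rho_n=1-|\chi^{\mathrm{c}}(q_n)|/(2L^{\mathrm{c}})$. This is exactly what the inductive argument yields. It is also what the paper uses afterwards: in the proof of Lemma~\ref{l.seq} the bad proportion is bounded by $|\chi^{\mathrm{c}}(\mu_n)|/(2L^{\mathrm{c}})$. The $q_{n+1}$ in the statement is an index slip.

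Two smaller points.
\begin{itemize}
\item \emph{Getting below $\epsilon_0$.} Shrinking $\epsilon_n$ is not free, because the equal-count condition (2) of Definition~\ref{d.goodap} is not monotone in $\epsilon$. In the lemma's proof, $\epsilon<\epsilon_0$ is used only to ensure $La<\epsilon_0$. So impose $a<\epsilon_0/(L+1)$ directly and apply the lemma with $\epsilon=\epsilon_n$ itself.
\item \emph{Strictness in item (5) for $n=1$.} This cannot come from the geometric series, which sums to exactly $1$. Instead, $\mathcal{O}(q_2)$ has two points in one $\epsilon_1$-ball, since its good segment has length $\ell\gg\pi(q_1)$. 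Hence $\min_{j\le k}d_j\le d_2<2\epsilon_1\le d_1/3$ for every $k\ge 2$, which gives the strict inequality.
\end{itemize}
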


Take $(q_n)_n$ as in Corollary~\ref{c.periodicpointshomclass} and let
$\mathcal{O}_n\eqdef \mathcal{O}(q_n)$ and
 $\mu_n\eqdef \mu_{\mathcal{O}_n}$. 
By construction, the sequence $(\mu_n)_n$ is GIKN, recall Definitions~\ref{d.GIKNseq} and \ref{d.GIKNmes}. By 
Theorem~\ref{t.l.p.strongnonper} and 
 item (3) in Corollary~\ref{c.periodicpointshomclass}, the sequence
$(\mu_n)_n$ converges to some
$\nu \in \mathcal{M}^0_{\mathrm{aGIKN}}(f,\Lambda)$.

 We now estimate  $W_1(\mu,\nu)$. Since $\mu_1=\mu$ and $\mu_n \to \nu$, as $n \to \infty$, it follows that 
    \begin{equation}
    \label{eq.distmunu}
        W_1(\nu,\mu) \leq \sum_{n=1}^{\infty} W_1(\mu_n.\mu_{n+1}).
    \end{equation}

For simplicity, in what is below,  
for $L^\c (\Lambda)$ as in \eqref{e.Lc} and $\lambda_{\mathrm{bh}}(B^+)$ as in Remark~\ref{r.cuexpansion},
we  write 
$$
L^\c =L^\c (\Lambda),
 \qquad {\lambda_{\mathrm{bh}}}={\lambda_{\mathrm{bh}}(B^+)}.
$$

\begin{lemma}
\label{l.seq}
The sequence $(\mu_n)_n$ satisfies
    \begin{equation}
    \label{eq.inequalityinconvergence}
        W_1(\mu_n,\mu_{n+1})<\left( \frac{ \mathrm{diam}(M)}{2^{n-1} \cdot 3 \cdot 2L^{\mathrm{c}}}+K C_{f}^n\right)|\chi^{\mathrm{c}}(\mu)|, \quad  \mbox{where} \quad K \eqdef \frac{\mathrm{diam}(M)}{L^{\mathrm{c}}}.
    \end{equation}
\end{lemma}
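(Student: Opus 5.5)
Fix $n$ and a test function $\psi\in\mathrm{Lip}(1)$ normalized by $\psi(x_0)=0$, so that $|\psi|\le \mathrm{diam}(M)$. We bound $\bigl|\int\psi\,d\mu_{n+1}-\int\psi\,d\mu_n\bigr|$ by splitting the orbit $\mathcal{O}_{n+1}$ into its $\epsilon_n$-good part $G=(\mathcal{O}_{n+1})^{\mathrm{g}}_{\epsilon_n}(\mathcal{O}_n)$ and its bad part $B$. Then
\[
\int\psi\,d\mu_{n+1}=\frac{1}{\pi(q_{n+1})}\Big(\sum_{y\in G}\psi(y)+\sum_{y\in B}\psi(y)\Big).
\]

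\textbf{Good part.} By item (2) of Definition~\ref{d.goodap}, each ball $B_{\epsilon_n}(f^i(q_n))$, $0\le i<\pi(q_n)$, contains exactly $\kappa$ good points. Hence $\#G=\kappa\,\pi(q_n)$, and we may pair each good point $y$ with a point $f^i(q_n)$ satisfying $d(y,f^i(q_n))<\epsilon_n$, each orbit point of $\mathcal{O}_n$ being used exactly $\kappa$ times. Therefore
\[
\frac{1}{\#G}\sum_{y\in G}\psi(y)
\]
differs from $\int\psi\,d\mu_n$ by less than $\epsilon_n$, since $\psi$ is $1$-Lipschitz.

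\textbf{Combining the parts.} Write $\theta=\#G/\pi(q_{n+1})\ge\rho_n$. Then
\[
\Bigl|\int\psi\,d\mu_{n+1}-\int\psi\,d\mu_n\Bigr|
\le \theta\,\epsilon_n+(1-\theta)\Bigl|\frac{1}{\#B}\sum_{B}\psi-\int\psi\,d\mu_n\Bigr|
\le \epsilon_n+2(1-\rho_n)\,\mathrm{diam}(M).
\]

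\textbf{Plugging in the constants of Corollary~\ref{c.periodicpointshomclass}.} Since $\min_j d_j\le\mathrm{diam}(M)$ and $q_1=p$ (so $\chi^\c(q_1)=\chi^\c(\mu)$), we get
\[
\epsilon_n\le \frac{\mathrm{diam}(M)\,|\chi^\c(\mu)|}{3\cdot 2^n L^\c}.
\]
This is half the first term of \eqref{eq.inequalityinconvergence}, which leaves slack. Next, $1-\rho_n=|\chi^\c(q_{n+1})|/(2L^\c)$. Iterating item (3) of the corollary gives $|\chi^\c(q_{n+1})|<C_f^n|\chi^\c(\mu)|$. So
\[
2(1-\rho_n)\,\mathrm{diam}(M)<\frac{\mathrm{diam}(M)}{L^\c}\,C_f^n\,|\chi^\c(\mu)|=K\,C_f^n\,|\chi^\c(\mu)|.
\]
Taking the supremum over $\psi$ yields \eqref{eq.inequalityinconvergence}.

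\textbf{Main obstacle.} The only delicate point is the constant in front of the bad part. If one bounds that term crudely by $2\,\mathrm{diam}(M)$, the constant $K$ must absorb the resulting factor $2$; the definition of $\rho_n$, with its $2L^\c$ in the denominator, is designed precisely for this. No other step needs more than bookkeeping.
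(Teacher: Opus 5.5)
Your argument is correct, but it handles the good part differently from the paper. The paper never invokes item (2) of Definition~\ref{d.goodap}. It rewrites $\pi_{n+1}\int\varphi\,d\mu_{n+1}$ as $\sum_{x\in\mathcal{O}_{n+1}}\frac{1}{\pi_n}\sum_{i=0}^{\pi_n-1}\varphi(f^i(x))$. For each good $x$, it uses that $x$ $\epsilon_n$-shadows some $\bar x\in\mathcal{O}_n$ during a full period $\pi_n$ (this is built into Definition~\ref{d.goodpoints}), so the $\pi_n$-step Birkhoff average starting at $x$ lies within $\epsilon_n$ of $\int\varphi\,d\mu_n$. Bad points are then bounded crudely, exactly as you do. You instead use only the time-zero closeness of good points to $\mathcal{O}_n$, together with the disjointness of the balls and the equal-multiplicity condition (2). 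This shows that the uniform measure on the good points is an $\epsilon_n$-perturbation of $\mu_n$. Both routes are legitimate, since Corollary~\ref{c.periodicpointshomclass} asserts a full $(\epsilon_n,\rho_n)$-good approximation. The paper's route is a bit more economical in that it never needs condition (2), which in the proof of Lemma~\ref{l.periodicpointshomoclinicclass} is automatic only because $p$ is taken to be fixed. Yours, in turn, never needs shadowing over a whole period.

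Two small points. First, since $2^{n-1}\cdot 3\cdot 2L^{\mathrm{c}}=3\cdot 2^n L^{\mathrm{c}}$, your bound on $\epsilon_n$ equals the first term of \eqref{eq.inequalityinconvergence}, not half of it; this is harmless. Second, your bookkeeping $1-\rho_n=|\chi^{\mathrm{c}}(q_{n+1})|/(2L^{\mathrm{c}})$ with $|\chi^{\mathrm{c}}(q_{n+1})|<C_f^n|\chi^{\mathrm{c}}(\mu)|$ is the right one. The paper's text instead passes through $|\chi^{\mathrm{c}}(\mu_n)|<C_f^n|\chi^{\mathrm{c}}(\mu)|$, which is off by one index: for $n=1$ it would read $|\chi^{\mathrm{c}}(\mu)|<C_f|\chi^{\mathrm{c}}(\mu)|$.
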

\begin{proof}
    By the definition of the Wasserstein distance, we need to estimate
$$
\left| \int \varphi d \mu_{n+1} - \int \varphi d \mu_n \right|, \qquad
\mbox{where  $\varphi \in \mathrm{Lip}(1)$.}
$$ 
Set, for each $n$, and $x \in \Lambda$,
$$
\eta_{\pi_n}(x) \eqdef  \frac{1}{\pi_n}\sum_{i=0}^{\pi_n-1} \delta_{f^i(x)},
\qquad 
\mbox{where $\pi_n$ is the period of $\mathcal{O}_n$.}
$$
Note that, since $\mathcal{O}_{n+1}$ is a periodic orbit, it holds
\begin{align*}
    \sum_{ x \in \mathcal{O}_{n+1}} \int \varphi d \eta_{\pi_n}(x) & = \sum_{ x \in \mathcal{O}_{n+1}} \left( \frac{1}{\pi_n} \sum_{i=0}^{\pi_n-1} \varphi(f^i(x))\right) = \frac{1}{\pi_n} \sum_{ x \in \mathcal{O}_{n+1}} \sum_{i=0}^{\pi_n-1} \varphi(f^i(x)) \\
    & = \frac{1}{\pi_n} \sum_{x \in \mathcal{O}_{n+1}} \, \pi_n \varphi(x) = \pi_{n+1} \, \int \varphi d \mu_{n+1}.
\end{align*}
Recall Definition~\ref{d.goodpoints} and the sets
$
\mathcal{O}_{n+1,\epsilon_n}^{\mathrm{i}}(\mathcal{O}_n)$, $\mathrm{i}\in\{ \mathrm{g}, \mathrm{b}\}$.
 It follows that
\begin{align*}
\left| \int \varphi d \mu_{n+1} - \int \varphi d \mu_n \right| 
& = \frac{1}{\pi_{n+1}}\left| \sum_{ x \in \mathcal{O}_{n+1}} \int \varphi d \eta_{\pi_n}(x) -\pi_{n+1} \int \varphi d\mu_{n} \right| \\
& \leq \frac{1}{\pi_{n+1}}\left| \sum_{ x \in \mathcal{O}_{n+1,\epsilon_n}^{\mathrm{g}}(\mathcal{O}_n)}  \left(\int \varphi d \eta_{\pi_n}(x) - \int \varphi d\mu_{n}\right) \right| + \\
& + \frac{1}{\pi_{n+1}}\left| \sum_{ x \in 
\mathcal{O}_{n+1,\epsilon_n}^{\mathrm{b}}(\mathcal{O}_n)} \left(\int \varphi d \eta_{\pi_n}(x) -\int \varphi d\mu_{n} \right) \right|.
\end{align*}
To estimate the first sum,
take any $x \in \mathcal{O}_{n+1, \epsilon_n}^{\mathrm{g}}(\mathcal{O}_n)$. 
As $\varphi \in \mathrm{Lip}(1)$, it follows that 
\[
\left| \int \varphi d\eta_{\pi_n}(x) - \int \varphi d \mu_n \right|<  \epsilon_n= \frac{(\min_{j \leq n} d_j) |\chi^{\mathrm{c}|}(\mu)}{2^{n-1} \cdot 3 \cdot 2{L^{\mathrm{c}}}} 
< \frac{ \mathrm{diam}(M) |\chi^{\mathrm{c}|}(\mu)}{2^{n-1} \cdot 3 \cdot 2{L^{\mathrm{c}}}}.
\]
Therefore,
\begin{equation}\label{eq.first}
    \frac{1}{\pi_{n+1}}\left| \sum_{ x \in 
\mathcal{O}_{n+1,\epsilon_n}^{\mathrm{g}}(\mathcal{O}_n)}  \left(\int \varphi d \eta_{\pi_n}(x) - \int \varphi d\mu_{n}\right) \right| <\frac{ \mathrm{diam}(M) | \chi^{\mathrm{c}}(\mu)|}{2^{n-1} \cdot 3 \cdot 2{L^{\mathrm{c}}}}.
\end{equation}
To estimate the second sum, write
\begin{equation}
\label{eq.second}
\begin{split}
     \frac{1}{\pi_{n+1}}&\left| \sum_{x \in 
\mathcal{O}_{n+1,\epsilon_n}^{\mathrm{b}}(\mathcal{O}_n)} \left(\int \varphi d \eta_{\pi_n}(x) -\int \varphi d\mu_{n} \right) \right|\\
     & \qquad < \frac{\#(\mathcal{O}_{n+1,\epsilon_n}^{\mathrm{b}}(\mathcal{O}_n)}
     {\pi_{n+1}} \left| \int \varphi d \eta_{\pi_n}(x) - \int \varphi d\mu_{n}  \right| \\
    & \qquad  < 2 \cdot \mathrm{diam}(M) \frac{|\chi^{\mathrm{c}}(\mu_n)|}{2L^{\mathrm{c}}}=  K| \chi^{\mathrm{c}}(\mu_n) |,
\end{split}
\end{equation}
 where in the last inequality we use that $\mathcal{O}_{n+1}$ is a $(\epsilon_n,\rho_n)$-good approximation of $\mathcal{O}_{n}$ and that $\|\varphi\|\leq \mathrm{diam}(M)$, and in the equality the definition of $K$ in \eqref{eq.inequalityinconvergence}.

Putting together the estimates in~\eqref{eq.first} and \eqref{eq.second}, we obtain
\[
\begin{split}
\left| \int \varphi d\mu_{n+1} - \int \varphi d \mu_n \right| &< \frac{ \mathrm{diam}(M) \cdot \chi^{\mathrm{c}}(\mu)}{2^{n-1} \cdot 3 \cdot 2L^{\mathrm{c}}}+ K |\chi^{\mathrm{c}}(\mu_n)|
\end{split}
\]
Using that $|\chi^{\mathrm{c}}(\mu_n)|< C_f^n|\chi^{\mathrm{c}}(\mu)|$, recall (3) in Corollary~\ref{c.periodicpointshomclass}, we get
$$
W_1(\mu_n,\mu_{n+1})<\left( \frac{ \mathrm{diam}(M)}{2^{n-1} \cdot 3 \cdot 2L^{\mathrm{c}}}+K C_{f}^n\right) |\chi^{\mathrm{c}}(\mu)|,
$$
ending  the proof of the lemma.    
\end{proof}

To finish the proof of the  proposition, note that by Lemma~\ref{l.seq} and \eqref{eq.distmunu},   
 $$
        W_1(\mu,\nu) \leq \sum_{n=1}^{\infty} W_1(\mu_n,\mu_{n+1})<\sum_{n=1}^{\infty}\left( \frac{ \mathrm{diam}(M)}{2^{n-1} \cdot 3 \cdot 2L^{\mathrm{c}}}+K C_{f}^n\right)|\chi^{\mathrm{c}}(\mu)|,
    $$
Since $0<C_{f}<1$, it follows that
    $$
        A_\Lambda \eqdef \sum_{n=1}^{\infty}\left( \frac{ \mathrm{diam}(M)}{2^{n-1} \cdot 3 \cdot 2L^{\mathrm{c}}}+K C_{f}^n\right) < \infty,
    $$
where $A_\Lambda$ does not depend on  $\mu$. This ends the proof of the proposition. 
\qed

\subsection{Proof of Lemma~\ref{l.periodicpointshomoclinicclass}}
\label{ss.proofofperiodicpointshomoclinicclass}

Let $f$ and $\Lambda$ be as in Proposition~\ref{p.mainprop}.  
Note that as $\Lambda$ is an isolated set, there is a neighborhood $U$ of $\Lambda$ such that 
$\Lambda=\Lambda_f(\overline U)=\Lambda_f (U)$, recall
\eqref{e.LambdaU}.

\begin{remark}
\label{r.orbitcontained}
    If $\mathcal{O}(x) \subset U$, then $\mathcal{O}(x) \subset \Lambda_f(U)=\Lambda$.
\end{remark}

\subsubsection{Liao-Gan's shadowing lemma}
\label{ss.shadowinglemma}

We will use  the shadowing lemma in~\cite{Gan:02} that we state for a strongly partially hyperbolic set $\Lambda$, considering the splitting $T_{\Lambda}M = E \oplus F$, where $E = E^\s \oplus E^\c \eqdef E^{\cs}$ and $F=E^\u$. 

Given $x \in \Lambda$ and $n \in \bN$, denote by $[x,n]$ the {\em{orbit segment}} $\{x,f(x),\dots,f^n(x)\}$.

\begin{lemma}[\cite{Gan:02}]
\label{l.gan}
    Let $f \in \mathrm{Diff}^1(M)$ and $\Lambda$ be a strongly partially hyperbolic set with splitting $T_{\Lambda}M = E \oplus F$. Given $\lambda<0$, there are $L,a_0>0$ such that for every $a \in (0,a_0]$ and every 
     orbit segment $[x,n]$ such that 
        \begin{enumerate}
            \item $d(x,f^n(x))<a$;
            \item $\frac{1}{k} \sum_{i=0}^{n-1}\log{\|Df|_{E^{\cs}(f^i(x))}\|} \leq \lambda$ for every $1 \leq k \leq n$; and
            \item $\frac{1}{k} \sum_{i=0}^{n-1}\log{\|Df^{-1}|_{E^\u (f^{n-i}(x))}\|} \leq \lambda$ for every $1 \leq k \leq n$;
        \end{enumerate}
    there exists a periodic point $p$ with period $\pi(p)=n$ such that 
        $$
            d(f^i(x),f^i(p)) < L  a, \quad \mbox{for every } i \in \{0,\dots,n\}.
        $$
\end{lemma}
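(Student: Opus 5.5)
This is the Liao--Gan shadowing lemma, and the paper quotes it from~\cite{Gan:02}; I would reprove it by the usual scheme: close the orbit segment into a periodic pseudo-orbit, linearize along it, prove that the linearized operator is uniformly hyperbolic, and conclude with a contraction-mapping argument. I read conditions (2) and (3) in the intended ``quasi-hyperbolic string'' form. That is, the averages run over the first $k$ iterates, $\frac1k\sum_{i=0}^{k-1}\log\|Df|_{E(f^i(x))}\|\le\lambda$, and over the last $k$ iterates for $F$ under $f^{-1}$, for every $1\le k\le n$.

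\emph{Step 1 (setup).} Extend the continuous splitting $E\oplus F$, the domination and the cone fields to a small neighbourhood of $\Lambda$. Consider the periodic $a$-pseudo-orbit $y_i=f^{i \bmod n}(x)$, $i\in\mathbb Z$; its only jump is $d(f^n(x),x)<a$. Fix a small $r>0$. Via exponential charts, $f$ induces maps $F_i\colon T_{y_i}M(r)\to T_{y_{i+1}}M$ with $F_i(0)=O(a)$ at the jump and $F_i(0)=0$ elsewhere, and with $DF_i(0)$ equal to $Df_{y_i}$ up to a parallel-transport correction of size $O(a)$. An $n$-periodic sequence $(v_i)$ with $F_i(v_i)=v_{i+1}$ corresponds to a periodic point $p$ of period $n$ with $d(f^i(x),f^i(p))\le|v_i|$. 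So it suffices to find a fixed point of $\mathcal T(v)_{i+1}=F_i(v_i)$ on the Banach space $\ell^\infty_n=\{(v_i)\ \text{$n$-periodic}\}$ with the sup norm, near $0$.

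\emph{Step 2 (uniform hyperbolicity of the linear cocycle; main obstacle).} Let $A_i=Df_{y_i}$ in the block form adapted to $E\oplus F$. I must show that $\mathrm{Id}-\mathcal A$, where $(\mathcal A v)_{i+1}=A_iv_i$, is invertible on $\ell^\infty_n$ with $\|(\mathrm{Id}-\mathcal A)^{-1}\|\le L_0$, and that $L_0$ depends only on $\lambda$ and $\Lambda$, not on $n$. This is the hard point, because (2) and (3) give contraction only from the start of the segment (for $E$) and from the end (for $F$), not from every intermediate time. I would use Liao's cyclic trick. Since the segment is closed, the product $\prod_{i=0}^{n-1}\|Df|_{E(f^ix)}\|\le e^{n\lambda}$ is a genuine contraction of the return map on $E$. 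Likewise (3) makes the return map expand $F$. Domination then gives, for every block $[j,k]$, the bound $\|Df^{k-j}|_E\|\cdot\|Df^{-(k-j)}|_F\|\le C\lambda_0^{k-j}$. Splitting the full cycle at the first index where a partial $E$-product fails to be contracting, the two pieces together with domination control all intermediate products. This yields an exponential dichotomy with constants depending only on $\lambda$ and the domination constants. Given the dichotomy, the inverse is obtained by solving the $E$-component forward and the $F$-component backward around the cycle, summing geometric series. The invariant cone fields absorb the $O(a)$ discrepancy between $E,F$ at $y_i$ and at $y_{i+1}$ across the jump. If the direct estimate fails, I would fall back on applying the Pliss lemma to the cyclic sequence, which produces hyperbolic times at a positive density, and running the argument between consecutive hyperbolic times.

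\emph{Step 3 (nonlinear closing).} Write $\mathcal T(v)=\mathcal A v+\mathcal N(v)+b$. Here $|b|=O(a)$ comes from the jump, and $\mathcal N$ has Lipschitz constant $\omega(r)\to0$ as $r\to0$, by uniform continuity of $Df$ (only $C^1$ is used). The fixed-point equation becomes $v=(\mathrm{Id}-\mathcal A)^{-1}(\mathcal N(v)+b)$. Choose $r$, and hence $a_0$, so small that $L_0\,\omega(r)<1/2$. Then this map is a contraction of the ball of radius $2L_0 a\le r$ into itself, and the fixed point satisfies $\|v\|\le 2L_0a$. Setting $L=2L_0$ gives $d(f^i(x),f^i(p))<La$ for all $0\le i\le n$. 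The period is exactly $n$ in the sense of the statement, $f^n(p)=p$, where $n$ is the length of the segment.
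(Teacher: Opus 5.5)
The paper gives no proof of this lemma; it only cites it from~\cite{Gan:02}. Your reading of (2)--(3) is the intended one: averages over the first, respectively last, $k$ iterates. So is your reading of ``period $n$'' as $f^n(p)=p$, and Steps~1 and~3 are the standard scheme.

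\textbf{Step~2 is false as stated, and Step~3 depends on it.} Under these hypotheses there is no exponential dichotomy along the cycle with $n$-independent constants, and $(\mathrm{Id}-\mathcal A)^{-1}$ is not uniformly bounded on $\ell^\infty_n$. Conditions (2)--(3) only make time $0$ a forward hyperbolic time for $E$ and time $n$ a backward hyperbolic time for $F$. In between, $E$ may be expanded along stretches of length proportional to $n$. Domination bounds only the ratio of $\|Df^{k-j}|_E\|$ to the $F$-expansion, not $\|Df^{k-j}|_E\|$ itself for intermediate $j<k$. So splitting the cycle at the first bad index cannot control all intermediate products.

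A linear model shows this. Take $\dim E=\dim F=1$, $n$ even, $\lambda=\log(1/2)$, $E$-multipliers $a_i=1/8$ for $i<n/2$ and $a_i=2$ for $n/2\leq i<n$, and $F$-multipliers $b_i=4$. Then $\prod_{i<k}a_i\leq 2^{-k}$ for all $k\leq n$, condition (3) holds, and $a_i/b_i\leq 1/2$. Yet the $n$-periodic solution of $v_{i+1}=a_iv_i+u_{i+1}$, with $u$ a unit impulse at time $n/2+1$, has $v_0=2^{n/2-1}/(1-2^{-n})$.

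This is not a pathology. In the paper's own application (proof of Lemma~\ref{l.periodicpointshomoclinicclass}), the segment spends $k$ consecutive iterates, with $k$ comparable to $n$, near the blender, where $E^{\mathrm{c}}\subset E^{\mathrm{cs}}$ is expanded by $\lambda_{\mathrm{bh}}>1$. With $\|(\mathrm{Id}-\mathcal A)^{-1}\|$ of order $e^{cn}$, your contraction in Step~3 needs $\omega(r)\lesssim e^{-cn}$. Then $a_0$ and $L$ depend on $n$, which is exactly what the lemma must exclude: it is applied with $a$ fixed and $n=\ell+C+k+N\to\infty$. The Pliss fallback does not help, because such a stretch contains no $E$-hyperbolic times and the blow-up is genuine, not an artifact of the estimate.

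\textbf{What is missing is a weighted norm.} The only forcing that is not a small-Lipschitz term sits at the jump, which is exactly where the hypotheses are anchored. So work in a weighted norm instead of the sup norm.
\begin{itemize}
\item Fix $\theta\in(\max\{e^{\lambda},\lambda_0\},1)$, where $\|Df|_{E(y)}\|\,\|Df^{-1}|_{F(f(y))}\|\leq\lambda_0$ is the one-step domination in the adapted metric.
\item Write $B_i=Df|_{E(f^ix)}$ and $C_i=Df|_{F(f^ix)}$.
\item Set $\alpha_k=\theta^{-k}\prod_{i<k}\|B_i\|$, $\beta_k=\theta^{-(n-k)}\prod_{k\leq i<n}\|C_i^{-1}\|$, $w_k=\alpha_k+\beta_k$, and $\|v\|_w=\sup_k|v_k|/w_k$.
\end{itemize}
These weights have the properties you need.
\begin{itemize}
\item By (2)--(3), $w_k\leq 2$ and $w_0,w_n\geq 1$.
\item One-step domination gives $\|B_{k-1}\cdots B_j\|\,w_j\leq(\theta^{k-j}+(\lambda_0/\theta)^{k-j})\,w_k$ and $\|C_j^{-1}\cdots C_{k-1}^{-1}\|\,w_k\leq(\theta^{k-j}+(\lambda_0/\theta)^{k-j})\,w_j$ for $j<k$.
\item $w_{k+1}\geq c\,w_k$ with $c=\theta/\sup\|Df^{-1}\|$.
\end{itemize}
Consequently the pieces of your argument go through in $\|\cdot\|_w$.
\begin{itemize}
\item $(\mathrm{Id}-\mathcal A)^{-1}$ is bounded in $\|\cdot\|_w$ by a constant $L_0(\theta,\lambda_0)$. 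The wrap-around is handled through $\mathrm{Id}-\Phi^E$ and $\mathrm{Id}-(\Phi^F)^{-1}$, whose inverses are bounded by $(1-e^{n\lambda})^{-1}$.
\item The remainder $\mathcal N$ vanishes at $0$ away from the jump, so its $\|\cdot\|_w$-Lipschitz constant is at most $\omega(r)/c$.
\item The jump term satisfies $\|b\|_w\leq|b|$.
\end{itemize}
Your Step~3 then runs verbatim in this norm and gives $\sup_k|v_k|\leq 2\|v\|_w\leq 4L_0|b|=O(a)$. In short, domination should be used to control the $E$--$F$ cross terms in a weighted space, not to produce a uniform dichotomy.
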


\begin{remark}
\label{r.lambdau1}
    Let $[x,n]$ be an orbit segment contained in $\Lambda$. By the partial hyperbolicity of $\Lambda$, there are
    $C>0$ and ${\lambda_\u} \in (0,1)$ such that 
        \begin{equation*}
            \|Df^{-i}|_{E^\u(f^i(x))}\|\leq C i \log{\lambda_\u}<0, \qquad \mbox{for every $1 \leq i \leq n$}.
        \end{equation*}
    By~\cite{Gou:07}, there is an {\em{adapted metric}} such that $C=1$. Hence, 
   Condition $(3)$ in Lemma~\ref{l.gan} is always satisfied for $\lambda=\log{\lambda_\u}$.
    \end{remark}

\subsubsection{End of the proof of Lemma~\ref{l.periodicpointshomoclinicclass}}
The number $\epsilon_0>0$ in the statement of the lemma is chosen as follows:
every $x$ with $d(x,\Lambda)<\epsilon_0$ belongs to $U$.              
In what follows, fix any $\epsilon \in (0,\epsilon_0)$
and take any saddle $p\in \Lambda$ with $\chi^\c (p)<0$. For notational  
simplicity, let us assume that $f(p)=p$.
 We construct an orbit segment satisfying Conditions (1)-(3) of Lemma~\ref{l.gan}
for appropriate constants to be  fixed.

Recall the map $\varphi^\c$ in~\eqref{eq.derivadacentrala}. By continuity, 
given 
\begin{equation}
\label{eq.choiceofdelta}
       0< \delta < \frac{|\le^{\mathrm{c}}(p)|}{4},
\end{equation}
there is $\gamma>0$ such that 
\begin{equation}
\label{eq.choiceofgamma}
    |\varphi^\mathrm{c}(x) - \varphi^{\mathrm{c}}(y)|<\delta \qquad \mbox{for every $x,y \in \Lambda$ such that $d(x,y)<\gamma$}.
\end{equation}

Recalling Remark~\ref{r.lambdau1}, Condition (3) of Lemma~\ref{l.gan} is satisfied for 
    \begin{equation}
    \label{eq.choiceoflambda}
        \lambda \eqdef \max  \left\{ \frac{\le^{\mathrm{c}}(p)}{4},\log{\lambda_\u}\right\}<0.
    \end{equation}
Let $L,a_0$ be the numbers associated to $\lambda$ given by Lemma~\ref{l.gan} and take
    \begin{equation}
    \label{eq.escolhahomclass}
        0<a<\min \left\{a_0,\frac{\gamma}{L+1},\frac{\epsilon}{L+1}\right\}.
    \end{equation}

We are now ready to construct an orbit segment.
By the hypotheses and Remark~\ref {r.refinedflipfop},
and after shrinking $a$ if necessary and taking small $\rho>0$,
there is a split flip-flop configuration
$$
 \mathrm{SFF}(\mathfrak{D}_{a/4, \rho}(p),\mathfrak{D}_{\mathrm{bet}}(B^+),N,C), 
 \qquad
N=N_{a/4,\rho, p,B^+}, \quad C=C_{a/4,\rho, p,B^+}.
$$
We can assume that the disks in $\mathfrak{D}_{a/4, \rho}(p)$ are contained in the 
ball of radius $\gamma/2$ centered at $p$. For simplicity, write 
$\mathfrak{D}(p)=\mathfrak{D}_{a/4, \rho}(p)$.

 Note that for every disk $D' \in \mathfrak{D}(p)$,
the set $f^C(D')$ contains a disk $D\in \mathfrak{D}_{\mathrm{bet}}(B^+)$.
 By Lemma~\ref{l.sobrediscos},   for every $k \in \N$
 the disk $D$ contains a subdisk $D_k$ such that 
\begin{itemize}
    \item $f^i(D_k)$ is contained in a disk in $\mathfrak{D}_{\mathrm{bet}}(B^+)$ for every $i \in \{0,\dots,k-1\}$; and 
    \item $f^k(D)$ contains a disk $\widetilde D \in  \mathfrak{D}_{\mathrm{bet}}(B^+)$.   
    \end{itemize}
Note that  $f^N(\widetilde{D})$ contains a disk in $\mathfrak{D}(p)$ that intersects $W^\mathrm{s}(p)$ transversally at some point $y$. 
Define 
    \begin{equation}
     \label{eq.constrqk}
        q_k \eqdef f^{-N-k}(y), \qquad   q_k \in f^{-k}(\widetilde{D}) \subset D_k \subset D \subset f^{C}(D').
    \end{equation}
Hence $f^{-C}(q_k) \in D'$.
    Since $p$ is a fixed point, it follows that
    \begin{equation}
    \label{eq.qkmtop}
        q_{k,\ell} \eqdef f^{-\ell-C}(q_k) \in W^{\mathrm{u}}_{a/4}(p), \quad \mbox{for every $\ell \in \N$}.
    \end{equation}
Thus, the orbit segment $[q_{k,\ell}, \ell+C+k+N]$ satisfies Condition (1).

\begin{remark}
\label{r.positionoforbit}
    By~\eqref{eq.qkmtop},
           $ f^n(q_{k,\ell}) \in W^{\mathrm{u}}_{a/4}(p)$  for every $0 \leq n \leq \ell$.
    Hence, by~\eqref{eq.constrqk},  $f^n(q_{k,\ell})$ belongs to some 
    disk in $\mathfrak{D}_{\mathrm{bet}}(B^+)$ for every $\ell+ C \leq n \leq \ell+ C +k$. Since the blender-horseshoe $B^+$ is unstable, the disks in-between are tangent to $\mathcal{C}^\u$, and since $\mathcal{C}^\u \subset \mathrm{int}(\mathcal{C}^\cu)$, recalling Remark~\ref{r.cuexpansion} it follows that    
    $$
            \varphi^\c(f^n(q_{k,\ell})) > \log{\lambda_{\mathrm{bh}}}
            \qquad \mbox{for every $\ell+C \leq n \leq \ell+ C +k$.}    
        $$   
\end{remark}

We claim that $[q_{k,\ell},\ell+C+k+N]$ satisfies Condition (2). Due to the domination and 
since we are considering an adapted metric, it is enough to check it
 for the subbundle $E^{\mathrm{c}}$ of $E^{\cs}$:

\begin{claim}
\label{af.cond2}
        There are arbitrarily large $\ell$ and $k$ such that 
            $$
                \frac{1}{n} \sum_{i=0}^{n-1} \varphi^{\mathrm{c}}(f^i(q_{k,\ell})) < \frac{\chi^{\mathrm{c}}(p)}{2} \leq \lambda,
                \qquad \mbox{for every $1 \leq n \leq \ell+C+k+N$.}
            $$
\end{claim}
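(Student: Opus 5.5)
The plan is to split the orbit segment $[q_{k,\ell},\ell+C+k+N]$ into a long initial part near $p$ followed by a tail of bounded length $C+k+N$. On the initial part the central derivative is uniformly negative. The tail can only spoil the averages by an amount proportional to its length. We therefore fix $k$ arbitrary (so $k$ can be taken as large as desired) and then choose $\ell$ large compared with $C+k+N$. Recall that we assume $f(p)=p$, so $\chi^\c(p)=\varphi^\c(p)<0$, and that $\varphi^\c$ is continuous on a neighborhood of $\Lambda$ (the splitting is extended there), with $|\varphi^\c|\le L^\c$ along the orbit, which lies in $\Lambda$ or arbitrarily close to it.

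\emph{Step 1 (initial part).} For $0\le i\le \ell$, by \eqref{eq.qkmtop} and Remark~\ref{r.positionoforbit}, $f^i(q_{k,\ell})=f^{-(\ell-i)-C}(q_k)\in W^{\mathrm u}_{a/4}(p)$, hence $d(f^i(q_{k,\ell}),p)<a/4<\gamma$ by \eqref{eq.escolhahomclass}. By \eqref{eq.choiceofgamma} and \eqref{eq.choiceofdelta}, $\varphi^\c(f^i(q_{k,\ell}))<\chi^\c(p)+\delta<\tfrac34\chi^\c(p)$. Averaging, for every $1\le n\le \ell+1$ we get $\frac1n\sum_{i=0}^{n-1}\varphi^\c(f^i(q_{k,\ell}))<\tfrac34\chi^\c(p)<\tfrac12\chi^\c(p)$.

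\emph{Step 2 (tail).} For $\ell+1<n\le \ell+C+k+N$ write $n=\ell+1+m$ with $m\le C+k+N$. Bounding the last $m$ terms by $L^\c$ gives
\[
\sum_{i=0}^{n-1}\varphi^\c(f^i(q_{k,\ell}))<\tfrac34(\ell+1)\chi^\c(p)+mL^\c .
\]
We need this to be less than $\tfrac12 (\ell+1+m)\chi^\c(p)$, which is equivalent to
\[
\tfrac14(\ell+1)|\chi^\c(p)| > m\bigl(L^\c+\tfrac12|\chi^\c(p)|\bigr).
\]
Since $m\le C+k+N$, it suffices to take $\ell+1> 4(C+k+N)\bigl(L^\c+\tfrac12|\chi^\c(p)|\bigr)/|\chi^\c(p)|$. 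The constants $C,N$ depend only on the fixed split flip-flop configuration, so for every $k$ all sufficiently large $\ell$ work. This gives arbitrarily large pairs $(k,\ell)$.

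\emph{Step 3 (comparison with $\lambda$).} Finally, $\tfrac12\chi^\c(p)\le\tfrac14\chi^\c(p)$ and $\tfrac12\chi^\c(p)\le\lambda$ need care, because $\lambda$ in \eqref{eq.choiceoflambda} is the maximum of $\chi^\c(p)/4$ and $\log\lambda_\u$. Since $\lambda\ge\chi^\c(p)/4>\chi^\c(p)/2$, the inequality $\tfrac12\chi^\c(p)\le\lambda$ holds trivially.

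The only real obstacle is the positive contribution of the blender stretch, where by Remark~\ref{r.positionoforbit} we have $\varphi^\c>\log\lambda_{\mathrm{bh}}>0$. This is handled entirely by the ratio $\ell\gg k$ in Step 2. One must keep $\ell$ large but not unboundedly large relative to $k$, because the next part of the proof of Lemma~\ref{l.periodicpointshomoclinicclass} needs the blender stretch to reduce $|\chi^\c|$. For the present claim, however, only the lower bound on $\ell$ is needed.
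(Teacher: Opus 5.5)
Your proof of the claim as stated is correct and follows the same outline as the paper's. The iterates $0\le i\le\ell$ lie in $W^{\mathrm{u}}_{a/4}(p)$ and each contributes roughly $\chi^{\mathrm{c}}(p)$, the at most $C+k+N$ remaining terms are bounded by $L^{\mathrm{c}}$, and $\ell$ is taken large relative to $k$.

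The difference is in the quantifiers, and it matters downstream.

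The paper keeps the sharp per-term bound $\chi^{\mathrm{c}}(p)+\delta$ and pins $\ell/k$ in the two-sided window \eqref{eq.escmk}. Both ends of that window tend to $(2L^{\mathrm{c}}-\chi^{\mathrm{c}}(p))/(-\chi^{\mathrm{c}}(p))$ as $\delta\to 0$. Its estimate for $n>\ell$ ends at $\chi^{\mathrm{c}}(p)/2+\delta$, so in that range it only proves $<\chi^{\mathrm{c}}(p)/4\le\lambda$, which is all Lemma~\ref{l.gan} needs.

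Your cruder bound $\tfrac34\chi^{\mathrm{c}}(p)$ with a one-sided condition does give the literal bound $<\chi^{\mathrm{c}}(p)/2$ for every $n$. The cost is that your condition asks for $\ell/k$ above roughly $(4L^{\mathrm{c}}-2\chi^{\mathrm{c}}(p))/(-\chi^{\mathrm{c}}(p))$, twice the paper's ratio.

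This is harmless for the claim alone, but the same $(\ell,k)$ are reused in Claim~\ref{cl.semlabel} and in the good-approximation claim. Those need the upper end of \eqref{eq.escmk}, via \eqref{eq.lowerboundkmk}, after $\delta$ has been shrunk in \eqref{e.deltadelta} to be much smaller than $|\chi^{\mathrm{c}}(p)|$. Your pairs then fall outside the window. For small $|\chi^{\mathrm{c}}(p)|$ the bound $|\chi^{\mathrm{c}}(q)|<(1-\log\lambda_{\mathrm{bh}}/(2L^{\mathrm{c}}))|\chi^{\mathrm{c}}(p)|$ of Lemma~\ref{l.periodicpointshomoclinicclass} no longer follows; you would get only a weaker contraction constant, and only if you also cap $\ell/k$. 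So your remark that only a lower bound on $\ell$ is needed holds for the claim in isolation, not for how the claim is used.

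The fix is to keep $\chi^{\mathrm{c}}(p)+\delta$ in your Step 1. Your Step 2 inequality then only requires $(\ell+1)/(C+k+N)>(2L^{\mathrm{c}}-\chi^{\mathrm{c}}(p))/(-\chi^{\mathrm{c}}(p)-2\delta)$. This lies strictly below the lower end of \eqref{eq.escmk}, so for $k$ large every pair in the paper's window satisfies the claim exactly as stated.
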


\begin{proof}
        We split the proof in two cases: $1 \leq n \leq \ell$ and $\ell+1 \leq n \leq \ell+C+k+N$. 
    
    \begin{fact}
    \label{fact.estimativaprimeiraparte}
            For every $1 \leq n \leq \ell$, it follows that 
                \begin{equation*}
                    \left| \frac{1}{n} \sum_{i=0}^{n-1} \varphi^{\mathrm{c}}(f^i(q_{k,\ell})) - \chi^{\mathrm{c}}(p) \right| < \delta, \quad \mbox{where $\delta$ is as in \eqref{eq.choiceofdelta}.}
                \end{equation*}
    \end{fact}
    \begin{proof}
        Fix  $1 \leq n \leq \ell$. By Remark~\ref{r.positionoforbit} and recalling the choice of $a$ in~\eqref{eq.escolhahomclass}, we have  that $f^n(q_{k,\ell}) \in W^{\mathrm{u}}_{a/4}(p)$, therefore
                $$
                    d(f^n(q_{k,\ell}),p)< \frac{a}{4}< \gamma.
                $$
        By the choice of $\gamma$ in~\eqref{eq.choiceofgamma}, we get
                \begin{equation}
                \label{eq.fnqkmtop}
                    | \varphi^{\mathrm{c}}(f^n(q_{k,\ell}))-\varphi^{\mathrm{c}}(p)|< \delta, \qquad
                    \mbox{for every $1 \leq n \leq \ell$.}
                \end{equation}
          The fact follows noting that $p$ is a fixed point and $\chi^\mathrm{c}(p) = \varphi^{\mathrm{c}}(p)$.
    \end{proof}

    By Fact~\ref{fact.estimativaprimeiraparte}, the choice of $\delta$ in~\eqref{eq.choiceofdelta}, and $\le^{\mathrm{c}}(p)<0$,  it follows that 
        \begin{equation*}
            \frac{1}{n} \sum_{i=0}^{n-1} \varphi^{\mathrm{c}}(f^i(q_{k,\ell}))< \chi^{\mathrm{c}}(p) + \delta < \frac{\chi^{\mathrm{c}}(p)}{2},
            \quad \mbox{for every $1 \leq n \leq \ell$},
        \end{equation*}
        proving the claim for $1 \leq n \leq \ell$.
    
   We now consider the case $\ell+1 \leq n \leq \ell+C+k+N$, where we
    assume that $\ell, k$ are large enough satisfying
      \begin{equation}
    \label{eq.escmk}
        \frac{2L^{\mathrm{c}} +2 \delta -\chi^{\mathrm{c}}(p)}{-\chi^{\mathrm{c}}(p) -2 \delta} < \frac{\ell}{k} < \frac{2L^{\mathrm{c}} +3 \delta -\chi^{\mathrm{c}}(p)}{-\chi^{\mathrm{c}}(p) -3 \delta}, \qquad 
        \mbox{$L^\c$ as in \eqref{e.Lc}.}
    \end{equation}

     Using Fact~\ref{fact.estimativaprimeiraparte} and the definition of $L^\c $  it follows that
            \begin{equation}
            \label{eq.secondcase1}
            \begin{split}
                \sum_{i=0}^{n-1} \varphi^{\mathrm{c}}(f^i(q_{k,\ell})) 
                & = \sum_{i=0}^{\ell-1} \varphi^{\mathrm{c}}(f^i(q_{k,\ell})) + \sum_{i=\ell}^{n-1} \varphi^{\mathrm{c}}(f^i(q_{k,\ell})) \\
                & \leq \ell  (\chi^{\mathrm{c}}(p) + \delta) + (n-\ell)  L^{\mathrm{c}}
                \\
                &= n L^{\mathrm{c}} + {\ell} \left(\chi^{\mathrm{c}}(p) + \delta - L^{\mathrm{c}}\right)
            \end{split}
            \end{equation}
    Since
    $\chi^{\mathrm{c}}(p) + \delta - L^{\mathrm{c}} <0$,
    dividing both sides of ~\eqref{eq.secondcase1} by $n$,
    we get:
        \begin{equation}
        \label{eq.estiamtiva}
        \begin{split}
            \frac{1}{n} \sum_{i=0}^{n-1} \varphi^{\mathrm{c}}(f^i(q_{k,\ell})) 
            & = L^{\mathrm{c}}+ \frac{\ell}{n}  \left(\chi^{\mathrm{c}}(p) + \delta - L^{\mathrm{c}}\right) \\
            & \leq L^{\mathrm{c}}  + \frac{\ell}{\ell+C+k+N}  \left(\chi^{\mathrm{c}}(p) + \delta - L^{\mathrm{c}}\right) \\
            & = \frac{\ell}{\ell+C+k+N}  (\chi^{\mathrm{c}}(p) + \delta) + \frac{C+k+N}{\ell+C+k+N} L^{\mathrm{c}}.
        \end{split}
        \end{equation}
        Taking $k$ sufficiently large, we get 
        \begin{equation}
        \label{eq.estimativa2}
        \begin{split}
             \frac{1}{n} \sum_{i=0}^{n-1} \varphi^{\mathrm{c}}(f^i(q_{k,\ell})) & \leq \frac{(\ell/k) (\chi^{\mathrm{c}}(p) + \delta) }{\ell/k+1+(C+N)/k}  + \frac{(1+(C+N)/k) L^{\mathrm{c}}}{\ell/k+1+(C+N)/k}   \\
             & \leq \frac{\ell/k}{\ell/k+1}  (\chi^{\mathrm{c}}(p) + 2 \delta) + \frac{1}{\ell/k+1}  L^{\mathrm{c}}\\
             & \leq \frac{\ell/k}{\ell/k+1} \chi^{\mathrm{c}}(p) + \frac{1}{\ell/k+1} L^{\mathrm{c}}+ 2 \delta\\
              & =  \frac{\ell}{\ell+k} \left( \chi^{\mathrm{c}}(p) - L^{\mathrm{c}}\right) +L^{\mathrm{c}} +2 \delta.
       \end{split}
        \end{equation}

We now  estimate $\frac{\ell}{\ell+k}$.
 Using~\eqref{eq.escmk}, we get
        \begin{equation} 
        \label{eq.ellell+k}
        \begin{split}
         \frac{\ell+k}{\ell}&=     1 + \frac{k}{\ell} < \frac{-\chi^{\mathrm{c}}(p) -2 \delta}{2L^{\mathrm{c}} +2 \delta -\chi^{\mathrm{c}}(p)} +1 = \frac{2L^\c - 2 \chi^\c(p)}{2L^\c+2\delta -\chi^\c(p)}\\
       &
         > \frac{2L^\c +2\delta -\chi^\c(p)}{2L^\c - 2 \chi^\c(p)}.
        \end{split}
        \end{equation}   
For further use, arguing analogously, we obtain
    \begin{equation}
    \label{eq.lowerboundkmk}
        \frac{k}{\ell+k} = \left(1+ \frac{\ell}{k}\right)^{-1} > \left(\frac{2L^{\mathrm{c}}+3\delta-\chi^{\mathrm{c}}(p)}{-\chi^{\mathrm{c}}(p)-3\delta} +1\right)^{-1} = \frac{-\chi^{\mathrm{c}}(p)-3\delta}{2L^{\mathrm{c}}-2\chi^{\mathrm{c}}(p)}.
    \end{equation}

    Since $\chi^{\mathrm{c}}(p)-L^{\mathrm{c}}<0$ and $\delta<\frac{|\chi^{\mathrm{c}}(p)|}{4}$, 
    using \eqref{eq.estimativa2} and \eqref{eq.ellell+k},
    it follows 
    \begin{equation}
    \begin{split}
    \label{eq.estimativasegundaparte}
        \frac{1}{n}\sum_{i=0}^{n-1} \varphi^{\mathrm{c}}(f^i(q_{k,\ell})) 
        & \leq \frac{\ell}{\ell+k}  ( \chi^{\mathrm{c}}(p) - L^{\mathrm{c}}) + L^{\mathrm{c}} + 2 \delta \\
        & \leq \frac{2L^{\mathrm{c}} +2 \delta -\chi^{\mathrm{c}}(p)}{2L^{\mathrm{c}}
        -2\chi^{\mathrm{c}}(p)}  (\chi^{\mathrm{c}}(p) -L^{\mathrm{c}}) +L^{\mathrm{c}} +2 \delta \\
     { \footnotesize{\mbox{(choice of $\lambda$ in~\eqref{eq.choiceoflambda})}}}  
        & = \frac{\chi^c(p)}{2} + \delta < \frac{\chi^c(p)}{2} - \frac{\chi^c(p)}{4}< \frac{\chi^c(p)}{4}< \lambda,
    \end{split}        
    \end{equation}
ending the proof of the claim.  
\end{proof}

The next claim provides  the estimate of the $\chi^\c(q)$ in Lemma~\ref{l.periodicpointshomoclinicclass}.

\begin{claim}
\label{cl.semlabel}
Let $\ell, k$ be the (large) numbers in Claim~\ref{af.cond2} and 
$q_{k,\ell}$ the point in \eqref{eq.qkmtop}.
There is a periodic point
  $q \in \Lambda$ with period $\pi(q)=\ell+C+k +N$ such that
    \begin{equation} 
    \label{eq.shadows}
        d(f^i(q_{k,\ell}),f^i(q))< L a, \quad \mbox{for every $i \in \{0,\dots,\pi(q)$,  where $a$ is as in~\eqref{eq.escolhahomclass}}
    \end{equation}
whose central Lyapunov exponent satisfies
$$
\chi^\c (p)  \left( \frac{2 L^\c - \log {\lambda_{\mathrm{bh}}}}{2  L^\c } \right)
\leq \chi^\c (q) \leq \frac{\chi^\c(p)}{4}.
$$
\end{claim}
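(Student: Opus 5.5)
First I obtain the periodic point $q$ from Lemma~\ref{l.gan}, applied to the orbit segment $[q_{k,\ell},\ell+C+k+N]$ with $\lambda$ as in \eqref{eq.choiceoflambda} and $a$ as in \eqref{eq.escolhahomclass}. Its three hypotheses hold:
\begin{itemize}
\item Condition~(1) holds because both $q_{k,\ell}$ and $f^{\ell+C+k+N}(q_{k,\ell})=y$ lie within $a/4$ of $p$, the latter since $y\in W^{\mathrm s}_{a/4}(p)$ by item~(2) of Remark~\ref{rd.familiadiscos}. Hence their distance is less than $a$.
\item Condition~(2) is Claim~\ref{af.cond2}, using domination and the adapted metric to pass from $E^{\mathrm c}$ to $E^{\cs}$.
\item Condition~(3) is Remark~\ref{r.lambdau1}.
\end{itemize}
The lemma then yields $q$ with $\pi(q)=\ell+C+k+N$ and the shadowing estimate \eqref{eq.shadows}. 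Since $La<\epsilon<\epsilon_0$ and the segment lies in $\Lambda$ (or within $\epsilon_0$ of it), the whole orbit of $q$ is in $U$. Remark~\ref{r.orbitcontained} then gives $q\in\Lambda$.

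Next I estimate $\chi^{\mathrm c}(q)=\frac1{\pi(q)}\sum_{i=0}^{\pi(q)-1}\varphi^{\mathrm c}(f^i(q))$. By \eqref{eq.escolhahomclass} we have $La<\gamma$, so \eqref{eq.choiceofgamma} gives $|\varphi^{\mathrm c}(f^i(q))-\varphi^{\mathrm c}(f^i(q_{k,\ell}))|<\delta$ for every $i$. Consequently $\chi^{\mathrm c}(q)$ is within $\delta$ of the Birkhoff average of $q_{k,\ell}$ over the full segment.

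For the upper bound, Claim~\ref{af.cond2} with $n=\pi(q)$ gives the average $<\chi^{\mathrm c}(p)/2$. Adding $\delta<|\chi^{\mathrm c}(p)|/4$ then yields $\chi^{\mathrm c}(q)<\chi^{\mathrm c}(p)/4$. Strictly, the average computed in \eqref{eq.estimativasegundaparte} is already below $\chi^{\mathrm c}(p)/2+\delta$, and one more $\delta$ may require shrinking $\delta$ (say $\delta<|\chi^{\mathrm c}(p)|/8$). I would adjust \eqref{eq.choiceofdelta} accordingly, which is harmless.

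For the lower bound I split the sum into three parts:
\begin{itemize}
\item the first $\ell$ iterates, each with $\varphi^{\mathrm c}>\chi^{\mathrm c}(p)-\delta$ by Fact~\ref{fact.estimativaprimeiraparte};
\item the $C+N$ transition iterates, each bounded below by $-L^{\mathrm c}$;
\item the $k$ blender iterates, each with $\varphi^{\mathrm c}>\log\lambda_{\mathrm{bh}}$ by Remark~\ref{r.positionoforbit}.
\end{itemize}
Adding the $\delta$ error from shadowing, this gives
$$
\chi^{\mathrm c}(q)\ge \frac{\ell(\chi^{\mathrm c}(p)-2\delta)+k(\log\lambda_{\mathrm{bh}}-\delta)-(C+N)L^{\mathrm c}}{\ell+C+k+N}.
$$
For $k$ large the term $(C+N)/k$ is negligible. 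Using $\ell/(\ell+k)\le1$ and the lower bound \eqref{eq.lowerboundkmk} for $k/(\ell+k)$, the right-hand side is at least
$$
\chi^{\mathrm c}(p)+\frac{|\chi^{\mathrm c}(p)|}{2L^{\mathrm c}+2|\chi^{\mathrm c}(p)|}\log\lambda_{\mathrm{bh}}-O(\delta).
$$
Since $|\chi^{\mathrm c}(p)|\le L^{\mathrm c}$, the fraction multiplying $\log\lambda_{\mathrm{bh}}$ is roughly $|\chi^{\mathrm c}(p)|/(2L^{\mathrm c})$, which matches the target $\chi^{\mathrm c}(p)\bigl(1-\log\lambda_{\mathrm{bh}}/(2L^{\mathrm c})\bigr)$.

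The main obstacle is the bookkeeping of constants. The denominator $2L^{\mathrm c}-2\chi^{\mathrm c}(p)$ is larger than $2L^{\mathrm c}$, and the $\delta$ errors must be absorbed. I would handle this by choosing $\delta$ small relative to $|\chi^{\mathrm c}(p)|\log\lambda_{\mathrm{bh}}/L^{\mathrm c}$ and, if necessary, re-tuning the ratio $\ell/k$ in \eqref{eq.escmk} slightly downward. The aim is that the blender portion contributes at least a fraction $|\chi^{\mathrm c}(p)|/(2L^{\mathrm c})$ of the orbit, while Claim~\ref{af.cond2} still holds; the stated inequality has enough slack for both. If the exact constant proves stubborn, I would accept a constant like $\log\lambda_{\mathrm{bh}}/(4L^{\mathrm c})$, which suffices for Corollary~\ref{c.periodicpointshomclass} since only some contraction factor $C_f<1$ is needed there.
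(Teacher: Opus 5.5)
Your overall route is the paper's: Lemma~\ref{l.gan} applied to $[q_{k,\ell},\ell+C+k+N]$, $q\in\Lambda$ via the isolating block, and the upper bound from the computation behind Claim~\ref{af.cond2}. You are right that \eqref{eq.estimativasegundaparte} actually yields $\chi^{\mathrm c}(p)/2+\delta$, so shrinking $\delta$ is needed to absorb a further $\delta$. Comparing $f^i(q)$ with $f^i(q_{k,\ell})$ by uniform continuity is fine. On the blender iterates it is more careful than the paper, which applies Remark~\ref{r.positionoforbit}, a statement about $q_{k,\ell}$, directly to $q$.

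The gap is in the lower bound. When you use $\ell/(\ell+k)\leq 1$ to replace $\frac{\ell}{\ell+k}\chi^{\mathrm c}(p)$ by $\chi^{\mathrm c}(p)$, you discard the positive term $\frac{k}{\ell+k}|\chi^{\mathrm c}(p)|$. The remaining coefficient $\frac{|\chi^{\mathrm c}(p)|}{2L^{\mathrm c}+2|\chi^{\mathrm c}(p)|}$ is not roughly $\frac{|\chi^{\mathrm c}(p)|}{2L^{\mathrm c}}$: it equals that number times $L^{\mathrm c}/(L^{\mathrm c}+|\chi^{\mathrm c}(p)|)$, which is close to $1/2$ when $|\chi^{\mathrm c}(p)|$ is comparable to $L^{\mathrm c}$. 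As written you only get the factor $1-\frac{\log\lambda_{\mathrm{bh}}}{2(L^{\mathrm c}+|\chi^{\mathrm c}(p)|)}$.

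Your proposed remedies do not repair this. The claim concerns the $\ell,k$ fixed by \eqref{eq.escmk}, so $\ell/k$ cannot be re-tuned inside it, and the required change is not slight. With your bookkeeping you would need $k/\pi(q)>|\chi^{\mathrm c}(p)|/(2L^{\mathrm c})$. That contradicts the count $\ell/\pi(q)\geq 1-|\chi^{\mathrm c}(p)|/(2L^{\mathrm c})$ used for the good approximation in Lemma~\ref{l.periodicpointshomoclinicclass}. The fallback constant $\log\lambda_{\mathrm{bh}}/(4L^{\mathrm c})$ proves a weaker statement.

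The fix is to keep the term, as the paper does:
\[
\frac{\ell}{\ell+k}\chi^{\mathrm c}(p)+\frac{k}{\ell+k}\log\lambda_{\mathrm{bh}}
=\chi^{\mathrm c}(p)+\frac{k}{\ell+k}\bigl(\log\lambda_{\mathrm{bh}}+|\chi^{\mathrm c}(p)|\bigr).
\]
Then \eqref{eq.lowerboundkmk} gives the main term $\frac{|\chi^{\mathrm c}(p)|(\log\lambda_{\mathrm{bh}}+|\chi^{\mathrm c}(p)|)}{2L^{\mathrm c}+2|\chi^{\mathrm c}(p)|}$. This exceeds $\frac{|\chi^{\mathrm c}(p)|\log\lambda_{\mathrm{bh}}}{2L^{\mathrm c}}$ by $\frac{|\chi^{\mathrm c}(p)|^{2}(L^{\mathrm c}-\log\lambda_{\mathrm{bh}})}{2L^{\mathrm c}(L^{\mathrm c}+|\chi^{\mathrm c}(p)|)}$.

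So the inequality that matters is $L^{\mathrm c}\geq\log\lambda_{\mathrm{bh}}$ (true since $B^+\subset\Lambda$), not $|\chi^{\mathrm c}(p)|\leq L^{\mathrm c}$. This excess is the slack that absorbs your $O(\delta)$ errors once $\delta$ is small. It must be strictly positive, i.e.\ $L^{\mathrm c}>\log\lambda_{\mathrm{bh}}$, which the paper also tacitly uses when it shrinks $\delta$ in \eqref{e.deltadelta}.
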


\begin{proof}
As the orbit segment $[q_{k,\ell}, \ell+C+k+N]$ satisfies all conditions 
of Lemma~\ref{l.gan},  there exists a periodic point $q$ with period
$\pi(q)= \ell+C+k+N$ satisfying  \eqref{eq.shadows}. To see that $q\in \Lambda$,
recall the choices of $\epsilon_0$ and $a$. By~\eqref{eq.shadows}, we get that 
        $$
            d(f^i(q),\Lambda) < \epsilon_0, \quad \mbox{for every } i \in \{0,\dots,\pi(q)\}.
            $$ 
  Hence, $\mathcal{O}(q) \subset U$ and,   by Remark~\ref{r.orbitcontained},  $q\in \Lambda$.       
            
         To estimate the Lyapunov exponent $\chi^\c(q)$, we
    estimate $\varphi^{\mathrm{c}}(f^n(q))$.

\begin{fact}
\label{fact.primeirapartedeq}
    Let $\gamma$ as in~\eqref{eq.choiceofgamma}. Then
       $d(f^n(p),f^n(q))< \gamma$  for every $1 \leq n \leq \ell-1$.
\end{fact}
\begin{proof}
 Recall the choice of $a$ in \eqref{eq.escolhahomclass}.
    Using Remark~\ref{r.positionoforbit} and that $f(p)=p$, one gets
    \begin{equation}
    \label{eq.inicioorbita}
        d(f^n(q_{k,\ell}),f^n(p))= d(f^n(q_{k,\ell}),p) < a \quad \mbox{for every $1 \leq n \leq \ell-1$}.
    \end{equation}
Using~\eqref{eq.shadows} and~\eqref{eq.inicioorbita}, we get that, for every $0 \leq n \leq \ell-1$, it holds
    \begin{equation}
    \begin{split}
    \label{eq.aproxprimeiraparte}
        d(f^n(p),f^n(q)) & < d(f^n(q),f^n(q_{k,\ell})) + d(f^n(q_{k,\ell}),f^n(p)) \\
        & < L a + a =  (L+1)a < \gamma,
    \end{split}
    \end{equation}
proving the fact.
\end{proof}

By Fact~\ref{fact.primeirapartedeq}, the choice of $\gamma$ in~\eqref{eq.choiceofgamma}, it follows 
    \begin{equation}
    \label{eq.contrprimeirapartedeq}
        \varphi^{\mathrm{c}}(f^n(q)) < \varphi^{\mathrm{c}}(p) + \delta = \chi^{\mathrm{c}}(p) + \delta,
       \qquad \mbox{for every $0 \leq n \leq \ell$.}
    \end{equation}

Using the definition of $L^{\mathrm{c}}$ and Equation~\eqref{eq.contrprimeirapartedeq}, we get 
    \begin{equation*}
    \label{eq.controledeexpq1}
    \begin{split}
        \chi^{\mathrm{c}}(q) & =  \frac{1}{\pi(q)} \sum_{i=0}^{\pi(q)-1}\varphi^{\mathrm{c}}(f^i(q))  \\
         &\leq \frac{\ell}{\ell+C+k+N}  (\chi^{\mathrm{c}}(p) +\delta) + \frac{k+C+N}{\ell+C+k+N}  L^{\mathrm{c}} \\
        & = \frac{\ell/k}{\ell/k+1+(C+N)/k}  (\chi^{\mathrm{c}}(p) +\delta) + \frac{1+(C+N)/k}{\ell/k+1+(C+N)/k}  L^{\mathrm{c}}\\
      { \footnotesize{\mbox{($k$ large as in~\eqref{eq.estimativa2})}}}  
      & \leq \frac{\ell/k}{\ell/k+1}  (\chi^{\mathrm{c}}(p) +2\delta) + \frac{1}{\ell/k+1}  L^{\mathrm{c}}\\
         {\footnotesize{\mbox{($\ell,k$ as in~\eqref{eq.escmk})}}}  
               & \leq \frac{\ell}{\ell+k}  \chi^{\mathrm{c}}(p) + \frac{k}{\ell+k} L^{\mathrm{c}} + 2\delta< \frac{\chi^{\mathrm{c}}(p)}{4},
    \end{split}
    \end{equation*}
where in the last inequality we argue as in~\eqref{eq.estimativasegundaparte}.
 This gives the 
upper bound. 

We now get the lower bound. By Fact~\ref{fact.primeirapartedeq} and the choice of $\gamma$ in~\eqref{eq.choiceofgamma}, 
        \begin{equation*}
            \varphi^{\mathrm{c}}(f^n(q)) \geq \varphi^{\mathrm{c}}(p) - \delta = \chi^{\mathrm{c}}(p) - \delta
            \quad \mbox{for every $0 \leq n \leq \ell-1$.}
        \end{equation*}

Additionally, recalling Remark~\ref{r.positionoforbit}, we have that
    \begin{equation*}
    \label{eq.derivsegundaparte}
        \varphi^{\mathrm{c}}(f^n(q)) > \log{\lambda_{\mathrm{bh}}}  \quad \mbox{for every $\ell+C \leq n \leq \ell+C+k$}.
    \end{equation*}

Therefore, taking $\ell, k$ large enough and arguing as above, we get
    \begin{equation}
    \begin{split}
    \label{eq.lowerbound}
        \chi^{\mathrm{c}}(q)& = \frac{1}{\ell+C+k+N}\sum_{i=0}^{\ell+C+k+N-1} \varphi^{\mathrm{c}}(f^i(q)) \\
        &\geq \frac{\ell  ( \chi^{\mathrm{c}}(p) - \delta)}{\ell+C+k+N}  + \frac{k  \log{\lambda_{\mathrm{bh}}}}{\ell+C+k+N}
         - \frac{(C+N)  L^{\mathrm{c}}}{\ell+C+k+N}  \\
        & \geq \frac{\ell}{\ell+k} \chi^{\mathrm{c}}(p) + \frac{k}{\ell+k} \log{\lambda_{\mathrm{bh}}} -2 \delta\\
        & = \chi^{\mathrm{c}}(p) + \frac{k}{\ell+k} (\log{\lambda_{\mathrm{bh}}} - \chi^{\mathrm{c}}(p)) - 2 \delta \\
      { \footnotesize{\mbox{(by \eqref{eq.lowerboundkmk})}}}      & \geq \chi^{\mathrm{c}}(p) + \frac{(-\chi^{\mathrm{c}}(p)-3 \delta)}{2L^{\mathrm{c}}-2\chi^{\mathrm{c}}(p)} (\log{\lambda_{\mathrm{bh}}}-\chi^{\mathrm{c}}(p))-2\delta.
    \end{split}
\end{equation}

From $L^{\mathrm{c}} \geq \log{\lambda_{\mathrm{bh}} }$,
we get  $L^\c (\chi^\c(p))^2 \geq \log{\lambda_{\mathrm{bh}}}(\chi^\c(p))^2$ and therefore
\begin{equation*}
            \frac{(-\chi^{\mathrm{c}}(p))(\log{\lambda_{\mathrm{bh}}}-\chi^{\mathrm{c}}(p))}{2L^{\mathrm{c}}-2\chi^{\mathrm{c}}(p)}  \geq \frac{\log{\lambda_{\mathrm{bh}}}\cdot(-\chi^{\mathrm{c}}(p))}{2L^{\mathrm{c}}}.
\end{equation*}
Hence, by shrinking $\delta$ if necessary, we get
    \begin{equation}
    \label{e.deltadelta}
            \frac{(-\chi^{\mathrm{c}}(p)-3 \delta)(\log{\lambda_{\mathrm{bh}}}-\chi^{\mathrm{c}}(p))}{2L^{\mathrm{c}}-2\chi^{\mathrm{c}}(p)} -2\delta \geq \frac{\log{\lambda_{\mathrm{bh}}}\cdot(-\chi^{\mathrm{c}}(p))}{2L^{\mathrm{c}}}.
    \end{equation}

Taking $\delta$ as in~\eqref{e.deltadelta} and  using~\eqref{eq.lowerbound} we conclude that 
    $$
        \chi^{\mathrm{c}}(q) \geq \chi^{\mathrm{c}}(p) + \frac{\log{\lambda_{\mathrm{bh}}}(-\chi^{\mathrm{c}}(p))}{2L^{\mathrm{c}}} = \chi^{\mathrm{c}}(p)\left(1 - \frac{\log{\lambda_{\mathrm{bh}}}}{2L^{\mathrm{c}}}\right),
    $$
getting the lower bound. This  ends the proof of the claim.
\end{proof}

The next claim states the good approximation property in Lemma~\ref{l.periodicpointshomoclinicclass}.   

\begin{claim}
    The orbit $\mathcal{O}(q)$ is an $(\epsilon,1 + \frac{\chi^{\mathrm{c}}(p)}{2 L^{\mathrm{c}}})$-good approximation for $\mathcal{O}(p)$.
\end{claim}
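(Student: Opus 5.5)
Since $p$ is a fixed point (the reduction $f(p)=p$ made at the start of the proof), the orbit $\mathcal{O}(p)$ has period $\pi(p)=1$. Condition (2) of Definition~\ref{d.goodap} is therefore automatic: there is a single ball $B_\epsilon(p)$, and $\kappa$ is just the number of good points in it. The pairwise-disjointness requirement on the balls is likewise vacuous. A point $y\in\mathcal{O}(q)$ is $(\epsilon,1)$-good for $\mathcal{O}(p)$ exactly when $d(y,p)<\epsilon$. So the whole task is to show that at least a fraction $\rho = 1+\frac{\chi^\c(p)}{2L^\c}$ of the $\pi(q)=\ell+C+k+N$ points of $\mathcal{O}(q)$ lie in $B_\epsilon(p)$.

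\emph{Step 1: the first $\ell$ iterates are good.} As in Fact~\ref{fact.primeirapartedeq}, for $0\le n\le \ell-1$ we have $d(f^n(q_{k,\ell}),p)<a/4<a$ by Remark~\ref{r.positionoforbit}. Combined with the shadowing estimate \eqref{eq.shadows}, this gives
\[
d(f^n(q),p)<(L+1)a<\epsilon,
\]
using the choice of $a$ in \eqref{eq.escolhahomclass}. These $\ell$ points $f^n(q)$ are distinct because $\pi(q)=\ell+C+k+N>\ell$. Hence
\[
\#\mathcal{O}(q)^{\mathrm g}_\epsilon(\mathcal{O}(p))\ge \ell .
\]

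\emph{Step 2: the counting estimate.} It suffices to show
\[
\frac{\ell}{\ell+C+k+N}\ge 1+\frac{\chi^\c(p)}{2L^\c},
\qquad\text{equivalently}\qquad
\frac{k+C+N}{\ell+C+k+N}\le \frac{|\chi^\c(p)|}{2L^\c}.
\]
Letting $k\to\infty$ with $\ell/k$ in the window \eqref{eq.escmk}, the left-hand side is at most $\frac{k}{\ell+k}+o(1)$. By the upper bound in \eqref{eq.escmk},
\[
\frac{k}{\ell+k}<\frac{-\chi^\c(p)-2\delta}{2L^\c-2\chi^\c(p)},
\]
which is the analogue of \eqref{eq.lowerboundkmk} with the other inequality. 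The right side is strictly less than $\frac{|\chi^\c(p)|}{2L^\c}$, because $-\chi^\c(p)>0$ makes the denominator larger than $2L^\c$, and $\delta>0$ shrinks the numerator. The strict gap absorbs the $o(1)$ term coming from $(C+N)/k$, so taking $k$ (and $\ell$) large enough gives the inequality.

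\emph{Main obstacle.} The only delicate point is this last bookkeeping: making sure the constraint on $\ell/k$ from \eqref{eq.escmk}, the finite connecting times $C,N$, and the largeness of $k$ can all hold at once. The strict margin computed above handles this. Since $C$ and $N$ are fixed once $a$ and $\rho$ are fixed, enlarging $k$ (with $\ell$ chosen to keep $\ell/k$ in the window) is compatible with Claims~\ref{af.cond2} and~\ref{cl.semlabel}.

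If $p$ is not fixed, the argument is the same with $f$ replaced by $f^{\pi(p)}$. The orbit segment then stays near $\mathcal{O}(p)$ for $\ell\pi(p)$ iterates, which yields equal counts $\kappa$ in each ball $B_\epsilon(f^i(p))$ up to a bounded boundary effect. That effect is handled by discarding at most $\pi(p)$ points, which costs a negligible fraction for large $\ell$.
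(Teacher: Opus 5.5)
This is the paper's argument. The first $\ell$ iterates of $q$ are $\epsilon$-close to $p$ by \eqref{eq.shadows} and the choice of $a$. The lower bound on $\ell/k$ in \eqref{eq.escmk} then gives $\frac{k}{\ell+k}<\frac{-\chi^{\mathrm{c}}(p)-2\delta}{2L^{\mathrm{c}}-2\chi^{\mathrm{c}}(p)}<\frac{|\chi^{\mathrm{c}}(p)|}{2L^{\mathrm{c}}}$, and its strict margin absorbs $(C+N)/(\ell+k)$ for large $k$. (You call it the upper bound, but the $2\delta$ expression comes from the left-hand inequality.) Your checks of condition (2) of Definition~\ref{d.goodap} and of distinctness of the $\ell$ points make explicit what the paper leaves implicit.

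Your closing remark on non-fixed $p$ is loose. In Definition~\ref{d.goodap} the good set consists of all $\epsilon$-good points of $\mathcal{O}(q)$, so unequal ball counts cannot be repaired by discarding points. This does not affect the claim, since the paper itself only treats $f(p)=p$.
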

    \begin{proof}
        By~\eqref{eq.aproxprimeiraparte} and by the choice of $a$ in \eqref{eq.escolhahomclass} we have 
                 \begin{equation*}
            \begin{split}
                d(f^n(q),f^n(p)) < (L+1)  a < \epsilon, \quad \mbox{for every $0 \leq n \leq \ell-1$}.
            \end{split}
            \end{equation*}     
     Since $\pi(q)=\ell+C+k+N$, it follows that 
        \begin{equation}
\label{e.countgood}
        \frac{\#\left(\mathcal{O}(q)\right)_{\epsilon}^{\mathrm{g}}\left(\mathcal{O}(p)\right)}{\#\left(\mathcal{O}(q)\right)}  \geq \frac{\ell}{\ell+C+k+N} = 1- \frac{k+C+N}{\ell+C+k+N}.
    \end{equation}
    We now obtain an upper bound for $(k+C+N)/(\ell+C+k+N)$. 
    \begin{fact} 
    \label{f.af.lemaboaap} It holds
        $\displaystyle{\frac{k+C+N}{\ell+C+k+N} < \frac{-\chi^{\mathrm{c}}(p)}{2L^{\mathrm{c}}} +O\left(\frac{1}{k}\right).}$
    \end{fact}
    \begin{proof}
    Since $C+N>0$, it follows that
        \begin{equation*}
        \label{eq.estimativak+nm+k+N}
            \frac{k+C+N}{\ell+C+k+N}<\frac{k+C+N}{\ell+k} = \frac{k}{\ell+k} + \frac{C+N}{\ell+k}
        \end{equation*}
    Using~\eqref{eq.escmk} one has that 
    \begin{equation*}
        1+ \frac{\ell}{k} > 1 + \frac{2L^{\mathrm{c}}+2\delta-\chi^{\mathrm{c}}(p)}{-\chi^{\mathrm{c}}(p)-2\delta} = \frac{2L^{\mathrm{c}}- 2\chi^{\mathrm{c}}(p)}{-\chi^{\mathrm{c}}(p) - 2\delta}.
    \end{equation*}
    Therefore,
    \begin{equation*}
    \label{eq.estimativakm+k}
    \begin{split}
        \frac{k}{\ell+k} = \left(1+ \frac{\ell}{k}\right)^{-1}  < \frac{-\chi^{\mathrm{c}}(p)-2\delta}{2L^{\mathrm{c}}-2\chi^{\mathrm{c}}(p)}< \frac{-\chi^{\mathrm{c}}(p)}{2L^{\mathrm{c}}},
        \end{split}
    \end{equation*}
    which implies the claim.
    \end{proof}
  
Using Fact~\ref{f.af.lemaboaap} and \eqref{e.countgood}, and taking $k$ sufficiently large, we obtain
    \begin{equation*}
        \frac{\#(\mathcal{O}(q))_{\epsilon}^{\mathrm{g}}(\mathcal{O}(p))}{\#(\mathcal{O}(q))} \geq 1 +  \frac{\le^{\mathrm{c}}(p)}{2L^{\mathrm{c}}}
    \end{equation*}
    proving the claim.
    \end{proof}
The proof of Lemma~\ref{l.periodicpointshomoclinicclass} is now complete \hfill \qed

\section{Split flip-flop configurations inside homoclinic classes} 
\label{s.splitflipflopinsidehomoclinicclass}

Recall the residual  set $\mathcal{R}_0(M)$ of $\mathrm{Diff}^1(M)$ in  Theorem~\ref{t.residual}. 
The goal of this section is to prove the following:

\begin{teo}
\label{t.classeblender}
Consider  $f \in \mathcal{R}_0(M)$ and a $C^1$-neighborhood $\mathcal{U}_f$  of $f$ such that 
there are continuous families of saddles $(p_g)_{g \in \mathcal{U}}$ and  $(q_g)_{g \in \mathcal{U}}$
with
\[
\mathrm{s}\text{-}\mathrm{ind}(p_g) > \mathrm{s}\text{-}\mathrm{ind}(q_g)
\]
whose homoclinic classes satisfy
\[
H_g \eqdef H(p_g) = H(q_g)\in \mathrm{ISPH}(g) \qquad \mbox{for every  $g \in \mathcal{U}_f \cap \mathcal{R}_0(M)$}.
\]
Then there exists a locally residual set
$
\mathcal{R}_{\mathcal{U}_f}$ of $ \mathcal{U}_f$
such that, for every $g \in \mathcal{R}_{\mathcal{U}_f}$, the class $H_g$
contains both an unstable blender-horseshoe $B_g^+$ and a stable
blender-horseshoe $B_g^-$ such that
 every saddle in $H_g$ has a split flip--flop configuration with
either $B_g^+$ or $B_g^-$, depending on its $\mathrm{s}$-index.
\end{teo}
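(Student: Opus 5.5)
The plan is to obtain Theorem~\ref{t.classeblender} by combining a $C^1$-perturbative statement with a Baire/semicontinuity argument. Here is the perturbative statement we want: for $g\in\mathcal{U}_f$ with $H(p_g)=H(q_g)$ having index variation and strong partial hyperbolicity, an arbitrarily small perturbation $h$ of $g$ has blender-horseshoes $B^+_h$, $B^-_h$ in split flip-flop configurations with $p_h$ and $q_h$. Once this holds, it persists on an open set, and we localize it inside the class using the robust isolation.

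\textbf{Step 1: creating blenders and flip-flops (perturbative step).} Since $H(p_g)=H(q_g)$ and $\mathrm{s}$-$\mathrm{ind}(p_g)=\mathrm{s}$-$\mathrm{ind}(q_g)+1$, for $g\in\mathcal{R}_0(M)$ the chain recurrence class is $H(p_g)$ by \ref{G1}. Via the connecting lemma one obtains an arbitrarily small perturbation with a heterodimensional cycle between $p$ and $q$. By \ref{G5} we may take saddles with real multipliers, homoclinically related to $p_g$ and $q_g$ respectively by \ref{G2}, so the cycle can be chosen with real central eigenvalues.

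Next we invoke the standard unfolding of co-index one cycles of \cite{BonDia:08}. Under strong partial hyperbolicity with one-dimensional center, this produces both an unstable blender-horseshoe $B^+$ homoclinically related to $q$ and a stable blender-horseshoe $B^-$ homoclinically related to $p$. We use the notation of Definition~\ref{d.splitflipflop}, in which the saddle paired with $B^+$ has $\chi^{\mathrm{c}}<0$; in the index variation here that saddle is $p$.

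Then we follow \cite[Section 4]{BocBonDia:16}, which shows that the unfolded cycle yields the two requirements of a split flip-flop configuration. The first is that $W^{\mathrm u}(\mathcal O(p))$ contains a disk in $\mathfrak D_{\mathrm{bet}}(B^+)$. The second is that every disk in-between is carried by some iterate to a disk in $\mathfrak D_{\delta,\rho}(p)$. The second condition comes from the transverse intersection of $W^{\mathrm s}(p)$ with the strong unstable leaves through the blender, together with the superposition property of Lemma~\ref{l.sobrediscos}. The symmetric construction with $f^{-1}$ gives the configuration between $q$ and $B^-$.

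\textbf{Step 2: openness and genericity.} Both blender-horseshoes and split flip-flop configurations are $C^1$-open: continuations exist by Remark~\ref{r.blendercontinuations}, disks in-between form an open family, and the connecting times $N,C$ persist. Let $\mathcal O\subset\mathcal U_f$ be the open set of $h$ having continuations $B^\pm_h$ in split flip-flop configuration with $p_h$ and $q_h$. Step 1 shows $\mathcal O$ is dense in $\mathcal U_f$, since $\mathcal R_0(M)$ is dense. Set $\mathcal R_{\mathcal U_f}=\mathcal O\cap\mathcal R_0(M)$, which is locally residual.

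For $g\in\mathcal R_{\mathcal U_f}$ we must check that $B^\pm_g\subset H_g$. The blender $B^+_g$ is a transitive hyperbolic set whose saddles are homoclinically related to $q_g$, since homoclinic relation is open and was built in. Hence $B^+_g\subset H(q_g)=H_g$, and likewise $B^-_g\subset H(p_g)=H_g$.

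\textbf{Step 3: every saddle.} Let $r\in H_g$ be a saddle. By strong partial hyperbolicity its $\mathrm s$-index equals that of $p_g$ or of $q_g$. By \ref{G2}, $r$ is homoclinically related to $p_g$ or to $q_g$ accordingly. Lemma~\ref{l.flipflopeverypoint}, and its symmetric version for $f^{-1}$, then transfers the split flip-flop configuration from $p_g$ or $q_g$ to $r$, with $B^+_g$ or $B^-_g$ respectively.

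\textbf{Main obstacle.} The main difficulty is Step 1: obtaining the split flip-flop configuration itself, and not just a blender. The second condition of Definition~\ref{d.splitflipflop} requires a uniform connecting time $N$ for all disks in-between. I would first try to extract this directly from \cite[Lemma~4.11 and Section~4]{BocBonDia:16}, applied to the perturbed cycle. The delicate points are twofold. First, the whole construction must take place inside the isolating block, so that the robust isolation keeps the new blender in $H_g$. Second, in case the perturbation creates new saddles, the homoclinic relations must be preserved.
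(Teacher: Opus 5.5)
Your outline has the same architecture as the paper's proof:
\begin{itemize}
\item a dense set of co-index one cycles from the connecting lemma (Lemma~\ref{l.hetcyc});
\item unfolding of these cycles into blender-horseshoes;
\item open dense sets intersected with $\mathcal{R}_0(M)$;
\item containment of the blenders in $H_g$;
\item transfer to every saddle via \ref{G2} and Lemma~\ref{l.flipflopeverypoint}.
\end{itemize}
Several of your local choices differ harmlessly. Your containment argument is correct: all saddles of $B^+_g$ are homoclinically related to $q_g$, hence lie in $H(q_g)=H_g$. It is shorter than the paper's chain-recurrence argument via \ref{G1} and item (3) of Proposition~\ref{p.blender2}, and that proposition already supplies the homoclinic relation with $\mathcal{O}(q_g)$ that you use. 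The detour through \ref{G5} is unnecessary, because a one-dimensional dominated center already forces real central eigenvalues. You also need not get $B^+$ and $B^-$ from a single unfolding: the paper builds the open dense sets $\mathcal{A}_p$ and $\mathcal{A}_q$ separately and intersects them.

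The genuine gap is the step you flag yourself, and the mechanism you suggest would not close it. The superposition property of Lemma~\ref{l.sobrediscos} gives no uniformity of the connecting time. A transverse intersection of $W^{\mathrm s}(p)$ with some strong unstable leaves only controls disks close to those leaves. What is needed, and what the paper extracts as item (1) of Proposition~\ref{p.blender2}, is that \emph{every} disk in the closure of $\mathfrak{D}_{\mathrm{bet}}(B^+_g)$ crosses a fixed compact piece $W^{\mathrm s}_R(\mathcal{O}(p_g))$ transversally. The $\lambda$-lemma then gives a connecting time for each disk, and compactness gives a uniform $N$.

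In the paper, this crossing comes from item (b) of Proposition~\ref{p.bondiakirmany}: the strong unstable manifold of the neutral point $s_g$ meets $W^{\mathrm s}(\mathcal{O}(\bar p_g))$ transversally, and the disks in-between are $C^1$-close to the local strong unstable manifold of $s_g$. Item (1) of Definition~\ref{d.splitflipflop} comes from unfolding the intersection in (d).

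Producing a neutral point with these connections to saddles homoclinically related to $p$ and $q$ is exactly where the paper uses \cite{BonDiaKir:12} rather than \cite{BonDia:08}. A possibly twisted cycle must first be replaced by a nontwisted one (Lemma~\ref{l.twistedafter}), and this step uses the nontriviality of the homoclinic classes, which your sketch never uses. Falling back on \cite{BocBonDia:16} is fine only if the result you quote delivers both properties for the given saddles: the tie of the blender to $p$ and $q$, and the uniform crossing of a compact piece of $W^{\mathrm s}(\mathcal{O}(p))$.
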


This section is organized as follows. In Section~\ref{ss.heterodimensionalcycles}, we define \emph{heterodimensional cycles} and state Proposition~\ref{p.blender2}, which yields blender-horseshoes from such cycles. In Section~\ref{ss.proofoftheoremclassblender}, we prove Theorem~\ref{t.classeblender} by applying this proposition. Finally, Section~\ref{ss.proofofpropblender2} contains the proof of Proposition~\ref{p.blender2}.

\subsection{Heterodimensional cycles}
\label{ss.heterodimensionalcycles}
A diffeomorphism $f$ has a {\em{heterodimensional cycle}} (or  a {\em{cycle}} for short) associated to a pair of saddles
$p$ and $q$ having different $\mathrm{s}$-index if the invariant manifolds of these saddles meet cyclically:
  $$
  W^{\mathrm{s}}(\mathcal{O}(p)) \cap W^{\mathrm{u}}(
  \mathcal{O}(q)) \neq \emptyset \neq W^{\mathrm{u}}(\mathcal{O}(p)) \cap W^{\mathrm{s}}(\mathcal{O}(q)).
  $$
This cycle has {\em{co-index one}} if the $\mathrm{s}$-indices of $p$ and $q$ differ by one.

Suppose now  that $\mathrm{s}$-indices of $p$ and $q$ are $s+1$ and $s$, respectively. Let $\ast \in \{p,q\}$ and $|\lambda_1(\ast)| \leq \dots \leq |\lambda_{\mathrm{dim}(M)}(\ast)|$
be the eigenvalues of $Df^{\pi(\ast)}(\ast)$ enumerated with multiplicity. 
The cycle has {\em{real central eigenvalues}} if 
 $|\lambda_{s+1}(p)| > |\lambda_s(p)|$  and $|\lambda_{s+2}(q)|>|\lambda_{s+1}(q)|$.
 In such a case,
 there is a unique {\em{central eigenvalues}} of $p$ and $q$ denoted by
   $\lambda_{\mathrm{c}}(p)\eqdef \lambda_{s+1}(p)$ 
 and $\lambda_{\mathrm{c}}(q)\eqdef \lambda_{s+1}(q)$.

The next lemma is a consequence of Hayashi's connecting lemma in~\cite{Hay:97}.

\begin{lemma}[Persistence of heterodimensional cycles]
\label{l.hetcyc}
    Let $\mathcal{U} \subset \mathrm{Diff}^1(M)$ be an open set  such that 
    there are continuous families of saddles $(p_f)_{f\in \mathcal{U}}$ and $(q_f)_{f\in \mathcal{U}}$
       with different $\mathrm{s}$-indices  such that 
    $$
    H(p_f)=H(q_f) \qquad \mbox{for every $f \in \mathcal{U} \cap \mathcal{R}_0(M)$.}
    $$
    Then there exists a dense subset $\mathcal{D}$ of $\mathcal{U}$ such that every $g \in \mathcal{D}$ has a heterodimensional cycle associated to  $p_g$ and $q_g$.
\end{lemma}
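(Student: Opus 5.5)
The strategy is to combine the $C^1$-generic properties of Theorem~\ref{t.residual} with Hayashi's connecting lemma~\cite{Hay:97}. Fix $g_0 \in \mathcal{U}$ and a small open neighborhood $\mathcal{V}\subset\mathcal{U}$ of $g_0$. The goal is to find $g\in\mathcal{V}$ whose stable manifold of $\mathcal{O}(p_g)$ meets the unstable manifold of $\mathcal{O}(q_g)$, and vice versa. Density of $\mathcal{R}_0(M)$ lets us first replace $g_0$ by some $g_1\in\mathcal{V}\cap\mathcal{R}_0(M)$. For $g_1$ we have $H(p_{g_1})=H(q_{g_1})$, and by \ref{G1} this class equals the chain recurrence class $\mathrm{CR}(p_{g_1})=\mathrm{CR}(q_{g_1})$.

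\textbf{First intersection.} I would obtain $W^{\mathrm{u}}(\mathcal{O}(q))\cap W^{\mathrm{s}}(\mathcal{O}(p))\neq\emptyset$ by invoking the generic consequence of the connecting lemma of Bonatti--Crovisier~\cite{BonCro:04}. For $C^1$-generic diffeomorphisms, if two saddles lie in the same chain recurrence class, then $W^{\mathrm{u}}(\mathcal{O}(q))$ accumulates on $W^{\mathrm{s}}(\mathcal{O}(p))$. Equivalently, both $W^{\mathrm{u}}(\mathcal{O}(q))\cap H$ and $W^{\mathrm{s}}(\mathcal{O}(p))\cap H$ are dense in $H$, because both saddles are homoclinically related to dense subsets of periodic points of the class.

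Since $H(p_{g_1})=H(q_{g_1})$, there are points $x\in W^{\mathrm{u}}(\mathcal{O}(q_{g_1}))$ whose forward orbits come arbitrarily close to points of $W^{\mathrm{s}}_{\mathrm{loc}}(\mathcal{O}(p_{g_1}))$. The orbit of $x$ is not periodic, and neither is that of its target, because transverse homoclinic points are nonperiodic. Hayashi's connecting lemma therefore applies and gives an arbitrarily small perturbation $g_2\in\mathcal{V}$ with $W^{\mathrm{u}}(\mathcal{O}(q_{g_2}))\cap W^{\mathrm{s}}(\mathcal{O}(p_{g_2}))\neq\emptyset$.

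\textbf{Second intersection.} The connection $W^{\mathrm{s}}(\mathcal{O}(q))\cap W^{\mathrm{u}}(\mathcal{O}(p))$ has dimension count
\[
\mathrm{s}\text{-}\mathrm{ind}(q)+(\dim M-\mathrm{s}\text{-}\mathrm{ind}(p)) \geq \dim M
\]
when $\mathrm{s}\text{-}\mathrm{ind}(p)<\mathrm{s}\text{-}\mathrm{ind}(q)$, or the symmetric count in the other case. One of the two connections is therefore quasi-transverse-robust. I would order the perturbations so that this robust connection is created first, by the connecting lemma near a generic $g_1$. By Kupka--Smale-type genericity it can then be made transverse, and so it persists on a neighborhood $\mathcal{V}'\subset\mathcal{V}$.

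Inside $\mathcal{V}'$ I would again choose a generic $g_1'\in\mathcal{R}_0(M)$. For this $g_1'$ the class equality $H(p)=H(q)$ still holds by hypothesis, and the robust connection is still present. A second application of the connecting lemma, supported away from the orbit of the persistent transverse intersection and away from the periodic orbits, then creates the nonrobust connection. The resulting $g\in\mathcal{V}$ has a heterodimensional cycle associated to $p_g$ and $q_g$.

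\textbf{Main obstacle.} The delicate step is the second perturbation. It must not destroy the first connection, and the connecting lemma needs an orbit returning close to the target without passing through the perturbation support. I expect to handle this with the standard locality of Hayashi's perturbation, which is supported in small neighborhoods of finitely many iterates of a nonperiodic point, together with openness of transverse intersections.
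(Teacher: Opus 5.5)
Your proposal is correct and follows the route the paper intends: the paper gives no details beyond attributing the lemma to Hayashi's connecting lemma, and your scheme fills this in in the standard way (pass to a generic map where $H(p)=H(q)$ yields the needed accumulation, first create and make transverse the connection whose dimensions add up to more than $\dim M$, then apply the connecting lemma again at a generic map inside the open set $\mathcal{V}'$ where that transverse intersection persists). Two simplifications: the accumulation of $W^{\mathrm{u}}(\mathcal{O}(q))$ on a point of $W^{\mathrm{s}}(\mathcal{O}(p))\setminus\mathcal{O}(p)$ follows directly from $H(p)=H(q)$ and the definition of a homoclinic class, so Bonatti--Crovisier is not needed; and your ``main obstacle'' disappears because transverse intersections are $C^1$-open, so it suffices to take the second connecting-lemma perturbation inside $\mathcal{V}'$, with no control on its support.
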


 The orbit $\mathcal{O}(q)$ of 
 a saddle $q$  and a {\em{blender-horseshoe $B^+=(f, \mathbf{C}, \Gamma)$
are homoclinically related}} 
 if there is some saddle of $\Gamma$  whose orbit is homoclinically related to $\mathcal{O}(q)$.
In such a case, due to transitivity of $\Gamma$, the orbit of any saddle of $\Gamma$ is homoclinically related to $\mathcal{O}(q)$.
For the next result recall the family of disks $\mathfrak{D}_{\mathrm{bet}}(B^+)$ of a blender-horseshoe,
see Remark~\ref{r.defdisksinbet}.

\begin{prop}
\label{p.blender2}
    Consider $f \in \mathrm{Diff}^1(M)$ with a cycle of co-index $1$ with real central eigenvalues associated to saddles $p_f$ and $q_f$ such that 
    $$
\mbox{$\mathrm{s}$-$\mathrm{ind}(p_f)= \mathrm{s}$-$\mathrm{ind}(q_f)+1\quad$
   and $\quad H(p_f)=H(q_f)$.}
    $$
    Then there are $R>0$ and  an open set $\mathcal{V}_{p,f}$  of $\mathrm{Diff}^1(M)$, 
    with  $f \in \overline{\mathcal{V}_{p,f}}$,
   consisting of diffeomorphisms
  $g$ with an unstable blender-horseshoe $B^+_g$ homoclinically related to $\mathcal{O}(q_g)$ such that
        \begin{enumerate}
            \item every disk in $\mathfrak{D}_{\mathrm{bet}}(B^+_g)$
            intersects transversally 
            $W^\mathrm{s}_R(\mathcal{O}(p_g))$, and
            \item $W^\mathrm{u}(\mathcal{O}(p_g))$ contains a disk in $\mathfrak{D}_{\mathrm{bet}}(B^+_g)$.
            \item $W^\mathrm{s}(\mathcal{O}(p_g)) \pitchfork W^\mathrm{u}(B^+_g) \neq \emptyset \neq 
            W^\mathrm{u}(\mathcal{O}(p_g)) \cap W^\mathrm{s}(B^+_g).$
        \end{enumerate}
        \end{prop}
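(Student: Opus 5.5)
The approach is to use the standard unfolding of co-index one heterodimensional cycles with real central eigenvalues, following \cite{BonDia:08} and \cite{BonDia:12}: perturbing such a cycle yields a blender-horseshoe near the cycle, homoclinically related to one of the saddles. Here $p_f$ has $\mathrm{s}$-index $s+1$ and $q_f$ has $\mathrm{s}$-index $s$. The central direction of $q_f$ is expanding, so the relevant blender is an unstable blender-horseshoe $B^+_g$, with $\mathrm{cu}$-direction of dimension $\dim(E^\u)+1$ and the center expanded. It should be homoclinically related to $\mathcal{O}(q_g)$.

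\emph{Step 1: normal form.} After an arbitrarily small $C^1$ perturbation (Franks' lemma plus the real central eigenvalue hypothesis), I would linearize $f$ near $\mathcal{O}(p_f)$ and $\mathcal{O}(q_f)$. I would also make the transition maps along a heteroclinic orbit $W^\u(\mathcal{O}(q))\pitchfork W^\s(\mathcal{O}(p))$ (transverse, by dimension count) and along a quasi-transverse orbit in $W^\u(\mathcal{O}(p))\cap W^\s(\mathcal{O}(q))$ affine, preserving the central direction. This is the simplified cycle setting of \cite{BonDia:08}.

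\emph{Step 2: unfolding.} I would unfold the quasi-transverse intersection by a one-parameter translation in the central direction. For suitable parameters and large iterates $m,n$ (spending $m$ iterates near $q$ and $n$ near $p$), I would find an invariant hyperbolic set $\Gamma_g$ inside a box $\mathbf{C}$ near the heteroclinic point. The return map there acts as an affine horseshoe whose central expansion factor is $\lambda_c(q)^m\lambda_c(p)^n$, tuned close to a value in $(1,2)$, with two overlapping central legs. These are exactly conditions (BH1)--(BH6) of \cite{BonDia:12}, so $B^+_g=(g,\mathbf{C},\Gamma_g)$ is an unstable blender-horseshoe. Since it is a blender-horseshoe, it persists in an open set $\mathcal{V}_{p,f}$ (Remark~\ref{r.blendercontinuations}), and $f$ lies in its closure. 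The homoclinic relation of $\Gamma_g$ with $\mathcal{O}(q_g)$ comes from the fact that the horseshoe orbits follow $q_g$ for $m$ iterates, via transverse intersections built into the construction.

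\emph{Step 3: items (1)--(3).} For (2), I would take the unfolded piece of $W^\u(\mathcal{O}(p_g))$ near the former quasi-transverse point. After iteration near $q_g$, it becomes a $\dim(E^\u)$-disk tangent to $\mathcal{C}^\u$ and crossing $\mathbf{C}$ in-between the two local strong unstable leaves. The translation parameter is chosen precisely so that this holds, and it is an open condition. For (1), every disk in $\mathfrak{D}_{\mathrm{bet}}(B^+_g)$ crosses $\mathbf{C}$. The box $\mathbf{C}$ lies in a neighborhood of the transverse heteroclinic point inside $W^\s(\mathcal{O}(p_g))$, which contains an $(s+1)$-dimensional disk of uniform size $R$ (fixed before perturbing) crossing $\mathbf{C}$ in the $\mathrm{cs}$-direction. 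Dimensions are complementary, so the intersection is transverse, and uniformity is preserved under small perturbations. For (3), Remark~\ref{r.intersectionwiththestableset} together with (2) gives $W^\u(\mathcal{O}(p_g))\cap W^\s(B^+_g)\neq\emptyset$. Item (1) applied to disks of $W^\u_{\mathrm{loc}}(\Gamma_g)$ that lie in-between gives a transverse point of $W^\s(\mathcal{O}(p_g))\pitchfork W^\u(B^+_g)$.

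The main obstacle is Step 2 together with (2): the in-between position of $W^\u(\mathcal{O}(p_g))$ must hold simultaneously with the blender conditions on an open set. This requires careful choice of $m,n$ and of the unfolding parameter in the affine model. I would follow \cite[Section~5]{BonDia:12} and \cite{BonDia:08} closely there.
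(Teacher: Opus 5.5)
Your argument never uses the hypothesis $H(p_f)=H(q_f)$, and this is where it breaks down. As written, Steps 1--2 would apply to every co-index one cycle with real central eigenvalues. Items (1)--(3), together with the homoclinic relation between $B^+_g$ and $\mathcal{O}(q_g)$, give a $C^1$-robust cycle between $\mathcal{O}(p_g)$ and a hyperbolic set containing $q_g$. So you would be proving that every such cycle is $C^1$-stabilizable, and that is not available. \cite{BonDiaKir:12} classify these cycles as \emph{twisted} or \emph{nontwisted}, according to how the transitions act on the orientation of the central direction. The unfolding you describe is guaranteed to produce the required configuration only in the nontwisted case. Here the unfolding means translating the quasi-transverse intersection along the central direction and tuning $m,n$ and the parameter. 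The required configuration is a neutral point with a strong homoclinic intersection, or equivalently a blender with $W^{\mathrm{u}}(\mathcal{O}(p_g))$ in-between. That nontwisted case is \cite[Proposition 6.1]{BonDiaKir:12}. For twisted cycles the orientation reversal can block exactly your claim in Step 3 that the translation parameter can be chosen so that $W^{\mathrm{u}}(\mathcal{O}(p_g))$ crosses $\mathbf{C}$ in-between. Twisted cycles with certain geometric restrictions are precisely the ones \cite{BonDiaKir:12} cannot stabilize without extra hypotheses.

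The missing idea, which is how the paper proceeds, is to use that $H(p_f)=H(q_f)$ is a nontrivial class. By \cite[Lemma 5.9]{BonDiaKir:12} there are saddles with the bi-accumulation property that are homoclinically related to $p_f$ and $q_f$. Then \cite[Proposition 6.2]{BonDiaKir:12}, after a small perturbation, turns a twisted cycle between such saddles into a nontwisted cycle associated to saddles $\bar p_g,\bar q_g$ homoclinically related to $p_g,q_g$ (Lemma~\ref{l.twistedafter}). Only for this nontwisted cycle does the unfolding go through, and it proceeds as follows.
\begin{itemize}
\item \cite[Proposition 6.1]{BonDiaKir:12} gives a neutral point $s_g$ (saddle-node or flip) with a strong homoclinic intersection and the intersections (a)--(e).
\item The strong homoclinic intersection yields $B^+_h$ via \cite{BonDia:12}.
\item Item (b) gives (1), because the in-between disks are close to the local strong unstable manifold of $s_h$.
\item Item (d) gives (2), and (3) follows as you say from Remark~\ref{r.intersectionwiththestableset}.
\end{itemize}
Finally, everything has to be transferred from $\bar p_h,\bar q_h$ back to $p_h,q_h$ by the $\lambda$-lemma (Remark~\ref{r.itisenough}). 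Your Step 3 reasoning is fine once the blender and the in-between position of $W^{\mathrm{u}}(\mathcal{O}(p_g))$ are in hand. The gap is in producing them for a general, possibly twisted, cycle.
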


\begin{remark}
\label{r.itisenough}
By the $\lambda$-lemma, 
it is enough to prove Proposition~\ref{p.blender2} for any saddle $\bar p_f$ which is homoclinically related to
$p_f$.
\end{remark}

\begin{remark}
\label{r.blender2}
By symmetry, under the assumptions of Proposition~\ref{p.blender2}, there exist $R>0$ and an open set
$\mathcal{V}_{q,f}$, with $f \in \overline{\mathcal{V}_{q,f}}$, consisting of  diffeomorphisms  $g$
admitting a stable blender-horseshoe $B^-_g$ satisfying the analogous properties with $q_g$,
obtained by exchanging stable and unstable directions.
\end{remark}

We postpone the proof of Proposition~\ref{p.blender2} 
to Section~\ref{ss.proofofpropblender2}
and proceed to prove Theorem \ref{t.classeblender} assuming it.

\subsection{Proof of Theorem~\ref{t.classeblender}}
\label{ss.proofoftheoremclassblender}

Let $f \in \mathcal{R}_0(M)$, $H_f=H(p_f)=H(q_f)$, and $\mathcal{U}_f$  as in the hypotheses of  the theorem. 
 By Lemma~\ref{l.hetcyc}, there is a dense subset $\mathcal{D}$ of $\mathcal{U}_f$ such that every $h \in \mathcal{D}$ has a  cycle associated to $p_{h}$ and $q_{h}$.  Since,  by hypotheses, the homoclinic classes are 
 in  $\mathrm{ISPH}(h)$, these cycles have co-index one and have real central eigenvalues.
  By Proposition~\ref{p.blender2}, there is an open set $\mathcal{V}_{p,h}\subset \mathcal{U}_f$, with $h \in \overline{\mathcal{V}_{p,h}}$, such that every $g \in \mathcal{V}_{p,h}$ has an unstable blender-horseshoe $B^+_g$ satisfying 
 the conclusions of Proposition~\ref{p.blender2}.

Define the open and dense subset of $\mathcal{U}_f$,
    \begin{equation}
        \mathcal{A}_p \eqdef \bigcup_{h \in \mathcal{D}} \mathcal{V}_{p,h}.
    \end{equation}
Analogously, 
interchanging the roles of $p$ and $q$,
 given $h \in \mathcal{D}$, we get an open set  $\mathcal{V}_{q,h}\subset \mathcal{U}_f$, 
with $h \in \overline{\mathcal{V}_{q,h}}$, such that every $g \in \mathcal{V}_{q,h}$ has a stable blender-horseshoe $B^-_g$ satisfying conditions in Remark~\ref{r.blender2}.
   
    Define the open and dense subset of $\mathcal{U}_f$
    \begin{equation}
        \mathcal{A}_q \eqdef \bigcup_{h \in \mathcal{D}} \mathcal{V}_{q,h}.
    \end{equation}
Finally,  we consider the locally residual subset of $U_f$ 
    \begin{equation}
    \label{e.localresidual}
    \mathcal{R}_{\mathcal{U}_f} \eqdef \mathcal{A}_p \cap  \mathcal{A}_q \cap \mathcal{R}_0(M).
   \end{equation}

\begin{lemma}
\label{cl.unstableblendercontido}
The blender-horseshoes provided by Proposition~\ref{p.blender2} satisfy
  $$
  B_g^+,B^-_g \subset H(p_g)=H(q_g), \qquad \mbox{for every $g \in \mathcal{R}_{\mathcal{U}_f}$.}
  $$
\end{lemma}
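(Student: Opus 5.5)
The plan is to show that every point of the hyperbolic set $\Gamma_g$ underlying $B^+_g$ lies in the chain recurrence class of $p_g$. Genericity will then put $B^+_g$ inside $H(p_g)$. The stable blender $B^-_g$ is handled symmetrically, using $q_g$ and Remark~\ref{r.blender2}. Fix $g\in\mathcal{R}_{\mathcal{U}_f}$. Since $g\in\mathcal{R}_0(M)$, property \ref{G1} gives $\mathrm{CR}(p_g)=H(p_g)$, and the hypothesis gives $H(p_g)=H(q_g)$. So it suffices to show $\Gamma_g\subset \mathrm{CR}(p_g)$.

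First, $g\in\mathcal{A}_p$, so $g\in\mathcal{V}_{p,h}$ for some $h\in\mathcal{D}$, and Proposition~\ref{p.blender2} applies to $g$. Item (3) of that proposition gives points
\[
x\in W^{\mathrm{s}}(\mathcal{O}(p_g))\pitchfork W^{\mathrm{u}}(\Gamma_g), \qquad y\in W^{\mathrm{u}}(\mathcal{O}(p_g))\cap W^{\mathrm{s}}(\Gamma_g).
\]
Alternatively, item (2) together with Remark~\ref{r.intersectionwiththestableset} produces $y$: a disk of $\mathfrak{D}_{\mathrm{bet}}(B^+_g)$ inside $W^{\mathrm u}(\mathcal{O}(p_g))$ meets $W^{\mathrm s}_{\mathrm{loc}}(\Gamma_g)$. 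Let $z\in\Gamma_g$ be arbitrary. We build $\epsilon$-pseudo orbits in both directions.

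\emph{From $p_g$ to $z$.} Follow the orbit of $y$. Backwards it accumulates on $\mathcal{O}(p_g)$; forwards it accumulates on $\Gamma_g$, with $d(g^n(y),g^n(w))\to0$ for some $w\in\Gamma_g$. Since $\Gamma_g$ is transitive, hence chain transitive, from $g^n(w)$ we can reach $z$ by an $\epsilon$-pseudo orbit inside $\Gamma_g$. Concatenating these pieces, with jumps of size $<\epsilon$, gives an $\epsilon$-chain from $p_g$ to $z$.

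\emph{From $z$ to $p_g$.} This is the symmetric argument with $x$. Go from $z$ inside $\Gamma_g$ to a point whose backward orbit is shadowed by $g^{-n}(x)$, jump onto the orbit of $x$, and follow it forward until it is $\epsilon$-close to $\mathcal{O}(p_g)$. Then move along $\mathcal{O}(p_g)$ to reach $p_g$.

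These chains put $z$ in $\mathrm{CR}(p_g)=H(p_g)$, so $B^+_g\subset H(p_g)$. The same argument with $g\in\mathcal{A}_q$ and Remark~\ref{r.blender2} gives $B^-_g\subset H(q_g)=H(p_g)$.

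There is an alternative route. $B^+_g$ is homoclinically related to $\mathcal{O}(q_g)$, so the saddles of $\Gamma_g$ lie in $H(q_g)$ and are dense in $\Gamma_g$; closedness then gives $\Gamma_g\subset H(q_g)$ directly. I would mention this as the cleaner proof, via the standard fact that the homoclinic class of $q$ contains the closure of the saddles homoclinically related to it, and that a transitive hyperbolic set has dense periodic points.

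The only delicate point is bookkeeping: the open sets $\mathcal{V}_{p,h}$ are defined for $h$ in a dense set, so one must check that the conclusions of Proposition~\ref{p.blender2} hold at $g$ itself, not only at $h$. This is exactly what membership $g\in\mathcal{A}_p\cap\mathcal{A}_q$ provides. One must also check that $p_g,q_g$ here are the continuations used there, which holds since all families are continuous on $\mathcal{U}_f$.
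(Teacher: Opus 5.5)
Your main argument is correct and is essentially the paper's proof: you use \ref{G1} to reduce to $B^+_g\subset \mathrm{CR}(p_g)$, then build $\epsilon$-pseudo orbits in both directions through the intersections in item (3) of Proposition~\ref{p.blender2} and the chain transitivity of $\Gamma_g$. Your alternative route, via the homoclinic relation between $B^+_g$ and $\mathcal{O}(q_g)$, also works and does not even need \ref{G1}, but you should justify density of periodic points in $\Gamma_g$ by local maximality ($\Gamma_g$ is the maximal invariant set in $\mathbf{C}$), since transitivity of a hyperbolic set alone does not give it.
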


\begin{proof}
    Take any $g \in  \mathcal{R}_{\mathcal{U}_f}$. It is enough to prove that $B^+_g \subset H(p_g)$.  
    The inclusion $B^-_g \subset H(q_g)$ is obtained 
    analogously.  
    Since  $g \in \mathcal{R}_{\mathcal{U}_f}$, by Condition~\ref{G1} in Theorem~\ref{t.residual}, $\mathrm{CR}(p_g)=H(p_g)=H(q_g)$. Hence, it is enough to prove $B^+_g \subset \mathrm{CR}(p_g)$.

    \begin{claim}
    \label{af.chainrecur}
        $B_g^+ \subset \mathrm{CR}(p_g)$.
    \end{claim}

    \begin{proof}
        Given any $\epsilon>0$, we prove that for every $z \in B^+_g$, there is an $\epsilon$-pseudo orbit starting at $p_g$ and ending at $z$. Analogously, we get an  $\epsilon$-pseudo orbit from $z$ to $p_g$. These two facts imply the claim.
        
        To construct the first
        $\epsilon$-pseudo, note that, by (3) in Proposition~\ref{p.blender2},  there is        
        $
            y \in W^{\mathrm{u}}(p_g) \cap W^{\mathrm{s}}(B^+_g).
        $
     Therefore, there are $n_1, n_2 \in \N$ and $w\in B^+_q$, such that 
            $$
            d(f^{-n_1}(y),p_g)<\frac{\epsilon}{2} \qquad \mbox{and} \qquad d(f^{n_2}(y),w)<\frac{\epsilon}{2}.
            $$ 
        Since $w,z \in B^+_g$ and $B_g^+$ is a transitive set, by Remark~\ref{r.classerecorrenciatransitivo}, there is an $(\epsilon/2)$-pseudo orbit $(x_i)_{i=0}^{n_3}$ starting at 
        $w$ and ending at $z$. 
        Concatenating  the point $p_g$ with the orbit segment $[f^{-n_1+1}(y),n_1+n_2-1]$ and  the $(\epsilon/2)$-pseudo orbit $(x_i)_{i=0}^{n_3}$, we obtain an $\epsilon$-pseudo orbit from $p_g$ to $z$.
        This ends the claim.
    \end{proof}
  Claim~\ref{af.chainrecur} implies that    $
        B^+_g \subset \mathrm{CR}(p_g)=H(p_g),$
proving the lemma.
\end{proof}

To finish the proof of the theorem, it remains to prove the occurrence of flip-flop configurations, this is obtained in the next lemma:

\begin{lemma}
    Let $g  \in \mathcal{R}_{\mathcal{U}_f}$. Then every saddle in $H(p_g)=H(q_g)$ with negative central Lyapunov exponent forms a split flip-flop configuration with $B^+_g$.
\end{lemma}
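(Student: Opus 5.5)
Since $g\in\mathcal{A}_p\subset\mathcal{U}_f$, the continuation $p_g$ is defined. By partial hyperbolicity, the saddles of $H_g$ with $\chi^\c<0$ have $\mathrm{s}$-index $\dim(E^\s)+1=\mathrm{s}\text{-}\mathrm{ind}(p_g)$. By \ref{G2}, all of them are homoclinically related to $p_g$. So by Lemma~\ref{l.flipflopeverypoint} it is enough to show that $p_g$ itself forms a split flip-flop configuration with $B^+_g$. Its hypotheses hold: $H(p_g)$ is strongly partially hyperbolic, and by Lemma~\ref{cl.unstableblendercontido} we have $B^+_g\subset H(p_g)$, so a single strongly partially hyperbolic set contains $H(p_g)\cup B^+_g$. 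There is one subtlety. Proposition~\ref{p.blender2} was applied at some $h\in\mathcal{D}$, perhaps to a saddle $\bar p_h$ homoclinically related to $p_h$ (Remark~\ref{r.itisenough}). Its continuation is still homoclinically related to $p_g$ on $\mathcal{V}_{p,h}$, after shrinking this open set if needed. So we may use either saddle.

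Next check Definition~\ref{d.splitflipflop} for $p_g$ and $B^+_g$.

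\emph{Item (1).} This is exactly conclusion (2) of Proposition~\ref{p.blender2}: $W^\mathrm{u}(\mathcal{O}(p_g))$ contains a disk in $\mathfrak{D}_{\mathrm{bet}}(B^+_g)$.

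\emph{Item (2).} Here we need $\delta,\rho>0$ and $N$ such that $g^N(D)$ contains a disk of $\mathfrak{D}_{\delta,\rho}(p_g)$ for every $D\in\mathfrak{D}_{\mathrm{bet}}(B^+_g)$. Conclusion (1) of Proposition~\ref{p.blender2} says each such $D$ meets $W^\mathrm{s}_R(\mathcal{O}(p_g))$ transversally. Since the disk has dimension $\dim E^\u=\dim W^\mathrm{u}(p_g)$, this intersection is transverse in complementary dimensions. The $\lambda$-lemma then gives, for each $D$, an iterate $g^{N_D}(D)$ that contains a disk $\vartheta$-close to $W^\mathrm{u}_\rho(p_g)$ (up to the orbit of $p_g$), hence a disk in $\mathfrak{D}_{\delta,\rho}(p_g)$.

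The main obstacle is making $N$ uniform over $\mathfrak{D}_{\mathrm{bet}}(B^+_g)$. I would argue by compactness. The transverse intersection point lies in the compact set $W^\mathrm{s}_R(\mathcal{O}(p_g))$, and the angle of intersection is bounded below, because the disks are tangent to $\mathcal{C}^\u$ while $W^\mathrm{s}_R$ is tangent to the $\mathrm{cs}$-direction. The forward iterates of $W^\mathrm{s}_R$ enter $W^\mathrm{s}_{\delta(p)/4}(p_g)$ after a uniform time. Together with the uniform size of the disks in-between and the continuity of the $\lambda$-lemma in its data, this gives one $N$ that works for all $D$. Once $g^{N}(D)$ lies in $\mathfrak{D}_{\delta,\rho}(p_g)$, strict $g^{\pi(p_g)}$-invariance of that family (Remark~\ref{rd.familiadiscos}) lets us pad every $N_D$ up to a common value. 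If the family of disks in-between is not precompact in $\vartheta$, I would first restrict to a compact subfamily that still has the invariance property of Remark~\ref{r.defdisksinbet}, or use the openness of the family to cover it by finitely many $\vartheta$-neighborhoods.

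The analogous statement for saddles with $\chi^\c>0$ and $B^-_g$ follows by applying the same argument to $g^{-1}$, using Remark~\ref{r.blender2}.
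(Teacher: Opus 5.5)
Your proposal is correct and follows essentially the same route as the paper. You reduce to $p_g$ via \ref{G2} and Lemma~\ref{l.flipflopeverypoint}, read off item~(1) of Definition~\ref{d.splitflipflop} from conclusion~(2) of Proposition~\ref{p.blender2}, and obtain item~(2) from conclusion~(1), the $\lambda$-lemma, and a compactness argument that makes $N$ uniform. The paper does the last step simply by taking $N_{p_g}=\sup_{D\in\overline{\mathfrak{D}_{\mathrm{bet}}(B^+_g)}}N_{p_g,D}$, using compactness of $W^{\mathrm{s}}_R(p_g)$ and assuming $p_g$ is fixed; your version spells this out in more detail (if $p_g$ is not fixed, the invariance only lets you pad by multiples of $\pi(p_g)$, a point the paper also sidesteps with that assumption).
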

\begin{proof}

    By Condition \ref{G2} in Theorem~\ref{t.residual} and Lemma~\ref{l.flipflopeverypoint}, it suffices to verify that $p_g$ and $B_g^+$ form a split flip-flop configuration. We thus check the conditions of Definition~\ref{d.splitflipflop} for this pair. For notational simplicity, assume that $p_g$ is a fixed point. 
    Proposition~\ref{p.blender2} then yields the following claim, which establishes item~(1) in Definition~\ref{d.splitflipflop}.

    \begin{af}
        $W^\mathrm{u} (p_g)$ contains a disk in $\mathfrak{D}_{\mathrm{bet}}(B^+_g)$.
    \end{af}

Next claim provides  item~(2) in Definition~\ref{d.splitflipflop}.
    Consider small $\delta>0$ as in Remark~\ref{r.safetyregionofsaddles} and  the family 
    $\mathfrak{D}(p_g)=\mathfrak{D}_{\delta, \rho}(p_g)$ 
   in Remark~\ref{rd.familiadiscos}.

    \begin{af}
        There is $N_{p_g} \eqdef N_{p_g,\delta}>0$ such that $f^{N_{p_g}}(D)$ contains a disk in 
        $\mathfrak{D}(p_g)$ for every $D \in \mathfrak{D}_{\mathrm{bet}}(B^+_g)$.
    \end{af}
    \begin{proof}
    By Proposition~\ref{p.blender2}, there is $R>0$ such that 
           \begin{equation}
            D \pitchfork W_R^\mathrm{s} (p_g)\neq \emptyset, \qquad \mbox{for every $D \in
            \overline{\mathfrak{D}_{\mathrm{bet}}(B^+_g)}$.}
        \end{equation}
    Therefore, there exists $N_{p_g,D}>0$ such that $f^{n}(D)$ contains a disk in $\mathfrak{D}(p_g)$ for every $n \geq N_{p_g,D}$. Since $W^\mathrm{s}_R(p_g)$ is a compact subset of $W^\mathrm{s}(p_g)$, we can define
    \begin{equation}
        N_{p_g} \eqdef \sup_{D \in \overline{ \mathfrak{D}_{\mathrm{bet}}(B^+_g)}}\{N_{p_g,D}\}.
    \end{equation}
    By construction, every $f^{N_{p_g}}(D)$ contains a disk in $\mathfrak{D}(p_g)$ for every $D \in \mathfrak{D}_{\mathrm{bet}}(B^+_g)$, ending the proof of the claim.
    \end{proof}
   We have proved that Items (1) and  (2) of Definition~\ref{d.splitflipflop} hold for $p_g$ and $B^+_g$, 
   ending the proof of the lemma.
   \end{proof}
   The proof of the theorem is now complete.
\hfill \qed

\subsection{Proof of Proposition~\ref{p.blender2}}
\label{ss.proofofpropblender2}
This proof is based on the refinement given in \cite{BonDiaKir:12} of the arguments in \cite{BonDia:08} yielding the \emph{stabilization of heterodimensional cycles}, which we now describe.

For what follows, recall that every hyperbolic basic set $\Lambda_f$ of $f\in \mathrm{Diff}^1(M)$ has a well-defined continuation $\Lambda_g$ for every $g$ sufficiently close to $f$. The $\mathrm{s}$-index of a basic set is defined as the  $\mathrm{s}$-index of any of its saddles.

Similarly to the case of cycles associated to saddles,
two hyperbolic basic  sets $\Sigma_f$ and $\Gamma_f$ of a diffeomorphism $f$ form a  
{\em{(heterodimensional) cycle}} 
 if
    \begin{itemize}
         \item the $\mathrm{s}$-indices of $\Sigma_f$ and $\Gamma_f$ are different, and
        \item $W^{\mathrm{s}}(\Sigma_f) \cap W^{\mathrm{u}}(\Gamma_f) \neq \emptyset \neq W^{\mathrm{u}}(\Sigma_f) \cap W^{\mathrm{s}}(\Gamma_f)$.
    \end{itemize}   
This cycle has {\em{co-index one}} if the $\mathrm{s}$-indices of $\Sigma_f$ and $\Gamma_f$ differ by one. 
The cycle is {\em{robust}} if there is a $C^1$-neighbourhood of $f$ consisting of diffeomorphisms $g$ having a cycle associated to the continuations $\Sigma_g$ and $\Gamma_g$. 

A cycle associated to a pair of saddles $p_f$ and $q_f$ of $f$ is $C^1$-stabilizable if there is a
$C^1$-open set $\mathcal{U}_f$ with $f\in \overline{\mathcal{U}(f)}$ such that every $g\in \mathcal{U}_f$
has a robust cycle associated to basic sets $\Sigma_g$ and $\Upsilon_g$ containing the continuations
$p_g$ and $q_g$.

Since the cycles of the diffeomorphisms in the set
$\mathcal{D}$ in Lemma~\ref{l.hetcyc}
are associated with saddles with nontrivial homoclinic classes, the 
theorem below can be applied to them.

\begin{teo}[Theorem A in \cite{BonDiaKir:12}]
\label{t.bondiakir}
Every co-index one cycle associated with a pair of saddles, one of which has a nontrivial homoclinic class, is $C^1$-stabilizable.
\end{teo}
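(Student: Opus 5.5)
Since this is Theorem~A of \cite{BonDiaKir:12}, I would follow the stabilization scheme of \cite{BonDia:08} as refined there, rather than build a new argument. Let the cycle be associated with saddles $p$ and $q$, with $\mathrm{s}$-$\mathrm{ind}(p)=\mathrm{s}$-$\mathrm{ind}(q)+1$, and suppose for definiteness that $H(p)$ is nontrivial; the other case follows by passing to $f^{-1}$. The goal is an open set $\mathcal{U}$ with $f\in\overline{\mathcal{U}}$ on which the cycle is robust. I would build, for $g\in\mathcal{U}$, a hyperbolic basic set $\Sigma_g\ni q_g$ containing a blender with the following property: $W^{\mathrm{u}}(\mathcal{O}(p_g))$ meets $W^{\mathrm{s}}(\Sigma_g)$ in a $C^1$-robust way. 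This intersection is only quasi-transverse for the cycle itself, since the dimensions do not add up. The opposite intersection, $W^{\mathrm{s}}(\mathcal{O}(p_g))\pitchfork W^{\mathrm{u}}(\Sigma_g)$, is transverse for dimensional reasons and hence automatically robust.

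\textbf{Step 1: normal form.} Using Franks' lemma together with small local perturbations (Kupka--Smale and transversality), I would first reduce to a \emph{simple} cycle. This means: $p$ and $q$ have real central eigenvalues, the dynamics is affine in linearizing charts near $\mathcal{O}(p)$ and $\mathcal{O}(q)$, and the heteroclinic transitions are affine maps respecting the splittings $E^{\mathrm{s}}\oplus E^{\mathrm{c}}\oplus E^{\mathrm{u}}$. In these coordinates, the return maps along the cycle reduce to one-dimensional affine central dynamics. This is also the point at which the cycle is classified as \emph{twisted} or \emph{non-twisted}, according to the signs of the central eigenvalues and of the transitions.

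\textbf{Step 2: use the nontrivial homoclinic class.} Because $H(p)$ is nontrivial, there is a transverse homoclinic point of $\mathcal{O}(p)$, and hence a horseshoe $\Lambda\ni p$. I would perturb so that the unstable manifold of $\Lambda$ accumulates on the heteroclinic point of $W^{\mathrm{u}}(p)\cap W^{\mathrm{s}}(q)$ from \emph{both} sides in the central direction. The reason for working on both sides is that the main obstruction, the twisted case, is exactly the situation where the cycle's own central dynamics only reach one side of $W^{\mathrm{s}}(q)$. Homoclinic points of $p$ approaching from different sides would let me replace $p$ by a homoclinically related saddle whose transition has the right orientation, in the spirit of Remark~\ref{r.itisenough}.

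\textbf{Step 3: build the blender and conclude.} Once the cycle is non-twisted, I would follow \cite{BonDia:08}. I would take a large iterate of the cycle map near $q$, choosing times $m$ near $\mathcal{O}(p)$ and $n$ near $\mathcal{O}(q)$ so that $\lambda_{\mathrm{c}}(p)^m\lambda_{\mathrm{c}}(q)^n$ is close to a prescribed value. Two such compositions then produce an affine horseshoe in a cube around $q$. Its central dynamics consists of two expanding branches whose images overlap, which is exactly the covering condition for an unstable blender-horseshoe (cf.\ Lemma~\ref{l.sobrediscos}). The disks in $W^{\mathrm{u}}(\mathcal{O}(p))$ that cross the cube in the $\mathrm{uu}$-direction are disks in-between. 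By Remark~\ref{r.intersectionwiththestableset} they meet $W^{\mathrm{s}}_{\mathrm{loc}}$ of the blender, and this remains true for every nearby $g$. This gives the robust cycle between $\mathcal{O}(p_g)$, or a horseshoe containing it, and $\Sigma_g$.

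I expect Step~2, the twisted case, to be the main obstacle. It requires the delicate use of the nontrivial class to flip the central orientation; everything else is by now standard blender machinery.
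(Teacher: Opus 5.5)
\textbf{Gap: Step~3, placing $W^{\mathrm{u}}(\mathcal{O}(p))$ in-between.} Your Steps~1--2 follow the route sketched in the paper: simple cycles, the twisted/non-twisted dichotomy, and bi-accumulated saddles in the nontrivial class yielding a non-twisted cycle for homoclinically related saddles $\bar p,\bar q$ (Lemma~\ref{l.twistedafter}). The gap is the sentence ``the disks in $W^{\mathrm{u}}(\mathcal{O}(p))$ that cross the cube in the $\mathrm{uu}$-direction are disks in-between.'' This is the whole content of stabilization, and it does not follow from the covering property.

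In your construction every leg of the horseshoe is a pass along the cycle that ends with the transition from $p$ to $q$. So the legs and the disk of $W^{\mathrm{u}}(p)$ are governed by the same unfolding parameter $\mu$ of the quasi-transverse intersection, and you cannot move $W^{\mathrm{u}}(p)$ independently. In the affine model, put $W^{\mathrm{s}}_{\mathrm{loc}}(q)$ at central level $0$ and the $W^{\mathrm{u}}(p)$-disk at level $\mu$. The central part of each leg is then $x\mapsto \mu+\kappa_i x+O(\lambda_{\mathrm{c}}(p)^{m_i})$.

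The in-between region of a blender-horseshoe is bounded by the local stable manifolds of the reference saddles of the two legs. Hence covering forces both legs to preserve the central orientation, so $\kappa_i>1$ and the fixed points sit at the two ends of the region. When the error terms vanish (in the affine model, e.g.\ if the heteroclinic curve in $W^{\mathrm{u}}(q)\cap W^{\mathrm{s}}(p)$ meets $W^{\mathrm{uu}}(q)\cap W^{\mathrm{ss}}(p)$), these fixed points are $-\mu/(\kappa_i-1)$. Both lie on the side of level $0$ opposite to $\mu$, so the $W^{\mathrm{u}}(p)$-disk is never in-between, whichever way you unfold. In general the outcome depends on the sign and size of $\mu$ relative to these error terms and on the central orientation of the transitions, which is exactly the data encoded by twisting. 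Your Step~3 never says where non-twistedness is used. You also do not show that this horseshoe is homoclinically related to $q$, which you need for $\Sigma_g\ni q_g$.

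\textbf{How the paper handles this step.} Following \cite[Proposition~6.1]{BonDiaKir:12}, the paper does not take the blender directly from two cycle returns. The non-twisted cycle is perturbed to create a neutral (saddle-node or flip) point $s_g$ with a strong homoclinic intersection (c), together with the connections (a), (b), (d), (e) of Proposition~\ref{p.bondiakirmany}.
\begin{itemize}
\item The blender $B^+$ comes from unfolding (c) (\cite{BonDia:08}, \cite[Proposition~5.4]{BonDia:12}), and its in-between disks lie near $W^{\mathrm{uu}}_{\mathrm{loc}}(s_g)$.
\item Item (b) gives $W^{\mathrm{u}}(B^+)\pitchfork W^{\mathrm{s}}(\mathcal{O}(\bar p))\neq\emptyset$, and that every in-between disk meets $W^{\mathrm{s}}_R(\mathcal{O}(\bar p))$.
\item $W^{\mathrm{u}}(\mathcal{O}(\bar p))$ is put in-between by unfolding the \emph{separate} quasi-transverse intersection (d) in the appropriate direction.
\item Items (a) and (e) relate the blender to $\bar q$.
\end{itemize}
The idea missing from your proposal is this decoupling: the intersection used to create the blender and the one used to place $W^{\mathrm{u}}(\bar p)$ on the in-between side must be unfolded independently. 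Without it, Step~3 is an assertion rather than routine blender machinery, and it is a central difficulty of \cite{BonDiaKir:12}, alongside your Step~2.
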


\begin{remark}\label{r.howarethecycles}
The cycles in Theorem~\ref{t.bondiakir}
are associated with the continuation of one of the saddles in the initial cycle and with a blender-horseshoe containing the continuation of the other saddle.
\end{remark}

We now review the proof Theorem~\ref{t.bondiakir} to deduce  Proposition~\ref{p.blender2}. 
We begin by introducing {\em{strong homoclinic intersections}}
associated with {\em{neutral points}} and recall that they yield blender-horseshoes. 

In the strongly partially 
hyperbolic setting, a
 periodic point $s$ of $f$ is {\em{neutral}}  if the derivative 
$Df^{\pi(s)}(s)$ has one eigenvalue equal to $\pm 1$.
These points are either of {\em{saddle-node}} or {\em{flip}} type (corresponding to the cases $+1$ 
and $-1$, respectively)\footnote{This means that the point is partially hyperbolic, with the terminology
in \cite{BonDiaKir:12}}. 
Note that 
$$
    T_{\mathcal{O}(s)}M = E^\s_{\mathcal{O}(s)} \oplus E^{\mathrm{c}}_{\mathcal{O}(s)} \oplus E^\u_{\mathcal{O}(s)},
$$
where  
$E^{\mathrm{c}}_{\mathcal{O}(s)}$ 
corresponds to the eigenvalue $\pm 1$. This allows 
to consider the strong stable $W^\s (\mathcal{O}(s) )$ and strong unstable $W^\u(\mathcal{O}(s))$ manifolds of
$\mathcal{O}(s)$. 
We say that  $\mathcal{O}(s)$ has a  {\em{strong homoclinic intersection}} if $W^\s(\mathcal{O}(s)) \cap W^\u(\mathcal{O}(s))$ 
contains a point $w\not\in \mathcal{O}(p)$, called a {\em{strong homoclinic point}},  such that 
\begin{equation}
    T_w W^\s(\mathcal{O}(s)) + T_w W^\u(\mathcal{O}(s)) = T_wW^\s(\mathcal{O}(s)) \oplus T_w W^\u(\mathcal{O}(s)).
\end{equation}

By \cite[Theorem 4.1]{BonDia:08}, these intersections yield robust  cycles associated 
with a blender-horseshoe\footnote{The terminology \emph{blender-horseshoe} was introduced in the subsequent work~\cite{BonDia:12}, where the hyperbolic sets considered in~\cite{BonDia:08} were identified as blender-horseshoes.}
 and a saddle. To fix ideas and be more precise, consider  the saddle-node case. 
On the one hand, the saddle-node splits into two saddles \(s^+\) and \(s^-\) with different \(\mathrm{s}\)-indices, the one of the former being greater.
On the other hand, the unfolding of
the intersection between the strong stable and unstable manifolds generates a unstable blender-horseshoe containing  $s^+$ (this is precisely stated in \cite[Proposition 5.4]{BonDia:12}).
The robust cycle is associated to this blender and the saddle $s^-$. 
 One can reverse the roles of the saddles, getting a robust cycle involving a stable blender-horseshoe and the 
 saddle $s^+$.
 
 Indeed, what is implicitly proved in \cite{BonDia:12} is that one can select one of the saddles in the cycle and obtain
a $C^1$-robust cycle involving that saddle and a blender-horseshoe containing the other one. 
This construction was made explicit in \cite{BonDiaKir:12}.
Let us sketch it. For this let us recall that the 
cycles  of co-index one associated with saddles with real eigenvalues are classified in 
\cite{BonDiaKir:12} as {\em{twisted}} and {\em{nontwisted}} (in \cite{LiTur:24} these cycles are renamed
type I and II, respectively).

\begin{lemma} \label{l.twistedafter}
 Every co-index one cycle associated to  saddles $p_f$ and $q_f$
with nontrivial homoclinic classes yields after an arbitrarily small $C^1$-perturbation a nontwisted cycle associated to saddles 
$\bar p_g$ and $\bar q_g$ which are homoclinically related to $p_g$ and $q_g$.
\end{lemma}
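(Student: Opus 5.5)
The plan is to follow \cite{BonDiaKir:12}, where the twisted/nontwisted dichotomy is introduced, and to use the nontriviality of the homoclinic classes to replace the saddles of the cycle by homoclinically related saddles whose central dynamics has the desired orientation. First I would make a preliminary $C^1$-perturbation (Franks' lemma plus Kupka--Smale) so that the cycle persists, is still of co-index one, and $p_f$, $q_f$ have real central eigenvalues. Then I would linearize in neighbourhoods of $\mathcal{O}(p_f)$ and $\mathcal{O}(q_f)$ and fix a heteroclinic point $x\in W^{\mathrm{u}}(\mathcal{O}(p))\pitchfork W^{\mathrm{s}}(\mathcal{O}(q))$ of quasi-transverse type, together with a point $y\in W^{\mathrm{s}}(\mathcal{O}(p))\cap W^{\mathrm{u}}(\mathcal{O}(q))$. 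The transition maps from a neighbourhood of $q$ to $p$ along $y$, and from $p$ to $q$ along $x$, act on the one-dimensional central direction. Whether the cycle is twisted is determined by the signs of the central eigenvalues and by the orientation that these transitions induce on the central line.

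Next I would use the nontrivial homoclinic class of, say, $p_f$. Transverse homoclinic points of $\mathcal{O}(p_f)$ give horseshoes containing $p_f$. In such a horseshoe one can choose saddles $\bar p$ whose orbits follow $\mathcal{O}(p)$ and then one excursion along a homoclinic loop, with any prescribed parity of the number of visits to the various rectangles. If necessary, a small Franks' perturbation along the excursion makes the central multiplier negative. This lets us choose $\bar p$ homoclinically related to $p$ with central eigenvalue of either sign and with either orientation of the central transition. The $\lambda$-lemma then transfers the heterodimensional intersections from $p$ to $\bar p$: the unstable manifold of $\bar p$ accumulates on that of $p$, and likewise for the stable manifolds, so a cycle between $\bar p$ and $q$, or between $\bar p$ and $\bar q$, appears after an arbitrarily small perturbation created by Hayashi's connecting lemma. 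The analogous freedom is available for $q$.

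Finally, the twisting invariant of the new cycle is the product of the orientation data of its pieces. Flipping the contribution of one saddle flips the invariant, so a suitable choice of $\bar p_g$ and $\bar q_g$ makes the cycle nontwisted. Homoclinic relations are open, so $\bar p_g$ and $\bar q_g$ remain homoclinically related to $p_g$ and $q_g$ after the perturbations.

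I expect the main obstacle to be controlling the orientation invariant precisely. The excursions of the new saddles must change that invariant while keeping the central eigenvalues real and the cycle of co-index one. The perturbation that re-creates the heterodimensional intersections must also not destroy the chosen orientation. To handle this, I would do all perturbations in small disjoint neighbourhoods, first fix $\bar p$ and $\bar q$, and only then apply the connecting lemma, away from their orbits.
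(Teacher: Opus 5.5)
\textbf{There is a genuine gap in the step meant to untwist the cycle.} You propose saddles $\bar p$, homoclinically related to $p$, with central multiplier of either sign, obtained from the parity of itineraries in a horseshoe or from a Franks perturbation along the excursion. In the setting where this lemma is applied (strongly partially hyperbolic classes with $\dim E^{\mathrm{c}}=1$) this cannot be done.

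\begin{itemize}
\item The central multiplier of a periodic orbit is the product of the nonzero scalars $Df|_{E^{\mathrm{c}}}$ along the orbit.
\item Small Franks perturbations keep the invariant cones, hence a one-dimensional center line, so this product never crosses zero. Its sign is a robust invariant: the holonomy of an orientation of $E^{\mathrm{c}}$ along the orbit.
\item If $E^{\mathrm{c}}$ is orientable near the class and $Df$ preserves that orientation, every saddle of $H(p)$ has positive central multiplier, whatever its itinerary. All the ``orientation data'' you want to flip are then frozen, yet twisted cycles do arise in this positive-multiplier regime.
\end{itemize}

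Without partial hyperbolicity, nothing in your argument produces an orientation-reversing homoclinic loop either. So your last step (``flipping the contribution of one saddle flips the invariant'') has nothing to act on.

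\textbf{What can actually be changed is a \emph{side}.} With positive central multipliers, $W^{\mathrm{ss}}_{\mathrm{loc}}(p)$ splits $W^{\mathrm{s}}_{\mathrm{loc}}(p)$ into two components, and $W^{\mathrm{uu}}_{\mathrm{loc}}(q)$ splits $W^{\mathrm{u}}_{\mathrm{loc}}(q)$ likewise. Geometrically, twistedness compares two things: the components through which the transverse heteroclinic orbit in $W^{\mathrm{u}}(\mathcal{O}(q))\pitchfork W^{\mathrm{s}}(\mathcal{O}(p))$ leaves $q$ and reaches $p$, and the central orientation carried along the quasi-transverse orbit.

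The missing idea is the \emph{bi-accumulation property}: the saddle is accumulated by its transverse homoclinic points on both sides of its strong stable (resp.\ strong unstable) manifold. The paper's proof consists of two citations.
\begin{itemize}
\item \cite[Lemma~5.9]{BonDiaKir:12} uses the nontriviality of the classes to produce bi-accumulated saddles $\bar p$, $\bar q$ homoclinically related to $p$, $q$.
\item \cite[Proposition~6.2]{BonDiaKir:12} turns twisted cycles associated with such saddles into nontwisted ones.
\end{itemize}

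Roughly, the mechanism is as follows. If $\bar p$ is bi-accumulated, the $\lambda$-lemma gives transverse heteroclinic points in $W^{\mathrm{u}}(\mathcal{O}(\bar q))\pitchfork W^{\mathrm{s}}(\mathcal{O}(\bar p))$ arriving on either side of $W^{\mathrm{ss}}_{\mathrm{loc}}(\bar p)$. The side through which they leave $\bar q$ does not change, so one can select the nontwisted combination. Your $\lambda$-lemma and perturbation step that recreates the quasi-transverse connection is fine. But without this choice of side it just reproduces a twisted cycle.
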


\begin{proof}
 The lemma  is a consequence of the following results:
\cite[Lemma 5.9]{BonDiaKir:12} (abundance of saddles with the bi-accumulation property
in nontrivial homoclinic classes and cycles) and
\cite[Proposition 6.2]{BonDiaKir:12} (twisted cycles associated to saddles with the bi-accumulation
property yields nontwisted cycles).
\end{proof}

The proposition now follows from the following result:

\begin{prop}[Proposition 6.1 in \cite{BonDiaKir:12}]
\label{p.bondiakirmany}
Let $f\in \mathrm{Dif}^1(M)$ have a nontwisted cycle associated with saddles $\bar p_f$ and $\bar q_f$
with $\mathrm{s}$-index$(\bar p_f)= \mathrm{s}$-index$(\bar q_f)+1$. Then there is
$g\in \mathrm{Dif}^1(M)$ arbitrarily $C^1$-close to $f$ with a neutral point $s_g$ such that
\begin{enumerate}
\item[{\rm{(a)}}]
$W^\s (\mathcal{O}(s_g)) \pitchfork W^\mathrm{u}(\mathcal{O}(\bar q_g))\ne\emptyset$;
\item[{\rm{(b)}}]
$W^\u (\mathcal{O}(s_g)) \pitchfork W^\mathrm{s}(\mathcal{O}(\bar p_g))\ne\emptyset$; 
\item[{\rm{(c)}}]
a strong homoclinic intersection
 $W^\u (\mathcal{O}(s_g) )\cap W^\u \mathcal{O}((s_g))$;
 \item[{\rm{(d)}}]
 $W^\s (\mathcal{O}(s_g)) \cap W^\mathrm{u}(\mathcal{O}(\bar p_g))\ne\emptyset$;
and
\item[{\rm{(e)}}]
$W^\u (\mathcal{O}(s_g)) \cap W^\mathrm{s}(\mathcal{O}(\bar q_g))\ne\emptyset$.
\end{enumerate}
\end{prop}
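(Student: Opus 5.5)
The proof follows the construction of \cite{BonDiaKir:12}: build a neutral periodic orbit $s_g$ that shadows the cycle, then read off the five intersections from the way its orbit follows $\bar p_g$ and $\bar q_g$ and from one extra local perturbation.

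\emph{Step 1: normalize the cycle.} Use Franks' lemma and small $C^1$-perturbations (which keep the cycle) to linearize $f^{\pi(\bar p)}$ and $f^{\pi(\bar q)}$ near $\bar p_f$ and $\bar q_f$. In these charts $T M=E^{\s}\oplus E^{\c}\oplus E^{\u}$ with $\dim E^{\c}=1$. The central eigenvalue satisfies $|\lambda_{\c}(\bar p)|<1$ at $\bar p$ and $|\lambda_{\c}(\bar q)|>1$ at $\bar q$; this is possible because the central eigenvalues are real and simple. Then perturb further so that:
\begin{itemize}
\item the transverse part of the cycle, $W^{\mathrm{u}}(\bar p_f)\pitchfork W^{\mathrm{s}}(\bar q_f)$, contains an orbit $x$ whose transition map is affine in the charts;
\item the co-index one, quasi-transverse part, $W^{\mathrm{s}}(\bar p_f)\cap W^{\mathrm{u}}(\bar q_f)$, contains an orbit $y$ whose transition is also affine and along which $E^{\c}$ is mapped to $E^{\c}$.
\end{itemize}
The transitions are then block maps preserving the three bundles. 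The central components of the transitions along $x$ and $y$ are multiplication by constants $\tau_1,\tau_2$. The nontwisted hypothesis says, after the choices of orientation, that $\tau_1\tau_2>0$ with respect to the signs of $\lambda_{\c}(\bar p)$ and $\lambda_{\c}(\bar q)$. This sign condition is what makes the next step produce a multiplier $+1$ rather than being obstructed.

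\emph{Step 2: create the neutral orbit.} For large $m,n$, consider the itinerary that stays $m$ periods near $\bar p$, follows $x$, stays $n$ periods near $\bar q$, and returns along $y$. The central part of the return map on a small box is affine, with linear coefficient $\tau_1\tau_2\lambda_{\c}(\bar p)^m\lambda_{\c}(\bar q)^n$. Choose $m,n$ with $|\lambda_{\c}(\bar p)^m\lambda_{\c}(\bar q)^n\tau_1\tau_2|$ as close to $1$ as desired. Then make a perturbation of size $o(1)$ as $m,n\to\infty$:
\begin{itemize}
\item rescale the central eigenvalue slightly, so the coefficient is exactly $1$ (positivity coming from nontwistedness);
\item translate the quasi-transverse intersection slightly in the central direction, so the affine central map is the identity on an interval.
\end{itemize}
A further arbitrarily small perturbation turns this into a single saddle-node fixed point of the return map. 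Hyperbolicity in $E^{\s}$ and $E^{\u}$ gives a periodic point $s_g$ with one central eigenvalue equal to $1$. Its strong manifolds $W^{\s}(\mathcal{O}(s_g))$ and $W^{\u}(\mathcal{O}(s_g))$ are $C^1$-close to the strong manifolds of $\bar p_g$ and $\bar q_g$ along the corresponding orbit pieces.

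\emph{Step 3: obtain (a), (b), (d), (e).} These come from the fact that $\mathcal{O}(s_g)$ spends long stretches near $\bar p_g$ and near $\bar q_g$.
\begin{itemize}
\item (a) and (b): $W^{\s}(\mathcal{O}(s_g))$ contains a disk close to a strong stable disk through a point of $W^{\mathrm{s}}(\bar q_g)$. By the $\lambda$-lemma it crosses $W^{\mathrm{u}}(\bar q_g)$ transversally, the dimensions being complementary since $\mathrm{s}$-ind$(\bar q)=\dim E^{\s}$. Symmetrically, $W^{\u}(\mathcal{O}(s_g))$ crosses $W^{\mathrm{s}}(\bar p_g)$ transversally.
\item (d) and (e): these are non-transverse and use the quasi-transverse orbit $y$. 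Because the central coordinate of the return map is the identity near $s_g$, the strong stable leaf of $s_g$ through the $\bar p$-segment contains a point of the local unstable manifold of $\bar p_g$, giving (d). The strong unstable leaf through the $\bar q$-segment contains a point of the local stable manifold of $\bar q_g$, giving (e).
\end{itemize}
These intersections must be preserved by the central-coordinate adjustments of Step 2; I would fix the local coordinates so that the relevant leaves pass exactly through the local invariant manifolds before making the final perturbation.

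\emph{Step 4, the main obstacle: the strong homoclinic intersection (c).} Here the intersection is of $W^{\s}(\mathcal{O}(s_g))$ with $W^{\u}(\mathcal{O}(s_g))$, the two strong manifolds. Their dimensions add up to $\dim M-1$, so the intersection is not generic and must be created by a perturbation supported away from $\mathcal{O}(s_g)$. My approach is to use that $H(\bar p)$ and $H(\bar q)$ are nontrivial, so there are transverse homoclinic points of $\bar q$ (equivalently of $\bar p$). A strong unstable disk of $s_g$ accumulates on $W^{\mathrm{u}}(\bar q_g)$, and a strong stable disk of $s_g$ accumulates on $W^{\mathrm{s}}(\bar q_g)$. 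Near a transverse homoclinic point of $\bar q_g$, these two disks come within $o(1)$ of each other in the central direction. A local translation in the central direction, supported in a small ball around that homoclinic point, then makes them meet. The sum of their tangent spaces is automatically direct, because the disks are tangent to cones around $E^{\u}$ and $E^{\s}$ respectively.

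The delicate point is that this perturbation does not destroy the neutrality of $s_g$ or the intersections (a), (b), (d), (e). I would handle this by choosing the ball disjoint from $\mathcal{O}(s_g)$ and from the finitely many compact pieces of orbits realizing (a)–(e). This is possible because the homoclinic point can be taken on an orbit distinct from all of these.
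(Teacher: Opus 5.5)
There is a genuine gap, and it is exactly where the nontwisted hypothesis must be used. (The paper does not reprove this statement; it quotes it from \cite{BonDiaKir:12}.)

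Your reading of nontwistedness as positivity of the central multiplier of the return map cannot be right. That sign only decides whether the neutral point is a saddle-node or a flip, and the statement allows both. The real content lies in (d) and (e), and your Step~3 argument for them fails.

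Take central eigenvalues $0<\lambda<1<\beta$ at $\bar p,\bar q$ (after taking powers). Write the central part of the quasi-transverse transition (from near $\bar p$ to near $\bar q$) as $x\mapsto\alpha x+\mu$. Let $y_P,y_Q\neq 0$ be the central coordinates of the transverse heteroclinic point in $W^{\mathrm s}_{\mathrm{loc}}(\bar p)$ and in $W^{\mathrm u}_{\mathrm{loc}}(\bar q)$; they record on which side of $W^{\s}_{\mathrm{loc}}(\bar p)$, resp.\ $W^{\u}_{\mathrm{loc}}(\bar q)$, that point lies.

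In the affine charts $W^{\mathrm u}_{\mathrm{loc}}(\bar p)\subset\{x^{\c}=0\}$. The leaves of $W^{\s}(\mathcal O(s_g))$ along the $\bar p$-segment sit at central levels $\lambda^i c'$ with $c'\approx y_P$. Making the central return map the identity does not move these levels to $0$, and no choice of coordinates can, since they are dynamically determined.

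Hence (d) must use the part of $W^{\mathrm u}(\bar p)$ beyond the quasi-transverse transition. Near $\bar q$ this part lies at levels $\beta^k\mu$, while $W^{\s}(\mathcal O(s_g))$ lies there at levels $\beta^{k'}c^+$ with $\beta^n c^+\approx y_Q$. Matching these levels, or continuing through the transverse transition (which needs $\beta^k\mu\approx y_Q$), forces $\mathrm{sign}(\mu)=\mathrm{sign}(y_Q)$.

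Symmetrically, $W^{\mathrm s}_{\mathrm{loc}}(\bar q)$ is at level $0$, while the leaves of $W^{\u}(\mathcal O(s_g))$ near $\bar q$ are at levels $\beta^k c^+\neq 0$. So (e) must use $W^{\mathrm s}(\bar q)$ pulled back near $\bar p$, at levels $\lambda^{-i}(-\mu/\alpha)$, against the levels $\lambda^{i'}c'$ of $W^{\u}(\mathcal O(s_g))$. This forces $\mathrm{sign}(\mu)=-\mathrm{sign}(\alpha y_P)$.

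Both conditions can hold only if the orientation-independent quantity $\alpha y_P y_Q$ is negative. A side/orientation condition of this kind is what the twisted/nontwisted dichotomy encodes. Your Step~3 never checks it: it would equally ``produce'' (d) and (e) in saddle-node configurations where they are obstructed. ``Fixing the coordinates so that the leaves pass through the invariant manifolds'' does not address this.

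Step~4 then uses a hypothesis you do not have: the proposition assumes nothing about the homoclinic classes of $\bar p_f,\bar q_f$. In \cite{BonDiaKir:12}, and in the paper's Lemma~\ref{l.twistedafter}, nontriviality of homoclinic classes is spent precisely on replacing a twisted cycle by a nontwisted one. For a nontwisted cycle, (c) has to be extracted from the cycle configuration itself. For instance, by (a) and the $\lambda$-lemma, backward iterates of $W^{\s}(\mathcal O(s_g))$ accumulate on $W^{\mathrm s}(\mathcal O(\bar q_g))$, which $W^{\u}(\mathcal O(s_g))$ meets by (e).

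Even granting homoclinic points, your claim that the two strong manifolds come $o(1)$-close near a transverse homoclinic point $z$ of $\bar q_g$ is unjustified. $W^{\u}(\mathcal O(s_g))$ has dimension $\dim E^{\u}$, one less than $W^{\mathrm u}(\bar q_g)$. Near $\bar q_g$ it only follows strong unstable leaves at central levels determined by $\mathcal O(s_g)$ and $\mu$ (and $W^{\u}(\bar q_g)$ itself, via (e)). The point $z$ need not lie on any of them; note that $W^{\u}(\bar q_g)\cap W^{\mathrm s}(\bar q_g)$ has expected dimension $-1$.

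Finally, you have swapped the two heteroclinic intersections. Since $\dim W^{\mathrm s}(\bar p)+\dim W^{\mathrm u}(\bar q)=\dim M+1$, the transverse one is $W^{\mathrm s}(\bar p)\pitchfork W^{\mathrm u}(\bar q)$, a curve tangent to $E^{\c}$. The quasi-transverse one, which carries the unfolding parameter $\mu$, is $W^{\mathrm u}(\bar p)\cap W^{\mathrm s}(\bar q)$.
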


Using
Proposition~\ref{p.bondiakirmany}, we get $h\in \mathrm{Dif}^1(M)$ arbitrarily $C^1$-close to $g$
 having a
unstable blender-horseshoe $B_h^+$ such that 
the conclusions (1)--(3) of Proposition~\ref{p.blender2} hold for 
$B_h^+$ and $\bar p_h$ as follows:
\begin{enumerate}
\item[{\rm{(i)}}]
The strong homoclinic intersection in (c) provides a unstable blender-horseshoe $B^+_h$.
Item (b) implies that $W^\mathrm{u} (B^+_h) \pitchfork W^\mathrm{s} (\mathcal{O}(\bar p_h)\ne\emptyset$.
Since the disks in $\mathfrak{D}_{\mathrm{bet}}(B^+_g)$ are close
to the (local) strong unstable set of $s_h$, item (b) also implies that
every disk in $\mathfrak{D}_{\mathrm{bet}}(B^+_h)$
intersects $W^\mathrm{s}_R (\mathcal{O}(\bar p_h)$ for some $R$,
proving item (1) of the proposition.
   \item[{\rm{(ii)}}] 
Unfolding the intersection in (d), we get that 
$W^\mathrm{u}(\mathcal{O}(\bar p_h)$ contains a disk 
in  $\mathfrak{D}_{\mathrm{bet}}(B^+_h)$, proving item (2).
\item[{\rm{(iii)}}]
the intersections
 in item (3) follow from the two previous items:
 $W^\mathrm{u} (B^+_h) \pitchfork  W^\mathrm{s}(\mathcal{O}(\bar p_h)$ is explicit in (i) and
  $W^\mathrm{s} (B^+_h) \cap  W^\mathrm{u}(\mathcal{O}(\bar p_h)\ne \emptyset$ follows from (ii) and  
  Remark~\ref{r.intersectionwiththestableset}.
\end{enumerate}

Note that these three conditions are $C^1$-open ones. Thus for each $g$ as in the theorem
we get an open set $\mathcal{U}_g$ formed by diffeomorphisms $h$ as above. As $g$ can be chosen arbitrarily
close to $f$, considering the union of these sets we get a set $\mathcal{V}_{p,f}$ whose closure contains $f$.
As the saddles $\bar p_h$ are homoclinically related to $p_h$ the result holds for $p_h$, recall Remark~\ref{r.itisenough}.
This completes the proof of the proposition. \qed

\section{Proof of Theorem~\ref{t.teogeralclassehom}: Approximation of nonhyperbolic measures}
\label{s.thmAnonhyperbolic}

In this section, we define the residual set from Theorem~\ref{t.teogeralclassehom} and show that its diffeomorphisms satisfy the hypotheses of Theorem~\ref{t.densityisolatedset}, see
Section~\ref{ss.endoftheproofofA}. This yields Theorem~\ref{t.teogeralclassehom} 
for $\alpha=0$. For this end, we state the approximation result Proposition~\ref{p.apnonhyphomclassperhyp}
below.

\subsection{Approximation of nonhyperbolic measures by periodic ones}
\label{ss.insideapproximation}

Given $f \in \mathrm{Diff}^1(M)$ and a homoclinic class $H\in \mathrm{ISPH}(f)$ (see Definition~\ref{d.ISPH}),
 recall the three types of measures
 $\mathcal{M}^\dag_{\mathrm{erg}}(f,H)$, $\dag\in \{0,+,-\}$,
 in \eqref{e.threemeasures}.

\begin{prop}
\label{p.apnonhyphomclassperhyp}
    There is a residual subset $\mathcal{R}_1(M)$ of $\mathrm{Diff}^1(M)$ consisting of diffeomorphisms
   $f$ with the following property: for every homoclinic class $H$ in $\mathrm{ISPH}(f)$ 
   and   every $\mu \in \mathcal{M}^0_{\mathrm{erg}}(f,H)$, there exist sequences of periodic measures 
     $(\mu^+_k)_k$ in  $\mathcal{M}^+_{\mathrm{erg}}(f,H)$ 
      and $(\mu^-_k)_k$  in $\mathcal{M}^-_{\mathrm{erg}}(f,H)$
     that converge to $\mu$.
\end{prop}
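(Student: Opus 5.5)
The plan is to combine the generic ergodic closing lemma with a Franks-lemma perturbation argument, and then promote the perturbative statement to a generic one by a Baire argument. First, I would invoke the $C^1$-generic version of Mañé's ergodic closing lemma (as in \cite{AbdBonCro:11}). For $f$ in a residual set, every ergodic measure $\mu$ is the weak$^\ast$ limit of periodic measures $\mu_{\mathcal{O}_k}$ such that:
\begin{itemize}
\item the orbits $\mathcal{O}_k$ converge to $\mathrm{supp}(\mu)$ in the Hausdorff topology;
\item the Lyapunov exponents of $\mu_{\mathcal{O}_k}$ converge to those of $\mu$.
\end{itemize}
Let $H\in\mathrm{ISPH}(f)$ and $\mu\in\mathcal{M}^0_{\mathrm{erg}}(f,H)$. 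Since $H$ is isolated with isolating block $U$, the orbits $\mathcal{O}_k$ eventually lie in $U$, hence in $\Lambda_f(U)=H$. By Remark~\ref{r.convexp}, $\chi^\c(\mu_{\mathcal{O}_k})\to 0$. This gives periodic approximations inside $H$ with central exponents of small modulus, but with uncontrolled sign.

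To control the sign, I would prove the statement first for an open and dense set of perturbations and then make it generic. Fix a countable basis $\{\mathcal{V}_j\}$ of open sets in the space of probability measures and a countable basis $\{U_i\}$ of open sets of $M$. The property to be made generic is $P(i,j,\epsilon)$: if every $g$ near $f$ has a periodic orbit in $U_i$ with measure in $\mathcal{V}_j$ and $|\chi^\c|<\epsilon$, then such an orbit exists with $\chi^\c>0$ and another with $\chi^\c<0$. The orbit in $U_i$ is a stand-in for $U_i$ being an isolating block of a robustly isolated class. Take a hyperbolic periodic orbit $\mathcal{O}$ of $f$ in $U_i$ with $\mu_{\mathcal{O}}\in\mathcal{V}_j$ and $|\chi^\c(\mathcal{O})|$ tiny. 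Franks' lemma lets me multiply the central derivative along $\mathcal{O}$ by a factor close to $1$ through an arbitrarily $C^1$-small perturbation supported near $\mathcal{O}$. This flips the sign of the one-dimensional central exponent while keeping $\mathcal{O}$ hyperbolic. Hyperbolicity is an open condition, so the continuation of the orbit persists on an open set of diffeomorphisms. Hence the set of $g$ having both a positive and a negative periodic orbit with these properties is open, and its union with the interior of the complement is open and dense. Intersecting over all $(i,j,\epsilon)$ and with $\mathcal{R}_0(M)$ and the ergodic closing residual set gives $\mathcal{R}_1(M)$.

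For $f\in\mathcal{R}_1(M)$ the approximating orbits of both signs lie in $U$, so they lie in $\Lambda_f(U)=H$. Here robust isolation is used: the continuation class $H(p_g)$ equals $\Lambda_g(U)$ for $g$ near $f$, so the perturbed orbits belong to the right class for every perturbation considered. Running $\mathcal{V}_j$ through a neighborhood basis of $\mu$ and $\epsilon\to 0$ then yields the sequences $(\mu^\pm_k)_k$.

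The main obstacle is the bookkeeping of the Baire argument. The property ``orbit of each sign near $\mu$'' must be formulated robustly in terms of countable data $(U_i,\mathcal{V}_j,\epsilon)$ that does not refer to the non-continuous object $\mu$ itself. I expect this to go through by the standard ``open or not-in-closure'' trick. Alternatively, one could use index variation and a flip-flop to connect the orbit to saddles of the other index. That route is less direct, and I would try the Franks lemma route first.
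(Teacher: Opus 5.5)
Your route differs genuinely from the paper's, and after some repairs it is valid.

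\textbf{How the two routes compare.} The paper does no perturbation. For $f\in\mathcal{R}_0(M)\cap\mathcal{R}_2(M)$ it uses Theorem~\ref{t.abc} to get periodic measures $\mu_k\to\mu$ supported on $H$. These are hyperbolic by (G0), so a subsequence has a fixed sign, say negative. The other sign comes from index variation: a saddle $q\in H$ with $\chi^{\mathrm{c}}(q)>0$ is mixed in with weight $1-\lambda_k\to 0$. This gives nonergodic measures $\nu_k\to\mu$ with $\chi^{\mathrm{c}}(\nu_k)=\int\varphi^{\mathrm{c}}\,d\nu_k=1/k>0$. One more application of Theorem~\ref{t.abc} to $\nu_k$, which the text leaves implicit, together with continuity of $\eta\mapsto\int\varphi^{\mathrm{c}}\,d\eta$, then yields periodic $\mu^+_k\to\mu$ in $H$ with positive exponent. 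That argument is soft and needs no new generic construction, but index variation is essential to it.

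Your argument trades index variation for Franks' lemma plus a Baire argument. It uses only the Hausdorff-convergence form of the generic ergodic closing lemma and the isolation of $H$ for $f$ itself. It therefore proves more: for generic $f$, an isolated strongly partially hyperbolic class carrying a nonhyperbolic ergodic measure automatically contains saddles of both indices.

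\textbf{Repairs needed in the bookkeeping.}

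\emph{Where the Franks step goes.} It belongs at the residual diffeomorphism, not in the density step. In the density step you work with an arbitrary diffeomorphism, which need not have any periodic orbit of tiny central exponent, so ``take $\mathcal{O}$ with $|\chi^{\mathrm{c}}(\mathcal{O})|$ tiny'' is not available there. Instead, let $\mathcal{A}=\mathcal{A}_{i,j,\epsilon,s}$ be the set of $g$ having hyperbolic periodic orbits $\mathcal{O}^\pm\subset U_i$ with $\mu_{\mathcal{O}^\pm}\in\mathcal{V}_j$ and $0<\pm\chi_{s+1}(\mathcal{O}^\pm)<\epsilon$. Here $\chi_{s+1}$ is the $(s+1)$-th Lyapunov exponent; it is defined for every $g$ and equals $\chi^{\mathrm{c}}$ on orbits in $H$ when $s=\dim E^{\mathrm{ss}}$. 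This set is open, and $\mathcal{A}\cup(\mathrm{Diff}^1(M)\setminus\overline{\mathcal{A}})$ is open and dense for free. At a residual $f$ you rule out $f\in\mathrm{Diff}^1(M)\setminus\overline{\mathcal{A}}$ as follows. The closing lemma supplies orbits in $U_i$ with measures in $\mathcal{V}_j$ and exponents as close to $0$ as you like. An arbitrarily small Franks perturbation then lands in $\mathcal{A}$: compose $Df$ along the orbit with the map that is the identity on $E^{\mathrm{ss}}\oplus E^{\mathrm{uu}}$ and multiplies $E^{\mathrm{c}}$ by $e^{\pm 2\eta}$, where $|\chi^{\mathrm{c}}|<\eta$.

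\emph{One orbit is not enough.} Flipping its sign loses the original sign. Take two distinct approximating orbits, which exist because $\mu$ is not periodic by (G0), and support the perturbation away from the second one.

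\emph{Choice of the open sets.} Take $\{U_i\}$ closed under finite unions. Then some $U_i$ with $H\subset U_i\subset\overline{U_i}\subset U$ is again an isolating block.

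\emph{Drop the appeal to robust isolation.} The claim that $H(p_g)=\Lambda_g(U)$ for all $g$ near $f$ is not what Definition~\ref{d.ISPH} provides, since the isolating block there may depend on $g$. It is also unjustified at non-generic perturbations. You never need it: only the orbits of the residual $f$ must lie in $H$, and $\mathcal{O}\subset U_i$ already gives $\mathcal{O}\subset\Lambda_f(U_i)=H$.
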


\begin{proof}
    To prove the proposition we use Theorem~\ref{t.abc} below.

\begin{teo}[Particular case of Theorem 3.5 in~\cite{AbdBonCro:11}]
\label{t.abc}
    There is a residual subset $\mathcal{R}_2(M)$ of $\mathrm{Diff}^1(M)$ such that for every $f \in \mathcal{R}_2(M)$ and every  $H$ in $\mathrm{ISPH}(f)$  
      the set of periodic measures supported on $H$ is dense in $\mathcal{M}_{\mathrm{inv}}(f,H)$.
\end{teo}

Recall the residual  set $\mathcal{R}_0(M)$ of $\mathrm{Diff}^1(M)$ in  Theorem~\ref{t.residual} and 
consider the new residual subset $\mathcal{R}_1(M)$ of $\mathrm{Diff}^1(M)$ defined by
        \begin{equation}
        \label{e.defR11}
            \mathcal{R}_1(M) \eqdef \mathcal{R}_2(M) \cap \mathcal{R}_0(M).
        \end{equation}
Let $f \in \mathcal{R}_1(M)$ and $H\in \mathrm{ISPH}(f)$. By condition (G6)
there is  $\mu \in \mathcal{M}^0_{\mathrm{erg}}(f,H) \subset \mathcal{M}_{\mathrm{inv}}(f,H)$. By Theorem~\ref{t.abc}, there is a sequence $(\mu_k)_k$ of periodic measures in $\mathcal{M}_{\mathrm{inv}}(f,H)$ converging to $\mu$. By Condition~\ref{G0}, these measures are hyperbolic, thus  $\mu_k \in \mathcal{M}^-_{\mathrm{erg}}(f,H) \cup \mathcal{M}^+_{\mathrm{erg}}(f,H)$. After taking a subsequence, we can suppose, for example, that $\mu_k \in \mathcal{M}^-_{\mathrm{erg}}(f,H)$ for every $k \in \N$. We let $\mu_k^- \eqdef \mu_k$. 
    
It remains construct the sequence $(\mu_k^+)_k$. For that, consider a saddle $q \in H$ with $\chi^{\c}(q)>0$ provided by the index variation condition of $H$ 
and the periodic measure
$\mu_q=\mu_{\mathcal{O}(q)}$.  
Define 
        \begin{equation}
        \label{eq.escolhalambdak}
            \lambda_k \eqdef \frac{1}{k(\chi^{\mathrm{c}}(\mu^-_k)-\chi^{\mathrm{c}}(\mu_q))} - \frac{\chi^{\mathrm{c}}(\mu_q)}{\chi^{\mathrm{c}}(\mu^-_k)-\chi^{\mathrm{c}}(\mu_q)}, \qquad k \in \N. 
        \end{equation}
      Since $\chi^{\mathrm{c}}(\mu^-_k)<0$, $\chi^{\mathrm{c}}(\mu^-_k) \to 0$, and 
      $\chi^{\mathrm{c}}(\mu_q)>0$, it immediately follows:
    \begin{af}
    \label{af.escolhalmabdak}
        For every $k$ sufficiently large, $0< \lambda_k <1$ and $\lim_{k \to \infty} \lambda_k=1$.
    \end{af}

   Define the nonergodic invariant measures supported on $H$,
    \begin{equation}
            \nu_k \eqdef (1-\lambda_k)\mu_q + \lambda_k \mu_k^-.
        \end{equation}

    The proposition now follows from the next claim.
        
    \begin{af}
        The sequence $(\nu_k)_k$ 
            converges to $\mu$ and satisfies $\le^{\mathrm{c}}(\nu_k)>0$ for every $k \in \N$. 
    \end{af}
    \begin{proof}
  For the convergence, take  any continuous map $\varphi\colon M \to \mathbb{R}$ and note that
            $$                \int \varphi d \nu_k   = \int \varphi d((1- \lambda_k) \mu_q + \lambda_k  \mu_k^-) 
                 = (1-\lambda_k)\int \varphi d\mu_q + \lambda_k \int \varphi d\mu^-_k. 
           $$
                   Since $\mu^-_k\to \mu$ and $\lim_{k \to \infty} \lambda_k=1$, the convergence to $\mu$ follows noting that 
            \begin{equation}
                \int \varphi d \nu_k \to \int \varphi d\mu, \quad \mbox{as $k \to \infty$}.
            \end{equation}
       
   To see that $\le^{\mathrm{c}}(\nu_k)>0$
  for every $k$  as in Claim~\ref{af.escolhalmabdak}, write 
            \[
            \begin{split}
                \le^{\mathrm{c}}(\nu_k)  &= (1-\lambda_k) \le^{\mathrm{c}}(\mu_q) + \lambda_k \le^{\mathrm{c}}(\mu^-_k) = \le^{\mathrm{c}}(\mu_q) + \lambda_k(\le^{\mathrm{c}}(\mu^-_k)-\le^{\mathrm{c}}(\mu_q)) =\\
            { \footnotesize{\mbox{($\lambda_k$ as in~\eqref{eq.escolhalambdak})}}}         & = \le^{\mathrm{c}}(\mu_q)  
                 +\frac{\le^{\mathrm{c}}(\mu^-_k)-\le^{\mathrm{c}}(\mu_q)}{k(\le^{\mathrm{c}}(\mu^-_k)-\le^{\mathrm{c}}(\mu_q))}
                - \frac{\le^{\mathrm{c}}(\mu_q) (\le^{\mathrm{c}}(\mu^-_k)-\le^{\mathrm{c}}(\mu_q))}{\le^{\mathrm{c}}(\mu^-_k)-\le^{\mathrm{c}}(\mu_q)} = \frac{1}{k}>0,
            \end{split}
            \]
       proving the assertion and ending  the proof of claim.
    \end{proof}
    The proof the proposition is now complete. \qed
  
  \subsection{End of the proof of~Theorem~\ref{t.teogeralclassehom} (nonhyperbolic case)}
  \label{ss.endoftheproofofA}
  Define the residual set of $\mathrm{Dif}^1(M)$ as follows:
  $$
  \mathcal{R}(M)=\mathcal{R}_1(M) \cap \mathcal{R}_3 (M),
  \qquad 
  \mathcal{R}_3 (M) \eqdef \bigcup_{f\in \mathcal{R}_0} \mathcal{R}_{\mathcal{U}_f},
  \quad \mbox{as in Theorem~\ref{t.classeblender}.}
  $$
 By construction, given any $f\in  \mathcal{R}(M)$,
 every homoclinic class in $\mathrm{ISPH}(f)$ satisfies the hypotheses of Theorem~\ref{t.densityisolatedset}:
 item (1) follows from Theorem~\ref{t.classeblender} and
 item (2)
  follows from $f\in \mathcal{R}_1(M)$ and Proposition~\ref{p.apnonhyphomclassperhyp}.
The proof of the theorem is now complete.  
\end{proof}

\section{Proof of Theorem~\ref{t.teogeralclassehom}:
Approximation of hyperbolic measures} \label{s.hyperbolic}

In this section, we prove Theorem~\ref{t.teogeralclassehom} for exponents $\alpha \neq 0$.  
In fact, we prove a slightly stronger statement, Theorem~\ref{t.hyperboliccase}, in which no index variability is required. For this result, recall the residual set $\mathcal{R}_0(M)$ from Theorem~\ref{t.residual} and the numbers $\alpha_{\inf}(H)$ and $\alpha_{\sup}(H)$ defined in~\eqref{eq.defalphaminalphamax}.

\begin{teo}
\label{t.hyperboliccase}
Let $f \in \mathcal{R}_0(M)$ and $H$ be a homoclinic class of $f$ that is nontrivial, strongly partially hyperbolic, and isolated. Then, for every  
\[
\alpha \in \bigl(\alpha_{\inf}(H),\alpha_{\sup}(H)\bigr) \setminus \{0\},
\]  
the set $\mathcal{M}^{\alpha}_{\mathrm{aGIKN}}(f,H)$ is nonempty and dense in $\mathcal{M}^{\alpha}_{\mathrm{erg}}(f,H)$.    
\end{teo}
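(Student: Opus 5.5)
By the symmetry $f\leftrightarrow f^{-1}$ it suffices to treat $\alpha\in(0,\alpha_{\sup}(H))$. The plan mirrors Section~\ref{s.distancetoGIKN}, with the blender-horseshoe replaced by a periodic orbit whose central exponent exceeds $\alpha$. Since $\alpha<\alpha_{\sup}(H)$, there is an ergodic measure on $H$ with exponent $>\alpha$. By Theorem~\ref{t.abc} (through Ma\~n\'e's ergodic closing lemma, or \cite{AbdBonCro:11} for isolated classes of generic maps) and \ref{G2}, there is a saddle $r\in H$ with $\chi^\c(r)=\beta>\alpha$. Likewise there is a saddle $s\in H$ with $\chi^\c(s)<\alpha$; if $\chi^\c(s)<0$ we simply use ergodic measures with exponent in $(0,\alpha)$. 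By \ref{G2} and \ref{G0}, saddles of $H$ with positive exponent are homoclinically related to $r$. I would first prove the analogue of hypothesis (2) of Theorem~\ref{t.densityisolatedset}: every $\mu\in\mathcal{M}^\alpha_{\mathrm{erg}}(f,H)$ is approximated by periodic measures in $H$ with exponent $<\alpha$, and also by ones with exponent $>\alpha$. This uses Theorem~\ref{t.abc}, followed by the convex-combination trick of Proposition~\ref{p.apnonhyphomclassperhyp}. Those combinations are not ergodic, however. To make them periodic again, I would use the fact that saddles in $H$ with exponent of the same sign are homoclinically related, so that horseshoes containing both orbits realize their convex combinations as limits of periodic measures.

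The core step is the analogue of Lemma~\ref{l.periodicpointshomoclinicclass}. Given a saddle $p\in H$ with $0<\chi^\c(p)<\alpha$ (the other side is symmetric), I want a saddle $q\in H$ with $\pi(q)>\pi(p)$ and $\chi^\c(p)<\chi^\c(q)<\alpha$, satisfying $\alpha-\chi^\c(q)<C_f(\alpha-\chi^\c(p))$ with $C_f<1$ uniform, and such that $\mathcal{O}(q)$ is an $(\epsilon,\rho)$-good approximation of $\mathcal{O}(p)$ with $1-\rho=O(|\chi^\c(p)-\alpha|)$. The construction follows Claim~\ref{af.cond2} and Claim~\ref{cl.semlabel}, with the $k$ iterations inside the blender replaced by $k$ iterations near $r$. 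Since $p$ and $r$ are homoclinically related, there is a heteroclinic loop $p\to r\to p$ with fixed transition times $C,N$. I take the orbit segment that spends $\ell$ iterates near $p$, then $C$ iterates in transit, then $k$ iterates near $r$, then $N$ iterates in transit, and I close it up.

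Because all exponents involved are positive (unstable index), Lemma~\ref{l.gan} should be applied with $E=E^\s$ and $F=E^\c\oplus E^\u$. Condition (2) then holds automatically, and condition (3) requires the backward central averages of the segment to be uniformly negative. This follows from the positivity of $\chi^\c(p)$ and $\chi^\c(r)$, with the transit pieces absorbed by choosing $\ell,k$ large, exactly as in \eqref{eq.estimativa2}. Isolation of $H$ places the shadowing orbit $q$ in $H$. Choosing $\ell/k$ so that the weighted average $\frac{\ell\chi^\c(p)+k\beta}{\ell+k}$ lies between $\chi^\c(p)$ and $\alpha$, at a definite fraction of the gap from $\alpha$, gives the contraction $C_f$, e.g.\ $C_f\approx 1-\frac{\alpha-\chi^\c(p)}{2(\beta-\chi^\c(p))}$-type bounds. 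The good proportion is $\ell/(\ell+k+C+N)=1-O(\alpha-\chi^\c(p))$. One subtlety is that $\beta-\chi^\c(p)$ should be bounded independently of $p$. This is fine because $\chi^\c(p)\in(0,\alpha)$ and $\beta$ is fixed, so $C_f$ is uniform.

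Iterating this step as in Corollary~\ref{c.periodicpointshomclass}, with $\epsilon_n$ chosen summable and small relative to $d_n$, gives a GIKN sequence. Theorem~\ref{t.l.p.strongnonper} then yields an aperiodic GIKN limit $\nu$, and Remark~\ref{r.convexp} gives $\chi^\c(\nu)=\alpha$. The Wasserstein estimate of Lemma~\ref{l.seq} gives $W_1(\mu_{\mathcal{O}(p)},\nu)\le A|\chi^\c(p)-\alpha|$, and density follows exactly as in Section~\ref{ss.proofoft.densityisolatedset}. Nonemptiness follows by starting from any saddle with exponent in $(0,\alpha)$, which exists by the approximation step applied to a combination of measures on both sides. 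The main obstacle I expect is that first approximation step: producing periodic (ergodic) measures with exponent on a prescribed side of $\alpha$ that converge to a given $\mu\in\mathcal{M}^\alpha_{\mathrm{erg}}$, without blenders. I would first try Theorem~\ref{t.abc} and split the approximating periodic measures by the sign of $\chi^\c-\alpha$. If all of them fall on one side, I would correct them via horseshoes built from homoclinic relations with $r$ or $s$, so that the correction remains a periodic measure close to the original.
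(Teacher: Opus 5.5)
Your plan is correct in outline and uses the same machinery as the paper. That is: Liao--Gan shadowing (Lemma~\ref{l.gan}) of segments alternating between two homoclinically related saddles of the same index (via \ref{G2}), isolation to place the periodic orbit in $H$, an iterated GIKN sequence, and the Wasserstein bookkeeping of Lemma~\ref{l.seq}.

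The difference lies in the auxiliary orbit. The paper proves a free-weight interpolation lemma (Lemma~\ref{l.apmethodgeral}). Then, in Lemma~\ref{l.explyphyp}, at step $n$ it interpolates with a fresh orbit $\widetilde{\mathcal{O}}_n$ of exponent in $(-\delta_n,0)$, supplied by Proposition~\ref{p.apnonhyphomclassperhyp}, with weights that halve the distance to $\alpha$. You instead fix one reference saddle $r$ beyond $\alpha$ (and $s$ for the other side), and transcribe Lemma~\ref{l.periodicpointshomoclinicclass} and Corollary~\ref{c.periodicpointshomclass} with the blender replaced by $r$.

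Your version only needs same-index periodic orbits on both sides of $\alpha$, which Theorem~\ref{t.abc} provides (via a convex combination for the side facing $0$). It therefore avoids nonhyperbolic measures (\ref{G6}, Proposition~\ref{p.apnonhyphomclassperhyp}) and the case split on whether $0\in(\alpha_{\inf}(H),\alpha_{\sup}(H))$, and so fits the stated hypotheses better. The paper's version buys a reusable interpolation lemma with arbitrary weights.

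Three points need repair.

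First, with $E=E^{\mathrm{ss}}$ and $F=E^{\mathrm{c}}\oplus E^{\mathrm{uu}}$, condition (3) of Lemma~\ref{l.gan} concerns averages over the \emph{last} $k$ iterates, for every $k$. In your ordering ($\ell$ near $p$, then $C$, $k$ near $r$, $N$), the first backward averages see only the transit block of length $N$. The center may contract there, and taking $\ell,k$ large does not help; the absorption in \eqref{eq.estimativa2} is for forward averages. Put the $\ell$ iterates near $p$ at the end of the segment (i.e.\ run the paper's construction for $f^{-1}$), or shadow inside a horseshoe containing $\mathcal{O}(p)$ and $\mathcal{O}(r)$.

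Second, your displayed quantity $1-\frac{\alpha-\chi^{\mathrm{c}}(p)}{2(\beta-\chi^{\mathrm{c}}(p))}$ is the good proportion $\rho$, which tends to $1$; it is not $C_f$. The uniform contraction comes from placing the weighted average at a fixed fraction of the gap (e.g.\ the midpoint, giving $C_f=3/4$ after absorbing errors). Separately, $\beta-\alpha>0$ is what makes the constant in $1-\rho=O(\alpha-\chi^{\mathrm{c}}(p))$ uniform.

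Third, your ``main obstacle'' is not one. Theorem~\ref{t.abc} gives density of periodic measures in all of $\mathcal{M}_{\mathrm{inv}}(f,H)$, so a nonergodic combination $(1-t)\mu+t\mu'$ with small $t$ is directly approximated by periodic measures with exponent on the side of $\mu'$. No horseshoes are needed, and $\mu'$ may even have negative exponent. In fact you need no prescribed side at all: your core step is two-sided, and the periodic approximants of $\mu$ eventually have positive exponent.
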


Note that, since the periodic points of any $f \in \mathcal{R}_0(M)$ are hyperbolic, every nonhyperbolic homoclinic class $H$ for such $f$ is automatically nontrivial.

The proof of this theorem closely follows that of Theorem~\ref{t.densityisolatedset}, so we omit some details. There are two main differences. First, blender-horseshoes are not needed, because we can ``interpolate exponents''  between saddles of the same $\mathrm{s}$-index, which are homoclinically related. Second, we need to carefully adjust the convergence of the central exponents so that they approach the exponent of the limiting measure. Unlike the previous situation, this convergence cannot be assumed exponential (compare Claim~\ref{cl.semlabel} with Claim~\ref{cl.seconsemnome}).

The proof is organized as follows. In Lemma~\ref{l.apmethodgeral}, we construct periodic measures using Lemma~\ref{l.gan}, in the spirit of Lemma~\ref{l.periodicpointshomoclinicclass}. Finally, by applying Lemma~\ref{l.apmethodgeral} inductively, we obtain the hyperbolic GIKN measures described in Lemma~\ref{l.explyphyp}.

We start with some preliminaries.
Note that since hyperbolic measures are approximated by periodic ones and 
every pair of saddles of the same $\mathrm{s}$-index are homoclinically related, see 
Theorem~\ref{t.abc} and
condition
(G2) in Theorem~\ref{t.residual}, 
we have that $\mathcal{M}_{\mathrm{erg}}^\alpha(f,H)$ is nonempty for every $\alpha \in (\alpha_{\inf}(H),\alpha_{\sup}(H))$.

There are two cases to consider in the theorem: $0 \in (\alpha_{\inf}(H), \alpha_{\sup}(H))$ and
 $0 \notin (\alpha_{\inf}(H), \alpha_{\sup}(H))$. We consider the first case and assume that 
 $\alpha \in (\alpha_{\inf}(H), 0)$. The other cases are analogous and thus omitted.
 Theorem~\ref{t.hyperboliccase} follows from the following 
 Proposition~\ref{pt.densityhomclass2}:

\begin{prop}
\label{pt.densityhomclass2}
    Let $f $ and 
    $H$ be as in the hypotheses of Theorem~\ref{t.hyperboliccase}. Assume  that
   $\alpha_{\mathrm{inf}}(H)<0<  \alpha_{\mathrm{sup}}(H)$.
       Then, for every $\alpha \in (\alpha_{\inf},0)$, 
       the set $\mathcal{M}_{\mathrm{aGIKN}}^\alpha(f,H)$ is 
       dense in $\mathcal{M}_{\mathrm{erg}}^\alpha(f,H)$. 
     \end{prop}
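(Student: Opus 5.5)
The plan is to mimic the proof of Theorem~\ref{t.densityisolatedset}, with the blender-horseshoe replaced by interpolation between homoclinically related saddles of the same $\mathrm{s}$-index, and with $0$ replaced by the target exponent $\alpha$.

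\emph{Reduction to periodic measures.} Fix $\alpha\in(\alpha_{\inf},0)$ and $\eta\in\mathcal{M}^\alpha_{\mathrm{erg}}(f,H)$. By Theorem~\ref{t.abc} (together with \ref{G0}), $\eta$ is approximated by periodic measures $\mu_{\mathcal{O}(p_n)}$ supported on $H$. Since $\chi^{\mathrm c}(\mu_{\mathcal{O}(p_n)})\to\alpha<0$, we may assume $\chi^{\mathrm c}(p_n)<0$ for all $n$, so all the $p_n$ have the same $\mathrm{s}$-index. By the triangle inequality for $W_1$, exactly as in Section~\ref{ss.proofoft.densityisolatedset}, it is enough to prove the analogue of Proposition~\ref{p.mainprop}: for every saddle $p\in H$ with $\chi^{\mathrm c}(p)<0$ there is $\nu\in\mathcal{M}^\alpha_{\mathrm{aGIKN}}(f,H)$ with $W_1(\mu_{\mathcal{O}(p)},\nu)\le A\,|\chi^{\mathrm c}(p)-\alpha|$, where $A$ depends only on $H$. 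A weaker bound of the form $W_1\le A\,\theta(|\chi^{\mathrm c}(p)-\alpha|)$ with $\theta(t)\to0$ as $t\to 0$ would also suffice.

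\emph{Interpolation step (analogue of Lemma~\ref{l.periodicpointshomoclinicclass}).} Fix auxiliary saddles $r^-,r^+\in H$ with negative central exponents satisfying $\chi^{\mathrm c}(r^-)<\alpha<\chi^{\mathrm c}(r^+)<0$. These exist because $\alpha_{\inf}<\alpha$, $\alpha<0$, and periodic measures are dense in the invariant measures. By \ref{G2}, $p$, $r^-$ and $r^+$ are pairwise homoclinically related. Suppose, say, $\chi^{\mathrm c}(p)>\alpha$. Using transverse heteroclinic points and the $\lambda$-lemma, build an orbit segment that stays $\ell$ iterates near $\mathcal{O}(p)$, then makes a transition of bounded length $C$ to $r^-$, stays $k$ iterates near $r^-$, and returns in bounded time $N$. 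Every point of this segment has central Birkhoff averages bounded above by a negative constant $\lambda$, because all three saddles have negative exponent and the transition times are bounded; this is the point where uniformly negative averages make condition (2) of Lemma~\ref{l.gan} easy. Condition (3) is automatic by Remark~\ref{r.lambdau1}. Lemma~\ref{l.gan} together with isolation (Remark~\ref{r.orbitcontained}) then gives a saddle $q\in H$. We choose the ratio $\ell/k$ so that
\[
\chi^{\mathrm c}(q)-\alpha=\theta\,(\chi^{\mathrm c}(p)-\alpha) \quad\text{up to } O(\delta),
\]
with a fixed $\theta\in(0,1)$, possibly with a sign change. This requires $k/(\ell+k)$ to be comparable to $|\chi^{\mathrm c}(p)-\alpha|/(\chi^{\mathrm c}(p)-\chi^{\mathrm c}(r^-))$, so the proportion of $\epsilon$-good points is at least $1-c\,|\chi^{\mathrm c}(p)-\alpha|$. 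The symmetric case $\chi^{\mathrm c}(p)<\alpha$ uses $r^+$. Condition (2) of Definition~\ref{d.goodap} follows as in Lemma~\ref{l.periodicpointshomoclinicclass}, since the good part is a single consecutive block shadowing $\mathcal{O}(p)$.

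\emph{Iteration and conclusion.} We iterate as in Corollary~\ref{c.periodicpointshomclass}, choosing $\epsilon_n$ summable and satisfying \eqref{e.distorbitaseq}. This yields $\rho_n=1-c|\chi^{\mathrm c}(q_{n+1})-\alpha|$ with geometric decay, so $\prod\rho_n>0$. Theorem~\ref{t.l.p.strongnonper} then gives an aperiodic GIKN limit $\nu$, and Remark~\ref{r.convexp} gives $\chi^{\mathrm c}(\nu)=\alpha$. The $W_1$ estimate follows verbatim from Lemma~\ref{l.seq}.

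\emph{Main obstacle.} The hard step is controlling the error terms. The $O(\delta)$ errors coming from the transitions and from shadowing must be absorbed relative to $|\chi^{\mathrm c}(q_n)-\alpha|$, which tends to zero; here, unlike the case $\alpha=0$, we cannot rely on exponential decay. I would handle this by choosing $\delta_n$ and $a_n$ at each step proportional to $|\chi^{\mathrm c}(q_n)-\alpha|$, which makes $C_n$ and $N_n$ grow, and then taking $k_n,\ell_n$ large enough that these transition times are negligible. If exact geometric contraction fails, I would settle for a summable sequence $|\chi^{\mathrm c}(q_n)-\alpha|\le\beta_n$ with $\sum\beta_n<\infty$. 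That is still enough for both $\prod\rho_n>0$ and the $W_1$ bound.
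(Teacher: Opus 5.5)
Your proposal is correct. It shares the paper's architecture: reduction to periodic measures via the triangle inequality, an analogue of Proposition~\ref{p.mainprop} with a bound in terms of $|\chi^{\mathrm{c}}(p)-\alpha|$, interpolation between homoclinically related saddles via the $\lambda$-lemma and Lemma~\ref{l.gan}, and a GIKN iteration closed off by Theorem~\ref{t.l.p.strongnonper} and the computation of Lemma~\ref{l.seq}. Your iteration step, however, is organized differently.

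In Lemmas~\ref{l.apmethodgeral} and~\ref{l.explyphyp} the paper prescribes the targets $2^{-n}\chi^{\mathrm{c}}_1+(1-2^{-n})\alpha_\ast$, which increase to $\alpha_\ast$. At step $n$ it interpolates $\mathcal{O}_n$ with a \emph{new} auxiliary periodic orbit $\widetilde{\mathcal{O}}_n$ whose exponent lies in $(-\delta_n,0)$. Because that exponent is essentially $0$, the weight $\rho_n$ is just the $n$-th target divided by $\chi^{\mathrm{c}}_n$, and the error budget $\delta/2^n$ is fixed in advance. The cost is that this needs negative periodic exponents arbitrarily close to $0$ (Proposition~\ref{p.apnonhyphomclassperhyp}). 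Also, only $\alpha_\ast\in(\chi^{\mathrm{c}}_1,0)$ is treated explicitly, since averaging with an exponent near $0$ can only push the exponent up.

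You instead fix two saddles $r^\pm$ straddling $\alpha$ and contract $|\chi^{\mathrm{c}}(q_n)-\alpha|$ by a fixed factor, switching sides as needed. This buys three things:
\begin{itemize}
\item it needs only one periodic orbit on each side of $\alpha$;
\item it treats $\chi^{\mathrm{c}}(p)>\alpha$ and $\chi^{\mathrm{c}}(p)<\alpha$ uniformly;
\item it makes $1-\rho_n\leq c\,|\chi^{\mathrm{c}}(q_n)-\alpha|$, and hence the bound proportional to $|\chi^{\mathrm{c}}(p)-\alpha|$, transparent.
\end{itemize}
The price is that the error budget must track the current distance $|\chi^{\mathrm{c}}(q_n)-\alpha|$. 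You handle this correctly by shrinking $\delta_n,a_n$ with that distance and taking $\ell_n,k_n$ large against the growing transition times. Your summable-$\beta_n$ fallback also covers an iterate that lands exactly on $\alpha$.

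One small point remains. When $\chi^{\mathrm{c}}(p)=\alpha$ exactly, no bound that vanishes at that value can hold for an aperiodic $\nu$. You should therefore state the reduction with an extra additive $\eta>0$, for instance by making a first step with a small nonzero displacement; this costs nothing in the density argument. The paper's Lemma~\ref{l.p.cauchyhyp}, with its strict inequality, has the same defect.
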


\subsection{Proof of Proposition \ref{pt.densityhomclass2}}

    We begin with a lemma whose proof is postponed to Section~\ref{ss.proofoflemmaapmethodgeral}.

\begin{lemma}
\label{l.apmethodgeral} 
    Consider numbers
    $\epsilon,\delta,\tau>0$  and periodic orbits $\mathcal{O}_1$ and $\mathcal{O}_2$ 
    of $f$ contained in $H$
    with negative central Lyapunov exponents $\chi^{\mathrm{c}}_1 =  \chi^{\mathrm{c}} (\mathcal{O}_1)$ 
    and $\chi^{\mathrm{c}}_2=  \chi^{\mathrm{c}} (\mathcal{O}_2)$.
    Then for every  $\rho \in (\tau, 1)$, there exists a periodic orbit of $f$ in b$H$
    $$
    \widetilde{\mathcal{O}}=
    \widetilde{\mathcal{O}}_{(\mathcal{O}_1,\mathcal{O}_2,  \epsilon, \delta, \rho)},
    $$
   which is an $(\epsilon,\rho-\tau)$-good approximation of $\mathcal{O}_1$
    whose central exponent satisfies
       $$     
    \chi^{\mathrm{c}}( \widetilde{\mathcal{O}} ) \in [ \rho \chi^{\mathrm{c}}_1 + (1-\rho)\chi^{\mathrm{c}}_2 - \delta \, , \, \rho \chi^{\mathrm{c}}_1 + (1-\rho)\chi^{\mathrm{c}}_2 + \delta  ].
    $$ 
    \end{lemma}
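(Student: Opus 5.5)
We prove Lemma~\ref{l.apmethodgeral} by the same shadowing scheme as Lemma~\ref{l.periodicpointshomoclinicclass}. The blender-horseshoe is replaced by a homoclinic relation between $\mathcal{O}_1$ and $\mathcal{O}_2$. Both orbits have negative central exponent, so they have the same $\mathrm{s}$-index (namely $\dim E^\s+1$). By condition~\ref{G2} of Theorem~\ref{t.residual} they are homoclinically related. For notational simplicity assume $\mathcal{O}_1=\{p_1\}$ and $\mathcal{O}_2=\{p_2\}$ are fixed points; the general case is handled with iterates $f^{\pi_1}$, $f^{\pi_2}$, which affects only constants.

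\textbf{Step 1: a pseudo-orbit loop.} Fix heteroclinic points $z_{12}\in W^\u(p_1)\pitchfork W^\s(p_2)$ and $z_{21}\in W^\u(p_2)\pitchfork W^\s(p_1)$. By the $\lambda$-lemma there are uniform transition times $T_{12},T_{21}$. For large $\ell_1,\ell_2$ this gives an orbit segment $[x,n]$ with $n=\ell_1+T_{12}+\ell_2+T_{21}$ and the following properties:
\begin{itemize}
\item it stays within $a$ of $p_1$ for $\ell_1$ iterates;
\item it then follows the transition;
\item it stays within $a$ of $p_2$ for $\ell_2$ iterates;
\item it returns within $a/4$ of its starting point.
\end{itemize}
Concretely, we pick a point of $W^\u_{a/4}(p_1)$ whose forward orbit accumulates on $W^\s_{\rm loc}(p_2)$, then on $W^\u_{\rm loc}(p_2)$, and comes back near $W^\s_{\rm loc}(p_1)\cap W^\u_{a/4}(p_1)$. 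This is the analogue of~\eqref{eq.qkmtop}, and it gives Condition~(1) of Lemma~\ref{l.gan}.

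\textbf{Step 2: Condition (2) and the shadowing periodic orbit.} Condition~(3) is automatic by Remark~\ref{r.lambdau1}. For Condition~(2), take $\lambda=\max\{\chi^\c_1/4,\chi^\c_2/4,\log\lambda_\u\}<0$ and choose $\gamma$ for the modulus of continuity as in~\eqref{eq.choiceofgamma}, with $\delta'\ll\delta$. Along the first block the central Birkhoff averages are within $\delta'$ of $\chi^\c_1$, and along the second block within $\delta'$ of $\chi^\c_2$. The transitions add at most $(T_{12}+T_{21})L^\c$. This is the main obstacle: the time averages must stay below $\lambda$ for \emph{every} initial subsegment length $k$, not only for the full length. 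Here it is easier than in Claim~\ref{af.cond2}, since both exponents are negative. For $\ell_1,\ell_2\gg T_{12}+T_{21}$, every partial sum is at most
\[
k\max\{\chi^\c_1,\chi^\c_2\}+2\delta' k+(T_{12}+T_{21})L^\c .
\]
Once $k$ exceeds a fixed threshold this is below $k\lambda$. For the short initial segments we use that the segment starts near $p_1$, where the average is already close to $\chi^\c_1$. If the transition spot is problematic, we first start the segment inside the $p_1$-block, shifting it cyclically, so that every initial subsegment begins with a long stay near $p_1$.

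Lemma~\ref{l.gan} then yields a periodic point $q$ of period $n$ that $La$-shadows the segment. Isolation of $H$ (Remark~\ref{r.orbitcontained}) gives $q\in H$, exactly as in Claim~\ref{cl.semlabel}.

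\textbf{Step 3: counting.} Choose $\ell_1/n\in(\rho-\tau/2,\rho)$, and hence $\ell_2/n\approx 1-\rho$, with $n$ large. The points of $\mathcal{O}(q)$ along the $p_1$-block are $(\epsilon,1)$-good for $\mathcal{O}_1$ once $(L+1)a<\epsilon$, which gives the ratio $\rho-\tau$ in Definition~\ref{d.goodap}. Condition~(2) of Definition~\ref{d.goodap} is automatic since $\mathcal{O}_1$ is a fixed point; in general we take $\ell_1$ a multiple of $\pi_1$. Finally,
\[
\chi^\c(q)=\frac{\ell_1}{n}\chi^\c_1+\frac{\ell_2}{n}\chi^\c_2+O\Big(\delta'+\frac{(T_{12}+T_{21})L^\c}{n}\Big),
\]
and this lies in the required $\delta$-window for suitable $\delta'$ and large $n$.
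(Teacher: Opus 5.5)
Your proposal is correct and follows essentially the same route as the paper. You join long stays near $p_1$ and $p_2$ by bounded transitions coming from the homoclinic relation in (G2), check Liao--Gan's Condition (2) on central Birkhoff averages (your single partial-sum bound replaces the paper's four-case check in Claim~\ref{c.l.quasihyp}), shadow, get $q\in H$ by isolation, and then count good points and average the exponent.

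The one slip is in Step 3. The window $\ell_1/n\in(\rho-\tau/2,\rho)$ secures the good-point ratio but not the exponent bound, because the error term $|\ell_1/n-\rho|\,|\chi^{\mathrm{c}}_1-\chi^{\mathrm{c}}_2|$ can exceed $\delta$. You should also require $|\ell_1/n-\rho|<\delta/(2|\chi^{\mathrm{c}}_1-\chi^{\mathrm{c}}_2|+1)$, which you can arrange since $\ell_1$ and $\ell_2$ are free.
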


    Next lemma states the existence of hyperbolic GIKN measures and it is the corresponding version of Corollary~\ref{c.periodicpointshomclass} in the hyperbolic case.

    \begin{lemma}\label{l.explyphyp}
    Let $\mathcal{O}_1$ be a periodic orbit of $f$ contained in $H$ with
$\chi^{\mathrm{c}}_1 = \chi^{\mathrm{c}}(\mathcal{O}_1) \in (\alpha_{\inf}(H),0)$.
Fix $\alpha_\ast \in (\chi^{\mathrm{c}}_1,0)$.
Then there exist a sequence of positive numbers $(\gamma_n)_{n \in \N}$ converging to $0$
and a GIKN sequence of periodic measures $(\nu_n)_{n \in \N}$ such that their central exponents
$\chi^\mathrm{c}_{n}$ satisfy
\[
0
<
\left(
\frac{\chi^{\mathrm{c}}_1}{2^n}
+
\sum_{i=1}^n \frac{\alpha_\ast}{2^i}
\right)
-
\chi^{\mathrm{c}}_{n+1}
<
\gamma_n,
\qquad \text{for every } n \in \N,
\]
and whose limit is an aperiodic GIKN measure $\nu$ with
$\chi^\mathrm{c}(\nu) = \alpha_\ast$.
  \end{lemma}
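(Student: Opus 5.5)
We build the sequence inductively with Lemma~\ref{l.apmethodgeral}, which plays the role Lemma~\ref{l.periodicpointshomoclinicclass} played in Corollary~\ref{c.periodicpointshomclass}.

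\emph{Setup.} Set $\nu_1=\mu_{\mathcal{O}_1}$. Since $\alpha_\ast>\chi^{\mathrm{c}}_1>\alpha_{\inf}(H)$ and $\alpha_\ast<0$, hyperbolic measures of $H$ are approximated by periodic ones (Theorem~\ref{t.abc}). Hence we can fix auxiliary saddles $\mathcal{P}_n\subset H$ with negative central exponent. All of these are homoclinically related to $\mathcal{O}_1$ by (G2). Their exponents are chosen so that the target
\[
\beta_{n+1}\eqdef \frac{\chi^{\mathrm{c}}_1}{2^n}+\sum_{i=1}^n\frac{\alpha_\ast}{2^i}
\]
is reachable. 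Note that $\beta_{n+1}=\tfrac12\beta_n+\tfrac12\alpha_\ast$, so $\beta_n\to\alpha_\ast$.

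\emph{Inductive step.} Given $\mathcal{O}_n$ with exponent $\chi^{\mathrm{c}}_n$ slightly below $\beta_n$, choose $\rho_n$ close to $1$ and $\mathcal{P}_n$ with
\[
\rho_n\chi^{\mathrm{c}}_n+(1-\rho_n)\chi^{\mathrm{c}}(\mathcal{P}_n)=\beta_{n+1}-\gamma_n/2 .
\]
This requires $\chi^{\mathrm{c}}(\mathcal{P}_n)$ to lie on the other side of $\beta_{n+1}$ from $\chi^{\mathrm{c}}_n$. Since $\chi^{\mathrm{c}}_n<\beta_{n+1}<0$, we take $\mathcal{P}_n$ with exponent in $(\beta_{n+1},0)$. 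Such orbits exist because $0<\alpha_{\sup}$ and by (G6)/interpolation, exponents in $(\alpha_{\inf},0)$ are attained by periodic orbits.

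Then apply Lemma~\ref{l.apmethodgeral} with $\mathcal{O}_1=\mathcal{O}_n$, $\mathcal{O}_2=\mathcal{P}_n$, and $\delta=\gamma_n/4$. The output $\mathcal{O}_{n+1}$ has exponent in $(\beta_{n+1}-\gamma_n,\beta_{n+1})$, giving the displayed inequality. It is an $(\epsilon_n,\rho_n-\tau_n)$-good approximation of $\mathcal{O}_n$, with period larger than that of $\mathcal{O}_n$ (which can be forced, as in Lemma~\ref{l.periodicpointshomoclinicclass}).

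\emph{Parameters.} We choose them recursively as follows:
\begin{itemize}
\item $\epsilon_n<\min_{j\le n}d_j/(3\cdot 2^n)$, so that \eqref{e.distorbitaseq} holds, exactly as in Corollary~\ref{c.periodicpointshomclass}(5);
\item $\gamma_n\to0$;
\item $\rho_n-\tau_n$ with $\prod(\rho_n-\tau_n)>0$.
\end{itemize}

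\emph{Main obstacle.} The difficulty is compatibility between the last requirement and the exponent target. The gap $\beta_{n+1}-\chi^{\mathrm{c}}_n\approx(\alpha_\ast-\chi^{\mathrm{c}}_1)/2^n$ decays geometrically. Therefore $1-\rho_n$ can be taken of order $2^{-n}/|\chi^{\mathrm{c}}(\mathcal{P}_n)-\chi^{\mathrm{c}}_n|$, which is summable provided $\chi^{\mathrm{c}}(\mathcal{P}_n)$ stays a definite distance above $\beta_{n+1}$. We ensure this by fixing once and for all a saddle $\mathcal{P}$ with exponent in $(\alpha_\ast/2,0)$ and using $\mathcal{P}_n=\mathcal{P}$ throughout. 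Then $1-\rho_n=O(2^{-n})$, and taking $\tau_n=2^{-n}$ gives $\sum(1-\rho_n+\tau_n)<\infty$, hence a positive product.

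\emph{Conclusion.} Theorem~\ref{t.l.p.strongnonper} now gives convergence of $(\nu_n)$ to an aperiodic ergodic GIKN measure $\nu$. Its support lies in $H$, since $H$ is closed and all the orbits are in $H$. By Remark~\ref{r.convexp}, $\chi^{\mathrm{c}}(\nu)=\lim\chi^{\mathrm{c}}_n=\lim\beta_n=\alpha_\ast$.
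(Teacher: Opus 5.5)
Your proof is correct and has the same skeleton as the paper's. Both use Lemma~\ref{l.apmethodgeral} inductively toward the targets $\beta_{n+1}=\tfrac12(\beta_n+\alpha_\ast)$ and then invoke Theorem~\ref{t.l.p.strongnonper}. The real difference is the auxiliary orbit.

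\begin{itemize}
\item The paper takes at each step an orbit $\widetilde{\mathcal{O}}_n$ with $0<-\widetilde{\chi}^{\mathrm{c}}_n<\delta_n$, obtained from Proposition~\ref{p.apnonhyphomclassperhyp}. It sets $\rho_n=\beta_{n+1}/\chi^{\mathrm{c}}_n$ and treats $(1-\rho_n)\widetilde{\chi}^{\mathrm{c}}_n$ as an error term. This forces the auxiliary exponents to lie within a $\delta$-dependent distance of $0$.
\item You fix one saddle $\mathcal{P}$ with exponent above $\alpha_\ast$ and solve the two-point interpolation exactly. So any periodic orbit with exponent in $(\alpha_\ast,0)$ will do.
\end{itemize}

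Your version also makes explicit three things the paper handles loosely.
\begin{itemize}
\item You derive $1-\rho_n=O(2^{-n})$ from the denominator bound $\chi^{\mathrm{c}}(\mathcal{P})-\chi^{\mathrm{c}}_n>\chi^{\mathrm{c}}(\mathcal{P})-\alpha_\ast$. This is what $\prod_n(\rho_n-\tau_n)>0$ actually needs; Fact~\ref{fcl.sumepsilon} only records $\rho_n\to1$, which alone does not suffice.
\item You track the $\tau$-loss in Lemma~\ref{l.apmethodgeral} through $\tau_n=2^{-n}$.
\item You offset the target by $\gamma_n/2$ against an interpolation error of $\gamma_n/4$. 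This guarantees the strict inequality $\chi^{\mathrm{c}}_{n+1}<\beta_{n+1}$, which the paper's error budget does not by itself give.
\end{itemize}

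One point to tighten: asking only that $\gamma_n\to0$ is not enough. Take $\gamma_n<(\alpha_\ast-\chi^{\mathrm{c}}_1)2^{1-n}$ with $\sum_n\gamma_n<\infty$, for example $\gamma_n=(\alpha_\ast-\chi^{\mathrm{c}}_1)2^{-n}$.
\begin{itemize}
\item The first condition keeps the target $\beta_{n+1}-\gamma_n/2$ above $\beta_n>\chi^{\mathrm{c}}_n$, so $\rho_n\in(0,1)$.
\item The second is silently used in your claim that the gap $\beta_{n+1}-\chi^{\mathrm{c}}_n$ decays geometrically. That gap can be as large as $(\alpha_\ast-\chi^{\mathrm{c}}_1)2^{-n}+\gamma_{n-1}$.
\end{itemize}
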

\begin{proof}
The fact that $\le^\c (\nu)=  \alpha_\ast$ follows from $\le^\c (\nu_n) \to \le^\c (\nu)$, recall Remark~\ref{r.convexp}.

    We construct the sequence of periodic orbits 
using Lemma~\ref{l.apmethodgeral}. 
We start by choosing some quantifiers. Let
$
d_1 \eqdef d_{\mathcal{O}_1}
$, recall ~\eqref{e.distorbitaseq},
and define 
\begin{equation} 
\label{e.rho1}
\begin{split}
\epsilon_{1}  \eqdef  
\frac{d_1\cdot |\chi^{\mathrm{c}}_1- \alpha_{\ast}(H)|}{3 \cdot |\alpha_{\inf}|} \quad \mbox{and} \quad
\rho_1  \eqdef \frac{1}{\chi^{\mathrm{c}}_1}\left(\frac{\chi^{\mathrm{c}}_1}{2}+\frac{\alpha_{\ast}}{2}\right)\in (0,1).
\end{split}
\end{equation}
Take $\delta>0$ such that 
\begin{equation}
\label{e.delta1}
\delta_1 = \frac{\delta}{4} (1- \rho_1)^{-1} >0.
\end{equation}

\begin{claim}
\label{cl.indstep1}
    There is a periodic orbit $\mathcal{O}_2=\mathcal{O}_{(\mathcal{O}_1,\epsilon_1,\rho_1, \delta)}$ with 
    exponent $\chi^{\mathrm{c}}_2= \chi^{\mathrm{c}} (\mathcal{O}_2)$ such that
        $$
            0 < \left( \frac{\chi^{\mathrm{c}}_1}{2}+\frac{\alpha_\ast}{2} \right) -\chi^{\mathrm{c}}_2 < \frac{\delta}{2}.
        $$
\end{claim}
\begin{proof}
Since $\mathcal{M}_{\mathrm{erg}}^0(H)$ is nonempty,  Proposition~\ref{p.apnonhyphomclassperhyp}
provides a periodic orbit $\widetilde{\mathcal{O}}_1$ such that
 $\widetilde{\chi}^{\mathrm{c}}_1 =  \chi^{\mathrm{c}}(\widetilde{\mathcal{O}}_1)$   
satisfies
$
0<-\widetilde{\chi}^{\mathrm{c}}_1< \delta_1.
$
By the choice of $\rho_1$ in \eqref{e.rho1},
\begin{equation}
\label{e.lyapunoves}
\begin{split}
\frac{{\chi}^{\mathrm{c}}_1}{2}+\frac{\alpha_{\ast}}{2}-\left(\rho_1 \chi^{\mathrm{c}}_1 + (1-\rho_1) \widetilde{\chi}^{\mathrm{c}}_1\right) 
& = -(1-\rho_1) \widetilde{\chi}^{\mathrm{c}}_1 = (1-\rho_1) (-\widetilde{\chi}^{\mathrm{c}}_1)\\
&< (1-\rho_1)\delta_1  = (1-\rho_1)\frac{\delta}{4}(1-\rho_1)^{-1} = \frac{\delta}{4}.
\end{split}
\end{equation}
By Lemma \ref{l.apmethodgeral}, there is 
an orbit $\mathcal{O}_2 = \mathcal{O}_{(\mathcal{O}_1, \widetilde{\mathcal{O}}_1)}$ 
such that
$$
\chi^{\mathrm{c}}_2  = \chi^{\mathrm{c}} (\mathcal{O}_2) \in \left[\rho_1 \chi^{\mathrm{c}}_1 + (1-\rho_1) \widetilde{\chi}^{\mathrm{c}}_1- \frac{\delta}{4},\rho_1 \chi^{\mathrm{c}}_1 + (1-\rho_1) \widetilde{\chi}^{\mathrm{c}}_1 + \frac{\delta}{4}\right].
$$
The claim now follows using~\eqref{e.lyapunoves}.
\end{proof}

We proceed inductively, 
suppose that, for every $1 \leq i < n$, there are  periodic orbits $\mathcal{O}_i$ and $\widetilde{\mathcal{O}}_i$, with 
$\chi^{\mathrm{c}}_i= \chi^{\mathrm{c}} (\mathcal{O}_i)<0$ and 
$\widetilde{\chi}^{\mathrm{c}}_i = \chi^{\mathrm{c}} (\widetilde{\mathcal{O}}_i)<0$,  
and numbers 
\begin{equation}
\label{eq.rhodeltaepsilonn}
\begin{split}
d_i &\eqdef d_{\mathcal{O}_i},
\\
\rho_i &\eqdef \frac{1}{\chi^{\mathrm{c}}_i}\left(\frac{\chi^{\mathrm{c}}_1}{2^i}+\frac{(2^i-1)\alpha_{\ast}}{2^i}\right) \in (0,1), 
\\ 
\delta_i &\eqdef \frac{\delta}{2^{i+1}}(1-\rho_i)^{-1}>0, \quad \mbox{and}\\
\epsilon_{i} &\eqdef \frac{(\min_{j\leq i}d_j) \cdot |\chi^{\mathrm{c}}_1-\alpha_{\ast}|}{2^{i-1}\cdot 3 \cdot |\alpha_{\inf}(H)|} 
\leq \frac{d_i\cdot |\chi^{\mathrm{c}}_1-\alpha_{\ast}|}{2^{i-1}\cdot 3 \cdot |\alpha_{\inf}(H)|}
\end{split}
\end{equation}
such that
\begin{itemize}
\item
$\mathcal{O}_{i+1}=\mathcal{O}_{(\mathcal{O}_{i}, \widetilde{\mathcal{O}}_{i}, \epsilon_{i}, \rho_{i}, \delta_i)}$
for every $i+1\leq n$,
\item
$0<-\widetilde{\chi}^{\mathrm{c}}_i< \delta_i,$
\item
$
    0 
    < 
    \left(
    \dfrac{\chi^{\mathrm{c}}_1}{2^{i}} 
    +
    \sum_{j=1}^{i} \dfrac{\alpha_{\ast}}{2^j}
    \right)
    -
     \chi^{\mathrm{c}}_{i+1}
    <\dfrac{\delta}{2^i}.
$
\end{itemize}

The next claim provides the orbit $\mathcal{O}_{n+1}$.

\begin{claim}
\label{cl.seconsemnome}
    The periodic orbit $\mathcal{O}_{n+1}=\mathcal{O}_{(\mathcal{O}_{n},\widetilde{\mathcal{O}}_{n},\epsilon_n,\rho_n, \delta_n)}$ has central exponent $\chi^{\mathrm{c}}_{n+1}$ satisfying
        $$
            0 < \left( \frac{\chi^{\mathrm{c}}_1}{2^n}+ \sum_{i=1}^{n}\frac{\alpha_\ast}{2^i} \right) -\chi^{\mathrm{c}}_{n+1} < \frac{\delta}{2^n}.
        $$
\end{claim}
\begin{proof}
The proof follows as  the one of Claim~\ref{cl.indstep1}  noting that
 there is  a periodic orbit $\widetilde{\mathcal{O}}_{n}$ whose exponent $\widetilde{\chi}^{\mathrm{c}}_{n}$ satisfies
$0<- \widetilde{\chi}^{\mathrm{c}_n}< \delta_n.$
We will omit the details.
\end{proof}

\begin{remark}
    By construction, every orbit $\mathcal{O}_{n+1}$ is an $(\epsilon_n,\rho_n)$-good approximation of $\mathcal{O}_n$. Let $\nu_n\eqdef \mu_{\mathcal{O}_n}$ and $\chi_n^c$ its  exponent. Set $\gamma_n \eqdef \frac{\delta }{2^n}$. One has that,  
        \begin{equation}
        \label{eq.gamman}
            0  < \left(\dfrac{\chi^{\mathrm{c}}_1}{2^n}   + \sum_{i=1}^n \dfrac{\alpha_{\ast}}{2^i} \right)  -\chi^{\mathrm{c}}_{n+1}
            <\gamma_n.
        \end{equation}
\end{remark}

\begin{claim}
\label{cl.isGIKN}
The sequence $(\nu_n)_n$ is GIKN and converges to an 
 aperiodic measure $\nu$. 
\end{claim}

\begin{proof}
The fact below implies  that  $(\nu_n)_n$ is GIKN.

\begin{fact}
\label{fcl.sumepsilon}
    $\sum_{n}^{\infty} \epsilon_n < \infty$ and $\prod_{n=1}^{\infty} \rho_n >0.$ 
\end{fact}

\begin{proof}
The first claim follows from the definition of $\epsilon_n$ in~\eqref{eq.rhodeltaepsilonn} and $d_n < \diam(M)$.

For the second assertion, fix  $n \in \N$. By the definition of $\rho_n$ in~\eqref{eq.rhodeltaepsilonn} it follows 
$\lim_{n\to \infty} \chi^{\mathrm{c}}_n=\alpha_{\ast} \neq 0$. Hence, $\lim_{n \to \infty} \rho_n =1$. Therefore, given small $\eta>0$, there exists $n_0$ such that for every $n\geq n_0$ one has that
    \begin{equation}
    \label{eq.convto1}
        |\rho_n-1|<\frac{\eta}{2^n}.
    \end{equation}

    To conclude the proof of the assertion,    fix any $m=n+i >n \geq n_0$.
    By induction on $i$ and using~\eqref{eq.convto1} we get
    \begin{equation*}
    \label{eq.boundbyak-1}
        \left|\prod_{k=n}^{n+i} \rho_k-1\right| \leq \sum_{k=n}^{n+i} |\rho_k-1| < \sum_{k=n}^{n+i} \frac{\eta}{2^k} <\sum_{k=1}^{n+i} \frac{\eta}{2^k} <\eta,
    \end{equation*}
 ending the proof of the assertion.
 \end{proof}

Finally,
using~\eqref{eq.rhodeltaepsilonn} and that $\alpha_{\inf} < \chi^\mathrm{c}_1 < \alpha_\ast<0$ it follows
$$
        \sum_{k=n}^{\infty} \epsilon_k < d_n/3 
        \qquad \mbox{for every $n$.}
 $$
 Theorem~\ref{t.l.p.strongnonper} implies the aperiodicity of the limit measure $\nu$,
ending the proof of the claim.
 \end{proof}

The  proof of the  lemma is now complete. 
\end{proof}

We will use Lemma~\ref{l.p.cauchyhyp} below, which is the hyperbolic analogue of Proposition~\ref{p.mainprop}.
Its proof follows the same line of calculations as in the proof of Proposition~\ref{p.mainprop} and hence is omitted.

\begin{lemma}\label{l.p.cauchyhyp} 
There is a constant $A_{H}>0$ such that for every periodic measure $\nu \in \mathcal{M}_{\mathrm{erg}}(f,H)$  with $\chi^{\mathrm{c}} (\nu) <0$ and every $\alpha\in (\alpha_{\inf}(H), 0)$, there is $\mu \in \mathcal{M}_{\mathrm{aGIKN}}^\alpha(f,H)$  such that
$$
W_1(\nu,\mu)< A_H |\chi^{\mathrm{c}}(\nu)- \chi^{\mathrm{c}}(\mu)|.
$$
\end{lemma}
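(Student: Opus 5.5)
The plan is to mimic the proof of Proposition~\ref{p.mainprop}, with Lemma~\ref{l.explyphyp} in place of Corollary~\ref{c.periodicpointshomclass}. Fix a periodic measure $\nu=\mu_{\mathcal{O}_1}$ with $\chi^{\mathrm{c}}_1=\chi^{\mathrm{c}}(\nu)<0$ and a target $\alpha\in(\alpha_{\inf}(H),0)$. Assume first $\chi^{\mathrm{c}}_1<\alpha$, so that we may take $\alpha_\ast=\alpha$ in Lemma~\ref{l.explyphyp}. That lemma gives a GIKN sequence $(\nu_n)_n$ with $\nu_1=\nu$, parameters $\epsilon_n,\rho_n$ as in \eqref{eq.rhodeltaepsilonn}, and an aperiodic GIKN limit $\mu$ with $\chi^{\mathrm{c}}(\mu)=\alpha$. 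The target estimate then comes from the triangle inequality
\[
W_1(\nu,\mu)\le \sum_{n\ge1}W_1(\nu_n,\nu_{n+1}),
\]
together with a term-by-term bound, exactly as in \eqref{eq.distmunu}.

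For each term we repeat the splitting of Lemma~\ref{l.seq} into $\epsilon_n$-good and $\epsilon_n$-bad points of $\mathcal{O}_{n+1}$ with respect to $\mathcal{O}_n$.
\begin{itemize}
\item The good points contribute at most $\epsilon_n$. By \eqref{eq.rhodeltaepsilonn} this is bounded by $\diam(M)\,|\chi^{\mathrm{c}}_1-\alpha|/(2^{n-1}\cdot 3\,|\alpha_{\inf}(H)|)$.
\item The bad points contribute at most $2\diam(M)(1-\rho_n)$, plus the loss $\tau$ from Lemma~\ref{l.apmethodgeral}, which we choose as small as needed (say $\tau_n\le 2^{-n}|\chi^{\mathrm{c}}_1-\alpha|$).
\end{itemize}
From the definition of $\rho_n$ in \eqref{eq.rhodeltaepsilonn},
\[
1-\rho_n=\frac{\chi^{\mathrm{c}}_n-2^{-n}\chi^{\mathrm{c}}_1-(1-2^{-n})\alpha}{\chi^{\mathrm{c}}_n}.
\]
Lemma~\ref{l.explyphyp} says $\chi^{\mathrm{c}}_n$ is within $\gamma_{n-1}$ of $2^{-(n-1)}\chi^{\mathrm{c}}_1+(1-2^{-(n-1)})\alpha$. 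Hence the numerator is of order $2^{-n}|\chi^{\mathrm{c}}_1-\alpha|+\gamma_{n-1}$. Since $|\chi^{\mathrm{c}}_n|\ge|\alpha|$ up to small errors, we get $1-\rho_n\lesssim 2^{-n}|\chi^{\mathrm{c}}_1-\alpha|/|\alpha|+\gamma_{n-1}/|\alpha|$.

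The main obstacle is that the errors $\gamma_n=\delta/2^n$ are not a priori proportional to $|\chi^{\mathrm{c}}_1-\alpha|$. The first fix to try is to choose $\delta\le|\chi^{\mathrm{c}}_1-\alpha|$ at the start of the construction in Lemma~\ref{l.explyphyp}. This is legitimate because $\delta$ is a free parameter there and only constrains the auxiliary orbits $\widetilde{\mathcal{O}}_i$, which exist by Proposition~\ref{p.apnonhyphomclassperhyp}. Summing over $n$ then gives
\[
W_1(\nu,\mu)\le A\,|\chi^{\mathrm{c}}_1-\alpha|=A\,|\chi^{\mathrm{c}}(\nu)-\chi^{\mathrm{c}}(\mu)|.
\]
The remaining difficulty is that the bound on $1-\rho_n$ carries a factor $1/|\alpha|$, whereas the lemma asks for a constant $A_H$ independent of $\alpha$. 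I would handle this by bounding $1/|\chi^{\mathrm{c}}_n|$ via $\rho_n\in(0,1)$ and the uniform bound $|\chi^{\mathrm{c}}_n|\le|\alpha_{\inf}(H)|$ used in $\epsilon_n$. If that fails, I would accept a constant depending on a compact subinterval of $(\alpha_{\inf}(H),0)$, which still suffices for the density argument of Theorem~\ref{t.densityisolatedset}.

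Finally, the case $\alpha<\chi^{\mathrm{c}}_1$ is symmetric. Here Lemma~\ref{l.apmethodgeral} is applied with auxiliary periodic orbits $\widetilde{\mathcal{O}}_i$ whose exponents are close to $\alpha_{\inf}(H)$ rather than to $0$; these exist by Theorem~\ref{t.abc} and the definition of $\alpha_{\inf}(H)$. The auxiliary exponents must be strictly below $\alpha$, so that the weights $\rho_i$ stay in $(0,1)$. The estimates are otherwise identical. The case $\chi^{\mathrm{c}}_1=\alpha$ follows from either construction by taking $\alpha_\ast$ arbitrarily close to $\alpha$, or needs no argument if that exponent is handled by a limit.
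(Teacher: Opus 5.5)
Your route is the one the paper intends: it omits this proof, saying it repeats the computation of Proposition~\ref{p.mainprop} with Lemma~\ref{l.explyphyp} in place of Corollary~\ref{c.periodicpointshomclass}. Your choices $\delta\le|\chi^{\mathrm{c}}(\nu)-\alpha|$ and $\sum_n\tau_n<\infty$ are correct and are genuinely needed.

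\textbf{The gap: independence of $A_H$ from $\alpha$.} Your proposed fix does not work. The bound $|\chi^{\mathrm{c}}_n|\le|\alpha_{\inf}(H)|$ bounds $1/|\chi^{\mathrm{c}}_n|$ from below, not from above, and $\rho_n\in(0,1)$ only gives $1-\rho_n<1$.

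Nor can this construction be repaired. Every auxiliary exponent is negative, so raising $\chi^{\mathrm{c}}_n$ to its target by mixing with $\widetilde{\mathcal{O}}_n$ forces a shadowing fraction $1-\rho_n\gtrsim(\text{target}_n-\chi^{\mathrm{c}}_n)/|\chi^{\mathrm{c}}_n|$. For example, with $\chi^{\mathrm{c}}(\nu)=-2\eta$ and $\alpha=-\eta$ you get $1-\rho_1=1/4$ for every $\eta$. Then $\nu_2$ is roughly $\frac34\nu+\frac14\widetilde\nu_1$, where $\widetilde\nu_1$ is an uncontrolled near-nonhyperbolic periodic measure. So $W_1(\nu_1,\nu_2)$ is comparable to $\frac14 W_1(\nu,\widetilde\nu_1)$, which has no reason to be $O(\eta)$, and the ratio to $|\chi^{\mathrm{c}}(\nu)-\alpha|=\eta$ blows up like $1/\eta$.

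What your argument actually proves is the estimate with a constant $A_{H,\alpha}$. It is of order $\diam(M)\,(1+|\alpha_{\inf}(H)|^{-1}+|\alpha|^{-1})$, and of order $(\alpha-\alpha_{\inf}(H))^{-1}$ in your downward case. It is locally uniform in $\alpha$ but degenerates at the endpoints. That is exactly what the density argument for Proposition~\ref{pt.densityhomclass2} uses, since there $\alpha$ is fixed. So your fallback is the right statement, but you should state and prove it as such, not claim the lemma as written. A constant uniform in $\alpha$ would need a reservoir of central exponents at a definite distance above $\alpha$, as the blender provides in Lemma~\ref{l.periodicpointshomoclinicclass}. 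That is not available under the hypotheses of Theorem~\ref{t.hyperboliccase}, and the paper's own construction does not deliver it either.

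\textbf{The case $\chi^{\mathrm{c}}(\nu)=\alpha$.} Your treatment here is wrong. The required inequality reads $W_1(\nu,\mu)<0$, which no $\mu$ satisfies; indeed $\mu\neq\nu$, one being aperiodic and the other periodic. Taking $\alpha_\ast\neq\alpha$ produces $\chi^{\mathrm{c}}(\mu)=\alpha_\ast$, so $\mu\notin\mathcal{M}^\alpha_{\mathrm{aGIKN}}(f,H)$. This is a defect of the statement, but it must be handled in the density argument when an approximating periodic measure has exponent exactly $\alpha$. Two ways to do this:
\begin{itemize}
\item first replace $\nu$ by a nearby periodic measure with a different exponent, using Lemma~\ref{l.apmethodgeral} with $\rho$ close to $1$; or
\item prove the weaker bound $W_1(\nu,\mu)\le A_{H,\alpha}|\chi^{\mathrm{c}}(\nu)-\alpha|+\theta$ for arbitrary $\theta>0$.
\end{itemize}
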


Proposition~\ref{pt.densityhomclass2} now follows exactly as the one of  
after noting that every $\eta \in \mathcal{M}_{\mathrm{erg}}^\alpha(f,H)$ is the limit of a sequence of periodic measures $(\nu_n)_n$.

To complete the proof of Proposition~\ref{pt.densityhomclass2} (and hence of Theorem~\ref{t.hyperboliccase})
 it remains to prove Lemma~\ref{l.apmethodgeral}.

\subsection{Proof of Lemma~\ref{l.apmethodgeral}}
\label{ss.proofoflemmaapmethodgeral}
To proof is similar to the one  of Proposition~\ref{p.mainprop}. The only difference is that we will interpolate between 
saddles with the same $\mathrm{s}$-index,
which is possible since they are homoclinically related.

  Fix $f \in \mathcal{R}_0(M)$, the homoclinic class $H$ of $f$, and the orbits
   $\mathcal{O}_{\ast}$,  $\ast \in \{1,2\}$, as in the lemma. For simplicity,  assume that the saddles have period one. Let
        \begin{equation}
        \label{eq.choicelambdahyp}
            \lambda \eqdef \max \left\{ \frac{\chi^{\mathrm{c}}_1}{2}, \frac{\chi^{\mathrm{c}}_2}{2},\log\lambda_\u\right\}<0,
            \quad \mbox{where  $\lambda_\u$  is as in Remark~\ref{r.lambdau1}.}
        \end{equation}
            Let $L$ and $a_0$ be the numbers given by Lemma~\ref{l.gan} for $\lambda$. We construct an orbit segment satisfying Conditions (1)-(3) of Lemma~\ref{l.gan}.
    As in the nonhyperbolic case, Condition (3) is satisfied, thus it is enough to check Conditions (1) and (2).

    Let $\delta>0$ and $0<\tau<1$. Recall the number $\alpha_{\inf}(H)$.
  By the continuity of $\varphi^\c$ in~\eqref{eq.derivadacentrala}, given 
        \begin{equation}
        \label{eq.varsigma}
            0< \varsigma< \min \left\{\frac{\delta}{4},\frac{|\chi^{\mathrm{c}}_1|}{8},\frac{|\chi^{\mathrm{c}}_2|}{8},|\alpha_{\min}|\tau\right\},
        \end{equation}
    there exists $\gamma>0$ such that 
        \begin{equation}
        \label{eq.continuidadedervivada}
            | \varphi_{\mathrm{c}}(x)-\varphi_{\mathrm{c}}(y)|<\varsigma \quad \mbox{for every } x,y \in M \,\, \mbox{such that  } d(x,y)<\gamma.
        \end{equation}

        Fix $\epsilon>0$ and
    \begin{equation}
    \label{eq.escolhaahyp}
        0< a < \min\left\{a_0, \frac{\epsilon}{L+1}, \frac{\gamma}{L+1} \right\}.
    \end{equation}

 For $\ast \in \{1,2\}$, consider families of disks $\mathfrak{D}(p_\ast)$ as in Remark~\ref{rd.familiadiscos}
whose disks are contained in the ball of radius $a/2$ centered at $p_\ast$.
The next claim follows from the invariance of these families  noting that saddles with the same $\mathrm{s}$-index are homoclinically related, recall (G2) in Theorem~\ref{t.residual}. It is the analogous to Lemma~\ref{l.sobrediscos} in the nonhyperbolic case. 

\begin{claim}
\label{c.l.constrdiscos}
There is $N$ with the following property.
    For every disk $D \in \mathfrak{D}(p_1)$ and every $\ell,r>0$ there are a disk $D' 
    =D'_{\ell, r}
    \subset D$ such that
    \begin{enumerate}
        \item $f^i(D')$ is contained in some disk of $\mathfrak{D}(p_1)$ for every $0 \leq i \leq \ell$;
        \item $f^{\ell+N+j}(D')$ is contained in some disk of $\mathfrak{D}(p_2)$ for every $0\leq j \leq r$; and
        \item $f^{\ell +N + r +N} (D')$ is a disk in $\mathfrak{D}(p_1)$.
    \end{enumerate}
\end{claim}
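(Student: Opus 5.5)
We will prove Claim~\ref{c.l.constrdiscos} by chaining the strict invariance of the families $\mathfrak{D}(p_\ast)$ (Remark~\ref{rd.familiadiscos}) with the two transverse heteroclinic connections between $p_1$ and $p_2$, which exist by (G2) in Theorem~\ref{t.residual}. Recall that we assumed $f(p_1)=p_1$ and $f(p_2)=p_2$.

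\emph{Uniform connecting time.} First we fix $N$. Since $p_1$ and $p_2$ are homoclinically related, $W^\mathrm{u}(p_1)\pitchfork W^\mathrm{s}(p_2)\neq\emptyset$. Every disk of $\mathfrak{D}(p_1)$ is $C^1$-close to $W^\mathrm{u}_\rho(p_1)$ and tangent to $\mathcal{C}^\u$. By the $\lambda$-lemma, for each $D\in\mathfrak{D}(p_1)$ there is $N_D$ such that $f^{N_D}(D)$ contains a disk of $\mathfrak{D}(p_2)$. Moreover $f^{n}(D)$ then contains such a disk for all $n\ge N_D$, by the strict $f$-invariance of $\mathfrak{D}(p_2)$.

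Strict invariance also gives a margin $\epsilon>0$. The condition ``$f^{n}(D)$ contains a disk of $\mathfrak{D}(p_2)$'' is therefore open in $D$ for the metric $\vartheta$. We apply a compactness argument to the closure of $\mathfrak{D}(p_1)$, exactly as in the proof of the claim giving $N_{p_g}$ in Section~\ref{ss.proofoftheoremclassblender}. This yields a uniform $N_{12}$ working for every $D\in\mathfrak{D}(p_1)$. Symmetrically, using $W^\mathrm{u}(p_2)\pitchfork W^\mathrm{s}(p_1)\neq\emptyset$, we obtain a uniform $N_{21}$ for disks of $\mathfrak{D}(p_2)$. We then set $N\eqdef\max\{N_{12},N_{21}\}$.

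\emph{Construction of $D'$.} Next we build $D'=D'_{\ell,r}$ by pulling back, in four steps.
\begin{enumerate}
\item Starting from $D\in\mathfrak{D}(p_1)$, iterate $\ell$ times. Strict invariance of $\mathfrak{D}(p_1)$ under $f=f^{\pi(p_1)}$ gives nested subdisks $D=E_0\supset E_1\supset\cdots\supset E_\ell$ such that $f^i(E_i)$ is contained in a disk of $\mathfrak{D}(p_1)$ for every $0\le i\le \ell$ and $f^{\ell}(E_\ell)\in\mathfrak{D}(p_1)$. Since each $f^i(E_\ell)\subset f^i(E_i)$, item (1) holds for $E_\ell$.
\item Apply the connecting time: $f^{N}(f^\ell(E_\ell))$ contains some $G_0\in\mathfrak{D}(p_2)$.
\item Iterate $r$ more times inside $\mathfrak{D}(p_2)$ in the same way. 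This gives $G_r\subset G_0$ with $f^{j}(G_r)$ contained in disks of $\mathfrak{D}(p_2)$ for $0\le j\le r$ and $f^r(G_r)\in\mathfrak{D}(p_2)$.
\item Apply $N$ again to get a disk $F\in\mathfrak{D}(p_1)$ contained in $f^{r+N}(G_r)$.
\end{enumerate}
Finally we set $D'\eqdef f^{-(\ell+N+r+N)}(F)\subset E_\ell\subset D$. Items (1)--(3) then follow directly from the nesting of these subdisks.

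\emph{Main obstacle.} The delicate point is that $N$ must be exactly the same in both transitions and independent of $\ell$, $r$ and $D$. This is why the statement uses ``is contained in some disk'' in items (1)--(2) and ``contains/is a disk'' in item (3), and it is why uniformity over all iterates $n\ge N_D$ matters. This uniformity follows from the strict invariance margin together with the compactness argument above. Taking the maximum of $N_{12}$ and $N_{21}$ is harmless because of the ``for all $n\ge N_D$'' property.

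A secondary check is the following. The orbit pieces between the two families, of length $N$, leave the balls of radius $a/2$. This does no harm here, since the claim only records membership in the families at the specified times. Those excursions will enter later as the bad points of the approximation and as the $N L^\c$ error terms.
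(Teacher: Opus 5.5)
Your proof is correct and is essentially the paper's argument. The paper only asserts that the claim follows from the strict invariance of the families $\mathfrak{D}(p_\ast)$ and the homoclinic relation between $p_1$ and $p_2$ given by (G2), as an analogue of Lemma~\ref{l.sobrediscos}; your nested pull-back construction with a common connecting time $N$ is precisely the unpacking of that.

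The one step to tighten is the uniformity of $N$. The closure of $\mathfrak{D}(p_1)$ is not compact in the space of $C^1$ disks, so the compactness argument does not work there. Use the uniform $\lambda$-lemma instead: each $D\in\mathfrak{D}(p_1)$ is tangent to the unstable cone and crosses $W^{\mathrm{s}}_{\mathrm{loc}}(p_1)$, so after a uniform number of iterates it is $C^1$-close to $W^{\mathrm{u}}_\rho(p_1)$, and hence uniformly transverse to a fixed compact piece of $W^{\mathrm{s}}(p_2)$. This is also the honest reading of the paper's own $N_{p_g}$ argument.
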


Given $D \in \mathfrak{D}(p_1)$ and $\ell,r>0$, let $D'$ be the disk given by Claim~\ref{c.l.constrdiscos}. Given $q \in D'$, define
    \begin{equation}
    \label{eq.defqlr}
        q_{\ell,r} \eqdef f^{N+r+N+\ell}(q) \in f^{\ell+N+r+N}(D') \in \mathfrak{D}(p_1). 
    \end{equation}
By construction, the orbit segment $[q_{\ell,r},\ell+N+r+N]$  satisfies Condition (1) in Lemma~\ref{l.gan}. 

As in the proof of Proposition~\ref{p.mainprop}, it is enough to verify Condition
(2) for the subbundle $E^{\mathrm{c}}$ of $E^{\mathrm{cs}}$. For that
consider the following remark.

\begin{remark}
\label{r.constrsegorbit}
    By the construction of the point $q_{\ell,r}$ and Claim~\ref{c.l.constrdiscos}, the orbit segment $[q_{\ell,r},\ell+N+r+N]$ satisfies: 
        \begin{itemize}
            \item $f^k(q_{\ell,r}) \in B_{a/2}(p_1)$ for $0\leq k \leq \ell$; and
            \item $f^k(q_{\ell,r}) \in B_{a/2}(p_2)$ for $\ell+N \leq k \leq \ell + N +r$.
        \end{itemize}
\end{remark}

In what follows, consider the number $L^\c \eqdef L^\c(H)$ as in~\eqref{e.Lc}.
\begin{claim}
\label{c.l.quasihyp}
   Let $\ell$ and $r$ sufficiently large such that 
        $$
        \frac{N}{\ell+N}  L^{\mathrm{c}} < \varsigma, \qquad \left|1-\frac{\ell}{\ell+N}\right| < \varsigma, \qquad \left|\frac{\chi^{\mathrm{c}}_2}{\ell+N+r}\right| < \varsigma.
        $$
        Then
$$
\frac{1}{k}\sum_{i=0}^{k-1} \varphi^{\mathrm{c}}(f^i(q_{\ell,r})) < \lambda,
            \qquad \mbox{for every $0 \leq k \leq \ell+N+r+N$.}
$$ 
\end{claim}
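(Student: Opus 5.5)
We will bound the Birkhoff averages of $\varphi^{\mathrm{c}}$ along $[q_{\ell,r},\ell+N+r+N]$ by splitting the time interval into four blocks, following Remark~\ref{r.constrsegorbit}: the block $0\le i\le \ell$, where the orbit stays in $B_{a/2}(p_1)$; the first transition block $\ell< i<\ell+N$; the block $\ell+N\le i\le \ell+N+r$, where the orbit stays in $B_{a/2}(p_2)$; and the final transition block of length $N$. Since $a<\gamma$ by \eqref{eq.escolhaahyp}, the choice of $\gamma$ in \eqref{eq.continuidadedervivada} gives $\varphi^{\mathrm{c}}(f^i(q_{\ell,r}))<\chi^{\mathrm{c}}_1+\varsigma$ on the first block and $\varphi^{\mathrm{c}}(f^i(q_{\ell,r}))<\chi^{\mathrm{c}}_2+\varsigma$ on the third block. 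Here we use that $p_\ast$ are fixed, so $\varphi^{\mathrm{c}}(p_\ast)=\chi^{\mathrm{c}}_\ast$. On the transition blocks we only use the crude bound $\varphi^{\mathrm{c}}\le L^{\mathrm{c}}$.

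\emph{Case $k\le \ell$.} The average is below $\chi^{\mathrm{c}}_1+\varsigma$. Since $\varsigma<|\chi^{\mathrm{c}}_1|/8$, this is below $\chi^{\mathrm{c}}_1/2\le\lambda$.

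\emph{Case $\ell<k\le \ell+N$.} The sum is at most $\ell(\chi^{\mathrm{c}}_1+\varsigma)+NL^{\mathrm{c}}$. Dividing by $k$ and using $\ell/k\ge \ell/(\ell+N)>1-\varsigma$ and $NL^{\mathrm{c}}/(\ell+N)<\varsigma$ (when $k$ is small compared with $\ell+N$, use instead that $\ell\ge k-N$), the average is at most
$(1-\varsigma)(\chi^{\mathrm{c}}_1+\varsigma)+\varsigma\cdot(\ell+N)/k$. For $\ell$ large this is $\chi^{\mathrm{c}}_1+O(\varsigma)<\chi^{\mathrm{c}}_1/2$.

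\emph{Case $\ell+N<k\le \ell+N+r$.} The sum is a combination of a negative contribution of size about $\ell\chi^{\mathrm{c}}_1$ (up to the $NL^{\mathrm{c}}$ error) and $(k-\ell-N)(\chi^{\mathrm{c}}_2+\varsigma)$. Both are negative with rates at most $\chi^{\mathrm{c}}_\ast/2$ after absorbing the $\varsigma$ errors. The average is then a convex combination of numbers below $\max\{\chi^{\mathrm{c}}_1,\chi^{\mathrm{c}}_2\}/2+O(\varsigma)$, plus an error $NL^{\mathrm{c}}/k<\varsigma$.

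\emph{Case $k$ in the last $N$ steps.} The extra positive contribution is at most $NL^{\mathrm{c}}$. This is controlled by $NL^{\mathrm{c}}/(\ell+N)<\varsigma$ since $k>\ell+N$. We will also use $|\chi^{\mathrm{c}}_2|/(\ell+N+r)<\varsigma$ to absorb the boundary term from the last point of the $p_2$-block.

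In all cases the average is below $\max\{\chi^{\mathrm{c}}_1,\chi^{\mathrm{c}}_2\}+c\varsigma$ for a small absolute constant $c$. Since $\varsigma<\min|\chi^{\mathrm{c}}_\ast|/8$, this quantity is below $\lambda$ as defined in \eqref{eq.choicelambdahyp}.

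The main obstacle is the bookkeeping in the transition blocks, where $\varphi^{\mathrm{c}}$ may be positive. We must make sure that the constants multiplying $\varsigma$ stay below $4$, so that the bound is strictly less than $\chi^{\mathrm{c}}_\ast/2$. If the stated smallness conditions turn out not to suffice directly, we would first enlarge $\ell$ further; this is harmless because $N$ is fixed.
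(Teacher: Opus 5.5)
Your proposal is correct and follows essentially the same argument as the paper. Both split the orbit segment into the same four blocks and use $\varphi^{\mathrm{c}}<\chi^{\mathrm{c}}_\ast+\varsigma$ on the blocks near $p_1$ and $p_2$, together with the crude bound $L^{\mathrm{c}}$ on the two transition blocks of length $N$. In every case this gives an average below $\max\{\chi^{\mathrm{c}}_1,\chi^{\mathrm{c}}_2\}+c\,\varsigma$ with $c<4$, hence below $\lambda$.

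One fix in the third block: keep the rates as $\chi^{\mathrm{c}}_\ast+\varsigma$, not $\chi^{\mathrm{c}}_\ast/2$, until the $NL^{\mathrm{c}}/k$ error has been added. Your final summary already does this. Your convex-combination treatment of that block is cleaner than the paper's, and your caveat about enlarging $\ell$ matches the paper's own tacit use of ``$\ell,r$ large''.
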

\begin{proof}
    There are four segments of the orbit of $q_{\ell,r}$ to consider, according to the value of $k$. 
  \smallskip  
    
 \noindent (i) $0\leq k \leq \ell$: 
 By Remark~\ref{r.constrsegorbit} and the choice of $a$ in~\eqref{eq.escolhaahyp}, we get 
 $$
 d(f^k(q_{\ell,r}),f^k(p_1))< a/2< \gamma.
 $$
    Hence, by~\eqref{eq.continuidadedervivada} and the choice of $\varsigma$ in~\eqref{eq.varsigma}, it follows that
        \begin{equation*}
        \frac{1}{k} \sum_{i=0}^{k-1}\varphi^{\mathrm{c}}(f^i(q_{\ell,r}))< \chi^{\mathrm{c}}_1+\varsigma< \frac{\chi^{\mathrm{c}}_1}{2}<\lambda.
    \end{equation*}

 \noindent (ii)  $ \ell + 1  \leq k \leq \ell + N$: 
 From the definition of $L^{\mathrm{c}}$ and $\chi^{\mathrm{c}}_1+\varsigma-L^{\mathrm{c}}(H(p))<0$, it follows
    \begin{equation*}
    \begin{split}
        \frac{1}{k}\sum_{i=0}^{ k-1} \varphi^{\mathrm{c}}(f^i(q_{\ell,r})) & < \frac{\ell}{ k } (\chi^{\mathrm{c}}_1 + \varsigma) + \frac{(k - \ell) }{k}  L^{\mathrm{c}} = L^{\mathrm{c}} + \frac{\ell}{k} ( \chi^{\mathrm{c}}_1 +\varsigma - L^{\mathrm{c}}) \\
        & < L^{\mathrm{c}} + \frac{\ell}{\ell +N}  ( \chi^{\mathrm{c}}_1 +\varsigma - L^{\mathrm{c}}) = \frac{\ell}{\ell + N}  (\chi^{\mathrm{c}}_1 + \varsigma) + \frac{N}{\ell+ N}  L^{\mathrm{c}}.
    \end{split}
    \end{equation*}
    By the choices of $\ell$ 
    and of $\varsigma$ (see ~\eqref{eq.varsigma}), 
    it follows     
    \begin{equation}
    \label{eq.escolhalhyp}
        \frac{1}{k}\sum_{i=0}^{ k-1} \varphi^{\mathrm{c}}(f^i(q_{\ell,r})) < \chi^{\mathrm{c}}_1 + 2\varsigma < \lambda,
        \quad \mbox{for  every$\ell+1 \leq k \leq \ell+N$.}
    \end{equation}

\smallskip

 \noindent (iii) $\ell+ N +1 \leq k \leq \ell + N + r$: The choice of $\varsigma$ in~\eqref{eq.varsigma}, implies that $\chi^{\mathrm{c}}_1+2\varsigma<0$ and $\chi^{\mathrm{c}}_2+\varsigma<0< \varsigma$. Therefore, 
    \begin{equation*}
    \begin{split}
        \frac{1}{k}\sum_{i=0}^{ k-1} \varphi^{\mathrm{c}}(f^i(q_{\ell,r})) & \leq \frac{1}{k} \sum_{i=0}^{ \ell + N -1} \varphi^{\mathrm{c}}(f^i(q_{\ell,r})) + \frac{1}{k} \sum_{i=\ell+N}^{k-1} \varphi^{\mathrm{c}}(f^i(q_{\ell,r})) \\
        & \leq \frac{\ell + N}{(\ell+N+r)}  (\chi^{\mathrm{c}}_1+2\varsigma) + \frac{1}{\ell + N + r}  (\chi^{\mathrm{c}}_2 + \varsigma) \\
       \footnotesize{ \mbox{choices $\ell,r$ and~\eqref{eq.varsigma}}} & < \chi_1^\c + 4\varsigma < \lambda. 
    \end{split}
    \end{equation*}

    \noindent (iv) $\ell+N+r+1 \leq k \leq \ell + r + 2N$: It follows that
    \begin{equation*}
    \label{eq.estimativafinalhyp}
    \begin{split}
        \frac{1}{k} \sum_{i=0}^{ k-1} \varphi^{\mathrm{c}}(f^i(q_{\ell,r})) & = \frac{1}{k}  \sum_{i=0}^{\ell-1} \varphi_{\mathrm{c}}(f^i(q_{\ell,r})) + \frac{1}{k}\sum_{i=\ell}^{\ell+N-1} \varphi_{\mathrm{c}}(f^i(q_{\ell,r}))\\
        & + \frac{1}{k}\sum_{i=\ell+N}^{\ell+N+r-1} \varphi_{\mathrm{c}}(f^i(q_{\ell,r})) +\frac{1}{k} \sum_{i=\ell+N+r}^{k} \varphi_{\mathrm{c}}(f^i(q_{\ell,r})) \\
        \footnotesize{\mbox{(Remark~\ref{r.constrsegorbit})}} & \leq \frac{\ell}{k}  (\chi^{\mathrm{c}}_1+\varsigma) + \frac{N}{k}  L^{\mathrm{c}} + \frac{r}{k}  (\chi^{\mathrm{c}}_2+\varsigma) + \frac{k-(\ell+N+r)}{k} L^{\mathrm{c}} \\ 
        & = L^{\mathrm{c}} + \frac{\ell}{k}  (\chi^{\mathrm{c}}_1 + \varsigma - L^{\mathrm{c}}(f)) + \frac{r}{k}  (\chi^{\mathrm{c}}_2 + \varsigma - L^{\mathrm{c}}) \\
        & \leq L^{\mathrm{c}} + \frac{\ell ( (\chi^{\mathrm{c}}_1 + \varsigma - L^{\mathrm{c}})) + r  (\chi^{\mathrm{c}}_2 + \varsigma - L^{\mathrm{c}})}{\ell + r +2N}  \\
        & \leq \frac{1}{\ell +r +2N}  \left( 
        \ell \chi^{\mathrm{c}}_1 + r \chi^{\mathrm{c}}_2+   2N  L^{\mathrm{c}}
        \right) + 2\varsigma \\
       \footnotesize{\mbox{($r$ large, choice of $\varsigma$ in~\eqref{eq.varsigma})}} & \leq \frac{\ell}{\ell+r}  \chi^{\mathrm{c}}_1 + \frac{r}{\ell+r}  \chi^{\mathrm{c}}_2 + 3\varsigma < \max\{\chi^{\mathrm{c}}_1,\chi^{\mathrm{c}}_2\} + 3\varsigma < \lambda.
    \end{split}
    \end{equation*}
    All possible cases have been considered, proving the
 claim.
\end{proof}

Since $[q_{\ell,r},\ell+N+r+N]$ satisfies conditions (1)--(3) of Lemma~\ref{l.gan}, there exists a periodic point $p_{\ell,r}$ with period $\ell+N+r+N$ such that
    \begin{equation}
    \label{eq.shadowed}
        d(f^i(p_{\ell,r}) , f^i(q_{\ell,r}))< La \qquad  \mbox{for every $i \in \{0, \dots, \ell+N+r+N\}$.}
    \end{equation}
The following claim estimates $\chi^{\mathrm{c}}(p_{\ell,r})$.

\begin{claim}
 \label{c.l.estimativaexpoenteapmethoddifeo}
     Let $\rho \in(\tau,1)$. For  sufficiently large $\ell$ and $r$, the following holds 
        $$
            \chi^{\mathrm{c}}(p_{\ell,r}) \in [\rho \chi^c_1 +(1-\rho)\chi^{\mathrm{c}}_2-\delta , \rho \chi^c_1 +(1-\rho)\chi^{\mathrm{c}}_2 + \delta].
        $$
\end{claim}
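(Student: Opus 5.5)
We split the Birkhoff sum $\pi\,\chi^{\mathrm{c}}(p_{\ell,r})=\sum_{i=0}^{\pi-1}\varphi^{\mathrm{c}}(f^i(p_{\ell,r}))$, where $\pi=\ell+N+r+N$, into the four blocks already used in Claim~\ref{c.l.quasihyp}: the $p_1$-block $[0,\ell)$, the first transition $[\ell,\ell+N)$, the $p_2$-block $[\ell+N,\ell+N+r)$, and the final transition $[\ell+N+r,\pi)$.

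On the two long blocks we compare $f^i(p_{\ell,r})$ with the fixed saddles. By \eqref{eq.shadowed} and Remark~\ref{r.constrsegorbit},
\[
d(f^i(p_{\ell,r}),p_1)<La+a/2<(L+1)a<\gamma \quad\text{for } 0\le i\le \ell,
\]
and the analogous bound holds near $p_2$ for $\ell+N\le i\le \ell+N+r$. The choice of $\gamma$ in \eqref{eq.continuidadedervivada} then gives $|\varphi^{\mathrm{c}}(f^i(p_{\ell,r}))-\chi^{\mathrm{c}}_\ast|<\varsigma$ on the respective blocks. On the two transition blocks we only use $|\varphi^{\mathrm{c}}|\le L^{\mathrm{c}}$. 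This yields
\[
\Bigl|\chi^{\mathrm{c}}(p_{\ell,r})-\frac{\ell\chi^{\mathrm{c}}_1+r\chi^{\mathrm{c}}_2}{\ell+r+2N}\Bigr|\le \varsigma+\frac{2NL^{\mathrm{c}}}{\ell+r+2N}.
\]

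Next we choose the ratio $\ell/(\ell+r)$ to match $\rho$. Since $\ell$ and $r$ may be taken arbitrarily large (Claim~\ref{c.l.constrdiscos} allows any $\ell,r$), we take $\ell=\lfloor \rho m\rfloor$ and $r=m-\ell$ with $m$ large. Then $\ell/(\ell+r)\to\rho$, $r/(\ell+r)\to 1-\rho$, and $2N/(\ell+r+2N)\to0$. The hypotheses of Claim~\ref{c.l.quasihyp} also hold for $m$ large, so the periodic point $p_{\ell,r}$ exists. Consequently,
\[
\bigl|\chi^{\mathrm{c}}(p_{\ell,r})-\rho\chi^{\mathrm{c}}_1-(1-\rho)\chi^{\mathrm{c}}_2\bigr|\le \varsigma+o(1)+\bigl(|\chi^{\mathrm{c}}_1|+|\chi^{\mathrm{c}}_2|\bigr)\,o(1)<\tfrac{\delta}{4}+o(1)<\delta,
\]
using $\varsigma<\delta/4$ from \eqref{eq.varsigma}.

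No step here is a serious obstacle; the main care is bookkeeping. We must make the rounding of $\ell/m$ and the transition terms uniformly small. We must also ensure that the constant $N$ from Claim~\ref{c.l.constrdiscos} is independent of $\ell$ and $r$, which is what makes the transition terms negligible. The same choice of $\ell\approx\rho m$ gives the good-approximation fraction $\ell/\pi\ge\rho-\tau$ used later in Lemma~\ref{l.apmethodgeral}.
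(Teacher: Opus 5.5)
Your proposal is correct and follows essentially the same route as the paper: you split the Birkhoff sum along the orbit of $p_{\ell,r}$ into the two long blocks near $p_1$ and $p_2$, where \eqref{eq.shadowed} and Remark~\ref{r.constrsegorbit} give $d(f^i(p_{\ell,r}),p_\ast)<(L+1)a<\gamma$, and the $2N$ transition iterates, which you bound crudely by $L^{\mathrm{c}}$; you then choose $\ell,r$ so that $\ell/(\ell+r+2N)$ is close to $\rho$ and $2N/(\ell+r+2N)$ is small. Your explicit choice $\ell=\lfloor\rho m\rfloor$, $r=m-\ell$, together with the two-sided estimate that keeps the $\pm\varsigma$ terms in the lower bound, is only a tidier version of the paper's bookkeeping in Fact~\ref{f.af.boundtole1e2}.
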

\begin{proof}
    For $0 \leq k \leq \ell$, using~\eqref{eq.shadowed}, the choice of $a$  in~\eqref{eq.escolhaahyp}, and Remark~\ref{r.constrsegorbit}
    it follows that
        \begin{equation}
        \begin{split}
        \label{eq.primeirapartehyp}
            d(f^k(p_{\ell,r}),p_1) & < d(f^k(p_{\ell,r}), f^k(q_{\ell,r})) +  d(f^k(q_{\ell,r}),p_1)
            < (L+1)a < \gamma.
        \end{split}
        \end{equation}
    Analogously, for $\ell + N  \leq k \leq \ell +N + r$, it holds that $d(f^k(p_{\ell,r}),p_2) < \gamma$.
    Therefore, using~\eqref{eq.continuidadedervivada} we get  
        \begin{equation}
        \label{eq.intexp}
        \begin{split}
            & |\varphi^{\mathrm{c}}(f^k(q_{\ell,r}))-\varphi^{\mathrm{c}}(p_1)|< \varsigma \quad \mbox{for } 0 \leq k \leq \ell \quad \mbox{and}\\
            & |\varphi^{\mathrm{c}}(f^k(q_{\ell,r}))-\varphi^{\mathrm{c}}(p_2)|< \varsigma \quad \mbox{for } \ell + N \leq k \leq \ell+ N + r.
        \end{split}
        \end{equation}
    By \eqref{eq.intexp}, the definition of $L^{\mathrm{c}}$, and  $\pi(p_{\ell,r})=\ell+r+2N$, we get 
    \begin{equation}
    \begin{split}
    \label{eq.estimativaexphyp1}
        \chi^{\mathrm{c}}(p_{\ell,r})& = \frac{1}{\ell+r+2N} \sum_{k=0}^{\ell+r+2N-1}\varphi^{\mathrm{c}}(f^k(p_{\ell,r}))\\
        & \leq
        \frac{\ell(\chi^{\mathrm{c}}_1+\varsigma) + r(\chi^{\mathrm{c}}_2+\varsigma)+ 2NL^{\mathrm{c}}}{\ell+r+2N} 
     \leq 
        \frac{\ell  \chi^{\mathrm{c}}_1 + r  \chi^{\mathrm{c}}_2 + 2NL^{\mathrm{c}}}{\ell+r+2N} 
        + 2\varsigma.
    \end{split}
    \end{equation}
    A similar calculation gives
    \begin{equation}
    \label{eq.estimativaexphyp2}
        \chi^{\mathrm{c}}(p_{\ell,r}) = \frac{1}{\ell+r+2N} \sum_{k=0}^{\ell+r+2N-1}\varphi^{\mathrm{c}}(f^k(p_{\ell,r}))
        \geq 
        \frac{\ell  \chi^{\mathrm{c}}_1 + r  \chi^{\mathrm{c}}_2  - 2 N L^{\mathrm{c}}}{\ell+r+2N} .
    \end{equation}
    Fix $\ell$ and $r$ sufficiently large such that 
   
       \begin{fact}
    \label{f.af.boundtole1e2}
    For every sufficiently large $\ell$ and $r$ such that
        $$
        - \frac{\varsigma}{3 |\alpha_{\inf}(H)|} < \frac{\ell}{\ell+r+2N} -\rho <  \frac{\varsigma}{3|\alpha_{\inf}(H)|}
        \qquad  \mbox{and} \qquad
        \frac{2N}{\ell+r+2N}   <  \frac{\varsigma}{3L^{\mathrm{c}}},
$$
it holds 
        \begin{equation}
        \begin{split}
            &\rho \chi^{\mathrm{c}}_1 - \frac{\varsigma}{3} < \frac{\ell \chi^{\mathrm{c}}_1}{\ell+r+2N} < \rho \chi^{\mathrm{c}}_1 + \frac{\varsigma}{3},\\
       &      (1-\rho) \chi^{\mathrm{c}}_2 - \frac{2\varsigma}{3} < \frac{\ell  \chi^{\mathrm{c}}_2}{\ell+r+2N} < (1-\rho)\chi^{\mathrm{c}}_2 + \frac{2\varsigma}{3}.     
       \end{split}
       \end{equation}
    \end{fact}
    
        \begin{proof}
    We get the upper bound. The calculations to obtain the lower bound are analogous.     
    Observe that taking $\ell$ and $r$ as in the hypotheses it follows
   $$
        1= \frac{\ell+r+2N}{\ell+r+2N} 
        < \rho + \frac{\varsigma}{3|\alpha_{\inf}|(H)} + \frac{r}{\ell+N+r+N} + \frac{\varsigma}{3L^{\mathrm{c}}}.
  $$
    Which implies that
    \begin{equation*}
        \frac{r}{\ell+r+2N} > 1-\rho - \frac{\varsigma}{3|\alpha_{\inf}(H) |} -\frac{\varsigma}{3L^{\mathrm{c}}}.
    \end{equation*}
    Since $\alpha_{\inf}(H) <\chi^{\mathrm{c}}_2<0 < L^{\mathrm{c}}$, it follows that
    \begin{equation*}
        \frac{r \chi^{\mathrm{c}}_2}{\ell+r+2N} < (1-\rho)\chi^{\mathrm{c}}_2 + \frac{\varsigma \chi^{\mathrm{c}}_2}{3\alpha_{\inf}(H)} + \frac{\varsigma|\chi^{\mathrm{c}}_2|}{3L^{\mathrm{c}}} < (1-\rho)\chi^{\mathrm{c}}_2 + \frac{2\varsigma}{3},
    \end{equation*}
    ending the proof of the fact.
    \end{proof}

    Using~\eqref{eq.estimativaexphyp1} and Fact~\ref{f.af.boundtole1e2}, we get 
       \begin{equation*}
    \begin{split}
        \chi^{\mathrm{c}}(p_{\ell,r}) & < \rho \chi^{\mathrm{c}}_1 + \frac{\varsigma}{3}+ (1-\rho)\chi^{\mathrm{c}}_2+ \frac{2 \varsigma}{3} + \frac{\varsigma}{3} + 2\varsigma \\
        &= \rho \chi^{\mathrm{c}}_1 +(1-\rho)\chi^{\mathrm{c}}_2 + \frac{10 \varsigma}{3} < \rho \chi^{\mathrm{c}}_1 +(1-\rho)\chi^{\mathrm{c}}_2 + 4\varsigma.
    \end{split}
    \end{equation*}
    Analogously, it holds
    $$
        \chi^{\mathrm{c}}(p_{\ell,r})> \rho \chi^{\mathrm{c}}_1 + (1-\rho)\chi^{\mathrm{c}}_2 - 4\varsigma.
    $$
    Since $\varsigma< \frac{\delta}{4}$,
    $$
        \chi^{\mathrm{c}}(p_{\ell,r}) \in [\rho \chi^c_1 +(1-\rho)\chi^{\mathrm{c}}_2-\delta , \rho \chi^c_1 +(1-\rho)\chi^{\mathrm{c}}_2 + \delta].
    $$
Finishing the proof of the claim.
\end{proof}

For $\ell$ and $r$ as in Claim~\ref{c.l.estimativaexpoenteapmethoddifeo}, consider the periodic orbit $\widetilde{\mathcal{O}}\eqdef \mathcal{O}(p_{\ell,r})$. It remains to check the following:

\begin{fact}
The orbit
 $\widetilde{\mathcal{O}}$ is an $(\epsilon,\rho-\tau)$-good approximation of $\mathcal{O}_1$. 
 \end{fact}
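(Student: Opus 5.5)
We have to check the two conditions of Definition~\ref{d.goodap} for $\widetilde{\mathcal{O}}=\mathcal{O}(p_{\ell,r})$ with respect to $\mathcal{O}_1$, with parameters $\epsilon$ and $\rho-\tau$. Since we assumed $p_1$ is a fixed point, $\pi(p_1)=1$. Then being $\epsilon$-good for $\mathcal{O}_1$ just means lying in $B_\epsilon(p_1)$. Condition (2) of the definition, the equidistribution of the good points among the balls $B_\epsilon(f^i(p_1))$, is therefore automatic: there is only one ball. In the general periodic case one would instead choose $\ell$ to be a multiple of $\pi(p_1)$, so that the good stretch visits every ball of $\mathcal{O}_1$ the same number of times. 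The first step is therefore to record this reduction. It uses that $\epsilon$ is small enough for the balls to be disjoint, which is the standing hypothesis in Definition~\ref{d.goodap}.

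For condition (1), we count good points on the first stretch of the orbit. By \eqref{eq.primeirapartehyp} and the choice of $a$ in \eqref{eq.escolhaahyp}, for $0\le k\le \ell$ we have
$d(f^k(p_{\ell,r}),p_1)<(L+1)a<\epsilon$.
Hence at least $\ell+1\ge \ell$ points of $\widetilde{\mathcal{O}}$ are $\epsilon$-good for $\mathcal{O}_1$. These points are distinct, since the period is $\ell+r+2N>\ell$. The remaining points, coming from the transitions and from the stretch near $p_2$, are simply counted as possibly bad. This gives
\[
\frac{\#\big(\widetilde{\mathcal{O}}^{\mathrm{g}}_{\epsilon}(\mathcal{O}_1)\big)}{\#(\widetilde{\mathcal{O}})}\ \ge\ \frac{\ell}{\ell+r+2N}.
\]

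The last step, which carries the only real bookkeeping, is to bound this ratio from below by $\rho-\tau$. We use the choice of $\ell,r$ in Fact~\ref{f.af.boundtole1e2}:
\[
\frac{\ell}{\ell+r+2N}>\rho-\frac{\varsigma}{3|\alpha_{\inf}(H)|}.
\]
By \eqref{eq.varsigma}, $\varsigma<|\alpha_{\inf}(H)|\,\tau$ (reading $\alpha_{\min}$ there as $\alpha_{\inf}(H)$). Hence $\varsigma/(3|\alpha_{\inf}(H)|)<\tau/3<\tau$, and the ratio exceeds $\rho-\tau$, which is positive because $\rho>\tau$. The main obstacle is making sure these quantifier choices are compatible. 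Fact~\ref{f.af.boundtole1e2} asks for $\ell/(\ell+r+2N)$ to be close to $\rho$ while $2N/(\ell+r+2N)$ is small. This is achieved by taking $\ell\approx \rho m$ and $r\approx(1-\rho)m$ with $m\to\infty$, and such a choice is consistent with the largeness requirements of Claims~\ref{c.l.quasihyp} and~\ref{c.l.estimativaexpoenteapmethoddifeo}. With this, both conditions of Definition~\ref{d.goodap} hold, proving the fact.
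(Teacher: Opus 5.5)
Your proof is correct and is essentially the paper's argument. The first $\ell$ iterates of $p_{\ell,r}$ stay $(L+1)a<\epsilon$-close to $p_1$, so the proportion of good points is at least $\ell/(\ell+r+2N)$. This exceeds $\rho-\varsigma/(3|\alpha_{\inf}(H)|)$, hence $\rho-\tau$, by Fact~\ref{f.af.boundtole1e2} and the choice $\varsigma<|\alpha_{\inf}(H)|\,\tau$.

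Your extra check of the equidistribution condition (2) is automatic under the period-one simplification, which the paper leaves implicit. Your aside on general periods is only heuristic, since points outside the first stretch could also be accidentally good and spoil exact equidistribution, but the fact as stated does not need it.
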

 
 \begin{proof}
 Note  that $\rho \in (\tau,1)$.
Proceeding as in~\eqref{eq.primeirapartehyp}, it follows that 
        $
            d(f^k(p_{\ell,r}),p_1) <(L+1)a < \epsilon
        $
        for all $0 \leq k \leq \ell$.
Thus, by Fact~\ref{f.af.boundtole1e2}
and the choice of $\varsigma$ in~\eqref{eq.varsigma}, we get 
    \begin{equation*}
    \begin{split}
        \frac{\#(\widetilde{\mathcal{O}}_{\epsilon}^{\mathrm{g}})(\mathcal{O}_1)}{\#(\widetilde{\mathcal{O}})} \geq \frac{\ell}{\ell+r+ 2N} > \rho - \frac{\varsigma}{3|\alpha_{\inf}|}> \rho -\tau,     
    \end{split}
    \end{equation*}
proving  the fact,
\end{proof}

The proof of  the lemma is now complete.
\qed

\section{Proof of Theorem~\ref{t.densitygeraldifeo}}
\label{s.proofthmB}
In this section, we derive Theorem~\ref{t.densitygeraldifeo} from our previous 
constructions.

\subsection{Preliminaries}
\begin{lemma}
\label{lp.flipflopmb}
    Let $f \in \mathbf{MB}^1(M)$, $B^+$ be an unstable blender-horseshoe of $f$, and $p$ a saddle of $f$ with $\chi^\c(p)<0$. Then $B^+$ and $p$ form a split flip-flop configuration.
\end{lemma}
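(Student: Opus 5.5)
We have to check the two items of Definition~\ref{d.splitflipflop} for $p$ and $B^+$. The only tools are minimality of the strong foliations (condition~\ref{MB1}), the superposition property of the disks in-between (Remark~\ref{r.defdisksinbet} and Lemma~\ref{l.sobrediscos}), and the fact that $\mathfrak{D}_{\mathrm{bet}}(B^+)$ and $\mathfrak{D}_{\delta,\rho}(p)$ are \emph{open} families of $\dim(E^\u)$-disks tangent to the unstable cone. Since $\chi^\c(p)<0$, we have $\mathrm{s}$-$\mathrm{ind}(p)=\dim(E^\s)+1$, so $W^{\mathrm{u}}(p)=\mathcal{F}^\u(p)$ is a strong unstable leaf. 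As usual we assume $f(p)=p$.

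\textbf{Item (1).} Fix a disk $D_0\in\mathfrak{D}_{\mathrm{bet}}(B^+)$, a point $x_0$ in its interior, and a $\vartheta$-neighbourhood of $D_0$ contained in the open family $\mathfrak{D}_{\mathrm{bet}}(B^+)$. The disks in-between can be taken to be (close to) local strong unstable plaques. So it suffices to find a point $z\in\mathcal{F}^\u(p)$ close to $x_0$, and then use continuity of the lamination (Theorem~\ref{t.invariantfoliation}(3)). Indeed, the local plaque $\mathcal{F}^\u_r(z)$ is then $\vartheta$-close to $\mathcal{F}^\u_r(x_0)$, and hence to $D_0$ after adjusting sizes. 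Minimality of $\mathcal{F}^\u$ gives exactly such a $z$, because $\mathcal{F}^\u(p)$ is dense in $M$. Therefore $W^{\mathrm{u}}(p)\supset\mathcal{F}^\u_r(z)$ contains a disk of $\mathfrak{D}_{\mathrm{bet}}(B^+)$.

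\textbf{Item (2).} Every disk $D\in\mathfrak{D}_{\mathrm{bet}}(B^+)$ is a $u$-disk with an interior of uniform size. We want to show that it crosses $W^{\mathrm{s}}_{R}(p)$ transversally for some uniform $R$. By minimality of $\mathcal{F}^\s$, the leaf $\mathcal{F}^\s(p)\subset W^{\mathrm{s}}(p)$ is dense. We combine this with compactness: the closure $\overline{\mathfrak{D}_{\mathrm{bet}}(B^+)}$ is compact in $\vartheta$, and each of its disks contains a strong-unstable-like disk of uniform radius $r_0$.

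\begin{itemize}
\item By minimality, there is $R$ such that $\mathcal{F}^\s_R(p)$ is $r_0/10$-dense in $M$. This uses the standard fact that for a minimal foliation, a compact plaque of a single leaf is $\eta$-dense for any given $\eta$, by compactness of $M$.
\item Using the local product structure of the transverse foliations $\mathcal{F}^\s$ and the $u$-cone disks, every $u$-disk of inner radius $r_0$ then meets $\mathcal{F}^\s_{R+1}(p)$ transversally. The transversality comes from the cone-field dominance $E^\s$ versus $\mathcal{C}^{\cu}$.
\end{itemize}

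Take a transverse point $y\in D\cap W^{\mathrm{s}}_{R'}(p)$. The $\lambda$-lemma gives $n_D$ such that $f^{n}(D)$ contains a disk of $\mathfrak{D}_{\delta,\rho}(p)$ for all $n\ge n_D$. By openness, the same $n_D$ works for nearby disks. Compactness of $\overline{\mathfrak{D}_{\mathrm{bet}}(B^+)}$ then gives a uniform $N$, exactly as in the proof of the claim in Section~\ref{ss.proofoftheoremclassblender}.

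\textbf{Main obstacle.} The delicate step is the uniform transverse intersection in item (2). The disks in-between are only $u$-disks of dimension $\dim E^\u$, while $W^{\mathrm{s}}(p)$ has dimension $\dim E^\s+1$. So the real intersection we use is with the strong stable plaque $\mathcal{F}^\s(p)$, which has complementary dimension only modulo the center direction. To handle this, I would first use Lemma~\ref{l.sobrediscos} and Remark~\ref{r.cuexpansion}: iterate $D$ so that its image sweeps a cu-region containing a uniform cu-box. Any $u$-disk in-between then has an iterate that crosses $W^{\mathrm{s}}_{\mathrm{loc}}(p)$, which has center-stable dimension, transversally. Failing that, I would use that $W^{\mathrm{s}}(p)\supset\bigcup_{t}\mathcal{F}^\s(c(t))$ along a local center curve $c$ in $W^{\mathrm{s}}_{\mathrm{loc}}(p)$. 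Density of these cs-sheets, obtained from minimality of $\mathcal{F}^\s$, yields a genuinely transverse intersection with every $u$-disk of radius $r_0$.
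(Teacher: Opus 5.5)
Your overall structure matches the paper's proof. Item (1) follows from minimality of $\mathcal{F}^\u$, from $W^{\mathrm{u}}(p)=\mathcal{F}^\u(p)$, and from openness of $\mathfrak{D}_{\mathrm{bet}}(B^+)$; this works if you choose $D_0$ to be a strong unstable plaque in the open family, as the paper tacitly does. Item (2) follows from a uniform transverse intersection $D\pitchfork W^{\mathrm{s}}_{R'}(p)\neq\emptyset$ for all disks in-between, followed by the $\lambda$-lemma and compactness. Those parts are fine.

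\textbf{The gap.} The problem is exactly the uniform crossing in item (2). The paper asserts it in one sentence from minimality of $\mathcal{F}^\s$, and your justification of it does not work.

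\emph{First bullet.} It fails for dimensional reasons: $\dim \mathcal{F}^\s(p)+\dim D=\dim E^\s+\dim E^\u=\dim M-1$. So a strong stable plaque can never meet a $u$-disk transversally, and density of $\mathcal{F}^\s_R(p)$ yields no intersection at all. Indeed, a typical small center-translate of $D$ misses the compact plaque $\mathcal{F}^\s_{R+1}(p)$ altogether.

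\emph{Repair (a).} You notice this issue, but the repairs do not close it. The goal in (a), an iterate of $D$ crossing $W^{\mathrm{s}}_{\mathrm{loc}}(p)$, is the right one, but the mechanism is not. Iterates of $D$ remain $\dim E^\u$-dimensional, and Lemma~\ref{l.sobrediscos} only yields further disks in-between, so nothing sweeps a $cu$-box.

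\emph{Repair (b).} All that minimality of $\mathcal{F}^\s$ gives you is density of $W^{\mathrm{s}}(p)$, and density does not force a crossing. To hit a $u$-disk centered at $x$, you need a point $z\in W^{\mathrm{s}}(p)$ near $x$ at which $W^{\mathrm{s}}(p)$ contains a $cs$-disk whose center-width exceeds the center-offset of $z$ from $D$. Bringing $z$ closer to $x$ pushes you further out along $\mathcal{F}^\s(p)$. There the strong stable holonomy image of your center curve $c$ may be arbitrarily thin, and nothing in your argument prevents the widths from shrinking faster than the offsets.

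\textbf{A way to close it.} Use the other half of \ref{MB1}, minimality of $\mathcal{F}^\u$.
\begin{enumerate}
\item Since $\chi^\c(p)<0$, $W^{\mathrm{s}}_\eta(p)$ is a $cs$-disk of dimension $\dim E^\s+1$. By local product structure, the union $V$ of the plaques $\mathcal{F}^\u_\epsilon(y)$, $y\in\mathrm{int}\,W^{\mathrm{s}}_{\eta/2}(p)$, is open.
\item Minimality of $\mathcal{F}^\u$, continuity of the lamination and compactness of $M$ give $R_u$ such that every plaque $\mathcal{F}^\u_{R_u}(x)$, $x\in M$, contains some $\mathcal{F}^\u_\epsilon(y)$ with $y\in W^{\mathrm{s}}_{\eta/2}(p)$.
\item Hence there is $\theta>0$ such that every disk tangent to $\mathcal{C}^\u$ that is $\theta$-close, in the Hausdorff sense, to such a plaque meets $W^{\mathrm{s}}_\eta(p)$ transversally.
\item By domination, strong unstable plaques attract $u$-disks under forward iteration (the standard graph-transform argument). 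So there is $n_0$ such that, for every disk $D$ tangent to $\mathcal{C}^\u$ with inner radius $r_0$, $f^{n_0}(D)$ contains a disk of radius $R_u$ that is $\theta$-close to a strong unstable plaque.
\item Thus $f^{n_0}(D)\pitchfork W^{\mathrm{s}}_\eta(p)\neq\emptyset$ for every $D\in\overline{\mathfrak{D}_{\mathrm{bet}}(B^+)}$. The $\lambda$-lemma, together with compactness of $W^{\mathrm{s}}_\eta(p)$ (or your compactness argument), then gives a uniform $N$.
\end{enumerate}
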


There is a similar statement for stable blender-horseshoes and saddles with $\chi^\c(p)>0$.

\begin{proof}[Proof of Lemma~\ref{lp.flipflopmb}]
  Consider $B^+$ and $p$ as in the statement. Assume, for simplicity that, $p$ is a fixed point.
  Due to minimality of the strong unstable foliation and the fact
  that the family of disks in between  $\mathfrak{D}_{\mathrm{bet}}(B^+)$ is open, 
  there is a number $R$ such that  the set $\mathcal{F}^\u_R(x)$ contains a disk in 
  $\mathfrak{D}_{\mathrm{bet}}(B^+)$ for every $x\in M$.
  Note that since $\chi^\c(p)<0$, it holds $W^{\mathrm{u}} (p)= \mathcal{F}^\u (p)$.
  
  We consider a family of disks $\mathfrak{D}(p)$ as in Remark~\ref{rd.familiadiscos}.
  The next claim implies that $B^+$ and $p$ forms a split flip-flop configuration.

    \begin{claim}
    \label{cl.mbsplitflipflop}
        There exists $N_{\delta,p}>0$ such that for every $n \geq N_{\delta,p}$ it holds that
        \begin{itemize}
            \item $f^n(\mathcal{W}^{\mathrm{u}}_{\delta}(p))$contains a disk in $\mathfrak{D}_{\mathrm{bet}}(B^+)$; and
            \item for every $D \in \mathfrak{D}_{\mathrm{bet}}(B^+)$, the set $f^n(D)$ contains a disk $D' \in \mathfrak{D}(p)$.
        \end{itemize}
    \end{claim}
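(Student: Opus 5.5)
The claim has two items, and both come from combining minimality of $\mathcal{F}^\u$, openness of $\mathfrak{D}_{\mathrm{bet}}(B^+)$, and the fact $W^{\mathrm{u}}(p)=\mathcal{F}^\u(p)$, together with compactness.

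\emph{First item.} Since $f(p)=p$ and $W^{\mathrm{u}}(p)=\mathcal{F}^\u(p)$, the iterates $f^n(W^{\mathrm{u}}_\delta(p))$ form an increasing union exhausting $\mathcal{F}^\u(p)$. By the choice of $R$, the ball $\mathcal{F}^\u_R(p)$ contains a disk of $\mathfrak{D}_{\mathrm{bet}}(B^+)$. Choose $n_0$ with $\mathcal{F}^\u_R(p)\subset f^{n_0}(W^{\mathrm{u}}_\delta(p))$. Then for $n\ge n_0$ one has $f^n(W^{\mathrm{u}}_\delta(p))\supset f^{n_0}(W^{\mathrm{u}}_\delta(p))$, so these iterates also contain that disk. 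This settles the first item.

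\emph{Second item.} First I get a single time. Take $D\in\mathfrak{D}_{\mathrm{bet}}(B^+)$. By Remark~\ref{r.intersectionwiththestableset}, every disk in between meets $W^{\mathrm{s}}_{\mathrm{loc}}(\Gamma)$, but this is not the stable manifold of $p$, so I use minimality differently. The disks in $\mathfrak{D}_{\mathrm{bet}}(B^+)$ are tangent to $\mathcal{C}^\u$ and have uniform size. Iterating by Lemma~\ref{l.sobrediscos}, $f^m(D)$ contains disks in between for all $m$. Disks tangent to $\mathcal{C}^\u$ of large inner radius are $C^1$-close to large pieces $\mathcal{F}^\u_{R'}(x)$ of strong unstable leaves. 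By minimality and compactness, there is $R'$ such that every $\mathcal{F}^\u_{R'}(x)$ crosses $W^{\mathrm{s}}_{\delta(p)/4}(p)$ transversally. Transversality holds because $\dim W^{\mathrm{s}}(p)=\dim E^\s+1$ is complementary to $E^\u$, and I use continuity of the foliation from Theorem~\ref{t.invariantfoliation}.

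So I need: some fixed iterate $f^{m}(D)$ contains a $\mathcal{C}^\u$-disk of inner radius larger than $R'$, uniformly over $D$. This follows from the uniform expansion along $\mathcal{C}^\u$ applied to a disk in between, provided the iterates stay in the region where the cone is defined. That region is the partially hyperbolic region, here all of $M$, so expansion on all of $M$ handles it. Once $f^m(D)$ crosses $W^{\mathrm{s}}_{\delta(p)/4}(p)$ transversally, the $\lambda$-lemma gives $j_D$ such that $f^{m+j}(D)$ contains a disk of $\mathfrak{D}(p)$ for all $j\ge j_D$.

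\emph{Uniformity.} To make $j_D$ independent of $D$, I use that transversality and the $\lambda$-lemma are open in the $\vartheta$-topology. I then cover the closure $\overline{\mathfrak{D}_{\mathrm{bet}}(B^+)}$, which is compact, by finitely many neighborhoods and take the maximum time, as in the analogous claim in Section~\ref{ss.proofoftheoremclassblender}. Strict invariance of $\mathfrak{D}(p)$ (Remark~\ref{rd.familiadiscos}) gives the statement for all $n\ge N$ rather than just one $N$. Taking $N_{\delta,p}$ as the maximum of $n_0$ and this second time finishes the claim.

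The main obstacle is this uniformity step: checking that the expanded disks really approximate strong unstable plaques well enough to inherit the crossing with $W^{\mathrm{s}}_{\delta(p)/4}(p)$, and the compactness of the disk family under $\vartheta$.
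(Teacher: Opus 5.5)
Your first item is the paper's argument. For the second item you take a genuinely different route.

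The paper uses the minimality of the strong \emph{stable} foliation to get $R'>0$ such that the bounded piece $W^{\mathrm{s}}_{R'}(p)$ meets every disk of $\mathfrak{D}_{\mathrm{bet}}(B^+)$ transversely. It then applies the $\lambda$-lemma directly to the disks in between. You use only the minimality of $\mathcal{F}^{\mathrm{uu}}$ together with global domination: you push $D$ forward until it shadows a long strong unstable plaque, which by minimality passes near $p$ and so crosses the local manifold $W^{\mathrm{s}}_{\delta(p)/4}(p)$.

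The paper's route is shorter and needs no preliminary iteration of $D$. However, its key step must get intersections of the $(\dim E^{\mathrm{ss}}+1)$-dimensional manifold $W^{\mathrm{s}}(p)$ with $\dim E^{\mathrm{uu}}$-dimensional disks from the density of the leaf $\mathcal{F}^{\mathrm{ss}}(p)$. That leaf has too small a dimension to meet such disks by itself, so one must control the center size of $W^{\mathrm{s}}(p)$ along distant parts of the leaf, which the paper does not spell out. Your route puts the crossing inside a local stable manifold of definite size, so this issue never arises.

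The price is the approximation step you flagged, and the reason you give for it is not the right one. Uniform expansion only makes $f^m(D)$ large. A disk of radius $R'$ tangent to a cone of fixed width $\alpha$ may deviate by order $\alpha R'$ and need not be close to any plaque. What does the job is domination: $Df^m$ maps $\mathcal{C}^{\mathrm{uu}}$ into cones of width $O(\theta^m)$ around $E^{\mathrm{uu}}$, with $\theta<1$. So radius-$R'$ pieces of $f^m(D)$ are tangent to ever thinner cones. A compactness argument plus the unique integrability of $E^{\mathrm{uu}}$ then makes them close to plaques of $\mathcal{F}^{\mathrm{uu}}$ for large $m$, uniformly in $D$.

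You should also drop the compactness of $\overline{\mathfrak{D}_{\mathrm{bet}}(B^+)}$ in $\vartheta$, which is not automatic for $C^1$ disks. It is also unnecessary. First choose $R'$ so that every $\mathcal{F}^{\mathrm{uu}}_{R'}(x)$ passes so close to $p$ that its piece there lies in $\mathfrak{D}(p)$. This is possible since $W^{\mathrm{u}}_\rho(p)=\mathcal{F}^{\mathrm{uu}}_\rho(p)$ and the foliation is continuous. Then, for the uniform $m$, $f^m(D)$ already contains a disk of the open family $\mathfrak{D}(p)$. Strict invariance of $\mathfrak{D}(p)$ gives every $n\ge m$, with no $\lambda$-lemma needed.
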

  
  \begin{proof}
  For the first assertion, note that the $f$ is uniformly expanding along the unstable direction. Hence there is
  $N_0$ such that $f^{n} (\mathcal{W}^{\mathrm{u}}_{\delta}(p)) \supset\mathcal{W}^{\mathrm{u}}_{R}(p)$
  for every $n \ge N_0$.
  The assertion now follows from the choice of $R$.
  
  The minimality of the strong stable foliation provides $R'>0$ such that
  $W^{\mathrm{s}}_{R'}(p)$ transversely intersects every disk in   $\mathfrak{D}_{\mathrm{bet}}(B^+)$. The second assertion in the claim follows now from the $\lambda$-lemma.
This proves the claim.
  \end{proof}
  The proof of the lemma is now complete.
  \end{proof}
 
\subsection{End of the proof of Theorem~\ref{t.densitygeraldifeo}}
We consider 
first nonhyperbolic measures ($\alpha=0$). We prove that  for every $f\in \mathbf{MB}^1(M)$ 
the whole $M$
satisfies hypotheses in Theorem~\ref{t.densityisolatedset} and hence we are done. 
The occurrence of the split flip-flop configuration follows
from Lemma~\ref{lp.flipflopmb}.
For the second condition,
 note that by \cite{DiaGelSan:20,YanZha:20} every ergodic  nonhyperbolic measure is
 (simultaneously)
  a limit of
hyperbolic ones with central positive and negative exponents. As in our partially hyperbolic setting these measures are approached
by periodic ones \cite{Cro:11,Gel:16}, the second condition holds. This completes the case $\alpha=0$.

For the hyperbolic values $\alpha\ne 0$, we use a consequence of the minimality 
of the strong foliations, which is a strong version of Condition~\ref{G2} in Theorem~\ref{t.residual}.

\begin{lemma}
\label{lp.homreldifeo}
    Let $f \in \mathbf{PH}_{\mathrm{c}=1}^1(M)$ such that both strong stable and unstable foliations are minimal. Then every pair of saddles $p$ and $q$ of $f$ with the same $\mathrm{s}$-index are homoclinically related.
\end{lemma}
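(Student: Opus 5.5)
We want to show that any two saddles $p,q$ of $f$ with the same $\mathrm{s}$-index are homoclinically related. We use minimality of both strong foliations together with the Remark in Section~\ref{s.ph} describing $W^{\mathrm{s}}$ and $W^{\mathrm{u}}$ of a saddle in terms of $\mathcal{F}^\s$ and $\mathcal{F}^\u$. Since $M$ is strongly partially hyperbolic with one-dimensional center, the common $\mathrm{s}$-index is either $\dim(E^\s)$ or $\dim(E^\s)+1$. By symmetry (replace $f$ by $f^{-1}$, which swaps the two strong foliations and the two cases), it suffices to treat $\mathrm{s}$-$\mathrm{ind}(p)=\mathrm{s}$-$\mathrm{ind}(q)=\dim(E^\s)$. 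In that case $\mathcal{F}^\s(x)=W^{\mathrm{s}}(x)$ and $\mathcal{F}^\u(x)\subsetneq W^{\mathrm{u}}(x)$ for $x\in\{p,q\}$. Moreover, $W^{\mathrm{u}}(x)$ is a $(\dim E^\c+\dim E^\u)$-dimensional manifold tangent to $E^\c\oplus E^\u$ along its points and saturated by strong unstable leaves.

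To show $W^{\mathrm{s}}(\mathcal{O}(p))\pitchfork W^{\mathrm{u}}(\mathcal{O}(q))\neq\emptyset$, take a small local unstable disk $W^{\mathrm{u}}_{r}(q)$. Its points have local strong stable plaques, and these plaques form a local product neighborhood $V$ of $q$: the union $\bigcup_{y\in W^{\mathrm{u}}_r(q)}\mathcal{F}^\s_{r}(y)$ contains an open set around $q$. This follows from transversality of $E^\s$ and $E^\c\oplus E^\u$ and the continuity in Theorem~\ref{t.invariantfoliation}. By minimality of $\mathcal{F}^\s$, the leaf $\mathcal{F}^\s(p)=W^{\mathrm{s}}(p)$ is dense in $M$, so it meets $V$. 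Hence some point $z\in \mathcal{F}^\s(p)\cap V$ lies in a plaque $\mathcal{F}^\s_r(y)$ with $y\in W^{\mathrm{u}}_r(q)$. Leaves are disjoint or equal, so $y\in\mathcal{F}^\s(p)=W^{\mathrm{s}}(p)$. The intersection at $y$ is transverse, because $T_yW^{\mathrm{s}}(p)=E^\s(y)$ and $T_yW^{\mathrm{u}}(q)=E^\c(y)\oplus E^\u(y)$ span $T_yM$. Exchanging $p$ and $q$ gives $W^{\mathrm{s}}(q)\pitchfork W^{\mathrm{u}}(p)\neq\emptyset$, so $p$ and $q$ are homoclinically related.

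In the other case the argument is symmetric. One uses minimality of $\mathcal{F}^\u$, the equality $\mathcal{F}^\u(x)=W^{\mathrm{u}}(x)$, and a local product neighborhood built from $W^{\mathrm{s}}_r(q)$ and strong unstable plaques. This also explains why both foliations must be minimal: each index uses one of them.

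The main obstacle is the local product structure step. We must justify that $W^{\mathrm{u}}(q)$ is tangent to $E^\c\oplus E^\u$ at points of $W^{\mathrm{u}}_r(q)$, not just at $q$, so that the strong stable plaques through $W^{\mathrm{u}}_r(q)$ cover a neighborhood of $q$. We would get this from the invariance of the center-unstable cone field: iterating backwards, tangent spaces of $W^{\mathrm{u}}(q)$ lie in $\mathcal{C}^{\cu}$, and $\mathcal{C}^{\cu}$ is transverse to $E^\s$. With this, we apply the invariance-of-domain/transversality argument uniformly for small $r$. The remaining point is that a dense leaf of $\mathcal{F}^\s$ really enters the open set $V$, which is immediate from minimality.
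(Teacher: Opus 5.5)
Your proof is correct. The paper gives no written proof of this lemma, only calling it a consequence of the minimality of the strong foliations, and your argument is the standard reasoning behind that remark: the dense leaf $\mathcal{F}^{\mathrm{ss}}(p)=W^{\mathrm{s}}(p)$ meets a local center-unstable disk of $q$ transversely through the local product structure, and the other index is handled via $\mathcal{F}^{\mathrm{uu}}$ and $f^{-1}$. For transversality it suffices that $W^{\mathrm{u}}_r(q)$ is tangent to the center-unstable cone, which holds for small $r$, so the step you flag as the main obstacle is routine.
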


This means we are in a  setting  similar to the one in Theorem~\ref{t.hyperboliccase} and can proceed similarly.
\qed

\bibliographystyle{plain}

\end{document}